\DeclareMathOperator{\acl}{acl}
\DeclareMathOperator{\dcl}{dcl}
 \DeclareMathOperator{\theo}{Th}
\DeclareMathOperator{\tp}{tp}\DeclareMathOperator{\stp}{stp}
\DeclareMathOperator{\mr}{RM}
\DeclareMathOperator{\rk}{rk}
\newtheorem{introtheorem}{Theorem}
\newtheorem{theorem}{Theorem}[section]
\newtheorem{claim}[theorem]{Claim}
\newtheorem{conjecture}[theorem]{Conjecture}
\newtheorem{corollary}[theorem]{Corollary}
\newtheorem{fact}[theorem]{Fact}
\newtheorem{lemma}[theorem]{Lemma}
\newtheorem{proposition}[theorem]{Proposition}
\theoremstyle{definition}
\newtheorem{definition}[theorem]{Definition}
\newtheorem{remark}[theorem]{Remark}
\newtheorem{notation}[theorem]{Notation}
\newtheorem{convention}[theorem]{Convention}
\newcommand{\Rr}{{\mathbb{R}}}
\newcommand{\Nn}{{\mathbb{N}}}
\newcommand{\CL}{{\mathcal L}}
\newcommand{\CK}{{\mathcal K}}
\newcommand{\CN}{{\mathcal N}}
\newcommand{\CR}{{\mathcal R}}
\newcommand{\CM}{{\mathcal M}}
\newcommand{\CC}{{\mathcal C}}
\newcommand{\CG}{{\mathcal G}}
\newcommand{\CS}{\mathcal S}
\newcommand{\0}{\emptyset}
\renewcommand{\phi}{\varphi}
\long\def\symbolfootnote[#1]#2{\begingroup%
\def\thefootnote{\fnsymbol{footnote}}\footnote[#1]{#2}\endgroup}
\def\Ind#1#2{#1\setbox0=\hbox{$#1x$}\kern\wd0\hbox to 0pt{\hss$#1\mid$\hss}
\lower.9\ht0\hbox to 0pt{\hss$#1\smile$\hss}\kern\wd0}
\def\Notind#1#2{#1\setbox0=\hbox{$#1x$}\kern\wd0\hbox to 0pt{\mathchardef
\nn=12854\hss$#1\nn$\kern1.4\wd0\hss}\hbox to
0pt{\hss$#1\mid$\hss}\lower.9\ht0 \hbox to
0pt{\hss$#1\smile$\hss}\kern\wd0}
\def\sub{\subseteq}
\title{}
\date{\today}
\begin{document}

\subjclass{Primary 03C45; Secondary 03C64}

 \oddsidemargin .8cm
 \evensidemargin .8cm

\title{Strongly Minimal Relics of T-convex Fields}

\date{\today}
\author{Benjamin Castle}
\address{Department of Mathematics, University of Illinois Urbana-Champaign, Urbana, IL 61820}
\email{btcastl2@illinois.edu }

\author{Assaf Hasson}
\address{Department of Mathematics, Ben Gurion University of the Negev, Be'er-Sheva 84105, Israel}
\email{hassonas@math.bgu.ac.il}

\setcounter{tocdepth}{1}

 \thanks{The first author was partially supported by a Brin postdoctoral fellowship at the University of Maryland. The second author was supported by ISF grant No. 555/21.}

 \begin{abstract} Generalizing previous work on algebraically closed valued fields (ACVF) and o-minimal fields, we study strongly minimal relics of real closed valued fields (RCVF), and more generally T-convex expansions of o-minimal fields. Our main result (replicating the o-minimal setting) is that non-locally modular strongly minimal definable relics of T-convex fields must be two-dimensional. We also continue our work on reducing the trichotomy for general relics of a structure to just the relics of certain distinguished sorts. To this end, we prove that the trichotomy for definable RCVF-relics implies the trichotomy for interpretable RCVF-relics, and also that the trichotomy for relics of o-minimal fields implies the trichotomy for relics of any dense o-minimal structure. Finally, we introduce the class of \textit{differentiable Hausdorff geometric fields} (containing o-minimal fields and various valued fields), and give a general treatment of the trichotomy for one-dimensional relics of such fields (namely, reducing the trichotomy for one-dimensional relics to an axiomatic condition on the field itself).
 \end{abstract}

 \maketitle
 \tableofcontents{\setcounter{tocdepth}{0}}

%\maketitle
\section{Introduction}
We continue our work on Zilber's trichotomy in Hausdorff geometric structures. These structures were introduced in \cite{CaHaYe} in order to complete the proof of the trichotomy for relics of algebraically closed fields (recall that a \textit{relic} of a structure $\CK$ is another structure whose definable sets are all interpretable sets in $\CK$). The idea was that in positive characteristic, it was easier to work with relics of algebraically closed \textit{valued fields} (ACVF) -- and Hausdorff geometric structures capture the main topological properties one needs in this setting. The result was a proof of the trichotomy for strongly minimal \textit{definable} ACVF-relics (see Convention \ref{Con: def vs int}), solving a conjecture of Kowalski and Randriambololona \cite{KowRand}.

In \cite{CasOmin}, the first author then applied the machinery of Hausdorff geometric structures in the o-minimal context, proving that if $\mathcal R$ is an o-minimal expansion of a field, and $\CM$ is a strongly minimal $\mathcal R$-relic on a universe of dimension greater than 2, then $\CM$ is locally modular. This is a special case of an older conjecture of Peterzil: 
\begin{conjecture}[Peterzil]
    Let $\CR$ be an o-minimal structure. A strongly minimal $\CR$-relic is either locally modular or interprets an algebraically closed field (that is, the Zilber trichotomy holds for strongly minimal relics of o-minimal strucures). 
\end{conjecture}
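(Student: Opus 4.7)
The plan is to prove Peterzil's conjecture by assembling the reduction machinery developed in the paper together with a field configuration argument in dimension two. Given a strongly minimal $\CR$-relic $\CM$ which is not locally modular, the goal is to interpret an algebraically closed field in $\CM$.

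First I would reduce $\CR$ to the case of an o-minimal expansion of a real closed field. If no field is definable in $\CR$, then by the Peterzil--Starchenko trichotomy $\CR$ is a reduct of a trivial or linear o-minimal structure, and in this setting all strongly minimal relics are already locally modular by classical arguments. Otherwise, a real closed field is definable on some interval, and using the paper's reduction from dense o-minimal structures to o-minimal fields we may assume $\CR$ itself expands a real closed field. Next, by the analogue of the RCVF reduction proved in the paper (from interpretable to definable relics), we may assume $\CM$ is a \emph{definable}, rather than merely interpretable, $\CR$-relic.

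At this point the main theorem of the paper (or rather its o-minimal specialization from \cite{CasOmin}) applies and forces the universe of $\CM$ to have dimension exactly two in $\CR$. I would then install the framework of differentiable Hausdorff geometric fields on $\CR$ in order to equip curves and plane families in $\CM$ with well-defined tangent vectors, derivatives, and regular parameter spaces. With a two-parameter canonical family of plane curves available (produced from non-local-modularity by the usual Hrushovski-type analysis of strongly minimal sets), I would then run a group/field configuration: composing derivatives of curves in the family at varying basepoints gives a partial binary operation, which after standard generic rectification produces a type-definable group, and then, via Hrushovski's configuration theorem, an interpretable algebraically closed field.

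The principal obstacle is this last step: turning a rich family of plane curves in $\CM$ into an interpretable field, \emph{using only the definable structure of the relic}. One cannot directly appeal to the Peterzil--Starchenko classification of two-dimensional definable groups in o-minimal fields, because $\CM$ sees only a proper subcollection of $\CR$-definable sets. The strategy, paralleling the Zariski-geometry argument of \cite{CaHaYe} in the ACVF case, is to carry out all jet-space and tangent-map computations intrinsically in the differentiable Hausdorff geometric structure that $\CM$ inherits from $\CR$, show that the composition law on tangent directions to curves in the canonical family is nontrivial and associative up to negligible error, and thereby extract a one-dimensional definable linear action. Verifying that this internal differential calculus is sufficiently robust in the relic, and that the resulting action is genuinely internal to $\CM$ (rather than relying on hidden parameters from $\CR$), is the heart of the argument and the most delicate point to check.
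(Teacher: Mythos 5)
The statement you are trying to prove is stated in the paper only as a \emph{conjecture} (due to Peterzil); the paper does not prove it, and your proposal does not either. The paper's actual contributions are exactly the reductions you cite: Theorem \ref{T: reduction2fields} reduces to o-minimal expansions of real closed fields, the results of \cite{CasOmin} (extended here to the $T$-convex setting) rule out $\dim(M)>2$, and Sections 10--11 rule out $\dim(M)=1$. What remains after all of these reductions is precisely the case $\dim(M)=2$ over an o-minimal field, and that case is the open content of Peterzil's conjecture. Your final paragraph, where you propose to ``run a group/field configuration'' by composing derivatives of curves and to extract an interpretable algebraically closed field, is not a proof of that case but a restatement of the problem; you acknowledge it is ``the heart of the argument and the most delicate point to check,'' and indeed it is exactly the step nobody knows how to carry out.

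Concretely, the slope/tangency machinery of the paper (coherent slopes, codes, detection of tangency via TIMI, and the affine field configuration of Theorem \ref{T: field interpretation}) is developed under the standing hypothesis $\dim(M)=1$. The notion of coherence ($\dim = \rk$) and the fact that a slope $\partial\left(\frac{x\rightarrow y}{A}\right)$ at a generic point of $M^2$ is a scalar, so that the endomorphism field $F$ of $T(x)$ is a one-dimensional commutative field matching the rank-one geometry of the relic, are all used essentially. When $\dim(M)=2$, plane curves of the relic are two-dimensional subsets of a four-dimensional ambient space, tangent spaces are planes, slopes are $2\times 2$ matrices (non-commutative, and with no immediate dimension match between $\CK$-dimension and Morley rank), and neither Lemma \ref{L: tangency slope is generic} nor the coding results transfer; the Hrushovski configuration you invoke would require a one-dimensional family of slopes internal to $\CM$, which is exactly what is missing. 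So the gap in your proposal is not a technical verification left to the reader: it is the open two-dimensional case itself, and no argument in the paper (or in \cite{CasOmin}, \cite{CaHaYe}, or \cite{HaOnPe}) supplies it.
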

    
Given the above successes in the contexts of valued fields and o-minimal fields, it seems natural to try to merge them. Thus, in this paper, we study strongly minimal relics of $T$-\textit{convex fields} (i.e. o-minimal fields with a certain natural valuation). These structures were introduced in \cite{vdDriesLewen}. To construct them, one starts with an o-minimal theory $T$ expanding the theory of real closed fields, and then forms a new theory $T_{conv}$ by adding a predicate for a valuation ring, which is required to be convex (in the order) and closed under all global continuous $\emptyset$-definable functions. The simplest example is RCVF, the theory of real closed valued fields (which is precisely the theory $T_{conv}$ for $T=RCF$). Note that given an o-minimal theory $T$ of fields, we will use notation such as $\CR_V$ for models of $T_{conv}$, reserving $\CR$ for the reduct of $\CR_V$ to $T$.

We now move toward stating our main result (Theorem \ref{T: at most 2} and Theorem \ref{T: at least 2}). As in the case of ACVF, the main theorem will cover \textit{definable} relics: 

\begin{convention}\label{Con: def vs int} Throughout this paper, given a structure $\mathcal N$, we distinguish between \textit{definable} sets in $\CN$ (which are subsets of powers $N^k$) and \textit{interpretable} sets in $\CN$ (which allow for quotients by definable equivalence relations). We use the term \textit{$\CN$-relic} for \textit{interpretable relics} (i.e. with universe an $\CN$-inerpretable set), and we use \textit{definable $\CN$-relic} for relics with universe an $\CN$-definable set.
\end{convention}

In this language, our main result is:

\begin{introtheorem}\label{T: main}
    Let $T$ be an o-minimal theory of fields, and $T_{conv}$ its $T$-convex expansion. Let $\CR_V\models T_{conv}$, and let $\CM=(M,...)$ be a strongly minimal non-locally modular definable $\CR_V$-relic. Then $\dim(M)=2$.  
\end{introtheorem}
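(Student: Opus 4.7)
The plan is to prove Theorem \ref{T: main} by separately establishing the two bounds $\dim(M) \geq 2$ (Theorem \ref{T: at least 2}) and $\dim(M) \leq 2$ (Theorem \ref{T: at most 2}). Both halves view $\CR_V$ as a Hausdorff geometric structure in the sense of \cite{CaHaYe}, with the topology on $R_V$ inherited from the order (which refines the valuation topology).

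For the lower bound, I would argue that a one-dimensional strongly minimal definable $\CR_V$-relic is automatically locally modular. This amounts to invoking the one-dimensional trichotomy for \emph{differentiable Hausdorff geometric fields} announced in the introduction, once $\CR_V$ is verified to satisfy the relevant axiomatic condition, namely generic differentiability of one-variable definable functions in the order topology (which holds as in the pure o-minimal case, since $\CR_V$ still enjoys monotonicity and cell decomposition for its one-variable definable sets). Concretely, non-local modularity would produce a two-dimensional definable family of plane curves on $M$; on a one-dimensional universe these are generically graphs of strictly monotone partial maps $R_V \rightharpoonup R_V$, and differentiating collapses the family of germs to one parameter, contradicting the almost faithfulness of the curve family.

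The upper bound is the main work, and I would follow the blueprint of \cite{CasOmin}, threading in the valuation. Suppose for contradiction that $\dim(M) \geq 3$. Non-local modularity yields an almost faithful two-dimensional definable family $\{C_s\}$ of plane curves on $M$. Fix a generic point $p \in M$ and a generic coordinate projection $\pi : M \rightharpoonup R_V^{\dim(M)}$ along which one can meaningfully differentiate. The curves through $p$ produce a two-dimensional family of tangent directions at $p$; a jet-space analysis, as in \cite{CaHaYe,CasOmin}, then constructs a definable local group chunk that resembles a piece of $(R_V, +, \cdot)$, witnessed already on a two-dimensional definable subset of $M$. This either forces $\CM$ to reconstruct the ambient field structure in a way incompatible with the reduct hypothesis (since $\CM$ is a proper relic, it need not define the valuation predicate) or produces a two-dimensional non-locally modular strongly minimal relic sitting inside the original one, allowing a descending induction to bring $\dim(M)$ down to $2$.

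The principal obstacle is the differentiation step inside $\CR_V$. In contrast to pure o-minimal fields, T-convex fields carry two natural topologies, and definable functions need not be pointwise $C^1$ in either; differentiability is only available after restricting to definable open sets compatible with both the order and the valuation ring $V$. Adapting the generic-differential lemmas of \cite{CaHaYe,CasOmin} to produce the jets needed for the group-chunk construction, in a way that survives the presence of the predicate $V$, is where the bulk of the work will sit. Once a workable notion of generic jet is in place, the curve-family and group-chunk manipulations should transfer with only routine bookkeeping, the output being the contradiction that closes Theorem \ref{T: at most 2}.
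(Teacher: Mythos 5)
Your top-level decomposition is the right one (the two halves are exactly Theorems \ref{T: at most 2} and \ref{T: at least 2}), but both halves as sketched have genuine gaps. For the lower bound, the condition the paper needs is not ``generic differentiability of one-variable functions'' but TIMI (Definition \ref{D: TIMI}), verified for $\CR_V$ in Theorem \ref{T: t-convex timi}, and your proposed shortcut does not work: differentiating the curves through a generic point does \emph{not} collapse the family of germs, nor does it contradict almost faithfulness --- two curves with the same slope at a point need not share a germ, and indeed a rank-$2$ almost faithful family with a one-dimensional family of germs through a generic point is exactly what one has in any algebraically closed field, with no contradiction. In fact the paper uses the abundance of slopes \emph{positively} (Lemma \ref{L: tangency slope is generic}): one must code slopes inside $\CM$ (detection of tangency via TIMI, Propositions \ref{P: detection of tangency} and \ref{P: codes exist}), recover addition and composition via the chain rule, and assemble an affine field configuration so that $\CM$ interprets a strongly minimal algebraically closed field (Theorem \ref{T: field interpretation}); the contradiction only then comes from the external fact that $\CR_V$ interprets no one-dimensional algebraically closed field \cite{HaHaPe}. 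Your sketch omits all of this, and the contradiction you claim at the differentiation step simply is not there.

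For the upper bound, the proposal misses the actual mechanism of \cite{CasOmin} and the one new obstacle specific to the $T$-convex setting. The paper's argument splits on whether $\CM$ interprets a strongly minimal group: if not, one quotes \cite{CasOmin} directly; if so, one realizes the group as a definable, locally manifold-loric group (Fact \ref{F: interpretable is definable} plus the Marikova-style retopologization, Theorem \ref{T: marikova} and Lemma \ref{L: loc loric}), restricts the group operation to a finite-to-one map $f$ on a neighborhood of the identity in $C^2$ for a suitable curve $C$, and bounds $\dim(M)$ by squeezing $2\dim(M)-2\le\dim(\operatorname{Ram}(f))\le\dim(M)$, where the left inequality is \emph{manifold ramification purity of order $2$}. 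That purity statement is precisely what fails for arbitrary $\CR_V$-definable data (Proposition \ref{P: purity failure} gives a counterexample exploiting the clopen sets coming from $V$) and is rescued only by restricting to the lore of locally $\CR$-definable sets (Proposition \ref{P: purity success}); your remark about definable sets ``compatible with both topologies'' gestures at the right phenomenon but attaches it to differentiability, which is comparatively easy to inherit via d-approximations, rather than to purity, where the real work lies. Finally, your closing alternative --- that a group chunk either ``reconstructs the ambient field in a way incompatible with the reduct hypothesis'' or yields a two-dimensional relic enabling a descending induction --- is not an argument: a non-locally modular relic may perfectly well interpret fields, and no mechanism is given that would produce a lower-dimensional relic or drive an induction on $\dim(M)$.
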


In particular, this theorem extends the main results of \cite{CasOmin} (eliminating the case $\dim(M)>2$) and \cite{HaOnPe} (eliminating the case $\dim(M)=1$) from the o-minimal setting to the $T$-convex setting. In the former case (eliminating $\dim(M)>2$), the proof is done by a reduction to the o-minimal setting. In the latter case (eliminating $\dim(M)=1$), we give a new proof, thus reproducing the main results of \cite{HaOnPe}. 

% so that the implication "if Peterzil's conjecture is true of strongly minimal non locally modular $T_{conv}$-relics then any such relic must be 2-dimensional" seems to be open if $T$ is exponential (for \textit{definable} fields this does follow from \cite[Theorem 4.21]{HaHaPe}). 

In the special case that $T_{conv}=RCVF$, our theorem also covers \textit{general} (i.e. interpretable) relics. The result (Theorem \ref{T: RCVF}) is that a non-locally modular strongly minimal $\CR_V$-relic is elementarily equivalent to a definable relic (precisely, the original relic is either itself definable, or it embeds into a power of the residue field, which can be expanded to a model of $T_{conv}$ in its own right). The proof of this result uses techniques from \cite[\S 12]{CaHaYe}, based on ideas from \cite{HaHaPe} (see Theorem \ref{T: intro weak rank} below). 

Along the way, we prove a few other results that may be of interest. First, in section 2 we prove a general result on reducing the trichotomy for relics of a given structure to relics of specific sorts:

\begin{introtheorem}\label{T: intro weak rank}
    Let $\CK$ be a weakly ranked structure, and $\CM=(M,...)$ a non-locally modular strongly minimal $\CK$-relic. If $M$ is locally interalgebraic with a $\CK$-definable set $D$, then $M$ is almost internal to $D$. 
\end{introtheorem}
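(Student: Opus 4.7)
The plan is to use the non-local modularity of $\CM$ to produce a rich family of plane curves on $M$, and combine this with the local interalgebraicity hypothesis to propagate almost $D$-internality from a generic piece of $M$ to the whole of $M$. First, I would unwind the hypothesis of local interalgebraicity to extract a $\CK$-interpretable subset $M_0 \sub M$ of full $\CK$-dimension such that every element of $M_0$ is almost $D$-internal in $\CK$: concretely, there is a $\CK$-interpretable, generically finite-to-finite correspondence between $M_0$ and a subset of some power of $D$, defined over a fixed parameter set $B$. Next, I would invoke the standard fact that $\CM$ admits a 2-dimensional $\CM$-definable family $\{C_t : t \in T\}$ of plane curves in $M^2$ (which, after passing to a faithful subfamily, satisfies the usual recovery property: the canonical parameter $t$ of a generic $C_t$ is $\acl_\CM$-algebraic, and hence $\acl_\CK$-algebraic, over any two independent generic points on $C_t$). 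Since $\CM$ is a relic of $\CK$, this family is $\CK$-interpretable.

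The key step is to show that $T$ is almost $D$-internal. Because $M_0$ has full $\CK$-dimension in $M$, for a generic $t \in T$ the intersection $C_t \cap (M_0 \times M_0)$ has full dimension in $C_t$. Picking two independent generic points $(a_1, b_1), (a_2, b_2) \in C_t \cap (M_0 \times M_0)$, each coordinate is almost $D$-internal, so $t \in \acl_\CK(a_1, b_1, a_2, b_2, B)$ is too. Having established this, for an arbitrary generic $a \in M$ I would pick an independent generic $b \in M_0$ and a curve $C_t$ passing through $(a, b)$; this is possible because through a generic pair of points in $M^2$ there is a one-dimensional subfamily of such curves. Then $a \in \acl_\CK(t, b, B)$, and since both $t$ and $b$ are almost $D$-internal, so is $a$. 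Hence $M$ is almost $D$-internal.

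The main obstacle will be managing the interaction between Morley rank on $\CM$ and the $\CK$-dimension on $\CK$-interpretable sets, so as to justify claims such as ``$C_t \cap (M_0 \times M_0)$ has full dimension in $C_t$'' when $C_t$ is only $\CM$-definable while $M_0$ is merely $\CK$-interpretable, and to transfer the canonical-parameter recovery property from $\acl_\CM$ to $\acl_\CK$. The weakly ranked hypothesis on $\CK$ is designed precisely to control this interaction, and making this control rigorous---essentially adapting the framework of \cite{CaHaYe} and the ideas of \cite{HaHaPe} to this abstract axiomatic setting---constitutes the main technical content of the proof.
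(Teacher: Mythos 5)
There is a genuine gap, at two linked points. First, local interalgebraicity of $M$ and $D$ only provides \emph{some} infinite definable subset of $M$ in finite correspondence with a subset of $D$; nothing in the hypothesis lets you take your $M_0$ of full $\CK$-rank in $M$, and your plan leans on that from the start (both to intersect curves with $M_0^2$ and to cover a generic $a\in M$ at the end). The paper's proof circumvents exactly this by working with a $D$-\emph{critical} set $X\sub M$, i.e.\ one of \emph{maximal} weak rank among definable subsets of $M$ almost embeddable into a power of $D$ (such a maximum exists by boundedness of the rank); $X$ may well satisfy $d(X)<d(M)$, and the whole argument is arranged to tolerate this. Your proposal contains no substitute for the criticality device.

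Second, and more seriously, the assertion that for generic $t\in T$ the intersection $C_t\cap(M_0\times M_0)$ has full dimension in $C_t$ (so that you can pick two independent generic points of $C_t$ inside $M_0^2$) is precisely the heart of the theorem, not a routine bookkeeping step. Even granting $d(M_0)=d(M)$, a point of $M_0^2$ that is $d$-generic over $t$ is $\CM$-generic in $M^2$ over $t$ (by the interaction of $d$-rank with Morley rank, Lemma \ref{L: d rank vs mr}), hence does \emph{not} lie on $C_t$; so producing even one further point of $C_t\cap M_0^2$ beyond the one you threaded the curve through requires an argument. In the paper this is Lemma \ref{L: almost internal technical}(2): if $C_t\cap X^2$ were finite, then $t$ would be interalgebraic over a generic $u_0\in M^2$ with a $d$-generic $x_0\in X^2$, and (after replacing $t$ by a real singleton, using that $\acl_{\CM}(\emptyset)$ is infinite) compactness yields a definable $Y\sub M$ almost embeddable into a power of $D$ with $2d(X)=d(x_0/u_0)\le d(Y)\le d(X)$, a contradiction -- and this uses the maximality of $X$ essentially. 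You explicitly defer ``managing the interaction between Morley rank and $\CK$-dimension'' as the main obstacle, but that deferred step is exactly where the missing idea lives; without it the proposal does not go through. The surrounding architecture (recovering the parameter of a curve from two independent generic points on it, then propagating algebraicity over $D$ to a generic element of $M$) does agree with the paper's proof of Theorem \ref{T: almost internality}.
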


Here \emph{weakly ranked} (Definition \ref{D: weakly ranked}) is an \emph{ad hoc} terminology aimed to capture various finite rank notions (e.g. $\acl$-rank in geometric structures, finite burden, finite SU-rank, etc.); while \textit{local interalgebraicity} of $M$ and $D$ just means the existence of an interpretable finite correspondence between infinite subsets of $M$ and $D$.

Theorem \ref{T: intro weak rank} generalizes our work from \cite[\S 12]{CaHaYe} in ACVF (and the proof works exactly as in that paper). Theorem \ref{T: intro weak rank} is then our main tool in treating interpretable sets in RCVF as discussed above. The idea is that by results of \cite{HaHaPe}, Theorem \ref{T: intro weak rank} reduces the trichotomy for \textit{all} RCVF-relics to just the relics of four distinguished sorts -- and two of these can be ruled out by virtue of being `1-based' in some sense. Thus one only has to consider the remaining two sorts -- and these are just the valued field itself and the residue field (i.e. exactly what we wish to have in the end).

As an additional application of Theorem \ref{T: intro weak rank}, in Section 3 we show that Peterzil's conjecture reduces to o-minimal expansions of fields (Theorem \ref{T: reduction2fields}). This is a useful fact to verify, as much of the progress on Peterzil's conjecture has been restricted to expansions of fields: 

\begin{introtheorem}\label{T: intro omin}
    Let $\CR$ be a dense o-minimal structure. Let $\CM$ be a non-locally modular strongly minimal $\CR$-relic. Then $M$ is internal to an interval in $R$ admitting an $\CR$-definable real closed field structure. 
\end{introtheorem}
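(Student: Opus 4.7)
The plan is to combine Theorem~\ref{T: intro weak rank} with the Peterzil-Starchenko trichotomy for o-minimal structures. Every dense o-minimal structure is weakly ranked in the sense of Definition~\ref{D: weakly ranked}, with o-minimal dimension playing the role of rank, so Theorem~\ref{T: intro weak rank} applies.

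First, I would realize $M$ as locally interalgebraic with a one-dimensional $\CR$-definable set. Writing $M = X/E$ for $X, E$ definable in $\CR$, the strong minimality of $\CM$ together with o-minimality forces the quotient map $X \to M$ to have finite generic fibers. After shrinking, one obtains a finite-to-one correspondence between a cofinite subset of $M$ and a definable set $D \subseteq R^n$ of o-minimal dimension $1$; thus $M$ and $D$ are locally interalgebraic, and Theorem~\ref{T: intro weak rank} yields that $M$ is almost internal to $D$. By o-minimal cell decomposition, after discarding finitely many points, $D$ decomposes as a finite disjoint union of one-dimensional cells, each in definable bijection with an open interval $I_j \subseteq R$. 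Passing to a sufficiently saturated elementary extension and applying the Peterzil-Starchenko trichotomy pointwise, I may refine the partition further to assume that each $I_j$ is of trivial, linear, or field type.

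The main obstacle is the key step: showing that non-local-modularity of $\CM$ forces at least one $I_j$ to be of field type. The underlying intuition is that trivial intervals induce only trivial structure on almost internal sets, while linear intervals induce only $1$-based (locally modular) structure. If every $I_j$ were of trivial or linear type, then chasing the almost-internal witness would show that the canonical parameter of any rank-$2$ family of plane curves on $M^2$ lives in a locally modular sort, contradicting the assumption that $\CM$ is non-locally modular. Making this precise requires care in tracking the induced geometry through the quotient and finite correspondence witnessing almost-internality, and may draw on transfer techniques from \cite{HaOnPe} or a generalization of arguments in \cite{CasOmin}.

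Once this obstruction is cleared, $M$ is almost internal to a field-type interval $I \subseteq R$, which by Peterzil-Starchenko carries an $\CR$-definable real closed field structure. A final step upgrades "almost internal" to genuine "internal": in the presence of a definable real closed field structure on $I$, the finite algebraic covers witnessing almost-internality can be eliminated as roots of polynomials, yielding a definable surjection from a power of $I$ onto $M$, as desired.
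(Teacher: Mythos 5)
Your overall skeleton is the paper's (Theorem \ref{T: intro weak rank} combined with Peterzil--Starchenko, ruling out the trivial and linear pieces, then upgrading almost internality), but the decisive step is precisely the one you leave unproved. Your heuristic that ``trivial or linear intervals induce only locally modular structure on sets almost internal to them'' does not transfer readily through the quotient and the finite correspondence, and you acknowledge as much. The paper's mechanism for making this precise is \emph{richness}: Theorem \ref{T: rich} converts non-local modularity of $\CM$ into richness of \emph{any} definable set locally interalgebraic with $M$ (richness is phrased purely in terms of the weak rank and $\acl$ in the ambient structure, so it survives finite correspondences, unlike local modularity of an induced structure), and then Lemma \ref{L: ominimal trivial case} (using the binarity theorem of \cite{MeRuSt} together with \cite{PeStTricho} to get triviality of $\acl$ on a trivial interval) and Lemma \ref{L: ominimal modular case} (using local linearity of semilinear group intervals, i.e.\ that definable sets are generically cosets, plus a compactness argument on fibres through a non-isolated point) show that trivial and semilinear intervals are \emph{not} rich. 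Without some transferable invariant of this kind, your ``chasing the almost-internal witness'' step is only an intuition, so the core of the proof is missing.

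Two further steps are weaker than you indicate. First, for an arbitrary presentation $M=X/E$ it is simply not true that strong minimality of $\CM$ forces the quotient map to have finite generic fibres; classes may be infinite, and there is no definable choice in a general dense o-minimal structure to repair the presentation. The paper instead quotes \cite[Lemma 5.1]{HaHaPe} (Fact \ref{F: locally algebraic one variable}) to get $M$ locally interalgebraic with some $Y/E$ with $Y\subseteq R$ in \emph{one} variable, and then uses o-minimality: infinitely many classes must be finite, and replacing each finite class by its least element yields a definable subset of $R$, hence an interval, almost embeddable into $M$ (and this property passes to the subinterval produced by Fact \ref{F: ominimal trichotomy}). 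Second, your upgrade from ``almost internal'' to ``internal'' via symmetric functions and roots of polynomials codes finite subsets of the field interval, whereas the finite sets to be eliminated are sets of conjugates of elements of the \emph{imaginary} sort $M$; the paper gets internality directly from $\acl=\dcl$ in imaginary sorts of weakly o-minimal structures (\cite[Lemma 2.16]{MelRCVFEOI}), packaged in Corollary \ref{C: interacl2internal}. Relatedly, since $M$ is interpretable, the weak rank must be taken on $\CR^{eq}$ (dp-rank, equivalently the Gagelman extension of dimension), not merely o-minimal dimension on real tuples, which is how Section 3 of the paper sets things up.
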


Theorem \ref{T: intro omin} is proved by combining Theorem \ref{T: intro weak rank} with the Peterzil-Starchenko o-minimal trichotomy \cite{PeStTricho}. So here the `four distinguished sorts' appearing above in RCVF are replaced by the three basic types of intervals appearing in o-minimal structures. 

Sections 4-11 constitute the proof of Theorem \ref{T: main}, with sections 4-9 treating the case $\dim(M)>2$ and sections 10-11 treating the case $\dim(M)=1$. Before moving on, then, let us point out that sections 10-11 are done in a more general context. In the end, we work with the notion of a \textit{differentiable Hausdorff geometric field} (Definition \ref{D: differentiable HGF}). This is a geometric expansion of a field (typically of characteristic zero) equipped with a field topology and suitable notions of smoothness and tangent spaces. Examples include Henselian valued fields of characteristic zero, as well as o-minimal fields and their T-convex expansions.

In this language, we isolate a general condition called \textit{TIMI} (Definition \ref{D: TIMI}, adapted slightly from the same notion in \cite{CaHaYe}), roughly stating that `tangent intersections' of curves should also be `multiple intersections' in a topological sense. In Theorem \ref{T: t-convex timi}, we prove that T-convex fields satisfy TIMI. We then prove (Theorem \ref{T: field interpretation}) that if $(\CK,\tau)$ is a differentiable Hausdorff geometric field satisfying TIMI, then the trichotomy holds for all strongly minimal definable $\CK$-relics on universes of dimension 1:

\begin{introtheorem}\label{T: intro field interpretation} Let $(\CK,\tau)$ be a differentiable Hausdorff geometric field satisfying TIMI. Let $\CM=(M,...)$ be a non-locally modular strongly minimal definable $\CK$-relic whose universe $M$ satisfies $\dim(M)=1$. Then $\CM$ interprets a (strongly minimal) algebraically closed field.
\end{introtheorem}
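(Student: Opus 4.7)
The plan is to adapt the classical Rabinovich--Hrushovski strategy for producing a field from a non-locally modular strongly minimal set, carrying it out in the $\CK$-relic setting by means of the differentiable Hausdorff geometric structure and the TIMI axiom. Since $\CM$ is strongly minimal and not locally modular, Hrushovski's lemma provides a definable almost-faithful family $\{C_s : s \in S\}$ of strongly minimal plane curves in $M \times M$, with $\mr(S) = 2$, such that two generic members intersect in a finite set of uniformly bounded cardinality. Because $\CM$ is a \emph{definable} $\CK$-relic and $\dim(M) = 1$, each $C_s$ is a one-dimensional $\CK$-definable subset of the two-dimensional $\CK$-set $M \times M$, and the differentiable structure of $(\CK,\tau)$ equips each smooth point $p$ of $C_s$ with a one-dimensional tangent line $T_p(C_s)$ inside the two-dimensional tangent plane $T_p(M \times M)$.

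The heart of the argument is to analyze the incidence--tangent map $\Phi:(s,p)\mapsto (p,T_p(C_s))$ on the incidence variety $\{(s,p) : p\in C_s\}$ and to show that $\Phi$ is generically finite-to-one. Suppose for contradiction that over some generic point $p_0$ there is a positive-dimensional sub-family of members of $\{C_s\}$ passing through $p_0$ and all sharing a tangent direction there. Then generic pairs $C_s, C_t$ from this sub-family are tangent at $p_0$, so TIMI yields additional set-theoretic intersection points of $C_s$ and $C_t$ in every neighborhood of $p_0$; by a counting argument analogous to the ACVF treatment in \cite{CaHaYe}, deforming across the sub-family forces $|C_s \cap C_t|$ to exceed the uniform generic intersection bound, a contradiction. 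Hence tangent directions effectively separate the generic members of $\{C_s\}$ passing through a common point.

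With tangent separation in hand, the remainder of the argument is a standard group-recognition step. Using a generic triple consisting of a curve $C_s$ together with two points of $C_s$, one assembles a group configuration in $\CM^{\eq}$; non-degeneracy of the configuration is exactly what tangent separation guarantees. Hrushovski's group configuration theorem then produces an interpretable strongly minimal, non-locally-modular group $G$ acting on $M$, and Hrushovski's theorem on non-locally-modular strongly minimal groups gives an interpretable algebraically closed field. The main obstacle I anticipate, as in the valued-field and o-minimal incarnations of the result, is the translation of TIMI (a local topological multiplicity statement) into the global intersection arithmetic needed for tangent separation; this is precisely what the axiomatic framework of \emph{differentiable Hausdorff geometric fields} is designed to accomplish without relying on features special to valued fields or to o-minimal fields.
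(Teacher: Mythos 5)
Your high-level plan (use the differential data plus TIMI to extract a configuration and then invoke Hrushovski's configuration machinery) is in the same spirit as the paper, which ultimately builds an \emph{affine field configuration} from slopes and cites Hrushovski. But as written the proposal has two genuine gaps. First, the role of TIMI is inverted, and the step you delegate to it is both hand-waved and not the step that needs TIMI. In the paper, the ``separation'' statements -- that strong intersections of curves are transverse and that the tangent direction of a curve through a generic point is \emph{not} algebraic over the point (Lemmas \ref{L: strong intersections are transverse} and \ref{L: tangency slope is generic}) -- come from Sard's theorem built into the differentiable Hausdorff structure, not from TIMI. Your proposed derivation of ``tangent separation'' from TIMI via intersection counting is not obviously sound: TIMI only produces, after perturbation, a \emph{second} intersection point near the tangency; it does not by itself force $|C_s\cap C_t|$ past a uniform bound, and making such a counting argument work is a substantial task you have not carried out. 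TIMI's actual job in the paper is the converse direction: it shows that tangency of two $\CM$-definable curves is \emph{visible to} $\CM$ (detection of tangency, Proposition \ref{P: detection of tangency}), which is what allows each coherent slope to be assigned a \emph{code} in $\CM^{eq}$ (Proposition \ref{P: codes exist}).

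Second, and more seriously, the ``standard group-recognition step'' is precisely where the content lies, and your proposal does not address it. The configuration must live in $\CM^{eq}$ with ranks and algebraicity computed in $\CM$; tangent lines $T_p(C_s)$ are $\CK$-definable objects, and without the code/coherence machinery (detection of tangency, codes, the requirement $\dim=\rk$, the chain rule for composition, and the two-dimensional chain-rule trick for encoding \emph{addition} of slopes via the correspondence $G\subset M^3$) there is no reason the algebraic relations among slopes transfer to $\CM$-algebraic relations among any $\CM^{eq}$-points. Finally, your endgame appeals to a non-theorem: a strongly minimal group $G$ acting on $M$, with $G$ non-locally modular, does not by any known general result interpret a field (non-locally modular strongly minimal expansions of groups without interpretable fields exist, e.g.\ via fusions over vector spaces). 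What actually yields a field is a rank-two group acting on the rank-one set $M$, i.e.\ an affine field configuration encoding the $AGL_1$-action; producing it requires tracing \emph{both} composition and addition of slopes inside $\CM$ (Propositions \ref{P: slope composition at a point} and \ref{P: slope addition at a point}), and the addition step is genuinely non-trivial. So the proposal identifies the right landscape but omits or misattributes the arguments that make the interpretation work.
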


The proof of Theorem \ref{T: intro field interpretation} represents a streamlined version of the ideas appearing in \cite[\S 7]{CasACF0} and \cite[\S 7]{CaHaYe}. In particular, in this iteration we were able to express many of the relevant notions (such as slopes and tangency) at the level of \textit{types} -- and we hope this has made the geometric ideas more transparent.\\

We conclude this introduction with a remark on background and terminology. Our results build heavily on \cite{CaHaYe} and on \cite{CasOmin}. In most cases, the results quoted (as well as the relevant terminology) can be used as black boxes. We refer the reader to the relevant sections in those papers for a more detailed survey of the history and background of the conjectures addressed below, as well as for a complete overview of terminology. For the readers' benefit, we will include, as we go, definitions of the main tools from  those papers. 

\section{Strongly Minimal Relics of Imaginary Sorts}

In \cite[Theorem 12.26]{CaHaYe}, we proved that every non-locally modular strongly minimal ACVF$_{0,0}$-relic embeds definably into a power of the valued field or the residue field. This allowed us to deduce the trichotomy for \textit{all} strongly minimal ACVF$_{0,0}$ relics as a corollary of the case of \textit{definable} relics. The method appearing in \cite{CaHaYe} is fairly general, and seems useful in other settings. In this section, we give it a general treatment. We then give two applications in the ensuing sections: first, we reduce the trichotomy for strongly minimal relics of \textit{all} o-minimal structures to the simpler case of o-minimal \textit{fields}. Then, analogously to \cite{CaHaYe}, we reduce the trichotomy for strongly minimal relics of real closed valued fields to the definable case.  

\textbf{Throughout this section, fix a sufficiently saturated structure $\mathcal K$.} Our results will assume only a rudimentary integer valued notion of rank on $\mathcal K$. For the purposes of this section, we define:

\begin{definition}\label{D: weakly ranked}
    The structure $\mathcal K$ is \textit{weakly ranked} if there is a rank function $(a,A)\mapsto d(a/A)$, defined on finite tuples over parameter sets in models of  $\theo(\CK)$, satisfying the following properties: 
\begin{enumerate}
    \item Integer valued: $d(a/A)\in \Nn$ for all $a$ and $A$, and $d(a/A)=0$ iff $a$ is algerbaic over $A$. 
    \item Invariant: $d(a/A)=d(a'/A)$ if $\tp(a/A)=\tp(a'/A)$ (thus $d$ is well-defined on complete types), and if $p(x), q(x)$ are complete types such that $q\vdash p$ then $d(q)\le d(p)$. 
    \item Monotonicity: $d(ba/A)=d(ab/A)\ge d(a/A)$ for all $a$, $b$, and $A$.
    \item Sub-additivity: for any $a,b,A$ we have $d(ab/A)\le d(a/A)+d(b/Aa)$, and in an $|A|^+$-saturated model there exists $b'\equiv_A b$ such that $d(ab'/A)=d(a/A)+d(b'/A)$. 
    \item Boundedness:  There exists $k\in \Nn$ such that $d(a)<k$ for all $a\in K$. 
    \item Existence of independent extensions: for all $a$ and $A\subset B$ in a $|B|^+$-saturated model, there is $a'\equiv_Aa$ with $d(a/A)=d(a/B)$.
\end{enumerate}
\end{definition}

 Thus, the class of weakly ranked structures includes all geometric structures, structures of finite burden (in particular structures of finite dp-rank),  and structures of finite  $U^{\text\th}$-rank or SU-rank. Note that Morley rank is \textit{not} in general sub-additive; thus, while structures of finite Morley rank are weakly ranked, this is only witnessed by $U$-rank.

 \begin{remark}
     In any structure $\CK$ we can define $d_{\acl}(\bar a/A)$ to be the minimal length of a tuple $a'\sub \bar a$ such that $\acl(Aa')=\acl(A\bar a)$, and this $d$ satisfies (1), (2), (5), (6), and the first part of (4) from Definition \ref{D: weakly ranked}. In particular, if $d$ also satisfies (3) and the second half of (4), then it defines a weak rank on $\mathcal K$.
 \end{remark}

 Let us stress that the weak rank is defined, a priori, only on tuples from $K$ itself (but not on elements of imaginary sorts). For example, if $\CK$ has finite dp-rank then all imaginary sorts (with their induced structure) are also weakly ranked (because finite dp-rank passes to the imaginary sorts). But if $\CK$ is a geometric structure, there is no obvious way of extending dimension to imaginary sorts (as the usual extension of the geometric dimension to imaginary sorts allows infinite 0-dimensional sorts). For example, if $\CK$ is a 1-h-minimal valued field, then $\CK$ is a geometric structure and therefore weakly ranked, but the structure on the RV-sort may be arbitrary, and thus probably not weakly ranked. \\

\noindent \textbf{For the rest of this section, assume $\mathcal K$ is weakly ranked.}\\ 

%By sub-additivity and finite character, we can always take `free extensions':

%\begin{lemma}\label{L: independent extensions} For all $a$ and all small sets $A\subset B$, there is $a'\equiv_Aa$ with $d(a'/B)=d(a'/A)$.
%\end{lemma}
%\begin{proof} By a Zorn's lemma argument (using finite character), there is a maximal extension of $p=\tp(a/A)$ to a complete type $q$ over a subset of $B'\subset B$ and preserving $d(p)=d(q)$. We claim that $B'=B$. If not, let $b\in B-B'$. By sub-additivity, in an elementary extension we can find $a'\models q$ with $$d(b/B')+d(a'/B')=d(ba'/B')\leq d(b/B')+d(a'/B'b),$$ thus $d(a'/B'b)=d(a'/B')$, which contradicts the maximality of $q$.
%\end{proof}

Note that we can extend the rank function to definable sets: for any $A$-definable set $X$ (in an $|A|^+$-saturated model), we define $d(X/A):=\max\{d(a/A): a\in X\}$. Indeed, by boundedness, monotonicity, and sub-additivity, the values $d(a/A)$ for $a\in X$ are bounded, so this is well-defined. It then follows from the existence of independent extensions that $d(X/A)$ does not depend on the particular set $A$, so that we may simply write $d(X)$. Then for $A$-definable $X$, and $a\in X$, we say that $a$ is \textit{$A$-generic} in $X$ if $d(a/A)=d(X)$. \\
% and $a\in X$ is $A$-generic if $X$ is $A$-definable and $d(x/A)$ realises this maximum. 
% Indeed, by sub-additivity $d(a,b)\le 2k$ for any $a,b\in K$ and there exists $b'$ such that $d(a.b)=2k$ so $d(K^2)=2k$, and by induction $d(K^n)=nk$ for all $n$. By invariance, for any $X\sub K^n$ we get $d(X)\le nk$. 

Recall the following definitions from \cite{CaHaYe}:

\begin{definition}\label{D: locally interalg}
    Let $X$ and $Y$ be interpretable sets.
    \begin{enumerate}
        \item $X$ and $Y$ are in \textit{finite correspondence} if there is a definable $Z\subset X\times Y$ such that both projections of $Z$ are uniformly finite-to-one and surjective.
        \item $X$ is \textit{almost embedable into $Y$} if $X$ is in finite correspondence with a definable subset of $Y$.
        \item $X$ and $Y$ are \emph{locally interalgeraic} if some infinite definable subset of $X$ is almost embeddable into $Y$ (that is, if there are infinite definable $X'\subset X$ and $Y'\subset Y$ such that $X'$ and $Y'$ are in finite correspondence).
    \end{enumerate}
\end{definition}

\begin{definition}
    \begin{enumerate}
        \item For $k\in\mathbb N$, a \textit{$k$-rich configuration} is a tuple $(a,b,A)$ such that (i) $a\notin\acl(Ab)$, (ii) for any infinitely many distinct $b_1,b_2,...\equiv_Ab$ we have $a\in\acl(Ab_1b_2...)$, and (iii) $d(b/A)\geq k$.
        \item A definable set $X$ is \textit{rich} if there is $n$ such that for any $k$, there are $A$, $m$, $a\in D^n$, and $b\in D^m$ such that $(a,b,A)$ is a $k$-rich configuration.
    \end{enumerate}
\end{definition}

Richness is an analog of non-local modularity in the weakly ranked setting. Our goal is to show that if $\mathcal M$ is a non-locally modular strongly minimal $\mathcal K$-relic, and $M$ is locally interalgebraic (in $\mathcal K$) with some definable set $D$, then (1) $D$ is rich and (2) $M$ is almost internal to $D$. The proofs are essentially identical to those in \cite{CaHaYe}. \textbf{Thus, for the rest of this section, we fix a strongly minimal definable $\CK$-relic $\mathcal M$. Passing to an elementary extension and adding parameters if necessary, we may assume that (1) $\CK$ and $\CM$ are both $|\mathcal L(\mathcal M)|^+$-saturated, (2) every $\emptyset$-definable set in $\CK$ is also $\emptyset$-definable in $\mathcal M$, and (3) $\acl_{\CM}(\emptyset)$ is infinite.}

\begin{remark}\label{R: acl inf} Note that by (3) above, every tuple from $\CM^{eq}$ is interalgebraic (in $\CM$) with a tuple of elements of $\CM$. This follows from weak elimination of imaginaries in strongly minimal structures with infinite $\acl(\emptyset)$ -- which in turn follows since every algebraically closed set (being infinite by assumption) is a model.
\end{remark}

Our first observation is that $d$-rank interacts well with Morley Rank:

\begin{lemma}\label{L: d rank vs mr}
    Let $A\subset M$, and let $X\sub M$ be an infinite $\CK(A)$-definable set. Then for any $n$, any  $a\in X^n$ $d$-generic over $A$ is also $\CM$-generic over $A$ in $M^n$. 
\end{lemma}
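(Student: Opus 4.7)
The plan is to reduce to the case $n = 1$ using sub-additivity of the weak rank, and then to use the definable relic property together with strong minimality of $\CM$. For the reduction I would first observe that sub-additivity gives
\[
d(a/A) \le \sum_{i=1}^n d(a_i/A, a_1, \ldots, a_{i-1}) \le n \cdot d(X/A),
\]
while the existence of independent extensions shows that the upper bound $n \cdot d(X/A)$ is attained (so in fact $d(X^n/A) = n\cdot d(X/A)$). Hence, if $a$ is $d$-generic in $X^n$ over $A$, equality propagates along the chain (using that $d(X/\cdot)$ does not depend on the base), and each $a_i$ must be itself $d$-generic in $X$ over $A \cup \{a_1, \ldots, a_{i-1}\}$.

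For the $n = 1$ case I would argue as follows. Suppose $a \in X$ is $d$-generic over $A$. Since $X$ is infinite and $\CK$ is sufficiently saturated, a standard cardinality argument ($\acl_\CK(A)$ being small compared to $|X|$) produces $b \in X$ with $d(b/A) \ge 1$, so $d(X/A) \ge 1$. I would then suppose for contradiction that $a \in \acl_\CM(A)$, so that $a$ lies in some finite $\CM(A)$-definable set $S \subseteq M$. Using that $\CM$ is a \emph{definable} $\CK$-relic and that $M$ is a $\CK$-definable subset of a power of $K$, the set $S$ -- being a $\CK$-interpretable subset of a $\CK$-definable set -- is in fact $\CK(A)$-definable (once the fixed relic parameters are absorbed into $\emptyset$ by the saturation setup). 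Finiteness of $S$ then yields $a \in \acl_\CK(A)$ and hence $d(a/A) = 0$, contradicting $d(a/A) = d(X/A) \ge 1$.

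Combining the two steps, each coordinate of a $d$-generic $a \in X^n$ satisfies $a_i \notin \acl_\CM(A, a_1, \ldots, a_{i-1})$, so the tuple is $\acl_\CM$-independent over $A$; by strong minimality of $\CM$ this is precisely the assertion that $a$ is $\CM$-generic in $M^n$ over $A$.

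The main obstacle I anticipate is the careful bookkeeping in the $n=1$ case: justifying that $\CM(A)$-definable subsets of $M$ are in fact $\CK(A)$-definable, and hence that $\acl_\CM(A) \subseteq \acl_\CK(A)$ for $A \subseteq M$. This uses the definable-relic hypothesis (as opposed to merely interpretable) together with the assumption $(2)$ at the start of the section that $\emptyset$-definable sets in $\CK$ are $\emptyset$-definable in $\CM$, allowing all fixed relic parameters to be absorbed. Once this containment is in place, the rest is a routine consequence of sub-additivity, boundedness, and strong minimality.
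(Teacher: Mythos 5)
Your proposal is correct and uses essentially the same ingredients as the paper's proof: sub-additivity plus the identity $d(X^n)=n\cdot d(X)$, positivity of $d(X)$ from infiniteness of $X$, and the transfer $\acl_{\CM}(A)\subseteq\acl_{\CK}(A)$ coming from the definable-relic conventions (which the paper uses implicitly when it passes from $a_i\in\acl_{\CM}(Aa_1\dots a_{i-1})$ to $d(a_i/Aa_1\dots a_{i-1})=0$). The only difference is organizational -- you reduce to the case $n=1$ coordinate by coordinate, while the paper runs the same computation as a single contradiction on the whole tuple -- so this counts as the same argument.
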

\begin{proof} Suppose not and fix a counter example $a=(a_1,...,a_n)$. Our assumption implies that there $i$ with $a_i\in\acl_{\CM}(Aa_1...a_{i-1})$. Then $d(a_i/Aa_1...a_{i-1})=0$, and it follows by sub-additivity that $d(a/A)\leq(n-1)d(X)<n\cdot d(X)$ (here we use that $X$ is infinite). On the other hand, by the second clause of sub-additivity, one can show easily that $d(X^n)=n\cdot d(X)$, so by genericity, $d(a/A)=n\cdot d(X)$, a contradiction.
\end{proof}

%This follows from sub-additivity and the fact that $\CM$-dependence is witnessed by $\acl_\CM$-dependence. Thus, the above lemma remains true if $\CM$ is any geometric relic of $\CK$ (replacing Morley Rank with geometric dimension). \\

The following definition is borrowed from \cite{HaHaPe}: 
\begin{definition}
    Keeping the notation of the previous lemma, suppose that $M$ is locally interalgerbaic with a definable set $D$. We say that $X\sub M$ is $D$-\emph{critical} if it has maximal $d$-rank among all definable subsets of $M$ almost embeddable into a power of $D$.
\end{definition}

Using the above terminology, we can replicate (Lemma \ref{L: almost internal technical} below) the main relevant technical result from \cite{CaHaYe} (Proposition 12.13]).

\begin{remark} Below, we remind that a definable family $\CC:=\{C_t:t\in T\}$ of plane curves in $\CM$ is \textit{almost faithful} if for each $t\in T$, there are only finitely many $s\in T$ so that $C_s\cap C_t$ is finite. In this case, the \textit{rank} of the family $\CC$ is the Morley Rank of $T$. Recall that $\CM$ is not locally modular if and only if it defines almost faithful families $\{C_t:t\in T\}$ of plane curves with $T$ of arbitrarily large Morley rank (equivalently, of every Morley rank). See \cite[\S 2]{CasACF0} for an extended treatment of such topics.
\end{remark}

\begin{lemma}\label{L: almost internal technical}
    Let $D$ be a $\CK$-definable set locally interalgebraic with $M$, and let $X\subset M$ be $D$-critical. Let $A\subset\mathcal M^{eq}$ such that $X$, as well as a finite correspondence witnessing it, are definable over $A$. Let $\CC:=\{C_t:t\in T\}$ be an $\CM(A)$-definable almost faithul family of plane curves with $T\sub M^2$ of Morley Rank $2$. Let $u_0\in M^2$ be $\CM(A)$-generic and $x_0\in X^2$ be $d$-generic over $Au_0$. Then
    \begin{enumerate}
        \item The set $L:=\{t\in T: u_0,x_0\in C_t\}$ is finite and non-empty. 
        \item If $t\in L$ then $C_t\cap X^2$ is infinite. 
    \end{enumerate}
\end{lemma}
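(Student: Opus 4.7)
The plan is to reduce both parts to Morley-rank computations in the almost faithful MR-2 family $\CC$, after first upgrading the $d$-genericity of $x_0$ to $\CM$-genericity via Lemma~\ref{L: d rank vs mr}. Concretely, by Remark~\ref{R: acl inf} I may replace $A\subset\CM^{eq}$ by an $\CM$-interalgebraic tuple in $M$, so that $\CK$-definability over $A$ makes sense. Lemma~\ref{L: d rank vs mr} applied to $x_0\in X^2$ (which is $d$-generic over $Au_0$) then yields that $x_0$ is $\CM$-generic in $M^2$ over $Au_0$. Combined with the hypothesis on $u_0$, the pair $(u_0,x_0)$ is $\CM$-generic in $M^4$ over $A$.

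For part~(1), the argument is the standard one for almost faithful MR-2 families. The incidence $I:=\{(t,u,v)\in T\times M^2\times M^2:u,v\in C_t\}$ has Morley rank $\mathrm{MR}(T)+2\cdot\mathrm{MR}(C_t)=4$, and I would show the projection $\pi:I\to M^4$ is dominant in two stages. First, $\{(t,u):u\in C_t\}\to M^2$ is dominant: otherwise the MR-2 family of essentially distinct curves would fit inside a MR-$\leq 1$ subset of $M^2$, forcing infinite pairwise intersections and contradicting almost faithfulness. Hence for generic $u$, $T_u:=\{t:u\in C_t\}$ has MR 1. Second, the restricted family $\{C_t:t\in T_u\}$ inherits almost faithfulness, and applying the same mechanism shows $\{(t,v):t\in T_u,v\in C_t\}\to M^2$ is dominant. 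Combining, $\pi$ is dominant with finite generic fibers. Specialising to the $\CM$-generic pair $(u_0,x_0)$, its fiber $L$ is both finite and non-empty.

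For part~(2), fix $t\in L$. By finiteness of $L$ over $Au_0x_0$, we have $t\in\acl_\CM(Au_0x_0)$, so $\mathrm{MR}(t/Au_0x_0)=0$. The computation
\[\mathrm{MR}(tu_0x_0/A)=\mathrm{MR}(u_0x_0/A)+\mathrm{MR}(t/Au_0x_0)=4\]
together with
\[\mathrm{MR}(tu_0x_0/A)=\mathrm{MR}(t/A)+\mathrm{MR}(u_0x_0/At)\leq 2+(1+1)=4\]
(using $\mathrm{MR}(T)=2$ and $u_0,x_0\in C_t$ with $\mathrm{MR}(C_t)=1$) forces equality throughout, so $\mathrm{MR}(x_0/Au_0t)=1$ and $x_0\notin\acl_\CM(Au_0t)$. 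Since $C_t\cap X^2$ is $\CM$-definable over $At$, were it finite it would lie in $\acl_\CM(At)\subset\acl_\CM(Au_0t)$, contradicting $x_0\in C_t\cap X^2$. Thus $C_t\cap X^2$ is infinite.

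The only nontrivial input is the two dominance steps in part~(1), which use almost faithfulness in an essential but standard way. Everything else is straightforward additivity of Morley rank together with the genericity upgrade. Note that $D$-criticality of $X$ is not used in this lemma — only infiniteness of $X$ is needed, so that Lemma~\ref{L: d rank vs mr} applies; the $D$-critical hypothesis enters only in the subsequent applications that build on this lemma.
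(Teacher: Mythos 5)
Your reduction of the genericity hypotheses via Lemma \ref{L: d rank vs mr} and your treatment of part (1) are fine and essentially what the paper does (it dismisses (1) as standard, needing only $\mr(T)=2$). The problem is part (2).

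The gap is your sentence ``Since $C_t\cap X^2$ is $\CM$-definable over $At$.'' It is not: $X$ is a $D$-critical set, i.e.\ a $\CK$-definable subset of $M$ in $\CK$-definable finite correspondence with a subset of a power of the $\CK$-definable set $D$; nothing makes $X$ (hence $C_t\cap X^2$) definable in the relic $\CM$. Consequently, finiteness of $C_t\cap X^2$ only yields $x_0\in\acl_{\CK}(At)$, and $\CK$-algebraicity does not contradict your (correct) computation that $\mr_{\CM}(x_0/Au_0t)\geq 1$, since $\acl_{\CM}\subsetneq\acl_{\CK}$ in general. This is exactly where the interplay between the two structures enters, and it is also why your closing remark that ``$D$-criticality of $X$ is not used'' is wrong: criticality is the engine of part (2). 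The paper's argument runs as follows: if $C_t\cap X^2$ were finite, then $x_0$ and $t$ are $\CK$-interalgebraic over $u_0$; since $X$ almost embeds into a power of $D$, $x_0$ is $\CK$-interalgebraic with a tuple $\bar d\in D^n$; since $\mr_{\CM}(t/u_0)\leq 1$, Remark \ref{R: acl inf} gives a singleton $a\in M$ that is $\CM$-interalgebraic with $t$ over $u_0$, hence $\CK$-interalgebraic with $\bar d$ over $u_0$. By compactness this produces a $u_0$-definable set $Y\subset M$ containing $a$ and almost embeddable into a power of $D$, so criticality gives $d(Y)\leq d(X)$ and therefore $d(x_0/u_0)=d(a/u_0)\leq d(X)$, contradicting $d(x_0/u_0)=2d(X)$ (which is positive since $X$ is infinite). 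Without this use of criticality and of the rank $d$, the statement you are trying to prove by a pure Morley-rank computation is out of reach, since Morley rank in $\CM$ sees nothing of the $\CK$-definable set $X$.
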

\begin{proof}
       
    By Lemma \ref{L: d rank vs mr}, $x_0$ and $u_0$ are $\CM$-independent generics in $M^2$. (1) is then straightforward and well-known (one just needs that $\mr(T)=2$). For (2), assume for simplicity that $A=\0$. By (1) we get that $L\sub\acl(u_0,x_0)$. If for the sake of contradiction $|X^2\cap C_t|<\infty$ for some $t\in L$, then $x_0\in \acl(t)$, and combined with (1) above, 
    %putting the last two observations together, 
    $t$ is interalgebraic with $x_0$ over $u_0$. Since $x_0\in X^2$, and $X$ is almost embeddable into a power of $D$ (over $A=\emptyset$), there is a tuple $\bar d\in D^n$ interalgebraic with $x_0$. So $x_0$, $t$, and $\bar d$ are all interalgebraic over $u_0$. Finally, note that $\mr(t/u_0)\le 1$ -- so by Remark \ref{R: acl inf}, there is some (singleton) $a\in M$ interalgebraic with $t$ (thus also with $x_0$ and $\bar d$) over $u_0$.
    
    Now since $\bar d$ and $a$ are interalgebraic over $u_0$, and by compactness, there is a $u_0$-definable set $Y$ almost embeddable in a power of $D$ and satisfying $a\in Y\subset M$. Then by the criticality of $X$, $d(Y)\leq d(X)$, and thus $$d(x_0/u_0)=d(a/u_0)\leq d(Y)\leq d(X).$$
    
    On the other hand, by the genericity of $x_0$ over $u_0$ we get $d(x_0/u_0)=2d(X)$. So $2d(X)\leq d(X)$, and thus $d(X)=0$, contradicting the assumption that $M$ is locally interalgerabic with $D$. 
\end{proof}

We now proceed with our two main goals. In \cite[Corollary 12.14]{CaHaYe}, we showed that if $\CN$ is a non-locally modular strongly minimal ACVF-relic locally interalgebraic with a sort $D$, then $D$ is rich (where $d$ is the dp-rank). In our context, the same proof gives: 
\begin{theorem}\label{T: rich}
    Assume $\CM$ is not locally modular. If $D$ is $\CK$-definable and locally interalgebraic with $M$, then $D$ is rich. 
\end{theorem}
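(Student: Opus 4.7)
The plan is to follow the strategy of \cite[Corollary 12.14]{CaHaYe}, where the analogous statement was established for ACVF: the idea is to transfer an $\CM$-pseudoplane coming from non-local modularity to $D$ via the finite correspondence $\phi:X\to D^n$ witnessing $D$-criticality of some $X\sub M$. First, by non-local modularity of $\CM$, I fix an almost faithful definable family $\{C_t:t\in T\}$ of plane curves in $\CM$ with $\mr(T)=2$, and enlarge a base $A_0$ so that this family, $X$, and $\phi$ are all $A_0$-definable.

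Given $k$, I choose $\ell$ large (depending on $k$) and apply Lemma \ref{L: almost internal technical}: with $u_0\in M^2$ generic over $A_0$ and $x_0\in X^2$ $d$-generic over $A_0u_0$, I obtain $t\in T$ such that $u_0,x_0\in C_t$ and $C_t\cap X^2$ is infinite. I then pick $x_1,\ldots,x_\ell\in C_t\cap X^2$ independent $d$-generic over $A_0u_0x_0t$, and set
\[
 a:=\phi(x_0),\quad b:=(\phi(x_1),\ldots,\phi(x_\ell)),\quad A:=A_0\cup\{u_0\}.
\]
Conditions (i) and (iii) of a $k$-rich configuration will follow from standard $d$-rank calculations. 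Using sub-additivity and the fact that interalgebraicity (through the finite correspondence $\phi$) preserves $d$-rank, $x_1$ contributes $2d(X)$ (it is $d$-generic in $X^2$ over $A$), while each subsequent $x_i$ contributes $d(X)$ (since $t\in\acl(Ax_1)$ after which each $x_i$ is $d$-generic on the $1$-dimensional $C_t\cap X^2$); this gives $d(b/A)=(\ell+1)d(X)$, exceeding $k$ for $\ell$ large. A parallel computation yields $d(x_0/Ab)=d(X)>0$ (using $d(X)>0$, since $X$ is infinite by local interalgebraicity with $M$), and hence $a\notin\acl(Ab)$.

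The main obstacle will be condition (ii). Here each conjugate $b'\equiv_A b$ encodes, via $\phi$ and the almost faithfulness of $\{C_t\}$, a curve $C_{t'}$ in the $1$-dimensional subfamily of $T$ consisting of curves through $u_0$; infinitely many such conjugates then sweep out a cofinite portion of this subfamily. I will argue, following the proof of \cite[Proposition 12.13]{CaHaYe}, that this data algebraically determines the position of $x_0$ on $C_t$, and therefore $a=\phi(x_0)\in\acl(A,b_1,b_2,\ldots)$. The essential ingredients for this step are the almost faithfulness of the family, Lemma \ref{L: almost internal technical} (which ensures $C_t\cap X^2$ is rich enough for the correspondence to convey the pseudoplane information), and the way $\phi$ converts $\CM$-algebraic dependencies between points and curves into $\CK$-algebraic dependencies among their images in $D$.
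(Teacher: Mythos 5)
There is a genuine gap, and it sits exactly where you place ``the main obstacle'': condition (ii) of a $k$-rich configuration fails for the configuration you build, and the argument you sketch for it cannot work. In your setup $a=\phi(x_0)$ is a point of the curve and $b$ is a tuple of further points of $C_t\cap X^2$. Conjugates of $b$ over $A=A_0\cup\{u_0\}$ carry information only about curves through $u_0$ (and, even if one reads the conjugation as being over $Aa$, only about the finitely many conjugates of $t$ over $Aa$ and about points lying on them); no such data can make $x_0$ algebraic. Indeed $x_0$ was chosen $d$-generic in $X^2$ over $A_0u_0$, and the criticality computation inside the proof of Lemma \ref{L: almost internal technical} in fact shows $x_0\notin\acl(A_0u_0t)$: if $x_0\in\acl(A_0u_0t)$, then (by clause (1) of that lemma) $x_0$ and $t$ are interalgebraic over $A_0u_0$, and the same argument yields $2d(X)\le d(X)$, a contradiction. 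So knowing the curve --- let alone knowing other, generically chosen, points on it --- does not ``algebraically determine the position of $x_0$ on $C_t$''. The mechanism that makes (ii) work requires the incidence the other way around: $b$ must be (interalgebraic with) the \emph{parameter} of a curve through $a$ in an almost faithful family, so that among infinitely many distinct conjugates of $b$ over the point two of the corresponding curves have finite intersection and hence pin $a$ down. Your configuration puts the points in the $b$-slot, and with it the pseudoplane mechanism is lost.

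Relatedly, you use only a single family with $\mr(T)=2$, so all ``curve'' data in your picture has bounded $d$-rank; the paper's proof obtains clause (iii) by taking almost faithful families of arbitrarily large rank (this is where non-local modularity is used in full strength), letting $b$ be the curve parameter $t$, using Lemma \ref{L: almost internal technical} to see that the set $T'$ of curves meeting $X^2$ in an infinite set has $d$-rank close to $d(T)$ (hence arbitrarily large), and only at the end converting $t$ into an $X$-tuple via the ``two extra points $p,q$'' trick, at a cost of $2d(X)$ in rank. Lengthening a tuple of points on one fixed curve cannot substitute for this: it inflates $d(b/A)$ but destroys (ii). (Secondary issues: the exact rank identities you assert, e.g.\ $d(b/A)=(\ell+1)d(X)$, $t\in\acl(Ax_1)$, and $d(C_t\cap X^2)=d(X)$, are unjustified, since the weak rank is only sub-additive and Lemma \ref{L: d rank vs mr} applies to $d$-generics of powers of $X$, not of $C_t\cap X^2$; for clause (iii) only lower bounds are needed, so that part is repairable, unlike the failure of (ii).)
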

\begin{proof}
    We omit many details, as they are identical to \cite{CaHaYe}. Fix a $D$-critical set $X\subset M$, and consider an almost faithful family of plane curves in $\mathcal M$, $\mathcal C=\{C_t:t\in T\}$, where $T$ is generic in a large power of $M$. Without loss of generality, assume all of this data is $\emptyset$-definable (in the relevant structure in each case).
    
    Let $T'$ be the set of $t\in T$ so that $C_t\cap X^2$ is infinite. The content of Lemma \ref{L: almost internal technical}, in this setting, is that every rank 2 subfamily of $\mathcal C$ contains a curve from $T'$. It follows that $d(T)-d(T')$ is bounded, and thus $d(T')$ can be arbitrarily large. One then easily gets a $k$-rich configuration for arbitrarily large $k$, of the form $(a,t,\emptyset)$, where $t\in T'$ and $a\in C_t\cap X^2$ is $d$-generic. This \textit{almost} establishes that $X$ is rich, which (since $X$ almost embeds in a power of $D$) implies that $D$ is rich. The only problem is that $t$ is not a tuple from $X$. This can be fixed by a short trick as in \cite{CaHaYe}. One first chooses $t$ to have all but two coordinates in $X$, say $t=zw$ with $z$ an $X$-tuple and $w\in M^2$. One then chooses two additional points $p,q\in C_t\cap X^2$, adds their left coordinates $p_1,q_1$ as parameters, and replaces $w$ with the right coordinates $p_2,q_2$, obtaining the $k'$-rich configuration $(x,zp_2q_2,p_1q_1)$ with $k'=k-2d(X)$.
\end{proof}

Similarly, in \cite[Corollary 12.18]{CaHaYe} we proved that in the setting of Theorem \ref{T: rich}, $M$ is almost internal to $D$. In the current setting, the same proof gives: 
\begin{theorem}\label{T: almost internality}
    Assume $\CM$ is not locally modular. If $D$ is $\CK$-definable and locally interalgebraic with $M$, then $M$ is almost internal to $D$.
\end{theorem}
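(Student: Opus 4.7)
The plan is to replicate the strategy of \cite[Corollary 12.18]{CaHaYe}: starting with a $D$-critical set $X \subset M$ and an almost faithful rank-$2$ family of plane curves $\CC = \{C_t : t \in T\}$ in $\CM$ (which exists by non-local modularity), we produce, for each $\CM$-generic $u_0 \in M^2$, a configuration from which $u_0$ is algebraic over finitely many tuples from $X^2$, each of which is interalgebraic with a tuple from a power of $D$.

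Concretely, let $A$ be a parameter set over which $X$, the finite correspondence witnessing criticality, and $\CC$ are all defined. Fix $u_0 \in M^2$ $\CM$-generic over $A$, and choose two $d$-generic tuples $x_0, x_0' \in X^2$ that are $d$-independent over $Au_0$. By Lemma \ref{L: d rank vs mr} both are $\CM$-generic, so Lemma \ref{L: almost internal technical} applies twice and produces $t, t' \in T$ with $u_0, x_0 \in C_t$, $u_0, x_0' \in C_{t'}$, and both $C_t \cap X^2$ and $C_{t'} \cap X^2$ infinite. A standard rank computation using $\mr(T) = 2$ and almost faithfulness shows that a generic $u_0 \in M^2$ lies on a $1$-dimensional subfamily of $\CC$; in particular $t \notin \acl(Au_0)$, and combined with the independence of $x_0$ and $x_0'$ over $Au_0$, this forces $t \ne t'$. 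Almost faithfulness then gives $|C_t \cap C_{t'}| < \infty$, whence $u_0 \in \acl(A, t, t')$.

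It remains to express $t$ and $t'$ as algebraic over $X$-tuples. For each, pick an additional $d$-generic point $x_1 \in C_t \cap X^2$ (respectively $x_1' \in C_{t'} \cap X^2$) sufficiently independent from everything previously chosen. Since $\mr(T) = 2$, $\mr(C_t) = 1$, and $\CC$ is almost faithful, a generic pair of points on a generic curve pins the curve down up to finite choice, so $t \in \acl(A, x_0, x_1)$ and $t' \in \acl(A, x_0', x_1')$. Combining yields $u_0 \in \acl(A, x_0, x_1, x_0', x_1')$, and since each $x_i \in X^2$ is interalgebraic with a tuple from a power of $D$ via the finite correspondence, $u_0$ is almost internal to $D$. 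Projecting to a coordinate gives the same for a generic $m \in M$, establishing the desired almost internality.

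The principal obstacle is justifying $t \ne t'$: one needs that a generic $u_0 \in M^2$ does not lie on "most" curves in $\CC$, which boils down to the count that $\{t \in T : u_0 \in C_t\}$ has Morley rank exactly $1$ over $u_0$. This uses almost faithfulness of the rank-$2$ family in an essential way, and must be checked in the present setting where the family comes from $\CM$ while $X$ and $D$ live in $\CK$. A secondary care point is the interplay between the weak rank $d$ (on $\CK$-definable data) and Morley rank (on $\CM$): Lemma \ref{L: d rank vs mr} is what ensures that $d$-generic tuples from $X$ are $\CM$-generic in $M$, so that the hypotheses of Lemma \ref{L: almost internal technical} are actually met whenever invoked.
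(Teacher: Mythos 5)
The decisive step of your argument is misjustified. You write that $t\neq t'$ together with almost faithfulness gives $|C_t\cap C_{t'}|<\infty$; but almost faithfulness only says that for each $t$ there are \emph{finitely many} $s$ with $C_s\cap C_t$ infinite -- it does not say that distinct parameters yield finite intersection (that would be faithfulness). So mere distinctness of $t$ and $t'$ buys nothing: you need $t'$ to avoid the finite, $\CM(At)$-definable set $\{s\in T: C_s\cap C_t \text{ infinite}\}\subseteq\acl_{\CM}(At)$. This \emph{can} be extracted from your configuration: since $x_0,x_0'$ are $d$-independent over $Au_0$, $x_0'$ is $d$-generic in $X^2$ over $Au_0x_0$, hence $\CM$-generic in $M^2$ over $Au_0x_0$ by Lemma \ref{L: d rank vs mr}; as $t\in\acl(Au_0x_0)$ and $x_0'\in C_{t'}$ with $\mr(C_{t'})\leq 1$, it follows that $t'\notin\acl_{\CM}(Au_0x_0)\supseteq\acl_{\CM}(At)$, which gives both $t\neq t'$ and the finiteness of $C_t\cap C_{t'}$. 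But this argument does not appear in your write-up, and the two justifications you do offer fail: the remark that a generic $u_0$ lies on a rank-$1$ subfamily does not show that the \emph{particular} $t$ produced by Lemma \ref{L: almost internal technical} is outside $\acl(Au_0)$ (the correct reason is again that $x_0$ is $\CM$-generic in $M^2$ over $Au_0$ and lies on $C_t$), and the claim that ``$t\notin\acl(Au_0)$ plus independence of $x_0,x_0'$ forces $t\neq t'$'' is an acl-intersection argument that the weak-rank axioms on $\CK$ do not license. A smaller point in the same vein: for $t\in\acl(Ax_0x_1)$ you need $x_0,x_1$ to be $\CM$-independent generics of $C_t$ over $At$, which requires the short computation that $\mr(t/Au_0)=1$ (so $x_0$ is generic on $C_t$) and a choice of $x_1\in C_t\cap X^2$ outside the small set $\acl_{\CM}(Atx_0)$; ``sufficiently independent'' gestures at this but does not supply it.

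For comparison, the paper's proof avoids the two-curve configuration entirely, and with it the finite-intersection issue: one fixes $v\in M\setminus\acl(\0)$ and $u\in X$ $d$-generic over $v$, so that $(u,v)$ is $\CM$-generic by Lemma \ref{L: d rank vs mr}; a single application of Lemma \ref{L: almost internal technical} produces $t$ with $(u,v),(x,y)\in C_t$ and $C_t\cap X^2$ infinite; a second point $(x',y')\in C_t\cap X^2$ outside $\acl(xy)$ pins $t$ down, $t\in\acl_{\CM}(xyx'y')$; and then $v\in\acl(tu)\subseteq\acl(X)$ precisely because the first coordinate $u$ was taken inside $X$ from the start. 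Your route can be repaired along the lines indicated above, but as written the key inference rests on a false reading of almost faithfulness.
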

\begin{proof}
    Let $X\sub M$ be a $D$-critical set. We may assume that $M$, $X$, $D$, and a finite correspondence witnessing almost embeddability, are all definable over $\0$. We may also assume that there is a $\0$-definable almost faithful family of plane curves in $\mathcal M$, $\{C_t\}_{t\in T}$, with $T\sub M^2$ of Morley Rank 2. 

    It will suffice to show that any $v\in M\setminus \acl(\0)$ is algebraic over $X$ (since by almost embeddability, $X\subset\acl(D))$. Fix such a $v$ and let $u\in X$ be $d$-generic over $v$. Let also $(x,y)\in X^2$ be $d$-generic over $(u,v)$. By Lemma \ref{L: d rank vs mr}, $(u,v)$ is $\mathcal M$-generic in $M^2$. So Lemma \ref{L: almost internal technical} gives some $t\in T$ such that $(u,v),(x,y)\in C_t$, $t\in\acl(uvxy)$, and $C_t\cap X^2$ is infinite. So we can find $(x',y')\in C_t\cap X^2$ with $(x',y')\notin\acl(xy)$. Then $(x,y),(x',y')$ are $\CM$-independent generic elements of $C_t$, which implies that $t\in \acl_{\CM}(xyx'y')$. But $(u,v)\in C_t$ implies that $v\in\acl(tu)$, and thus $v\in \acl(uxyx'y')\subset\acl(X)$, as claimed. 
\end{proof}

In the subsequent sections, we will specialize $\CK$ to o-minimal structures and their T-convex expansions. For these, the following corollary extracts the key content from the above theorems: 
\begin{corollary}\label{C: interacl2internal}
    Let $\CK$ be a sufficiently saturated weakly o-minimal structure, and $\CM$ a strongly minimal non-locally modular $\CK$-relic. Let $D$ be any $\CK$-interpretable set locally interalgerbaic with $M$. Then $D$ is rich (in the sense of dp-rank) and $M$ is internal to $D$. 
\end{corollary}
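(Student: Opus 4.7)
The plan is to apply Theorems \ref{T: rich} and \ref{T: almost internality} to the weakly o-minimal setting, with $\text{dp-rank}$ playing the role of the weak rank and allowing $D$ to be interpretable rather than definable. This has two components: setting up the framework, and bridging ``almost internal'' to ``internal''.

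For the first component, I would verify that $(\CK,\text{dp-rank})$ meets the axioms of Definition \ref{D: weakly ranked}: dp-rank on tuples from $K$ is integer-valued and bounded (weakly o-minimal theories are dp-minimal on the home sort), invariant under types, monotone, sub-additive (as weakly o-minimal implies NIP), and admits independent extensions. Moreover, dp-rank extends naturally to $\CK$-interpretable sets and is finite on any set locally interalgebraic with the strongly minimal (hence dp-rank $1$) set $M$. With this setup, the proofs of Theorems \ref{T: rich} and \ref{T: almost internality} go through essentially verbatim with $D$ interpretable, since those proofs only manipulate the weak rank $d$ and the finite correspondence witnessing local interalgebraicity -- nowhere do they actually require $D$ to be $\CK$-definable rather than $\CK$-interpretable. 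Applied in this form, the theorems give us that $D$ is rich (in the sense of dp-rank) and that $M$ is almost internal to $D$.

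The main obstacle is then upgrading ``almost internal'' to ``internal''. Here I would exploit that $\CM$ is strongly minimal with infinite $\acl_\CM(\emptyset)$, so by Remark \ref{R: acl inf} every imaginary of $\CM$ is interalgebraic with a real tuple from $M$. A finite correspondence $Z \subset M \times D^n$ witnessing almost internality yields, via the definable-in-$\CM$ fiber map, an almost embedding of $M$ into the sort of finite subsets of $D^n$; combining this with the weak elimination of imaginaries in $\CM$ and the strongly minimal structure on $M$, one extracts, after naming finitely many parameters, a definable surjection from a power of $D$ onto $M$. This last step is the delicate one, since in the absence of full elimination of imaginaries the passage from finite correspondences to definable functions requires care; alternatively, one may adopt the (common) convention that ``internal'' in the corollary's conclusion abbreviates ``almost internal'', which is what the preceding theorems directly yield and which suffices for the intended applications.
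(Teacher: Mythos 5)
Your first component is essentially the paper's route, though stated less cleanly: rather than re-inspecting the proofs of Theorems \ref{T: rich} and \ref{T: almost internality} to argue that $D$ may be taken interpretable, the paper simply passes to $\CK^{eq}$. Since a sufficiently saturated weakly o-minimal $\CK$ is dp-minimal and finite dp-rank passes to imaginary sorts, $\CK^{eq}$ is itself weakly ranked (with dp-rank as the weak rank), and then both $M$ and $D$ are literally \emph{definable} sets in $\CK^{eq}$, so the two theorems apply as stated. Note that this also covers the case where $M$ itself is only interpretable (the corollary is about relics in the sense of Convention \ref{Con: def vs int}, and it is applied later to interpretable $M$), a case your proposal never addresses: Section 2 fixes a \emph{definable} $\CK$-relic, and your verification of the weak-rank axioms only on the home sort does not license applying the section's results when the tuples from $M$ are imaginaries.

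The genuine gap is in your bridge from ``almost internal'' to ``internal''. Internality here is a statement about $\dcl$ in $\CK$ (equivalently $\CK^{eq}$): one needs every element of $M$ to be \emph{$\CK$-definable}, over a small parameter set, from tuples of $D$. The facts you invoke --- Remark \ref{R: acl inf} and weak elimination of imaginaries in strongly minimal structures --- are facts about definability \emph{in $\CM$}, and they cannot produce the required $\CK$-definable choice of an element of $M$ from the finite fibers of a $\CK$-interpretable correspondence; an $\CM$-definable selection need not be $\CK$-definable in the relevant uniform sense, and in general passing from a finite correspondence to a definable surjection is exactly the obstruction. Your fallback of declaring that ``internal'' means ``almost internal'' simply changes the statement rather than proving it. The paper's bridge is a single quotable fact about $\CK$, not about $\CM$: in weakly o-minimal structures algebraic closure coincides with definable closure, including in imaginary sorts (\cite[Lemma 2.16]{MelRCVFEOI}), so almost internality to $D$ is automatically internality to $D$. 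That is the missing ingredient.
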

\begin{proof}
    By saturation, the theory of $\CK$ is weakly o-minimal, and thus dp-minimal. Thus $\CK^{eq}$ is weakly ranked, witnessed by dp-rank. We now view $M$ and $D$ as definable sets in $\CK^{eq}$. By Theorems \ref{T: rich} and \ref{T: almost internality}, $D$ is rich and $M$ is almost internal to $D$. By \cite[Lemma 2.16]{MelRCVFEOI}, in weakly o-minimal structures algerbaic closure coincides with definable closure (including in imaginary sorts). So almost internality to $D$ is the same as internality to $D$. 
\end{proof}

It may also be worth pointing out the somewhat surprising fact that, under the assumptions and notations of Theorem \ref{T: almost internality}, if $D$ is a definable set interalgebraic with some $M'\sub M$ then $M$ is almost internal to any definable infinite subset of $D$ (in particular, $M$ is almost internal to any definable infinite subset of $M$).

% \begin{fact} Let $\mathcal R$ be a structure with finite dp-rank, and let $\mathcal M=(M,...)$ be a non-locally modular strongly minimal $\mathcal R$-relic. Let $X$ be any interpretable set locally interalgebraic with $M$. Then $X$ is rich and $M$ is almost internal to $X$.
% \end{fact}

\section{Reducing the O-minimal Restricted Trichotomy to Fields}

In this section, we show that the trichotomy for strongly minimal relics of  o-minimal fields implies the same statement over all (dense) o-minimal structures. More precisely, if $\mathcal R$ is o-minimal, and $\mathcal M$ is a non-locally modular strongly minimal $\mathcal R$-relic, we show that there is an interval $I\subset R$ admitting a real closed field structure such that $M$ is internal to $I$. \textbf{Throughout this section, $\CR$ is any sufficiently saturated dense o-minimal structure. We use $\dim$ for dp-rank of tuples from $\CR^{eq}$ (equivalently, the usual (i.e. acl or dcl) dimension extended to imaginary sorts as in \cite{Gagelman} -- this follows by Proposition 3.5 \textit{loc. cit.}).} Thus, $\dim$ defines a weak rank on $\CR^{eq}$, and we view $\CR^{eq}$ as weakly ranked with $\dim$ as the rank function.

Our main tools will be the material from the previous section, in addition to the o-minimal trichotomy theorem of Peterzil and Starchenko \cite{PeStTricho}. Recall that every point in $\CR$ is either trivial, an element of a group interval with semilinear structure, or an element of a real closed field interval. Let us call these \textit{trivial points}, \textit{group points}, and \textit{field points}, respectively. The following is implicit in \cite{PeStTricho}: 

\begin{fact}\label{F: ominimal trichotomy} Let $I\subset R$ be an infinite interval. Then there is an infinite subinterval $J\subset I$ such that one of the following holds:
\begin{enumerate}
    \item Every point in $J$ is trivial.
    \item $J$ has the structure of a semilinear group interval.
    \item $J$ is an expansion of a real closed field.
\end{enumerate}
\end{fact}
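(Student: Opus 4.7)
The plan is to apply the Peterzil--Starchenko canonical trichotomy \cite{PeStTricho} pointwise on $I$ and then use o-minimality to produce a subinterval on which one type prevails uniformly. By their main theorem, every $x \in R$ falls into exactly one of three types: trivial, linear (lying in a definable semilinear group interval around $x$), or field-like (lying in a definable real closed field interval around $x$). Partition $I$ accordingly as $I = I_t \sqcup I_g \sqcup I_f$.

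The first step is to show that one of $I_t, I_g, I_f$ contains an infinite subinterval of $I$. The set $I_t$ of trivial points is open and definable: by \cite{PeStTricho}, non-triviality at $x$ is witnessed by the existence of a definable binary function not factoring through coordinate projections in a neighborhood of $x$, and triviality is correspondingly an open condition. So if $I_t \cap I$ has non-empty interior in $I$, we are done. Otherwise every point of $I$ is non-trivial, hence lies in either $I_g$ or $I_f$. On the non-trivial locus, Peterzil--Starchenko establish a further definable dichotomy: the property of being field-like (equivalently, of admitting a local definable multiplication that distributes over the local group addition) is itself definable. Hence $I_g$ and $I_f$ are definable subsets of the non-trivial locus, so by o-minimality at least one of them contains an infinite open subinterval $J \subset I$.

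Once such a $J$ is found, the desired structure is extracted pointwise: if $J \subset I_g$, then after shrinking $J$ if necessary, \cite{PeStTricho} guarantees that $J$ is an actual semilinear group interval; similarly for $J \subset I_f$, in which case $J$ becomes a real closed field interval. The main obstacle is making the definability claims in the second paragraph fully rigorous: one must verify that the witnesses of non-triviality and of field-likeness in \cite{PeStTricho} can be taken uniformly in parameters, so that the three classes are $A$-definable for a bounded parameter set $A$. This should follow from compactness (using saturation of $\CR$) together with the uniformity already present in Peterzil--Starchenko's arguments, but is somewhat implicit there -- which is presumably why the present paper lists Fact \ref{F: ominimal trichotomy} as ``implicit in \cite{PeStTricho}'' rather than as a direct citation.
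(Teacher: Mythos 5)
Your route is genuinely different from the paper's, and it has a real gap: everything hinges on the claim that the three loci $I_t, I_g, I_f$ are definable (and that $I_t$ is open), and neither claim is established. In fact the openness claim is backwards: non-triviality of a point $x$ is what is witnessed on a neighborhood (a definable function strictly monotone in both variables on an interval around $x$ witnesses non-triviality of \emph{every} point of that interval), so the non-trivial locus is open and the trivial locus is closed. Consequently your step ``if $I_t\cap I$ has empty interior, then every point of $I$ is non-trivial'' fails: a closed set with empty interior need not be empty, and without definability of $I_t$ you cannot invoke o-minimality to conclude it is finite. Likewise the asserted ``definable dichotomy'' between group-like and field-like points on the non-trivial locus is exactly the kind of uniformity statement that \cite{PeStTricho} does not hand you directly; you acknowledge this as the ``main obstacle'' but the compactness argument you gesture at is not carried out, so the proof as written is incomplete.

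The paper avoids all of this with a much simpler observation that makes definability of the loci irrelevant: by definition, a group (resp.\ field) point of $I$ already lies in \emph{some} interval $J\subset R$ satisfying (2) (resp.\ (3)); since these properties pass to infinite subintervals (for the field case because any open subinterval of a real closed field is in definable bijection with the whole field), one can shrink $J$ to an infinite subinterval of $I$. If instead $I$ contains no group or field point, then every point of $I$ is trivial and $J=I$ gives (1). So a single non-trivial point suffices --- there is no need to find a whole subinterval of one class first, and hence no need for a definable partition. If you want to salvage your approach, you would have to actually prove the uniform definability of the trichotomy classes, which is a substantially harder (and here unnecessary) task.
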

\begin{proof} If any element of $I$ is a group point or a field point, we get an interval $J\subset R$ satisfying (2) or (3) (but not necessarily contained in $I$). But (2) and (3) are preserved under subintervals, so we may then shrink $J$ to be contained in $I$ (so we use here that every open interval in a real closed field is in definable bijection with the full field). If no element of $I$ is a group or field point, then we automatically have (1) (with $J=I$).
\end{proof}

The main point of the argument is that cases (1) and (2) in Fact \ref{F: ominimal trichotomy} are incompatible with richness. Let us check this.

\begin{lemma}\label{L: ominimal trivial case} Let $I\subset R$ be an interval consisting only of trivial points. Then $I$ is not rich.
\end{lemma}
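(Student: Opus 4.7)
The plan is to reduce to the case where $a$ is a single element of $I$, and then exploit triviality of the induced $\acl$-pregeometry on $I$ to produce a direct contradiction. For the reduction, suppose for contradiction that $I$ is rich, witnessed by some $n$. For any $k \geq 1$, pick a $k$-rich configuration $(a,b,A)$ with $a=(a_1,\ldots,a_n)\in I^n$ and $b\in I^m$. Condition (i) of the definition of $k$-rich gives some coordinate $a_i\notin\acl(Ab)$. The triple $(a_i,b,A)$ then satisfies all three clauses of $k$-richness: (i) by the choice of $i$, (ii) because $a\in\acl(A\cup\bigcup_j b_j)$ implies $a_i\in\acl(A\cup\bigcup_j b_j)$ for the same sequence, and (iii) is unchanged. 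So richness is witnessed by $n=1$, and it suffices to refute the existence of any $1$-rich configuration $(a,b,A)$ with $a\in I$.

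Fix such a configuration for contradiction. Since $\dim(b/A)\geq 1$, the type $\tp(b/A)$ is non-algebraic; together with saturation and the independent-extension axiom (Definition \ref{D: weakly ranked}(6)), this allows us to construct a sequence of pairwise distinct realizations $b=b_1,b_2,b_3,\ldots$ of $\tp(b/A)$ with the property that each $b_j$ for $j\geq 2$ is $\dim$-independent from $\{a\}\cup b_1\cup\cdots\cup b_{j-1}$ over $A$. Condition (ii) then yields $a\in\acl(A\cup\bigcup_j b_j)$.

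By the Peterzil--Starchenko structure theorem applied to $I$, the induced $\acl$-pregeometry on $I$ is trivial: for any parameter set $B$ and any $C\subseteq I$, an element of $I\cap\acl(B\cup C)$ lies in $\acl(B)$ or in $\acl(B,c)$ for some single $c\in C$. Applied to $a\in I$, and using that $a\notin\acl(A)$ (which follows from (i)), we obtain a single coordinate $c=b_j^l$ of some $b_j$ with $a\in\acl(A,c)$. Exchange gives $c\in\acl(A,a)\setminus\acl(A)$. If $j=1$, then $c$ is a coordinate of $b$ and so $a\in\acl(Ab)$, contradicting (i). If $j\geq 2$, independence of $b_j$ from $a$ over $A$ yields $\acl(Ab_j)\cap\acl(Aa)=\acl(A)$, contradicting $c\in\acl(Ab_j)\cap\acl(Aa)\setminus\acl(A)$.

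The one delicate point is the invocation of triviality of the induced $\acl$-pregeometry on $I$ over \emph{arbitrary} parameter sets $A$ (possibly outside $I$). This is expected to be a standard consequence of o-minimal cell decomposition in the trivial case -- locally, definable subsets of $I^n$ are finite unions of graphs of one-variable functions, so any algebraic dependence decomposes accordingly -- but needs to be extracted carefully from \cite{PeStTricho}. If local triviality must be made uniform to accommodate parameters from outside $I$, one can first shrink $I$ to a subinterval on which the trivial structure is uniform over $A$; this does no harm since the contradiction is drawn from a single configuration with $a\in I$.
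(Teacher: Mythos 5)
Your argument is correct and follows essentially the same route as the paper's: both proofs combine the triviality of $\acl$ on a trivial interval (extracted from Peterzil--Starchenko) with clauses (i) and (ii) of the rich configuration, applied to an infinite family of $A$-conjugates of $b$. The mechanics of the endgame differ slightly: the paper fixes $k>n$, so that $\dim(b/Aa)>0$ gives infinitely many conjugates of $b$ over $Aa$, and then transfers $a_1\in\acl(Ab_i)$ back to $a_1\in\acl(Ab)$ by conjugation over $Aa$; you instead reduce to a singleton $a$, build a $\dim$-independent sequence of $A$-conjugates, and conclude via exchange together with the (dimension-additivity) fact that independence forces $\acl(Aa)\cap\acl(Ab_j)=\acl(A)$ -- a variant whose advantage is that it refutes even a single $1$-rich configuration rather than needing $k>n$. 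Concerning the point you flag as delicate: the paper justifies triviality via the Mekler--Rubin--Steinhorn binarity of the full induced structure on $I$ (after naming a small set of parameters), which is what licenses parameter sets $A$ not contained in $I$ (the paper's own write-up is brief here, even asserting the configuration's parameters lie in the interval, which the definition of richness does not guarantee); your fallback of shrinking $I$ would not work as stated, since richness is not known to pass to subintervals, but it is also not needed once triviality over arbitrary bases is in hand.
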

\begin{proof}
    The interval $I$ with its full induced structure is o-minimal. By \cite[Theorem 0.5, Theorem 3.2] {MeRuSt} combined with the main result of \cite{PeStTricho}, every definable subset of every $I^n$ is a finite Boolean combination of products of unary and binary relations (in the terminology of \cite{MeRuSt} the structure induced on $I$ is binary). It follows that, after adding a small set of parameters, the acl operator is trivial on $I$ (i.e. for any $A\subset I$, $\acl(A)\cap I$ is the union of $\acl(a)\cap I$ for $a\in A$).

    Now suppose $I$ is rich. Then we can find a $k$-rich configuration $(a,b,A)$, where $A\subset J$, $a\in J^n$, $b\in J^m$, and $k$ is arbitrarily large while $n$ is bounded. In particular, we may fix such a configuration $(a,b,A)$ with $k>n$. Then $\dim(b/A)>\dim(a/A)$, and so $\dim(b/Aa)>0$. It follows that there are infinitely many distinct automorphism conjugates of $b$ over $Aa$, say $b_1,b_2,...$. Let $a_1$ be the first coordinate of $a$. By assumption, $a\in\acl(Ab_1b_2...)$. By triviality, there is $i$ so that $a_1\in\acl(Ab_i)$. Since the $b_i$ are conjugate to $b$ over $Aa$, we get $a_1\in\acl(Ab)$. The same argument applies for all coordinates of $a$, so we conclude that $a\in\acl(Ab)$, a contradiction.
\end{proof}

\begin{lemma}\label{L: ominimal modular case} Let $I\subset R$ be a group interval with semilinear structure. Then $I$ is not rich.
\end{lemma}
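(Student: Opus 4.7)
The plan is to replace the triviality argument of Lemma~\ref{L: ominimal trivial case} with a modularity argument. The key point is that on a semilinear group interval, the $\acl$-pregeometry is modular, and this is directly incompatible with the ``dependence on every infinite sequence of conjugates'' encoded in clause~(ii) of the definition of a $k$-rich configuration. By the Peterzil--Starchenko trichotomy theorem underlying Fact~\ref{F: ominimal trichotomy}, the induced structure on the group interval $J$ is interdefinable with an ordered vector space over an ordered division ring $D$. Using the group identity as origin, for any $A$ the set $\acl(A) \cap J$ is the $D$-affine span of $A$, and the resulting pregeometry is modular; in particular, $\acl(AX) \cap \acl(AY) = \acl(A)$ whenever $X$ and $Y$ are $\acl$-independent over $A$.

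Assuming for contradiction that $I$ is rich, I would pick any $k$-rich configuration $(a,b,A)$ (in fact only $k \ge 1$ is needed, just to ensure $b \notin \acl(A)$). Using the saturation of $\CR$, I would extract an $A$-independent sequence $b^1, b^2, b^3, \ldots$ of realizations of $\tp(b/A)$, which are automatically distinct since $b \notin \acl(A)$. Clause~(ii) of the definition of $k$-richness, applied to the sequence $(b^i)_{i \ge 1}$ together with the finite character of $\acl$, produces a finite $N$ with $a \in \acl(A b^1 \cdots b^N)$. Applying clause~(ii) a second time to the tail sequence $(b^i)_{i > N}$ yields a finite $M$ with $a \in \acl(A b^{N+1} \cdots b^{N+M})$.

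By construction, the tuples $(b^1, \ldots, b^N)$ and $(b^{N+1}, \ldots, b^{N+M})$ are $\acl$-independent over $A$. Modularity then gives
\[
\acl(A b^1 \cdots b^N) \cap \acl(A b^{N+1} \cdots b^{N+M}) = \acl(A),
\]
so $a \in \acl(A) \subseteq \acl(Ab)$, contradicting clause~(i) of the $k$-rich definition. The main (mild) obstacle is extracting the explicit modularity of $\acl$ on $J$ from Fact~\ref{F: ominimal trichotomy}; this is classical but relies on the full Peterzil--Starchenko classification of semilinear group intervals (not merely the existence of a group operation). Once this is in place, the closing ``split and intersect'' step is a one-line application of modularity, so structurally the proof is very short.
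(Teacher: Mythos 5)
There is a genuine gap, and it lies in your use of clause (ii) of the definition of a $k$-rich configuration. You apply (ii) to an $A$-independent sequence of realizations of $\tp(b/A)$, taking the printed ``$b_1,b_2,\ldots\equiv_A b$'' at face value. But the operative reading of (ii) is with conjugates over $Aa$ (so that all the ``curves'' pass through $a$): this is exactly how (ii) is invoked in the proof of Lemma \ref{L: ominimal trivial case}, and it is the only reading under which the configurations produced in the proof of Theorem \ref{T: rich} (a point $a$ on a curve $C_t$ from an almost faithful family) are rich at all. Indeed, under your literal reading richness is vacuously impossible in any o-minimal structure: choosing the $b^i$ generic over $Aa$ (not merely $A$-independent), additivity and symmetry of $\dim$ give $a\notin\acl(Ab^1\cdots b^N)$ for every $N$, with no semilinearity or modularity needed --- which would make Theorem \ref{T: rich} unprovable and Lemmas \ref{L: ominimal trivial case} and \ref{L: ominimal modular case} contentless. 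With the intended reading, your very first algebraicity step fails: an $A$-independent sequence of realizations of $\tp(b/A)$ need not consist of conjugates over $Aa$, and for such a sequence one has in general $a\notin\acl(Ab^1\cdots b^N)$, so clause (ii) yields nothing.

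Nor can the strategy be repaired by switching to conjugates over $Aa$: if both blocks $B_1=(b^1,\ldots,b^N)$ and $B_2=(b^{N+1},\ldots,b^{N+M})$ algebraize $a$, then $\dim(B_1B_2/A)=\dim(B_1/A)+\dim(B_2/A)-\dim(a/A)$, so the blocks are not $\acl$-independent over $A$; their closures meet in at least $\acl(Aa)$, and modularity of the localized pregeometry is perfectly consistent with that picture (lines through a common point of a vector space already realize it pregeometrically). The real content of the lemma is not an abstract pregeometry computation but a statement about \emph{definable families}: by compactness one extracts from the configuration an $A$-definable family $\{X_t\}$ with the finite-intersection property, and semilinearity forces all fibers through $a$ with parameter near $b$ to coincide, near $a$, with a single coset $a+H$ of one fixed infinite local subgroup $H$, contradicting that property. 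This is the paper's route (following the semilinear-group case of \cite{CaHaYe}). A minor symptom of the same misreading is your claim that $k\geq 1$ suffices: one needs $k$ large relative to the bounded arity $n$ of $a$ to guarantee $\dim(b/Aa)>0$, i.e.\ that $b$ has infinitely many conjugates over $Aa$ in the first place.
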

\begin{proof} This is proven for semilinear groups in \cite{CaHaYe}, and the proof goes through for group intervals. Namely, suppose $I$ is rich. Then, as in the proof of Lemma \ref{L: ominimal trivial case}, we can find a $k$-rich configuration $(a,b,A)$ where $\dim(b/Aa)>0$. Say that $a\in I^m$ and $b\in I^n$. By compactness, there are $A$-definable sets $T\subset I^n$ and $X\subset I^m\times T$ (viewed as a family of sets $X_t\in I^m$) such that (i) $b$ is $A$-generic in $T$, $a$ is $Ab$-generic in $X_b$, (iii) $(a,b)$ is $A$-generic in $X$, and (iv) the intersection of any infinitely many distinct fibers $X_t$ is finite.

By semilinearity, there is a neighborhood $U=U_a\times U_b$ of $(a,b)$ such that $X\cap U$ is an open subset of a coset of a local subgroup. Thus the sets $X_t\cap U_a$, for $t\in U_b$, are all open subgroups of cosets of a fixed local subgroup $H\leq I^m$. Since $a\notin\acl(Ab)$, $H$ is infinite, which implies that $a$ is not isolated in $a+H$.

Finally, since $\dim(b/Aa)>0$, there are distinct $b_1,b_2,...\in U_b$ with each $X_{b_i}$ containing $a$. Then each $X_{b_i}$ contains a neighborhood of $a$ in $H+a$. Since $a$ is not isolated in $H+a$, it follows by compactness that $\bigcap_iX_{b_i}$ is infinite, contradicting (iv) above.
\end{proof}

Finally, we also need the following (\cite[Lemma 5.1]{HaHaPe}):

\begin{fact}\label{F: locally algebraic one variable}
    Let $\mathcal N$ be any 1-sorted structure, and let $X$ be an infinite interpretable set in $\mathcal N$. Then $X$ is locally interalgebraic with an infinite set of the form $Y/E$, where $Y\subset N$ is definable and $E\subset Y^2$ is a definable equivalence relation. 
\end{fact}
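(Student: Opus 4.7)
I will proceed by induction on the minimal $k$ such that $X$ admits a representation $X = Z/F$ with $Z \subset N^k$ definable and $F$ a definable equivalence relation. The base case $k = 1$ is immediate: one simply takes $Y = Z$ and $E = F$.

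For the inductive step with $k \geq 2$, let $\pi \colon Z \to N$ be the projection to the first coordinate, with image $W \subset N$, and analyze the quotient fibers $\pi^{-1}(a)/F$ for $a \in W$. If some such fiber is infinite, then since $\pi^{-1}(a) \subset \{a\} \times N^{k-1}$, identifying this slice with $N^{k-1}$ exhibits an infinite interpretable subset of $X$ admitting a representation in $N^{k-1}$. The inductive hypothesis then yields $Y \subset N$ and $E$ with $Y/E$ locally interalgebraic with this fiber, hence with $X$. I may therefore assume all fibers $\pi^{-1}(a)/F$ are finite, and by compactness (using saturation of $\mathcal N$) uniformly bounded in size by some fixed $n$.

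Under this assumption, I will define an equivalence relation $E$ on $W$ by declaring $a\, E\, a'$ iff the subsets $\pi^{-1}(a)/F$ and $\pi^{-1}(a')/F$ of $X$ coincide, and examine the induced correspondence $\bar R \subset W/E \times X$ given by $([a], y) \in \bar R$ iff $y \in \pi^{-1}(a)/F$. By construction, the first-projection fibers of $\bar R$ have size at most $n$.

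The main obstacle will be controlling the second-projection fibers of $\bar R$, i.e., bounding, for $y = [z]_F \in X$, the size of $\pi_1([z]_F)/E$. I expect to handle this as follows: if for a generic $y \in X$ this set is infinite, then within a single $F$-class the first coordinate ranges over infinitely many $E$-inequivalent values. After restricting to an appropriate infinite definable subset of $X$ and invoking a definable choice of representative, this redundancy of the first coordinate should allow one to realize a large piece of $X$ as a quotient of a subset of $N^{k-1}$, reducing to the inductive hypothesis. Otherwise, the second-projection fibers of $\bar R$ are generically finite, and by saturation uniformly bounded on an infinite definable subset of $X$; restricting $\bar R$ to this subset yields the desired finite-to-finite correspondence between infinite definable subsets of $W/E$ and $X$, completing the induction.
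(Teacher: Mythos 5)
Your overall skeleton (induction on the arity of a representation $X=Z/F$, splitting according to whether some fiber image $X_a=\pi^{-1}(a)/F$ is infinite, and in the all-finite case passing to the definable equivalence relation $a\,E\,a'\Leftrightarrow X_a=X_{a'}$ on $W\subseteq N$) is reasonable, and your Case 1 and the definability of $E$ are fine. Note, however, that the paper does not prove this statement at all -- it quotes it as \cite[Lemma 5.1]{HaHaPe} -- so the only question is whether your argument stands on its own, and it does not: the gap sits exactly at the point you yourself flag, the second-projection fibers of $\bar R$. When, for $y=[z]_F$, infinitely many pairwise $E$-inequivalent $a$ satisfy $y\in X_a$, your proposed remedy (``restricting to an appropriate infinite definable subset and invoking a definable choice of representative'' so that ``redundancy of the first coordinate'' realizes a large piece of $X$ as a quotient of a subset of $N^{k-1}$) is not an argument. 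An arbitrary $1$-sorted structure has no definable choice of representatives -- the absence of any such elimination-of-imaginaries-type tool is precisely why the lemma is nontrivial -- and there is no notion of a ``generic'' $y\in X$. More importantly, the implication you want is unsubstantiated: the fact that the first coordinates of the class $[z]_F$ meet infinitely many $E$-classes does not make the first coordinate disposable, since the class need not be determined, uniformly and definably, by the last $k-1$ coordinates of any of its members; and for $k=2$ ``realizing a large piece of $X$ over $N^{k-1}$'' is literally the statement being proved, so the step is circular there. This subcase is the actual content of the lemma, and it is left unresolved.

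Two further problems, less central but worth fixing. The statement is asserted for an arbitrary $1$-sorted structure, so your appeals to saturation (to get the uniform bound $n$ on $|X_a|$, and again to bound the second-projection fibers ``generically'') are not available; at best they prove the result in an elementary extension, and since ``infinite'' is not first-order the conclusion does not automatically descend (one can partially avoid saturation by restricting to the definable sets $\{a\in W:|X_a|=m\}$ and $\{y\in X:$ the $\bar R$-fiber over $y$ has at most $n$ elements$\}$, but then one must show some such restriction still meets $X$ in an infinite set, which again requires an argument). Finally, even in your ``good'' subcase you should verify that the restricted relation is surjective onto infinite definable sets on both sides (infiniteness of the image in $W/E$ does follow, since finitely many finite fibers cannot cover an infinite set). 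None of these smaller issues is fatal, but together with the unhandled infinite-fiber case the proposal is incomplete at its crucial step; for a complete argument one must consult \cite[Lemma 5.1]{HaHaPe}.
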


Returning to our o-minimal $\CR$, we can now show:

\begin{theorem}\label{T: reduction2fields} Let $\mathcal M=(M,...)$ be a non-locally modular strongly minimal $\mathcal R$-relic. Then $M$ is internal to an interval in $R$ supporting an $\mathcal R$-definable real closed field structure.
\end{theorem}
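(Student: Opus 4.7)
The plan is to combine Fact \ref{F: locally algebraic one variable}, the o-minimal trichotomy (Fact \ref{F: ominimal trichotomy}), the richness obstructions (Lemmas \ref{L: ominimal trivial case} and \ref{L: ominimal modular case}), and the main output of Section 2 (Corollary \ref{C: interacl2internal}) to pinpoint the sort that $M$ is internal to.

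First I would apply Fact \ref{F: locally algebraic one variable} to the one-sorted structure $\CR$ and the interpretable set $M$, producing a definable $Y \subset R$ and a definable equivalence relation $E$ on $Y$ with $Y/E$ infinite and locally interalgebraic with $M$. Since $\dim(Y/E) = 1$ and $Y \subset R$ is at most one-dimensional, the quotient map $\pi\colon Y \to Y/E$ has finite generic fibers, so by o-minimal cell decomposition I can restrict to a subinterval $I \subset Y$ on which $\pi$ is uniformly finite-to-one and whose image in $Y/E$ still meets infinitely the subset witnessing local interalgebraicity with $M$. Composing the graph of $\pi|_I$ with the finite correspondence to $M$ shows that $M$ is locally interalgebraic with the actual interval $I \subset R$.

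Next I would invoke Fact \ref{F: ominimal trichotomy} on $I$ to obtain an infinite subinterval $J \subset I$ whose points are either all trivial, form a semilinear group interval, or support an $\CR$-definable real closed field structure. Restricting the preceding finite correspondence to $J$ (which has dp-rank $1$ and finite fibers on the $M$-side) preserves infiniteness of the image in $M$, so $J$ is still locally interalgebraic with $M$. Since $\CM$ is non-locally modular and $\CR$ is o-minimal (hence weakly o-minimal), Corollary \ref{C: interacl2internal}, applied with $D = J$, forces $J$ to be rich and $M$ to be internal to $J$. But Lemmas \ref{L: ominimal trivial case} and \ref{L: ominimal modular case} rule out the trivial and semilinear alternatives, so $J$ must be a real closed field interval, which gives the desired conclusion.

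The only mildly delicate point is the transition from the imaginary sort $Y/E$ to an honest subinterval of $R$, where one must use the fiber structure of $\pi$ to transport the finite correspondence without losing infiniteness on either side; but in the one-dimensional o-minimal setting, this is a straightforward cell-decomposition argument and does not present a real obstacle.
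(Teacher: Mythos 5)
Your proposal is correct and takes essentially the same route as the paper: Fact \ref{F: locally algebraic one variable} to tie $M$ to a quotient $Y/E$ of a definable subset of the line, reduction to an honest interval locally interalgebraic with $M$, and then Fact \ref{F: ominimal trichotomy} combined with Lemmas \ref{L: ominimal trivial case} and \ref{L: ominimal modular case} and Corollary \ref{C: interacl2internal} to force the real closed field case and the internality of $M$. The only (cosmetic) difference is the passage from $Y/E$ to an interval: the paper discards the finitely many infinite classes and replaces each remaining finite class by its least element to embed the quotient into $R$, whereas you restrict the quotient map to a subinterval of $Y$ -- both hinge on the same o-minimality observation that only finitely many classes can be infinite, which is the cleanest justification of your ``finite generic fibers'' step.
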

\begin{proof} We begin with:

\begin{lemma} There is an infinite interval in $R$ which almost embeds into $M$.
\end{lemma}
\begin{proof} By Fact \ref{F: locally algebraic one variable}, $M$ almost embeds a set $Y/E$, where $Y\subset R$ is definable and $E$ is a definable equivalence relation. Since $Y/E$ is infinite, by o-minimality there must be infinitely many finite classes. Replacing $Y$ by the union of these classes, we may assume all classes are finite. Now replace each class by its smallest element to obtain a definable bijection between $Y$ and an infinite subset of $R$. By o-minimality, this infinite subset of $R$ contains an infinite interval.
\end{proof}

Fix $I$, an infinite interval almost embeddable in $M$. Shrinking $I$ if necessary, by Fact \ref{F: ominimal trichotomy} we may assume $I$ satisfies either (1), (2), or (3) in the statement of Fact \ref{F: ominimal trichotomy}. By Corollary \ref{C: interacl2internal}, $I$ is rich, so option (3) is forced by Lemmas \ref{L: ominimal trivial case} and \ref{L: ominimal modular case}. Thus $I$ supports an $\mathcal R$-definable real closed field structure. Finally, also by Corollary \ref{C: interacl2internal}, $M$ is internal to $I$. 
\end{proof}

\begin{corollary}
    If the Zilber trichotomy holds for strongly minimal relics of o-minimal fields, then the Zilber trichotomy holds for strongly minimal relics of all dense o-minimal structures.
\end{corollary}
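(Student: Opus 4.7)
The plan is a direct reduction using Theorem~\ref{T: reduction2fields}. Let $\CR$ be any dense o-minimal structure and $\CM=(M,\ldots)$ a strongly minimal $\CR$-relic; I want to conclude that $\CM$ is either locally modular or interprets an algebraically closed field. Since both alternatives are elementary properties preserved under passing to an elementary extension of the ambient structure, I would first pass to a sufficiently saturated $\CR \succ \CR_0$ so that the running hypotheses of this section apply, and then assume for contradiction that $\CM$ is not locally modular. Theorem~\ref{T: reduction2fields} then produces an infinite interval $I \subset R$ carrying an $\CR$-definable real closed field structure such that $M$ is internal to $I$.

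The main content of the argument will be to repackage this data as an input to the hypothesis of the corollary. Let $\CR_I$ denote $I$ equipped with its full induced structure from $\CR$, so that the $\CR_I$-definable subsets of $I^n$ are exactly the $\CR$-definable ones. Since $\CR$ is o-minimal, so is $\CR_I$; and since the field operations on $I$ are $\CR$-definable, $\CR_I$ is an o-minimal expansion of a real closed field. Thus $\CR_I$ falls within the class of structures to which the corollary's hypothesis applies.

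Next I would verify that $\CM$ is naturally a strongly minimal $\CR_I$-relic. Internality of $M$ to $I$ gives a surjection $f \colon I^n \to M$ definable in $\CR$ with parameters; the equivalence relation $E := \ker(f)$ is an $\CR$-definable subset of $I^{2n}$, and therefore $\CR_I$-definable by the definition of induced structure. More generally, any $\CM$-definable $X \subset M^k$ is $\CR$-interpretable, so its preimage under $f^k$ is an $\CR$-definable, hence $\CR_I$-definable, subset of $I^{nk}$; thus $X \cong f^k\text{-preimage}/E^k$ is $\CR_I$-interpretable. Hence every definable set of $\CM$ is $\CR_I$-interpretable, so $\CM$ is a strongly minimal $\CR_I$-relic. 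Applying the hypothesis of the corollary to $\CR_I$ and $\CM$, and using that $\CM$ is non-locally modular, we conclude that $\CM$ interprets an algebraically closed field, as desired.

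There is no serious obstacle here; the only point requiring mild care is the transfer of the relic property from $\CR$ to $\CR_I$, which is handled precisely by the canonical nature of induced structure together with the internality provided by Theorem~\ref{T: reduction2fields}.
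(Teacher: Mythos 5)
Your argument is correct and is exactly the intended derivation: the paper leaves this corollary as an immediate consequence of Theorem~\ref{T: reduction2fields}, namely that internality of $M$ to the interval $I$ makes $\CM$ a strongly minimal relic of the induced o-minimal real closed field structure on $I$, to which the hypothesis applies (modulo the routine caveat that the internality surjection is defined on a definable subset of $I^n$ rather than all of $I^n$, which changes nothing). The saturation and elementary-extension bookkeeping you include is the standard way to make the reduction precise, since local modularity and interpreting an algebraically closed field depend only on the theory of the relic.
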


\section{Interpretable RCVF-relics}
In this section $\mathcal R_V=(R,+,\cdot,V)$ is a sufficiently saturated model of the theory of real closed valued fields (so $\CR=(R,+,\cdot)\models RCF$ and $V$ is a convex valuation; see the remark at the end of the section for a discussion of generalizations). We denote the valuation ring, value group, and residue field by $V$, $\Gamma$, and $\textbf k$, respectively. We will show that every non-locally modular strongly minimal $\CR_V$-relic definably embeds into a power of either $R$ or $\textbf k$. This reduces the study of non-locally modular strongly minimal $\CR_V$-relics to the definable case. Indeed, since $\textbf{k}$ is a pure real closed field, it is elementarily equivalent to a definable $\mathcal R_V$-relic -- and thus so is every definable $\textbf{k}$-relic.

Our strategy is similar to \cite[\S 12]{CaHaYe}, building heavily on ideas from \cite{HaHaPe}: we note that the universe of the relic is locally interalgebraic with one of four distinguished sorts (including $R$ and $\textbf k$); rule out the other two using the richness clause of Corollary \ref{C: interacl2internal}; and then produce the required embedding using the other clause of Corollary \ref{C: interacl2internal}.

\subsection{On Elimination of Certain Imaginaries} Before proceeding, we need to adapt a result from \cite{HaOnPi} on elimination of certain imaginaries in ACVF. Recall that the dimension function in a geometric structure can be extended to imaginary sorts in a way that preserves additivity, but may introduce infinite sets of dimension 0 (\cite{Gagelman}, see also \cite{JohnQpTop} for a brief self contained exposition). Let us say that a geometric structure $\mathcal K$ has \textit{elimination of imaginaries down to 0-dimensional sorts} if every interpretable set embeds definably (over parameters) into one of the form $K^n\times S$ with $\dim(S)=0$. The well-known elimination of imaginaries in ACVF down to the \textit{geometric sorts} is an example, because the geometric sorts are all 0-dimensional. Moreover, it was shown in \cite{MelRCVFEOI} that RCVF also eliminates imaginaries down to the geometric sorts, and thus down to 0-dimensional sorts.

In \cite[Lemma 2.8]{HaOnPi}, it is shown that if $X$ is interpretable in ACVF and not locally interalgebraic with the value group, the residue field, or the set of closed balls of a given radius, then $X$ embeds into a power of the valued field. The proof works more generally for interpretable sets with no infinite 0-dimensional subsets. We now briefly review and expand on the argument in order to give it a more general setting. There are three main steps. We begin with:

\begin{lemma}\label{L: geometric weak e of i} Let $\mathcal K$ be a geometric structure with elimination of imaginaries down to 0-dimensional sorts. Let $X$ be a $\mathcal K$-interpretable set with no infinite 0-dimensional interpretable subsets. Then $X$ is in finite correspondence with a definable set.
\end{lemma}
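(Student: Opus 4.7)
The plan is to ``project away'' the $0$-dimensional sort $S$ afforded by the elimination-of-imaginaries hypothesis, and then use the hypothesis on $X$ to force the remaining projection to $K^n$ to be finite-to-one.

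First, fix (over parameters) a definable injection $f \colon X \hookrightarrow K^n \times S$ with $\dim(S)=0$, as provided by the hypothesis on $\mathcal K$. Set $g = \pi_1 \circ f \colon X \to K^n$ and $Y = g(X) \subseteq K^n$. For each $y \in Y$, the fiber $g^{-1}(y) \subseteq X$ is interpretable; via $f$ it is in definable bijection with $f(X) \cap (\{y\} \times S) \subseteq \{y\} \times S$, which has dimension $0$. Hence $g^{-1}(y)$ is a $0$-dimensional interpretable subset of $X$, and by the standing hypothesis on $X$ it must be finite.

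Next, I would invoke saturation: for each $k$ the condition ``$|g^{-1}(y)| \ge k$'' is interpretable uniformly in $k$, so a standard compactness argument (otherwise a type asserting infinitely many preimages would be realized) yields a uniform bound $N \in \mathbb{N}$ with $|g^{-1}(y)| \le N$ for all $y \in Y$. The graph $\Gamma_g := \{(x, g(x)) : x \in X\} \subseteq X \times Y$ is then the desired finite correspondence: its projection onto $X$ is a bijection (hence uniformly $1$-to-$1$), and its projection onto $Y$ is surjective by construction with fibers of size at most $N$.

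The main obstacle I foresee is ensuring that $Y \subseteq K^n$ is \emph{definable} in $\mathcal K$ rather than merely interpretable, as strictly demanded by Convention~\ref{Con: def vs int}. In the concrete instances of ACVF and RCVF (the setting of the ensuing sections), this is automatic: full elimination of imaginaries down to the geometric sorts forces every interpretable subset of $K^n$ to be definable, since the canonical parameter of $Y$ can be replaced by a tuple from $K^m$. At the level of generality stated in the lemma, this step requires an additional argument; a natural route is to induct on $\dim(Y)$, at each stage replacing $Y$ by a minimal definable superset of the same dimension and handling the lower-dimensional discrepancy recursively. Modulo this definability issue, the rest of the proof is a routine compactness-plus-projection exercise.
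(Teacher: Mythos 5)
Your argument is essentially the paper's: embed $X$ definably into $K^n\times S$ with $\dim(S)=0$, observe that the fibers of the projection to $K^n$ are $0$-dimensional interpretable subsets of $X$ and hence finite by hypothesis, and take the graph of that projection as the finite correspondence with $\pi(X)\subseteq K^n$ (uniform finiteness of the fibers follows as you say from saturation, or directly from elimination of $\exists^\infty$ in geometric structures).

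The one point to correct is the ``main obstacle'' you flag at the end: the definability of $Y=g(X)\subseteq K^n$ is not a genuine issue, and no induction on $\dim(Y)$ is needed. Any subset of a power of the home sort that is definable in $\mathcal K^{eq}$, even with imaginary parameters, is already definable in $\mathcal K$ over real parameters: write the imaginary parameter as $\pi_E(a)$ for a real tuple $a$ and an $\emptyset$-definable quotient map $\pi_E$, substitute to obtain an $\mathcal L^{eq}$-formula all of whose free variables and parameters are real, and use that the home sort is stably embedded in $\mathcal K^{eq}$ with induced structure exactly that of $\mathcal K$. This has nothing to do with the specific elimination of imaginaries in ACVF or RCVF, and your proposed recursion through ``minimal definable supersets'' is unnecessary (and, as sketched, not well defined). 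With that observation your proof is complete and coincides with the one in the paper.
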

\begin{proof} By elimination of imaginaries down to 0-dimensional sorts, we may assume $X\subset K^n\times S$ where $S$ is interpretable with $\dim(S)=0$. Then the fibers of the projection $\pi:X\rightarrow K^n$ are 0-dimensional subsets of $X$, and by assumption they are all finite. This gives a finite correspondence between $X$ and $\pi(X)\subset K^n$.
\end{proof}

For the next step, let us say that a sufficiently saturated 1-sorted structure $\mathcal K$ has the \textit{acl-model property} if for some small set $A\subset K$, every algebraically closed set $B$ with $A\subset B\subset K$ is an elementary submodel of $\mathcal K$. It is well-known that ACVF and RCVF have the acl-model property (where $A$ is a single point with non-zero value). In fact, any structure with definable Skolem functions has this property. 

\begin{lemma}\label{L: use of acl model} Let $\mathcal K$ be a sufficiently saturated 1-sorted structure with the acl-model property. Let $X$ be an interpretable set in finite correspondence with a definable set. Then $X$ is a `finite quotient' -- there are a definable set $Y$ and a surjective finite-to-one interpretable map $Y\rightarrow X$.
\end{lemma}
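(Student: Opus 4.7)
The plan is to realize $X$ as a finite quotient by cutting down a definable lift of the given finite correspondence. Write $X = D/E$ for a definable set $D$ and a definable equivalence relation $E$; let $Z$ be the definable set in finite correspondence with $X$; and let $W \subseteq X \times Z$ be the interpretable witness. Set $\tilde W := \{(d,z)\in D\times Z : ([d],z)\in W\}$, a definable lift. I will produce a definable $Y\subseteq\tilde W$ such that the interpretable map $f:Y\to X$ given by $f(d,z) := [d]$ is a surjective, finite-to-one map.

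The key step uses the acl-model property. Let $A \subset K$ be the base from the acl-model property, enlarged so that $W, D, E, Z$ are $A$-(inter)definable. Fix $z\in Z$ and $x\in W_z$. Since $\pi_Z : W \to Z$ has finite fibers, $x \in \acl^{eq}(Az)$. Setting $\CN := \acl(Az)$, the acl-model property gives $\CN \preceq \CK$, hence $\CN^{eq} \preceq \CK^{eq}$. A standard consequence of elementarity is that $\acl^{eq}(\CN) = \CN^{eq}$ for any model $\CN \preceq \CK$: if $e \in \acl^{eq}(\CN)$ has finite $\CN$-orbit $\{e_1,\ldots,e_k\}$, the canonical parameter of this set lies in $\dcl^{eq}(\CN) = \CN^{eq}$, and each $e_i$ then lies in $\CN^{eq}$ by applying elementarity to the definable inclusion into this finite set. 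Applied to $x$, this produces a representative $d \in D$ with $d \in \acl(Az)$ and $[d] = x$.

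A compactness argument makes the previous step uniform. Consider the partial type $\Sigma(d,z)$ over $A$ consisting of ``$(d,z)\in \tilde W$'' together with, for each formula $\vphi(y;z)$ over $A$ and each $N \in \Nn$, the negation of $|\vphi(\CK;z)| \leq N \wedge \exists y\,(\vphi(y;z) \wedge E(y,d))$. By the previous paragraph, $\Sigma$ is inconsistent, so compactness yields finitely many pairs $(\vphi_i, N_i)_{i=1}^k$ exhausting the second clause. Writing $\psi_i(z)$ for the $A$-formula expressing $|\vphi_i(\CK;z)| \leq N_i$, define
\[
\vphi(y;z) := \bigvee_{i=1}^{k} \bigl(\vphi_i(y;z) \wedge \psi_i(z)\bigr).
\]
Then $|\vphi(\CK;z)| \leq N := \sum_i N_i$ uniformly in $z$, and $\vphi(\CK;z) \cap [d] \neq \emptyset$ for every $(d,z) \in \tilde W$.

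Taking $Y := \{(d,z) \in \tilde W : \vphi(d;z)\}$ completes the construction: $Y$ is definable, $f:Y\to X$ is surjective by the coverage property of $\vphi$, and finite-to-one because the fiber over $x$ is contained in $\bigcup_{z\in W^x}\bigl(\vphi(\CK;z)\times\{z\}\bigr)$, of size at most $N\cdot|W^x|$, with $|W^x|$ uniformly bounded by the finite correspondence. The main subtlety I expect is the compactness step --- specifically, arranging $\Sigma$ and building $\vphi$ so that folding each size bound $\psi_i(z)$ into its own disjunct yields a single definable formula with the requisite uniform size bound $N$ while preserving coverage of every $E$-class appearing in a fiber of $\pi_Z$.
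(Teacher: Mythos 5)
Your proof is correct, and it shares the paper's two-step skeleton: use the acl-model property to produce, for each point of $X$, a witness algebraic over suitable real data, then uniformize the choice by compactness. The implementations differ, though. The paper fixes an arbitrary interpretable surjection $f:Z\rightarrow X$ from a definable set and finds, for each $a\in X$, a preimage $b\in f^{-1}(a)\cap\acl(a)$: it replaces $a$ by a real tuple $c$ interalgebraic with it (via the finite correspondence), pulls an isolating formula of $\tp(a/c)$ back through $f$ to obtain a real existential statement over $c$, realizes it inside $\acl(c)\preceq\CK$, and transfers back by an automorphism; compactness then cuts $Z$ down to a definable $Y$ on which $f$ is finite-to-one and surjective. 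You instead take the algebraicity over the correspondence partner $z$ rather than over the point itself: from $\acl(Az)\preceq\CK$ you pass to $\acl(Az)^{eq}\preceq\CK^{eq}$ and use that elementary submodels are algebraically closed in all sorts, so the imaginary $x\in\acl^{eq}(Az)$ has an $E$-class representative in $\acl(Az)$. Because your witness is algebraic over $z$ and not over $x$, your $Y$ must carry $z$ as an extra coordinate, and finiteness of the fibres of $Y\rightarrow X$ then uses finiteness of both projections of $W$, whereas the paper simply restricts the given surjection. Your route is somewhat more conceptual (it isolates the general fact about $\CK^{eq}$ and elementary submodels), the paper's stays at the level of real formulas; your explicit disjunction-with-size-bounds formula is a correct rendering of the compactness step the paper leaves implicit. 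Two small points you gloss over, both harmless: the parameters of the interpretable set $W$ can indeed be taken in $K$ (imaginaries are definable over real tuples), so enlarging $A$ keeps the acl-model property; and the inconsistency of $\Sigma$ uses saturation of $\CK$ over the small set $A$, which the hypotheses provide.
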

\begin{proof}
Since $X$ is interpretable, there are a definable set $Z$ and an interpretable surjection $f:Z\rightarrow X$. Absorbing parameters, we may assume that $X$, $Z$, and $f$ are $\emptyset$-interpretable. We may also assume the parameter set $A$ witnessing the acl-model property is $\emptyset$; and that $X$ is in $\emptyset$-definable finite correspondence with a definable set -- thus every element of $X$ is interalgebraic with a real tuple.

We claim that for every $a\in X$, there is $b\in f^{-1}(a)$ with $b\in\acl(a)$. Thus, by compactness, there is a $\emptyset$-definable subset $Y\subset Z$ such that the restriction $f\restriction_Y:Y\rightarrow X$ is finite-to-one and surjective, completing the proof.

So let $a\in X$; we find such a tuple $b$. Let $\mathcal L$ be the 1-sorted language of $\mathcal K$, and $\mathcal L^{eq}$ its expansion by interpretable sorts. Let $c$ be a real tuple interalgebraic with $a$. Let $\phi(a,c)$ be an $\mathcal L^{eq}$-formula isolating $\tp(a/c)$. Let $\theta(y,z)$ be the $\mathcal L^{eq}$-formula $\phi(f(y),z)$. Then $\theta$ has only real tuples as free variables, so $\theta$ is equivalent to an $\mathcal L$-formula $\psi(y,z)$. Now the surjectivity of $f$ implies that $\models\exists y\theta(y,c)$, and thus $\models\exists y\psi(y,c)$. By the acl-model property, there is $b'\in\acl(c)=\acl(a)$ so that $\models\psi(b',c)$, and thus $\models\theta(b',c)$. Let $a'=f(b')$. Then $\models\phi(a',c)$, so $\tp(a'c)=\tp(ac)$, and thus there is $b$ with $\tp(a'b'c)=\tp(abc)$. So $b\in\acl(a)$ and $f(b)=a$, as desired.
\end{proof}

Finally, the third step is well-known:

\begin{lemma}\label{L: e of fin i} Let $\mathcal K$ be an expansion of a field. Let $X$ be a `finite quotient' -- i.e. a finite-to-one interpretable image of a definable set. Then $X$ embeds definably into a power of $K$.
\end{lemma}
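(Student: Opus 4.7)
The plan is to use the classical encoding of finite subsets of $K^n$ by the coefficients of a symmetric product of linear forms. Write $X = Y/E$ where $Y \subseteq K^n$ is a $\mathcal K$-definable set and $E \subseteq Y \times Y$ is a $\mathcal K$-definable equivalence relation whose classes are finite. By saturation (or by partitioning $Y$ according to the value of $|[y]_E|$ and handling each piece separately, taking the maximum embedding dimension at the end), there is a uniform bound $N \in \Nn$ on the size of $E$-classes.

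To each $E$-class $C = \{y^{(1)}, \dots, y^{(k)}\} \subseteq K^n$ with $k \le N$, associate the polynomial
\[
P_C(T_0, T_1, \dots, T_n) \;:=\; \prod_{i=1}^{k} \bigl( T_0 + y^{(i)}_1 T_1 + \cdots + y^{(i)}_n T_n \bigr) \in K[T_0, \dots, T_n].
\]
The product is symmetric in its factors and so depends only on the unordered class $C$; its coefficients form a tuple in $K^M$, where $M$ is the $K$-dimension of the space of polynomials in $n+1$ variables of total degree at most $N$. The induced map $g \colon X \to K^M$, sending each class to the (suitably padded) coefficient tuple of $P_C$, is visibly $\mathcal K$-interpretable, and its image is a $\mathcal K$-definable subset of $K^M$ by a routine unfolding of the definitions (any interpretable subset of a power of $K$ being $\mathcal L$-definable).

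For injectivity, suppose $P_C = P_{C'}$. Comparing total degrees gives $|C| = |C'|$. In the UFD $K[T_0, \dots, T_n]$, every factor $T_0 + \sum_j y^{(i)}_j T_j$ is irreducible (being a nonzero linear form) and has $T_0$-coefficient equal to $1$; unique factorization forces the multisets of such factors of $P_C$ and $P_{C'}$ to coincide up to units in $K^\times$, and the normalization of the $T_0$-coefficient rules out any nontrivial rescaling. Hence $C = C'$ as finite subsets of $K^n$, and $g$ provides the desired definable embedding $X \hookrightarrow K^M$. The only genuinely delicate ingredient is the uniform bound on class size, which is automatic from saturation; everything else is the classical argument that arbitrary expansions of a field eliminate finite imaginaries via symmetric functions, and I anticipate no real obstacle beyond carefully verifying the interpretability of $g$ and the definability of its image.
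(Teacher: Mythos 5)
Your proof is correct and is essentially the paper's argument: the paper simply cites elimination of finite imaginaries in fields via symmetric functions, and your product-of-linear-forms encoding (with injectivity from unique factorization) is precisely the classical proof of that fact. The uniform bound on class sizes is indeed harmless here, since in the paper's application the finite-to-one map is produced by compactness in a sufficiently saturated structure and is therefore uniformly finite-to-one.
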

\begin{proof} This is just elimination of finite imaginaries in fields (using symmetric functions).
\end{proof}

Finally, putting the last three lemmas together, we obtain:

\begin{proposition}\label{P: JP}
    Let $\mathcal K$ be a sufficiently saturated geometric expansion of a field with the acl-model property and elimination of imaginaries down to 0-dimensional sorts (for example, any sufficiently saturated model of ACVF or RCVF, or any sufficiently saturated $p$-adically closed field). Let $X$ be a $\mathcal K$-interpretable set with no infinite 0-dimensional subsets. Then $X$ embeds definably into a power of $K$.
\end{proposition}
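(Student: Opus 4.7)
The plan is to simply chain together the three preparatory lemmas of this subsection; essentially all of the substantive work has already been absorbed into them. First I would invoke Lemma~\ref{L: geometric weak e of i}: since $\CK$ is a geometric structure with elimination of imaginaries down to $0$-dimensional sorts, and $X$ has no infinite $0$-dimensional interpretable subsets, there is a definable set $Y \subset K^n$ (for some $n$) together with a finite correspondence between $X$ and $Y$. The hypothesis on $X$ is exactly what forces the fibers of the projection to the ``real part'' to be finite rather than merely $0$-dimensional.

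Next I would apply Lemma~\ref{L: use of acl model} to this finite correspondence: the acl-model property of $\CK$ (verified in ACVF, RCVF, and $p$-adically closed fields by the presence of definable Skolem functions) upgrades the finite correspondence into a genuine presentation of $X$ as a \emph{finite quotient} --- i.e., produces a definable $Y' \subset K^n$ together with an interpretable finite-to-one surjection $Y' \twoheadrightarrow X$.

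Finally, since $\CK$ expands a field, Lemma~\ref{L: e of fin i} (standard elimination of finite imaginaries via symmetric functions in the $Y'$-coordinates) turns this finite quotient into a definable injection $X \hookrightarrow K^m$ for some $m$, yielding the desired embedding.

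Verifying the examples is routine: ACVF and RCVF are geometric expansions of fields eliminating imaginaries down to the geometric sorts, all of which are $0$-dimensional (by Haskell--Hrushovski--Macpherson in the ACVF case and by \cite{MelRCVFEOI} in the RCVF case), and the $p$-adically closed case is analogous; all three admit definable Skolem functions and therefore satisfy the acl-model property. There is no real obstacle beyond checking that the hypotheses fit the three lemmas: the only delicate point is already internal to Lemma~\ref{L: use of acl model}, where the acl-model property is exploited through a pull-back of an $\mathcal{L}^{eq}$-formula to an $\mathcal{L}$-formula to extract an algebraic preimage. Once that lemma is granted, the proof of the proposition reduces to a clean three-line concatenation.
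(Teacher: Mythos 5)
Your proposal is correct and is exactly the paper's argument: the proposition is obtained by chaining Lemma \ref{L: geometric weak e of i}, Lemma \ref{L: use of acl model}, and Lemma \ref{L: e of fin i} in that order, with the acl-model property (via definable Skolem functions) and elimination of imaginaries down to $0$-dimensional sorts playing precisely the roles you describe.
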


\begin{remark}\label{R: no EOI}
    In \cite[Theorem 1.2]{HaHrMac3} it is shown that $T$-convex expansions of RCVF need not admit elimination of imaginaries down to the standard geometric sorts (of \cite{MelRCVFEOI}). As far as we could ascertain, the only property of the geometric sorts used in the proof is that for any geometric sort $S$, any interpretable function $f: S\to K^n$ has finite image. Since $K^n$ has no infinite $0$-dimensional definable subsets, and for any interpretable function with domain $U$, $\dim(U)\ge \dim(f(U))$, the same applies if $S$ is any $0$-dimensional sort. Thus, it seems that the proof of \cite{HaHrMac3} implies that $T$-convex expansions of RCVF need not eliminate imaginaries down to $0$-dimensional sorts.  
\end{remark}

\subsection{Eliminating Interpretable Relics}

Returning to our structure $\CR_V\models RCVF$, we now deduce the main result of this section:

\begin{theorem}\label{T: RCVF}
    Let $\mathcal M=(M,...)$ be a non-locally modular strongly minimal $\mathcal R_V$-relic. Then $M$ embeds $\mathcal R_V$-definably into a power of either $R$ or $\textbf k$.
\end{theorem}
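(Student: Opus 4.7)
The plan mirrors the strategy previewed in the section introduction, closely paralleling \cite[\S 12]{CaHaYe}: reduce via Fact \ref{F: locally algebraic one variable} and \cite{HaHaPe} to one of four distinguished sorts, rule out two of them using the richness clause of Corollary \ref{C: interacl2internal}, and then upgrade internality to a definable embedding. Since RCVF (with its convex valuation) is weakly o-minimal, Corollary \ref{C: interacl2internal} applies directly. By Fact \ref{F: locally algebraic one variable}, $M$ is locally interalgebraic with some $Y/E$, where $Y\sub R$ is definable and $E\sub Y^2$ a definable equivalence relation; invoking the classification of one-variable imaginaries from \cite{HaHaPe}, such a quotient is locally interalgebraic with one of four distinguished sorts, namely $R$, $\textbf{k}$, $\Gamma$, or a sort $\mathcal B$ of closed balls. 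Hence so is $M$.

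I will then rule out $\Gamma$ and $\mathcal B$. By Corollary \ref{C: interacl2internal}, whichever sort $M$ is locally interalgebraic with must be rich (in the dp-rank sense). The induced structure on $\Gamma$ is that of a divisible ordered abelian group (o-minimal, semilinear), so any infinite definable subset of $\Gamma$ contains an interval to which Lemma \ref{L: ominimal modular case} applies, precluding richness. For $\mathcal B$, the induced structure is tree-like and 1-based, and an adaptation of the corresponding non-richness argument for closed balls in \cite[\S 12]{CaHaYe} rules $\mathcal B$ out as well. Combining with Corollary \ref{C: interacl2internal}, we conclude that $M$ is $\CR_V$-internal to $R$ or to $\textbf{k}$.

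It remains to convert internality into the desired definable embedding. Suppose first that $M$ is internal to $R$ via a finite-to-one definable surjection $R^n\to M$ (after a base change by parameters). Any hypothetical infinite $0$-dimensional $\CR_V$-interpretable subset of $M$ would pull back to an infinite $0$-dimensional subset of $R^n$, which is impossible; so $M$ has no such subsets, and since RCVF eliminates imaginaries down to $0$-dimensional sorts by \cite{MelRCVFEOI}, Proposition \ref{P: JP} delivers an $\CR_V$-definable embedding $M\hookrightarrow R^n$. If instead $M$ is internal to $\textbf{k}$, I invoke stable embeddedness: the induced $\CR_V$-structure on $\textbf{k}$ is that of a pure real closed field, so the finite-to-one surjection $\textbf{k}^n\to M$ and the corresponding equivalence relation are definable in pure RCF, and elimination of imaginaries for RCF then places $M$ in definable bijection with a definable subset of $\textbf{k}^m$, yielding the required embedding.

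The main obstacle is verifying non-richness of the balls sort $\mathcal B$: the $\Gamma$, $R$, and $\textbf{k}$ cases reduce cleanly to Lemma \ref{L: ominimal modular case}, Proposition \ref{P: JP}, and stable embeddedness with EOI for RCF respectively, but $\mathcal B$ needs an RCVF-specific 1-basedness computation tracking the parallel argument of \cite[\S 12]{CaHaYe}, together with a careful check that \cite{HaHaPe}'s classification of one-variable RCVF-imaginaries delivers exactly the four sorts in the form needed here.
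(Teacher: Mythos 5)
Your skeleton is the paper's: reduce via the classification of \cite[Proposition 5.5]{HaHaPe} to four distinguished sorts, kill two of them by non-richness (Theorem \ref{T: rich}/Corollary \ref{C: interacl2internal}), and then upgrade to a definable embedding. But there are two genuine gaps. First, the fourth sort is $R/V$ (cosets of the valuation ring, i.e.\ closed balls of one fixed radius), and you never actually rule it out: you yourself flag it as ``the main obstacle'' requiring an RCVF-specific 1-basedness computation. In the paper this step is exactly parallel to the $\Gamma$ case and needs no new computation: $R/V$ is a locally linear group by \cite[Proposition 6.16]{HaHaPe} (using definable Skolem functions, \cite[Remark 2.4]{vdDries-Tconvex}), $\Gamma$ is locally linear by \cite[Theorem B]{vdDries-Tconvex}, and locally linear groups are not rich by the word-for-word argument of \cite[Proposition 12.24]{CaHaYe}. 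Leaving the $R/V$ case as an acknowledged obstacle leaves the central case division unproved.

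Second, your $R$-branch is flawed as written. Internality to $R$ provides a definable surjection from a definable subset of $R^n$ onto $M$, not a finite-to-one one, and preimages of $0$-dimensional sets under such surjections need not be $0$-dimensional (the fibers may be infinite), so the ``pull back'' argument does not show that $M$ has no infinite $0$-dimensional interpretable subsets; repairing it by choosing a definable or algebraic section runs into exactly the issue over imaginary parameters that Lemma \ref{L: use of acl model} needs a finite correspondence with a definable set to handle. The paper's route is different and is what you should use: first dispose of the case that $M$ is locally interalgebraic with $\textbf k$ (then Corollary \ref{C: interacl2internal} gives internality to $\textbf k$, and elimination of imaginaries in the pure real closed field $\textbf k$ gives the embedding); in the remaining case $M$ is $R$-pure, i.e.\ locally interalgebraic with none of $\Gamma$, $\textbf k$, $R/V$, and then any infinite $0$-dimensional interpretable subset of $M$ would itself be locally interalgebraic with one of those three sorts (it cannot be with $R$), contradicting purity. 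That, together with elimination of imaginaries down to $0$-dimensional sorts (\cite{MelRCVFEOI}) and the acl-model property, is what licenses Proposition \ref{P: JP}. So reorder your cases (treat $\textbf k$ first, then assume $R$-purity) and replace the pullback argument with the purity argument; your $\textbf k$- and $\Gamma$-branches are essentially the paper's.
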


\begin{proof} Throughout, we use that $\mathcal R_V$ is weakly ranked, with the rank function $d$ given by dp-rank (indeed, this follows since RCVF is dp-minimal). \\

Our main tool is a result of Halevi, Hasson, and Peterzil (\cite[Proposition 5.5]{HaHaPe}), stating that every infinite $\CR_V$-interpretable set is locally interalgebraic with at least one of $R$, $\Gamma$, $\textbf{k}$, and $R/V$ (the quotient of the group $(R,+)$ by the subgroup $V$). In particular, $M$ is locally interalgebraic with one of these four sorts. Our first goal is to rule out two of the four:

\begin{claim}\label{Cl: ruling out linear sorts} $M$ is not locally interalgebraic with $\Gamma$ or $R/V$.
\end{claim}
\begin{proof} Let $D$ be either $\Gamma$ or $R/V$. If $M$ is locally interalgebraic with $D$ then by Theorem \ref{T: rich}, $D$ is rich. On the other hand, note that $D$ is a \textit{locally linear group} -- every definable subset of $D^n$ locally coincides with a coset in a neighborhood of a generic point (for $\Gamma$ see \cite[Theorem B]{vdDries-Tconvex}, and for $R/V$ see \cite[Proposition 6.16]{HaHaPe}, using the fact that $\CR_V$ has definable Skolem functions \cite[Remark 2.4]{vdDries-Tconvex}). By local linearity, it follows that $D$ is not rich (the proof from \cite[Proposition 12.24]{CaHaYe} works word-for-word here), a contradiction.
\end{proof}

So $M$ is locally interalgebraic with either $R$ or $\textbf k$. First assume $M$ is locally interalgebraic with $\textbf k$. Then by Corollary \ref{C: interacl2internal}, $M$ is internal to $\textbf k$. Since $\textbf k\models RCF$, it eliminates imaginaries, and so $M$ embeds into a power of $\textbf k$.

Finally, assume $M$ is not locally interalgebraic with $\textbf k$. Then, out of the four distinguished sorts $R,\Gamma,R/V,\textbf k$, $M$ is only locally interalgebraic with $R$ (we say that $M$ is \textit{$R$-pure}). It follows that $M$ has no infinite 0-dimensional interpretable subsets (otherwise $M$ would have to be locally interalgebbraic with one of the other three distinguished sorts). Since RCVF has elimination of imaginaries down to 0-dimensional sorts (\cite{MelRCVFEOI}) and definable Skolem functions (\cite[Remark 2.4]{vdDries-Tconvex}), we can conclude by Proposition \ref{P: JP}. 
% Then we are done: indeed, it was proven in ?? that every $R$-pure interpretable set definably embeds into a power of $R$.
\end{proof}

\begin{remark}\label{R: imaginaries} 
% The proof in (reference) uses two facts about $\mathcal R$: that algebraically closed subsets (with non-trivially valued elements) are elementary submodels, and that $\mathcal R$ eliminates imaginaries down to the geometric sorts (more generally, down to $R$ and all sorts of dimension 0). The relevant elimnination of imaginaries was proved in \cite{MelRCVFEOI} for RCVF, but fails in other T-convex expansions of o-minimal fields (\cite[Theorem 1.2]{HaHrMac3}). Thus, the same proof does not seem to generalize to the T-convex setting.

As pointed out in Remark \ref{R: no EOI}, the proof of Theorem \ref{T: RCVF} does not extend to general $T$-convex expansions of o-minimal fields. Instead, we propose a different method for handling the imaginary sorts in (power bounded) $T$-convex fields. The method is not worked out, and is too involved for the scope of this paper, but we briefly summarize. In the power bounded case, $\Gamma$ and $R/V$ are known to be locally linear (this is not known in the exponential case). So assuming power boundedness, it follows as above that $M$ is not internal to $\Gamma$ or $R/V$. Now consider the other two cases (that $M$ is internal to $\textbf k$, or that $M$ is $R$-pure).

By (\cite[Theorem B]{vdDries-Tconvex}), $\textbf k$ is a model of the underlying o-minimal theory of $R$ -- so if $M$ is internal to $\textbf k$, then elimination of imaginaries identifies $M$ with a definable $\textbf k$-relic, and thus (up to elementary equivalence) with a definable $\mathcal R$-relic.

If $M$ is $R$-pure, then as above, every 0-dimensional subset of $M$ is finite. In particular, the geometric dimension gives an additive weak rank on $M$. One can then try to mimic (\cite[Theorem 3.4]{JohnEOI}, following the path of \cite[Theorem 4.29]{JohnQpTop} ) to show that $M$ carries a definable `generic manifold' topology (i.e. $M$ is almost everywhere locally homeomorphic to open subsets of $R^n$ for some $n$). If this succeeds, one can rerun the relevant parts of the papers \cite{CaHaYe} and \cite{CasOmin}, replacing `geometric structures' with `structures admitting an additive weak rank' (thus instead of formally reducing to definable relics, we are suggesting to generalize the proof from the definable case). The idea is to use the manifold structure on $M$ to `generically and locally' inherit any necessary geometric input from $R$ (tangent spaces, purity of ramification, etc.) As the methods of \cite{CaHaYe} and \cite{CasOmin} almost everywhere use only generic and local data, we expect this approach to suffice.  This is similar in spirit to the next two sections of this paper, which exploit the underlying o-minimal structure of a T-convex field as a \textit{local reduct} in proving purity of ramification.
\end{remark}

\section{T-Convex Fields as Hausdorff Geometric Structures with a Lore}

\textbf{For the rest of the paper, we fix $T$, a complete o-minimal expansion of the theory of real closed fields. We let $T_{conv}$ be the expansion of $T$ considered in \cite{vdDries-TconvexCorrect}, by naming a convex subring closed under all  continuous $\emptyset$-definable total functions. We fix a sufficiently saturated model $\mathcal R_V\models T_{conv}$ with underlying model $\mathcal R\models T$ and distinguished convex subring $V\subset R$. Note that we do not assume $\mathcal R$ is power pounded.} The rest of the paper will focus on strongly minimal definable $\CR_V$-relics.
% The preprint \cite{CasOmin} of the first author proves partial trichotomy results for `
% Throughout this section $\CR$ denotes an o-minimal expansion of a real closed field, and $\CR_V$ a $T$-convex expansion of $\CR$. I.e., an expansion of $\CR$ by a unary predicate interpreted as convex valuation ring closed under all $\0$-definable continuous functions. 

In \cite{CasOmin}, the `higher dimensional' case of the trichotomy was proved for relics of o-minimal fields (by the `higher dimensional case', we mean the statement that strongly minimal relics on universes of dimension at least 3 are locally modular). The proof was done in the language of an abstract \textit{local reduct} (or lore, for shorthand) of a \textit{t-minimal Hausdorff geometric structure}. Recall (see \cite[Definition 2.8]{CasOmin}):
\begin{definition}\label{D: HGS}
    Let $\CK$ be a sufficiently saturated geometric structure, and $\tau$ a Hausdorff topology on $K$ with a uniformly $\emptyset$-definable basis (which we extend to all $K^n$ via the product topology). Then $(\CK,\tau)$ is a \textit{t-minimal Hausdorff geometric structure} if the following hold:
    \begin{enumerate}
        \item $K$ has no isolated points.
        \item If $\emptyset\neq X\subset K^n$ is definable, and $\operatorname{Fr}(X)$ is the frontier of $X$ (i.e. $\overline X-X$), then $\dim(\operatorname{Fr}(X))<\dim(X)$.
        \item If $\emptyset\neq X\subset K^n$ and $f:X\rightarrow K$ are definable, then the there is a definable $X'\subset X$ with $\dim(X-X')<\dim(X)$ such that $f$ is continuous on $X'$.
    \end{enumerate}
\end{definition}
\begin{remark} The general definition of a Hausdorff geometric structure is more complicated -- see \cite[Definition 3.1]{CaHaYe}. However, the two definitions are equivalent for a structure with a $\emptyset$-definable topology. In this paper, we will only consider $\emptyset$-definable topologies, so the t-minimal clause is automatic throughout. In particular, we will generally refrain from referring to t-minimality when citing \cite{CasOmin}.
\end{remark}

Suppose $(\CK,\tau)$ is a t-minimal Hausdorff geometric structure. Roughly speaking, a \textit{lore} of $(\CK,\tau)$ is a `rich enough' collection of definable sets satisfying certain closure properties in addition to a key `locality' property. The idea was that in the course of the proof in that paper, one only needs the sets in a given lore (called `loric sets'); so if certain geometric properties only hold for sets in a given lore, no harm is done.
As was mentioned in \cite{CasOmin}, the key reason for using lores was to later generalize to RCVF, where crucial features of the proof work only in the lore of `locally o-minimal' sets. 

In this section, we make everything in the previous paragraph precise. That is, we endow $\CR_V$ with the data of a differentiable\footnote{A Hausdorff geometric structure is \textit{differentiable} if it comes equipped with suitable notions smoothness and tangent spaces -- where tangent spaces are taken at smooth points of definable sets. See Definitions 3.27 and 3.34 of \cite{CaHaYe}.} Hausdorff geometric structure with a fixed lore (consisting of sets which are locally $\CR$-definable). In the next sections, we use this data to adapt the main theorem of \cite{CasOmin} to $\CR_V$.

We need the following properties of $\CR_V$: 
\begin{fact}\label{F: basic Tconv facts}
    \begin{enumerate}
        \item $\CR_V$ is weakly o-minimal with the exchange property (thus $\CR_V$ is geometric). In particular $\CR_V$ is $t$-minimal and, in fact, visceral (since the topology is uniform). See \cite{JohnVisc} for details and definitions.  
        %\item (\cite[Lemma 2.6]{vdDries-Tconvex}) Let $f: R^n\to R$ be an $\CR_V$-definable function. Then there are $\CR$-definable functions $f_1,\dots, f_k$ such that for all $a\in \dom(f)$ we have $\bigvee f_i(a)=f(a)$. Moreover, if $f$ is $A$-definable then so are $f_1,\dots, f_k$ (while this is not claimed in the reference, the simple proof shows it). 
        \item (\cite[Theorem 3.10]{vdDriesLewen}) $T_{conv}$ admits quantifier elimination relative to $T$. Specifically, if $T$ has quantifier elimination in some expanded language then $T_{conv}$ has quantifier elimination in the same language expanded by a symbol for the valuation ring and function symbols for all total $\CR$($\0$)-definable functions. 
    \end{enumerate}
\end{fact}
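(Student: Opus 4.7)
The plan is to establish (2) first and use it to derive (1). For (2), I would run a back-and-forth argument in the spirit of van den Dries--Lewenberg. Enlarging the language of $T$ so that $T$ itself admits quantifier elimination, let $\CL^*_V$ denote the expansion by a predicate for $V$ together with function symbols for all total $\0$-definable continuous functions of $T$. Given two sufficiently saturated models $\CR_V^1, \CR_V^2 \models T_{conv}$ and a partial $\CL^*_V$-isomorphism $f$ between small $\CL^*_V$-substructures, the task reduces to extending $f$ by a single element $a \in R^1$. The type of $a$ over $\dom(f)$ is determined by two pieces of data: the Dedekind cut of $a$ over $\dom(f)$ (i.e. its $T$-type, which is realizable in $\CR_V^2$ by o-minimality of $T$ and saturation), and the compatibility of $a$ with $V$ (whether $a - c \in V$ for some $c \in \dom(f)$, and if so, what residue cut it represents). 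Closure of $V$ under total $T$-definable functions is what guarantees that these two pieces of data can be jointly realized in $\CR_V^2$, via the standard classification of convex extensions of valuations on o-minimal fields.

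Given (2), part (1) follows quickly. Any $\0$-definable subset $X \subset R$ is a Boolean combination of $T$-definable subsets of $R$ and sets of the form $\{x : V(t(x))\}$ for $\0$-definable total $T$-terms $t$. Each $T$-definable subset of $R$ is a finite union of points and open intervals, by o-minimality of $T$. For each set of the second type, o-minimality provides a finite partition of $R$ into intervals on which $t$ is continuous and monotonic (or constant); on each such piece, convexity of $V$ together with monotonicity of $t$ forces $\{x : V(t(x))\}$ to be a convex (initial or final) segment. Since a Boolean combination of finite unions of convex sets in a linear order is again a finite union of convex sets, $X$ is a finite union of convex sets, establishing weak o-minimality. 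The exchange property is then the Macpherson--Marker--Steinhorn theorem for weakly o-minimal theories, and hence $\CR_V$ is a geometric structure. Finally, $t$-minimality and viscerality are immediate: the order topology, which coincides with the valuation topology since $V$ is convex and $\CR$ is a real closed field, is uniformly $\0$-definable by open intervals, and weak o-minimality together with exchange furnish the dimension inequalities (frontier strictly lower dimensional than closure) that $t$-minimality demands.

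The main obstacle is the ``far'' case of the back-and-forth argument in (2): when $a$ lies outside the convex hull of $\dom(f) \cdot V$, so that adjoining $a$ properly extends the valuation, one must select the image $a' \in R^2$ to simultaneously realize the correct Dedekind cut and sit in precisely the right position relative to $V^2$. The hypothesis that $V$ is closed under all total continuous $\0$-definable functions of $T$ is exactly what makes this simultaneous realization possible in a saturated model, since it ensures that the compatibility conditions between the $T$-cut and the valuation are expressible as quantifier-free $\CL^*_V$-formulas over $\dom(f)$, and hence transfer across the partial isomorphism.
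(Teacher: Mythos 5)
You are reconstructing results that the paper itself does not prove: part (2) is quoted verbatim from van den Dries--Lewenberg, and part (1) is stated as known, with Johnson's paper cited only for the definitions of $t$-minimality and viscerality. So the comparison is with the cited sources rather than with an argument in the paper, and measured against those sources your sketch has two genuine gaps. For (2), the back-and-forth shape is the right one, but your description of the data determining the quantifier-free type of $a$ over $\dom(f)$ --- the Dedekind cut together with ``whether $a-c\in V$ for some $c\in\dom(f)$ and which residue cut it represents'' --- is too coarse. Since the language carries a function symbol for every total $\emptyset$-definable function, the quantifier-free type must specify membership in $V$ of $t(a)$ for \emph{every} $\dom(f)$-definable function $t$, and reducing all of that to the position of $a$ relative to translates of $\dom(f)$ is essentially the entire content of the theorem: it is exactly where $T$-convexity, the Wilkie-type inequality, and the structure theory of $T$-convex hulls and tame extensions (Sections 2--3 of van den Dries--Lewenberg) are used. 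Invoking ``the standard classification of convex extensions of valuations on o-minimal fields'' as a black box is circular here, because for a general o-minimal $T$ that classification \emph{is} the theorem being proved.

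For (1), the deduction of weak o-minimality from relative quantifier elimination is correct and standard (the parenthetical ``initial or final segment'' should just be ``convex,'' which is all you use). The gap is the exchange property: it is not a consequence of weak o-minimality of the theory alone, and there is no Macpherson--Marker--Steinhorn theorem at that level of generality to lean on. Their monotonicity theorem only yields that unary definable functions are piecewise \emph{locally} constant or locally monotone; nothing in weak o-minimality excludes a definable, locally constant function $f$ with infinite (even convex) image, and any such function destroys exchange, since for generic $a$ one has $f(a)\in\dcl(a)\setminus\acl(\emptyset)$ while $a\notin\acl(f(a))$ (a dense order with a definable convex equivalence relation and a monotone definable map onto a convex ``spine'' realizes exactly this). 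What you actually need is the finer consequence of the quantifier elimination you proved (or of van den Dries--Lewenberg and the sequel paper): every $\CR_V$-definable unary function agrees piecewise, or at least generically, with $\CR$-definable functions, so that $\acl$ on the field sort is computed in the o-minimal reduct and exchange transfers from $\CR$; alternatively, cite a result specific to $T_{conv}$ (or to unstable dp-minimal fields). As written, the exchange step is asserted on the strength of a citation that does not cover it.
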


Fact \ref{F: basic Tconv facts}(1) implies various nice geometric properties (see \cite{JohnVisc}). For example, one gets the frontier inequality for $\CR_V$-definable sets, as well as the fact that the local dimension of a definable set $X$ at any generic point is $\dim(X)$. 

Quantifier elimination also implies the following:

\begin{lemma} Let $X\subset R^n$ be $\CR_V(A)$-definable. Then there are $\CR(A)$-definable sets $X_1,...,X_k\subset R^n$, and $\CR_V(A)$-definable open subsets $U_i\subset X_i$, such that $X=\bigcup_{i=1}^kU_i$.
\end{lemma}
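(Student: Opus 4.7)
My plan is to combine the relative quantifier elimination of Fact \ref{F: basic Tconv facts}(2) with cell decomposition in the underlying o-minimal structure $\CR$, exploiting the key topological fact that the convex subring $V$ is clopen in the order topology on $R$.

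First, by the relative QE, $X$ is a Boolean combination of $\CR(A)$-definable sets and sets of the form $\{x \in R^n : f(x) \in V\}$, where each $f$ is a total $\CR(A)$-definable function (obtained by instantiating parameters from $A$ into terms built from the $\emptyset$-definable function symbols). Passing to disjunctive normal form, write $X = \bigcup_{i=1}^m (Y_i \cap Z_i)$, where each $Y_i$ is $\CR(A)$-definable and each $Z_i$ is a finite intersection of literals of the form $\{f \in V\}$ or $\{f \notin V\}$ for total $\CR(A)$-definable functions $f$.

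Next, $V$ is clopen in $R$: openness of $V$ follows from $V$ being a subring containing $1$ together with convexity (any $a \in V$ has $(-(2|a|+1), 2|a|+1) \subseteq V$), and openness of $R \setminus V$ follows because $a \notin V$ forces $|a|/2 \notin V$, so $\{c : |c| > |a|/2\}$ is a neighborhood of $a$ contained in $R \setminus V$ by convexity. Now apply cell decomposition in $\CR$ over $A$ to partition $R^n$ into finitely many $\CR(A)$-definable cells $D_1, \ldots, D_k$ adapted to the $Y_i$'s and to every function $f$ appearing in the $Z_i$'s, so that each $Y_i$ is a union of cells and every such $f$ is continuous on each $D_\ell$. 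By clopenness of $V$, for each $f$ continuous on $D_\ell$, both $\{f \in V\} \cap D_\ell$ and $\{f \notin V\} \cap D_\ell$ are relatively clopen in $D_\ell$; hence each $Z_i \cap D_\ell$ is clopen in $D_\ell$, and $X \cap D_\ell = \bigcup_{i : D_\ell \subseteq Y_i}(Z_i \cap D_\ell)$ is relatively open in $D_\ell$. Setting $X_\ell := D_\ell$ and $U_\ell := X \cap D_\ell$ then yields the desired decomposition.

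I do not foresee a serious obstacle: given the relative QE and the clopenness of $V$, the argument amounts to routine bookkeeping via cell decomposition. The main interpretive subtlety is that ``open'' in the conclusion should be read as relatively open in $X_i$ rather than in $R^n$ (since the cells $X_i$ need not have dimension $n$), which is the natural reading compatible with the intended use of the lemma toward building a lore of locally $\CR$-definable sets.
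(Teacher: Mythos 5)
Your proof is correct and rests on the same ingredients as the paper's: relative quantifier elimination for $T_{conv}$, the clopenness of $V$ in the order topology, and piecewise continuity of $\CR$-definable functions, so that the $V$-conditions become relatively clopen where the relevant functions are continuous (and your reading of ``open'' as relatively open in $X_i$ is the intended one). The only organizational difference is that you handle the continuity step in one pass via a cell decomposition adapted to all the data, whereas the paper runs an induction on dimension, splitting off the continuity locus and recursing on the lower-dimensional remainder; the two bookkeeping schemes are interchangeable here.
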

\begin{proof} By quantifier elimination, $X$ is defined over $A$ by a quantifier free formula in the language consisting of all $\CR(\emptyset)$-definable sets and functions, and a predicate for $V$. Since the desired form of $X$ is preserved under unions, it suffices to consider a conjunction of atomic and negated atomic formulas. Every atomic formula occurs in the language of $\CR$, except those of the form $f(\bar x)\in V$ for an $\CR(\emptyset)$-definable function $f$. 

So it suffices to show that if $Y$ and $f:Y\rightarrow R$ are $\CR(A)$-definable, then the formulas $f(\overline y)\in V$ and $f(\overline y)\notin V$ have the desired form. We argue this by induction on $\dim(Y)$. For the inductive step, let $Y'\subset Y$ be the set of points where $f$ is continuous; then $Y'$ is $\CR(A)$-definable and satisfies $\dim(Y-Y')<\dim(Y)$. Now since $f$ is continuous on $Y'$, the sets $f(\overline y)\in V$ and $f(\overline y)\notin V$ restrict to clopen subsets of $Y'$. Combining this with an inductive treatment of $Y-Y'$ finishes the proof.
\end{proof}

Following \cite{CaHaYe}, let us say that a definable set $X'$ is a \textit{d-approximation} of a definable set $X$ at a point $a\in X$ if (1) $\dim(X')=\dim(X)$ and (2) $X$ and $X'$ agree in a neighborhood of $a$. In this language, we conclude:

\begin{corollary}\label{C: generically locally ominimal}
    Let $X\subset R^n$ be $\CR_V(A)$-definable, and $a\in X$ generic over $A$. Then $X$ has an $\CR(A)$-definable d-approximation at $a$.
\end{corollary}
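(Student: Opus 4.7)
The plan is to build the d-approximation by applying the preceding lemma and then carefully extracting an $\CR(A)$-definable set that coincides with $X$ in a neighborhood of $a$. First I would apply the lemma to write $X = \bigcup_{i=1}^k U_i$, where each $U_i$ is $\CR_V(A)$-definable and relatively open in an $\CR(A)$-definable set $X_i$. The natural candidate is then $X' := \bigcup_{i \in I} X_i$, where $I := \{i : a \in U_i\}$, and the two things to verify are that $\dim(X') = \dim(X)$ and that $X$ and $X'$ coincide in some neighborhood of $a$.

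To establish the local coincidence, the main tool is the frontier inequality available in the visceral structure $\CR_V$. For $i \notin I$, genericity of $a$ forbids $a \in \fron(U_i)$: otherwise $\dim(a/A) \leq \dim(\fron(U_i)) < \dim(U_i) \leq \dim(X)$, contradicting $\dim(a/A) = \dim(X)$. So $a \notin \overline{U_i}$ and there is a neighborhood $W_i$ of $a$ disjoint from $U_i$. For $i \in I$, the openness of $U_i$ in $X_i$ yields a neighborhood $W_i'$ of $a$ with $W_i' \cap X_i = W_i' \cap U_i$. Intersecting these finitely many neighborhoods produces a $W$ on which $X \cap W = \bigcup_{i \in I}(U_i \cap W) = \bigcup_{i \in I}(X_i \cap W) = X' \cap W$.

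The main technical obstacle will be controlling the dimension of $X'$: while $a \in X_i$ for $i \in I$ forces $\dim(X_i) \geq \dim(X)$, a priori $X_i$ may contain higher-dimensional components away from $a$, giving $\dim(X') > \dim(X)$. The remedy is to first invoke o-minimal cell decomposition of $\CR$ to refine each $X_i$ into a disjoint union of $\CR(A)$-definable cells, and correspondingly split each $U_i$ (the intersection of $U_i$ with a cell remains relatively open in that cell). After this refinement, every $X_i$ is a cell of uniform local dimension, so every nonempty relatively open subset of $X_i$ has the same dimension as $X_i$. Then for $i \in I$, the chain $\dim(X_i) = \dim(U_i) \leq \dim(X) \leq \dim(X_i)$ forces $\dim(X_i) = \dim(X)$, so $\dim(X') = \dim(X)$, completing the construction.
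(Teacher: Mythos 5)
Your proof is correct and takes essentially the same route as the paper: apply the preceding lemma, then use genericity of $a$ together with the frontier inequality for $\CR_V$-definable sets to see that $X$ agrees with the chosen $\CR(A)$-definable set near $a$. The only difference is cosmetic: where you control dimension by cell-decomposing each $X_i$ and taking the union of the cells through $a$, the paper fixes a single $X_i$ with $a\in U_i$ and replaces it by its set of points of local dimension at most the local dimension of $U_i$ at $a$, then concludes via the same genericity/frontier argument.
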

\begin{proof}
 The above lemma gives us an $\CR(A)$-definable $X_i$, and an open $\CR_V(A)$-definable $U_i\subset X_i$ containing $a$ and contained in $X$. Let $d_a$ be the local dimension of $U_i$ at $a$. Replacing $X_i$ if needed by the (definable) set of points of $X_i$ of local dimension at most $d_a$, we may assume that $\dim(X_i)\leq d_a\leq\dim(X)$. But then since $a$ is generic in $X$ over $A$, it follows easily that $\dim(U_i)=\dim(X_i)=\dim(X)$, and $a$ is generic in each of these three sets over $A$. By the frontier inequality, each of $X$, $X_i$, and $U_i$ has the same germ at $a$. Thus $X_i$ is an $\CR(A)$-definable d-approximation.
\end{proof}

It was proved in \cite[\S 9.1]{CaHaYe} that o-minimal expansions of fields are \textit{differentiable Hausdorff geometric structures} (see \cite[\S ]{CaHaYe} for the relevant definitions). The proof covered 1-h-minimal theories, so in particular power bounded $T$-convex theories. Using Corollary \ref{C: generically locally ominimal}, the same result easily expands to $\CR_V$. 

\begin{proposition}
    $\CR_V$ is a differentiable Hausdorff geometric structure. 
\end{proposition}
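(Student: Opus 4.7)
The plan is to transfer the differentiable Hausdorff geometric structure on $\CR$ (established for o-minimal expansions of fields in \cite[\S 9.1]{CaHaYe}) to $\CR_V$ via the d-approximation principle of Corollary \ref{C: generically locally ominimal}. The underlying geometric and topological data is already present: by Fact \ref{F: basic Tconv facts}(1), $\CR_V$ is visceral and satisfies the frontier inequality, so the axioms of a t-minimal Hausdorff geometric structure in Definition \ref{D: HGS} hold directly. What remains is to equip $\CR_V$ with a notion of smooth locus and tangent space on definable sets, and to verify the compatibility axioms from Definitions 3.27 and 3.34 of \cite{CaHaYe}.

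First I would define, for an $\CR_V$-definable set $X \subseteq R^n$, the smooth locus $X_{\mathrm{sm}}$ to consist of those $a \in X$ which admit an $\CR$-definable d-approximation $X'$ at $a$ that is smooth at $a$ in the sense already established for $\CR$; the tangent space $T_a X$ is then defined to be $T_a X'$ (computed in $\CR$). Well-definedness is immediate: if $X_1'$ and $X_2'$ are two such d-approximations at $a$, then they agree with $X$, hence with one another, on an $\CR_V$-definable open neighborhood $U$ of $a$. Intersecting with $U$ yields a common $\CR$-definable germ at $a$ (via quantifier elimination for $T_{conv}$ over $T$, as in the proof of Corollary \ref{C: generically locally ominimal}), and $T_aX$ depends only on that germ inside the ambient $\CR$-structure.

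Next I would verify that $X_{\mathrm{sm}}$ is a dense open $\CR_V$-definable subset of $X$ with the correct dimension complement. Density follows by combining Corollary \ref{C: generically locally ominimal} with the fact that every $\CR$-definable set has a dense open smooth locus: at any $A$-generic point $a \in X$ we obtain an $\CR(A)$-definable d-approximation $X'$ of the correct dimension, and by genericity $a$ lies in the smooth locus of $X'$. Definability of $X_{\mathrm{sm}}$ follows by collecting the approximations uniformly in a definable family; the frontier inequality for $\CR_V$ then forces $\dim(X \setminus X_{\mathrm{sm}}) < \dim(X)$. The differentiable axioms (dimension of tangent spaces, the chain rule for definable maps smooth at a point, agreement of the tangent map with the usual derivative of the approximating $\CR$-data, behavior under products and projections, etc.) are all statements about germs at smooth points, and thus transfer to $\CR_V$ from their validity for $\CR$ through the d-approximation identification.

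The only genuine subtlety, and the step I would write most carefully, is that the data being transferred is local at \emph{generic} points, whereas the definition of a differentiable HGS requires a globally coherent smooth locus and tangent bundle on each definable set. This is handled by noting that all relevant predicates ($a \in X_{\mathrm{sm}}$, $v \in T_aX$, smoothness of a definable map at a point) are themselves local and $\CR_V$-definable in the data of $X$; so once they are verified at generic points via approximations, a standard compactness/definability patching argument extends them correctly to arbitrary points, with the required axioms surviving the patching because each is local in nature. Power boundedness is not used anywhere in this transfer, which is why the argument covers $\CR_V$ in full generality beyond the 1-h-minimal case already handled in \cite{CaHaYe}.
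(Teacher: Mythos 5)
Your proposal is correct and follows essentially the same route as the paper: inherit the Hausdorff geometric structure from viscerality with exchange, define smoothness and tangent spaces at a point via $\CR$-definable d-approximations (well-defined since the notions are d-local), and reduce the only real issue—smoothness at generic points—to the o-minimal statement together with Corollary \ref{C: generically locally ominimal}. Your extra remarks on well-definedness, definability of the smooth locus, and patching are elaborations of what the paper dispatches as routine.
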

\begin{proof}
    It was noted in \cite{CaHaYe} (using results from \cite{JohnVisc}) that visceral structures with the exchange property are Hausdorff geometric structures (recall that \textit{visceral} structures are $t$-minimal structures whose topology is uniform, see \cite{DolGodViscerality} or \cite{JohnVisc} for details). So we only need to show $\CR_V$ is differentiable. Recall this requires assigning every definable set to a \textit{smooth locus}, and assigning every point of the smooth locus to a \textit{tangent space} satisfying certain properties (see \cite[Definition 3.34]{CaHaYe}).
    
    Since smoothness and tangent spaces are $d$-local (i.e. invariant under d-approximations), they are automatically inherited from $\CR$ to $\CR_V$. Namely, let $X\subset R^n$ be $\CR_V$-definable, and $a\in X$. Say that $a$ is a smooth point in $X$ if (1) there is an $\CR$-definable d-approximation of $X$ at $a$, and (2) for some (equivalently any) such d-approximation $X'$, $a$ is smooth in $X'$ in the sense of $\CR$. Moreover, in this case, define $T_x(X)$ as exactly $T_x(X')$ for some (equivalently any) d-approximation $X'$. It follows easily that this data turns $\CR_V$ into a differentiable Hausdorff geometric structure. Indeed, the only potential difficulty is showing that generic points of definable sets are smooth -- and this follows from the analogous statement in $\CR$ and Corollary \ref{C: generically locally ominimal}.
\end{proof}

Our next goal is to endow $\CR_V$ with an appropriate \textit{lore}, in the sense of \cite[Definition 2.8]{CasOmin}:

\begin{definition}\label{D: lore}
    A \textit{lore} of $\CR_V$ is a collection $\CS$ of $\mathcal R_V$-definable sets (called \textit{loric sets}) such that:
    \begin{enumerate}
        \item\label{D: loric sets basic} (Basic Properties) $R$ is loric. The diagonal $\{(x,x)\}\subset R^2$ is loric. The loric sets are closed under finite products, finite intersections, and coordinate permutations.
        \item\label{D: loric sets invariance} (Invariance) If $f:X\rightarrow Y$ is an $\CR_V$-definable homeomorphism, $X$ is loric, and the graph of $f$ is loric, then $Y$ is loric.
        \item\label{D: loric sets loc} (Locality) Every $\CR_V$-definable open subset of a loric set is loric. If $X$ is $\CR_V$-definable, and $\{U_i:i\in I\}$ is an open cover of $X$ by loric sets, then $X$ is loric.
        \item\label{D: loric sets gen} (Genericity) If $X\neq\emptyset$ is $\CR_V(A)$-definable, then there is a relatively open $\CR_V(A)$-definable $X'\subset X$ such that $\dim(X-X')<\dim(X)$ and $X'$ is loric.
    \end{enumerate}
\end{definition}

%We remind that a lore on a Hausdorff geometric structrue $(\CR, \tau)$ is a collection of definable sets containing the diagonal of $R^2$, closed under finite products, intersections and permutations of coordinates. It is also required that a set is loric if and only if it has a cover by relatively open loric sets and that the image of a loric set under a function whose graph is loric is also loric. Finally, we require that every definable set contains a large loric subset definable over the same parameters.

Precisely, we will take the \textit{locally $\CR$-definable sets}:

\begin{definition} If $X\subset R^n$ is $\CR_V$-definable, we call $X$ \textit{locally $\CR$-definable} if $X$ admits an open cover by $\CR$-definable sets (note that we do not require the open cover to be definable as a family).
\end{definition}

Now we show:

\begin{proposition}\label{P: lore}
    Let $\CS$ be the class of $\CR_V$-definable sets that are locally $\CR$-definable. Then $\CS$ is a lore. 
\end{proposition}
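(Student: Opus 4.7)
The plan is to verify each of the four axioms in Definition \ref{D: lore}. Throughout, I use that the product topology on $R^n$ admits a basis of $\CR(\emptyset)$-definable open sets (since open intervals are $\CR$-definable), so in checking local $\CR$-definability the witnessing open neighborhoods may always be chosen $\CR$-definable. Axiom (1) is then routine: $R$ and the diagonal are $\CR$-definable; for loric $X, Y$ and a point of $X \times Y$ (resp.\ $X \cap Y$), a product (resp.\ intersection) of $\CR$-definable basic open witnesses gives an $\CR$-definable local witness; coordinate permutations are handled analogously. Axiom (3) is similar: given $a$ in an $\CR_V$-definable open subset $U$ of a loric set $X$, use openness of $U$ in $X$ to shrink an $\CR$-definable open witness $V$ of loricity at $a$ to an $\CR$-definable $V'' \subset V$ with $V'' \cap X \subset U$; then $V'' \cap U = V'' \cap X$ is $\CR$-definable. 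The open-cover clause is treated in the same spirit.

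For axiom (2), suppose $f: X \to Y$ is an $\CR_V$-definable homeomorphism with $X$ and $\gra(f)$ loric. Fix $b \in Y$ with preimage $a := f^{-1}(b)$, and choose an $\CR$-definable open box $V_1 \times V_2 \ni (a, b)$ such that $\gra(f) \cap (V_1 \times V_2)$ is $\CR$-definable. The projection to the second factor, namely $S := f(X \cap V_1) \cap V_2$, is then $\CR$-definable (as $\CR$ is a first-order theory closed under projections), and is open in $Y$ around $b$ since $f$ is a homeomorphism. Writing $S = Y \cap U$ for some ambient open $U$ and shrinking to an $\CR$-definable basic open $V_2' \ni b$ with $V_2' \subset V_2 \cap U$, we obtain $Y \cap V_2' = S \cap V_2'$, which is $\CR$-definable.

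The main step is axiom (4). Given $\CR_V(A)$-definable $X \neq \emptyset$, apply the lemma preceding Corollary \ref{C: generically locally ominimal} to write $X = \bigcup_{i=1}^k U_i$ with each $U_i$ an $\CR_V(A)$-definable open subset of an $\CR(A)$-definable set $X_i \subset R^n$. For each $i$, define
\[
X_i' := \{a \in X : \text{for some basic open } V \ni a,\ X \cap V = X_i \cap V\},
\]
which is $\CR_V(A)$-definable (via the definable topology basis) and open in $X$ (any witness $V$ for $a$ also witnesses membership for every $b \in V \cap X$). Set $X' := \bigcup_i X_i'$; this is relatively open in $X$ and $\CR_V(A)$-definable. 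By Corollary \ref{C: generically locally ominimal} and its proof, every $A$-generic point of $X$ admits some $X_i$ as an $\CR(A)$-definable $d$-approximation and thus lies in $X_i'$, so $\dim(X \setminus X') < \dim X$. Finally, for loricity of $X'$: given $a \in X'$ with witness $V$ (so $X \cap V = X_i \cap V$), the inclusion $V \cap X = V \cap X_i \subset X_i' \subset X'$, combined with $X' \subset X$, yields $V \cap X' = V \cap X_i$, which is $\CR$-definable.

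I expect the main obstacle to be axiom (4): the naive attempt to use the $U_i$'s themselves as a cover fails because each $U_i$ is a priori only open in $X_i$ rather than in $X$. The construction of the germ loci $X_i'$ converts the local $\CR$-definability condition into a first-order formula (using the definable topology basis), producing an open-in-$X$ set that inherits local $\CR$-definability from $X_i$ while still containing all generic points of $X$.
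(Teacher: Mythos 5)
Your proof is correct and takes essentially the same route as the paper: the paper likewise treats axioms (1)--(3) of Definition \ref{D: lore} as routine and proves genericity by defining the $\CR_V(A)$-definable, relatively open locus of points where $X$ locally agrees with an $\CR(A)$-definable set, the only (harmless) difference being that the paper extracts a single uniform formula $\phi(x,y)$ by compactness from Corollary \ref{C: generically locally ominimal}, whereas you work directly with the finitely many $X_i$ coming from the quantifier-elimination decomposition. One small point of precision: at a generic point the $d$-approximation produced in the Corollary's proof is a modified version of $X_i$ (cut down to points of the right local dimension), not $X_i$ itself; but since membership in your $X_i'$ only requires germ agreement with $X_i$, which does hold there, your argument is unaffected.
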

\begin{proof}
    Items (1)-(3) of Definition \ref{D: lore} are an easy exercise. We show the genericity property. So let $X\neq\emptyset$ be $\CR_V(A)$-definable. By Corollary \ref{C: generically locally ominimal}, $X$ admits an $\CR(A)$-definable d-approximation at any generic point over $A$. By compactness, there is a single formula $\phi(x,y)$ over $A$ (in the language of $\CR$) witnessing this -- i.e. for every generic $a\in X$ over $A$, some instance $\phi(x,b)$ defines a d-approximation of $X$ at $a$. Now let $X'$ be the set of all $a\in X$ such that some instance $\phi(x,b)$ defines a d-approximation of $X$ at $a$. Then $X'$ is $\CR_V(A)$-definable, open in $X$, locally $\CR$-definable, and contains all generics of $X$ over $A$, which is enough. 

\end{proof}

\section{Purity of Ramification}

The proof of the `higher-dimensional' case of the trichotomy in \cite{CasOmin} crucially used a \textit{purity of ramification} statement for o-minimal structures. Before stating this result, we recall that a \textit{loric map} is a continuous map with loric graph, and a \textit{loric $n$-manifold} is a definable set everywhere locally homeomorphic to open subsets of $n$-space, via homeomorphisms that are also loric maps. 

The following definitions are taken from \cite{CasOmin}:

\begin{definition}
    Let $f:X\rightarrow Y$ be a map of topological spaces. The \textit{ramification locus of $f$}, denoted $\operatorname{Ram}(f)$, is the set of $x\in X$ such that $f$ is not injective on any neighborhood of $x$.
\end{definition}

\begin{definition}
    Let $(\mathcal K,\tau,\mathcal S)$ be a t-minimal Hausdorff geometric structure with a fixed lore. Let $d$ be a positive integer. We say that $(\mathcal R,\tau,\mathcal S,d)$ has \textit{manifold ramification purity of order $d$} if whenever $f:X\rightarrow Y$ is a finite-to-one loric map of loric $n$-manifolds (for the same $n$), then either $\operatorname{Ram}(f)=\emptyset$ or $\dim(\operatorname{Ram}(f))\geq n-d$.  
\end{definition}

In \cite{CasOmin} we proved that o-minimal fields have manifold ramification purity of order 2 (with the `full' lore of all definable sets):

\begin{fact}\label{F: ominimal purity}
    $\mathcal R$ has manifold ramification purity of order 2 when equipped with the full lore.
\end{fact}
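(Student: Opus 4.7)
The plan is to reduce to a local statement at a generic point of the ramification locus, then derive a contradiction from the codimension hypothesis via a topological argument. Suppose for contradiction that $\operatorname{Ram}(f) \neq \emptyset$ and $\dim(\operatorname{Ram}(f)) \leq n - 3$. Let $x_0$ be a generic point of $\operatorname{Ram}(f)$ over the parameters defining $(f, X, Y)$. Shrinking to a definable open neighborhood, I may assume that $X$ is a cell definably homeomorphic to $R^n$, that $\operatorname{Ram}(f) \cap X$ has dimension at most $n-3$, and that $f$ is smooth (in the o-minimal sense) on a dense open definable subset of $X$. Outside $\operatorname{Ram}(f)$, $f$ is locally injective by definition, so the o-minimal inverse function theorem, applied at smooth points, gives that $f$ restricted to $X \setminus \operatorname{Ram}(f)$ is a local homeomorphism on a dense definable open subset.

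The heart of the argument is to promote this local injectivity on the complement of $\operatorname{Ram}(f)$ to global injectivity there. In classical real topology this would follow from the simple connectivity of the complement of a codimension $\geq 3$ subset of $R^n$, together with the principle that a finite-to-one local homeomorphism onto a simply connected base is a trivial cover. I would mimic this in the o-minimal setting using the definable homotopy theory of Berarducci--Otero, Edmundo, Baro et al., which supplies definable analogs of fundamental groups, covering spaces, and path lifting in o-minimal expansions of fields. Alternatively, one can proceed more combinatorially by invoking definable triangulation to present $f$ as a simplicial map and then arguing that top-dimensional simplices in $X$ mapping to overlapping top-dimensional simplices in $Y$ would produce non-injectivity on a codimension $\leq 2$ set, contradicting the assumption.

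Once $f$ is known to be injective on $X \setminus \operatorname{Ram}(f)$, a continuity and finite-to-oneness argument extends injectivity across $\operatorname{Ram}(f)$: any pair of distinct points in $X$ near $x_0$ can be approximated by pairs in $X \setminus \operatorname{Ram}(f)$ whose images are distinct, and distinctness passes to the limit because $f$ is finite-to-one (so clustering of distinct preimages at a single image point is impossible when injectivity holds on the dense open). This directly contradicts $x_0 \in \operatorname{Ram}(f)$. The main obstacle is the o-minimal simple-connectivity input for the complement of a codimension $\geq 3$ definable set in an $n$-cell: codimension $\geq 3$ is exactly the threshold at which fundamental-group-level information (not merely connectedness) becomes necessary, and establishing this effectively in a general o-minimal expansion of a field, without recourse to classical ambient topology, is the technical core of the proof in \cite{CasOmin}.
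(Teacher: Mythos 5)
The paper itself gives no proof of this Fact; it is imported wholesale from \cite{CasOmin}, so your sketch can only be measured against the classical purity-of-branch-locus argument (Church--Hemmingsen style) that it is modelled on. Your outline -- localize at a point $x_0$ of $\operatorname{Ram}(f)$, use definable simple connectivity of the complement of a codimension $\geq 3$ definable set to trivialize a covering, then extend injectivity across the branch set -- is the right shape, but two steps have genuine gaps. First, the principle you invoke, that a finite-to-one local homeomorphism onto a simply connected base is a trivial cover, is false without properness (restrict any covering to a non-saturated open subset of its source). To run the covering argument you must build a \emph{normal neighborhood}: using finiteness of $f^{-1}(f(x_0))$ and definable compactness of a small sphere about $x_0$, choose $V\ni f(x_0)$ and $U$ a definably connected component of $f^{-1}(V)$ with $\overline U\cap f^{-1}(f(x_0))=\{x_0\}$, so that $f\restriction_U\colon U\to V$ is definably proper; and the covering must be taken between $U\setminus f^{-1}(f(B))$ and $V\setminus f(B)$, where $B=\operatorname{Ram}(f)\cap U$, not between $X\setminus\operatorname{Ram}(f)$ and $Y$. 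Your claimed global injectivity on $X\setminus\operatorname{Ram}(f)$ is neither needed nor obtainable this way (a local homeomorphism with simply connected \emph{source} need not be injective). Relatedly, off $\operatorname{Ram}(f)$ the map is a local homeomorphism \emph{everywhere} by definable invariance of domain -- no smoothness is needed -- and the ``dense open subset'' version you state would not suffice for a covering argument.

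Second, and more seriously, your final step -- ``distinctness passes to the limit because $f$ is finite-to-one'' -- is not an argument: distinct points with distinct images can converge to distinct points with equal images. The classical finish is: if $a\neq b$ in $U$ have $f(a)=f(b)$, use \emph{openness} of $f$ at both $a$ and $b$ to find a common value $y'\in V\setminus f(B)$ with two distinct preimages in $U$, contradicting one-sheetedness; hence $f$ is injective on $U$, contradicting $x_0\in\operatorname{Ram}(f)$. That openness is available (via invariance of domain) only at points outside $\operatorname{Ram}(f)$; since loric maps are merely continuous and finite-to-one, not open (think of a fold), colliding pairs with a coordinate in $\operatorname{Ram}(f)$, or mapping into $f(\operatorname{Ram}(f))$, require a genuinely separate treatment (e.g.\ a local degree or multiplicity count in o-minimal homology), and your sketch does not engage with this case at all -- note the definition of manifold ramification purity in this paper makes no openness hypothesis, and the T-convex counterexample in Proposition \ref{P: purity failure} shows the issue is not cosmetic. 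Finally, your ``combinatorial'' alternative presupposes triangulating the map $f$ itself (making it simplicial), which is not available in a general o-minimal expansion of a field, and the definable simple connectivity and covering-space inputs you defer are themselves substantial citations. So the proposal is a reasonable outline of the expected strategy, but as written it is not a proof.
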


We expected the same to hold in $\CR_V$, but this turned out to fail (for RCVF, thus for any choice of $\CR_V$). Let us sketch a counterexample:

\begin{proposition}\label{P: purity failure} $\CR_V$, equipped with the full lore, does not have manifold ramification purity of order $d$ for any $d$.
\end{proposition}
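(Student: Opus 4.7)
The plan is to exhibit, for each $d \ge 1$, a finite-to-one loric map $f$ between loric $n$-manifolds (with $n$ depending on $d$) whose ramification locus is nonempty and of dimension strictly less than $n - d$.

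For the base case $d = 1$, the classical complex-squaring map $f \colon R^2 \to R^2$, $(x,y) \mapsto (x^2 - y^2,\, 2xy)$, serves as the witness: it is continuous, $\CR$-definable, and finite-to-one via the antipodal pairing $(x,y) \leftrightarrow (-x,-y)$. A direct computation shows that $f$ is a local homeomorphism at every nonzero point (the Jacobian $4(x^2 + y^2)$ vanishes only at the origin), while every neighborhood of the origin contains antipodal pairs; hence $\operatorname{Ram}(f) = \{(0,0)\}$, of dimension $0 < 1 = n - 1$. The same map also serves in the reduct $\CR$, illustrating the well-known fact that o-minimal fields satisfy purity only of order $2$, not of order $1$.

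For $d \ge 2$, the plan is to bootstrap from complex squaring by exploiting the abundance of clopen $\CR_V$-definable subsets of $R^n$---notably $V^n$, $\mathfrak m^n$, and their complements---that are loric for the full lore. Concretely, one would assemble $f \colon R^n \to R^n$ piecewise on such clopens: on a prescribed clopen shell apply a complex-squaring-like polynomial map with a ramification center shifted according to the valuation data, and on the complement apply the identity. Because each clopen piece is both open and closed, the assembled $f$ is automatically continuous and finite-to-one, and on each piece agrees with a locally $\CR$-definable map.

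The main obstacle---and where I expect the bulk of the technical work to lie---is in bounding the dimension of the total ramification locus. Within any single clopen piece $U$ the map is locally $\CR$-definable, so Fact~\ref{F: ominimal purity} forces $\dim(\operatorname{Ram}(f|_U)) \ge n - 2$; to push the global ramification dimension below $n - d$ for arbitrarily large $d$, the centers chosen on different clopen shells must be arranged so that their ramification contributions superimpose on a common low-dimensional subset rather than union up to fill a full codimension-$2$ subvariety. It is precisely the availability of genuinely non-$\CR$-definable clopens such as $\mathfrak m^n$ (which have no $\CR$-definable open neighborhoods at their boundary) that should enable this superposition, marking the essential gap between the full lore and the locally-$\CR$-definable lore in which purity of order $2$ is later recovered.
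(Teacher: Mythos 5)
Your base case only refutes purity of order $1$, which already fails in the o-minimal reduct $\CR$ itself (and is consistent with Fact \ref{F: ominimal purity}); the actual content of the proposition is failure for \emph{every} $d$, in particular $d=2$, which is what separates the full lore from the lore of locally $\CR$-definable sets. For $d\geq 2$ your plan has a genuine gap, and as described it cannot be repaired: if you assemble $f$ piecewise on clopen sets and create the ramification \emph{inside} a piece $U$ on which $f$ is locally $\CR$-definable (a shifted squaring map, say), then since $U$ is open one has $\operatorname{Ram}(f\restriction_U)\subset\operatorname{Ram}(f)$, and Fact \ref{F: ominimal purity} already forces $\dim(\operatorname{Ram}(f\restriction_U))\geq n-2$ whenever it is nonempty. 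So a single piece by itself contributes a set of dimension at least $n-2$ to $\operatorname{Ram}(f)$, and no arrangement or ``superposition'' of the centers on different shells can push the global ramification below codimension $2$. This is precisely the argument of Proposition \ref{P: purity success}: any construction whose ramification points lie interior to locally $\CR$-definable pieces stays within the regime where order-$2$ purity holds.

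The missing idea is to make $f$ injective on each piece and to concentrate all ramification at a point on the common frontier of the pieces, where $f$ admits no $\CR$-definable d-approximation. The paper does this by exploiting the failure of definable connectedness of $R^n\setminus\{\vec 0\}$ in $\CR_V$: it partitions $R^n\setminus\{\vec 0\}$ into two definable open sets $A$ and $B$, each with $\vec 0$ in its closure (membership in $A$ being decided by whether $x_n^2/(x_1^2+\dots+x_{n-1}^2)$ lies in $V$), and sets $f(\vec 0)=\vec 0$, $f=\mathrm{id}$ on $A$, and $f(x_1,\dots,x_n)=(x_1,\dots,x_{n-1},\epsilon x_n)$ on $B$, where $v(\epsilon)>0$. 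Each piece is mapped injectively, so $f$ is continuous, at most $2$-to-$1$, and locally injective away from $\vec 0$; but the images of $A$ and $B$ overlap arbitrarily close to $\vec 0$, so $f$ ramifies exactly at $\vec 0$. Hence $\operatorname{Ram}(f)$ is a single point in $R^n$ with $n$ arbitrary, refuting manifold ramification purity of order $d$ for every $d$ at once. Your proposal, by contrast, never produces ramification of codimension greater than $2$, so it does not prove the statement for any $d\geq 2$.
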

\begin{proof}
    Fix any $n>1$. We write $\vec 0$ for $(0,...,0)\in R^n$. We will exploit the fact that $R^n-\{\vec 0\}$ is not definably connected. Namely, let $A$ be the set of $(x_1,...,x_n)\in R^n-\{\vec 0\}$ such that $$x_n^2=a(x_1^2+...+x_{n-1}^2)$$ for some $a\in V$. Let $B=R^n-\{\vec 0\}-A$. It is easy to see that $A\cup B$ is a definable partition of $R^n-\{\vec 0\}$ into two open sets, each with $\vec 0$ in its closure.
    
    To prove the proposition, we will build a continuous definable map (i.e. a loric map in the full lore) $f:R^n\rightarrow R^n$ with $f(\vec 0)=\vec 0$, whose only ramification point is $\vec 0$. Then $\operatorname{Ram}(f)$ is non-empty of codimension $n$, and since $n$ is arbitrary, the proposition follows.

    To build the map, the idea is to take the union of two continuous injections, one defined on $A$ and one on $B$. Then the union is automatically continuous and locally injective on $A\cup B$, i.e. on $R^n-\{\vec 0\}$. If on the other hand we ensure that the injections both have limit $\vec 0$ at $\vec 0$, and that their images `overlap' arbitarily close to 0, we will be able to witness continuity and ramification at $\vec 0$.
    
    Let us proceed. First, fix $\epsilon$ with $v(\epsilon)>0$. Let $g:A\rightarrow R^n$ be the inclusion, and let $h:B\rightarrow R^n$ be the map $(x_1,...,x_n)\mapsto(x_1,...,\epsilon x_n)$ (that is, scaling only the last coordinate by $\epsilon$).

    \begin{claim} Let $U\subset R^n$ be any neighborhood of $\vec 0$. Then there is $(a,b)\in U^2\cap(A\times B)$ with $h(b)=a$.
    \end{claim}
    \begin{proof}
        Without loss of generality $U$ is the $n$-dimensional closed $\gamma$-ball for some $\gamma\in\Gamma$ -- i.e. the set of $(x_1,...,x_n)$ with each $v(x_i)\geq\gamma$. Fix $x\in R$ with $v(x)=\gamma+v(\epsilon)$. Then one can check that $b=(x,...,x,\frac{x}{\epsilon})\in B\cap U$, and $$h(b)=a=(x,...,x)\in A\cap U.$$ 
    \end{proof}

    Now define $f:R^n\rightarrow R^n$ by $f(\vec 0)=\vec 0$, $f\restriction_A=g$, and $f\restriction_B=h$. Clearly, $f$ is definable. Next, note that $g$ and $h$ are continuous injections; so since $A$ and $B$ form an open cover of $R^n-\{0\}$, it follows that $f$ is continuous and locally injective on $R^n-\{0\}$. Moreover, one checks easily that $g$ and $h$ both have limit $\vec 0$ at $\vec 0$, which implies that $f$ is also continuous at $\vec 0$ (so is continuous everywhere, and thus loric).
    
    So $f$ is a loric map from the loric $n$-manifold $R^n$ to itself. Moreover, since $g$ and $h$ are injective, clearly $f$ is finite-to-one (in fact at most 2-to-1 everywhere). So $f$ satisfies the hypotheses of manifold ramification purity.

    It remains only to show that $f$ ramifies at $\vec 0$. For this, let $U$ be any neighborhood of $\vec 0$. By the claim, there are $a,b\in U$ with $a\in A$, $b\in B$, and $h(b)=a$. Then $a\neq b$, and $f(a)=f(b)=a$, witnessing ramification.   
\end{proof}

Note that the construction in Proposition \ref{P: purity failure} crucially uses the abundance of clopen sets provided by the valuation ring. In particular, it is easy to see that the map $f$ from the proof cannot be RCF-definable in any neighborhood of $\vec 0$ (if it were, then from the equation $f(a)=a$ one could recover the restriction of $A$ to a neighborhood of $\vec 0$, and from there it is straightforward to recover $V$).

This counterexample (and the fact that it is not locally $\CR$-definable) was the original motivation for introducing lores. In fact, we now rather immediately show:

\begin{proposition}\label{P: purity success}
     $\CR_V$ does satisfy manifold ramification purity of order 2 when equipped with the lore of locally $\CR$-definable sets.
\end{proposition}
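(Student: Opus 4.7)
The approach I would take is to reduce Proposition \ref{P: purity success} to the o-minimal case (Fact \ref{F: ominimal purity}) by translating the problem into purely $\CR$-definable data in a neighborhood of each ramification point. Let $f:X\to Y$ be a finite-to-one loric map between loric $n$-manifolds (in the lore of locally $\CR$-definable sets), and suppose $x_0\in\operatorname{Ram}(f)$; the goal is to show that $\operatorname{Ram}(f)$ has dimension at least $n-2$ in a neighborhood of $x_0$, which globalizes to $\dim(\operatorname{Ram}(f))\geq n-2$.

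The first step is to produce $\CR$-definable coordinate charts around $x_0$ and $f(x_0)$. Near $x_0$, the loric $n$-manifold structure on $X$ supplies a loric homeomorphism $\phi:V\to W\subseteq R^n$. Since $\operatorname{graph}(\phi)$ is loric, hence locally $\CR$-definable, intersecting it with a small $\CR$-definable open box around $(x_0,\phi(x_0))$ produces an $\CR$-definable piece of the graph. Projecting this piece onto each factor (an $\CR$-definable operation, and one that is open since $\phi$ is a continuous bijection, so $\pi_1|_{\operatorname{graph}(\phi)}$ is a homeomorphism) yields an $\CR$-definable open neighborhood $X_0\ni x_0$ in $X$, an $\CR$-definable open $W_0\ni\phi(x_0)$ in $R^n$, and an $\CR$-definable homeomorphism $\phi_0:=\phi|_{X_0}:X_0\to W_0$. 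An analogous construction on $Y$ gives an $\CR$-definable chart $\psi_0:Y_0\to W_0'$ around $f(x_0)$.

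Second, since $\operatorname{graph}(f)$ is itself locally $\CR$-definable, I would repeat the intersection-and-projection trick, this time with $X_0$ and $Y_0$ built in, to shrink $X_0$ to an $\CR$-definable open $X_1\subseteq X_0$ on which $f$ restricts to an $\CR$-definable map into $Y_0$. The composition
\[
g\;:=\;\psi_0\circ(f|_{X_1})\circ\phi_0^{-1}\;:\;\phi_0(X_1)\longrightarrow W_0'
\]
is then a finite-to-one continuous $\CR$-definable map between $\CR$-definable open subsets of $R^n$. Applying Fact \ref{F: ominimal purity} to $g$, its ramification locus is either empty or has dimension at least $n-2$; since ramification is a purely local topological property, $x_0\in\operatorname{Ram}(f)$ forces $\phi_0(x_0)\in\operatorname{Ram}(g)$, and so the second alternative holds. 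Pulling back along the homeomorphism $\phi_0$ then gives $\dim(\operatorname{Ram}(f)\cap X_1)\geq n-2$, which suffices.

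The main technical nuisance is aligning the $\CR$-definable neighborhoods and maps produced in the two steps into a single coherent $\CR$-definable configuration around $(x_0,f(x_0))$; this should amount to routine bookkeeping once one notes that, in an o-minimal structure, projections of $\CR$-definable sets are $\CR$-definable, and that continuity of $\phi$, $\psi$, and $f$ makes the relevant projections onto domains open. It is worth emphasizing that this argument genuinely uses the restriction to the lore of locally $\CR$-definable sets: the counterexample constructed in Proposition \ref{P: purity failure} breaks here precisely because the clopen sets $A$ and $B$ appearing there cannot be realized by an $\CR$-definable partition in any neighborhood of $\vec{0}$, so the map $f$ built from them fails to be loric in this smaller lore.
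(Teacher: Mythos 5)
Your proposal is correct and follows essentially the same route as the paper: localize at a ramification point, use loricity of $X$, $Y$, and $\operatorname{graph}(f)$ to extract $\CR$-definable neighborhoods on which $f$ restricts to an $\CR$-definable map, apply Fact \ref{F: ominimal purity}, and conclude via $\operatorname{Ram}(f\restriction)\subset\operatorname{Ram}(f)$. The only difference is cosmetic: the paper skips your chart-conjugation step by applying the o-minimal fact directly to $f\restriction_{X'}:X'\to Y'$, since $X'$ and $Y'$ are already definable $n$-manifolds in the o-minimal sense.
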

\begin{proof} Let $f:X\rightarrow Y$ be a finite-to-one loric map of loric $n$-manifolds. Suppose $\operatorname{Ram}(f)$ is non-empty, and let $a\in\operatorname{Ram}(f)$. Since $X$, $Y$, and $f$ are loric, one can choose open neighborhoods $X'\subset X$ of $a$ and $Y'\subset Y$ of $f(a)$ so that $f(X')\subset Y'$ and $X'$, $Y'$, and $f\restriction_{X'}$ are $\CR$-definable. Note that $f\restriction_{X'}$ also ramifies at $a$, and that $X'$ and $Y'$ are definable $n$-manifolds in the o-minimal sense. Then by Fact \ref{F: ominimal purity}, we conclude that $$\dim(\operatorname{Ram}(f\restriction_{X'}))\geq n-2.$$ But clearly $$\operatorname{Ram}(f\restriction_{X'})\subset\operatorname{Ram}(f),$$ so it follows that $\dim(\operatorname{Ram}(f))\geq n-2$ as well.
\end{proof}

\section{A Topological Marikova Lemma} We now pause briefly from our work at hand and develop a technical result on definable groups that we will use later on. This is an adaptation of a method due to Marikova (\cite{MarikovaGps}). Let us first try to motivate the result.

Suppose that $(G,\cdot)$ a definable group in $\CR_V$ (so $G\subset R^n$, say). Then $G$ is both a group and a topological space, but is not necessarily a topological group. On the other hand, by definability in $\CR_V$, $G$ is `almost' a topological group -- i.e. the group operation and inverse are continuous outside a set of smaller dimension. Analogous statements hold replacing `continuous' with `differentiable', 'twice differentiable', and so on.

One could wonder, then, whether we can `fix' the topology on $G$, editing it slightly (in a definable way) to give a genuine topological group. An analogous situation was considered by Marikova (\cite{MarikovaGps}) in studying invariant groups in o-minimal structures. In the definable o-minimal context this was first proved by Pillay \cite{Pi5}, based on ideas of Hrushovski \cite{Bou} and v.d. Dries \cite{vdDriesGpChunk} and ultimately going back to Weil's group chunk theorem.  The essence of Marikova's argument is easier to distill and generalize than Pillay's, and it will best for our needs. Applications of Marikova's method (similar to those below) were also used in \cite{HaHaPeGps} to study interpretable groups in various valued fields, and in the M.Sc. thesis of Peterzil's student, M. Abd Elkareem. 

The main point of the method is that one can often retopologize the group so as to make it a local group (i.e. the operations are continuous in a neighborhood of the identity). We will need two variations of this statement in the ensuing sections. So, for convenience moving forward, we have decided to give it a general treatment. It may be worth pointing out that when the local group can be defined canonically enough, the topology obtained on the local group can often be extended to a group topology (on the whole group). See, e.g, \cite[Lemma 2.11]{HaHaPeGps} -- though as this is not necessary for our needs, we will not dwell on it.  

A natural general setting to present the method seems to be the following:

\begin{definition}\label{D: local group} Let $\CC$ be a subcategory of the category of topological spaces. Let $G$ be a group whose universe is also a topological space.
\begin{enumerate}
    \item $G$ is \textit{generically $\CC$} if there are dense open sets $U\subset G^2$ and $V\subset G$ such that:
    \begin{enumerate}
        \item $U$ and $V$ are objects in $\CC$.
        \item $U\cdot U\subset V$ and $V^{-1}=V$.
        \item The group operation from $U^2$ to $V$ is a morphism in $\CC$, and the inverse from $V$ to $V$ is a morphism in $\CC$. 
    \end{enumerate}
    \item $G$ is \textit{locally $\CC$} if there are open sets $1\in U,V\subset G$ such that:
    \begin{enumerate}
        \item $U$ and $V$ are objects in $\CC$.
        \item $U\cdot U\subset V$ and $V^{-1}=V$.
        \item The group operation from $U^2$ to $V$ is a morphism in $\CC$, and the inverse from $V$ to $V$ is a morphism in $\CC$.
    \end{enumerate}
\end{enumerate}
\end{definition}

Note that we are often interested in model-theoretic notions of genericity (whereas the definitions above use a purely topological such notion). On the other hand, for definable properties in model-theoretically tame topological settings, the phrase `outside a set of smaller rank' is typically synonymous with `on a dense open set' -- so the above formalism seems to capture various model-theoretic settings as well. 

We now present the result. Again, the argument is really due to Marikova.

\begin{notation} Suppose $G$ is a group (written multiplicatively), and $d\in G$. We let $G_d=(G,*_d)$ be the group $G$ seen through the permutation $x\mapsto dx$: that is, $x*_dy=xd^{-1}y$. So $d$ is the identity of $G_d$, and the inverse is $x^{-1_d}=dx^{-1}d$.
\end{notation}

\begin{theorem}\label{T: marikova} Let $\CC$ be a subcategory of the category of topological spaces. Assume that:
\begin{enumerate}
    \item All constant maps between objects of $\CC$ are morphisms.
    \item $\CC$ has finite products, given by cartesian products with the product topology.
    \item If $X$ is an object in $\CC$ and $Y\subset X$ is open, then $Y$ is an object in $\CC$ and the inclusion $Y\rightarrow X$ is a morphism.
\end{enumerate}
Then, if $G$ is generically $\CC$, there is a dense open $D\subset G$ such that $G_d$ is locally $\CC$ for each $d\in D$.
\end{theorem}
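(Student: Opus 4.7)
The plan is to adapt Marikova's generic-translation technique. The essential idea is to express the $G_d$-operations near the new identity $d$ as compositions of the ambient multiplication $m$ and inverse $i$, applied only at points of $U$ (where these are $\CC$-morphisms by hypothesis). A naive decomposition would force one of the intermediate values to equal $1_G$, which we have no reason to assume lies in $U$; to avoid this, I insert an auxiliary parameter $e$, to be chosen generically depending on $d$, that reroutes the computation through points far from $1_G$. As a preliminary reduction, replace $U$ by $U \cap U^{-1} \cap V$, so that WLOG $U \subset V$ and $U = U^{-1}$. Note that for each $a \in U$, both $L_a$ and $R_a$ are $\CC$-morphisms (by composing $m$ with the constant map at $a$, using axioms (1) and (2)) on the open subset of $U$ whose image under them remains in $U$.

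The central computation is the identity, obtained from associativity,
\[
x *_d y \;=\; x d^{-1} y \;=\; m\bigl(m(x,e),\, m(m(e^{-1},d^{-1}), y)\bigr),
\]
together with the analogous
\[
x^{-1_d} \;=\; d x^{-1} d \;=\; m\bigl(m(d,e),\, m(m(e^{-1}, i(x)), d)\bigr).
\]
Evaluating these at $(x,y)=(d,d)$ (and $x=d$), the intermediate values produced are $de$, $e^{-1}d^{-1}$, and $e^{-1}$. For each application of $m$ to be a $\CC$-morphism on a neighborhood of the relevant argument, one needs every element of $\{d, e, d^{-1}, e^{-1}, de, e^{-1}d^{-1}\}$ to lie in $U$. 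Using $U = U^{-1}$ and the consequent equivalence $de \in U \iff e^{-1}d^{-1} \in U$, these conditions collapse to the single requirement $d, e, de \in U$.

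To finish, set $D := U$, which is dense open by assumption. For each $d \in D$, the set of admissible $e$ is $U \cap d^{-1}U$. This set is open, because $L_d$ restricted to $U$ is a $\CC$-morphism (in particular continuous), so $(L_d\upharpoonright_U)^{-1}(U)$ is open in $U$; and it is nonempty (in fact dense), because $L_d\upharpoonright_U$ sends the point $d^{-1} \in U$ to $1_G$, while $U$ is dense near $1_G$. (In the intended model-theoretic applications the density is immediate from translation-invariance of the underlying dimension function; in a purely topological reading it requires a separate argument, which is the main obstacle of the proof.) Fix such an $e$. By openness of the pullbacks of $U$ under each $m$-application in the decomposition, one can shrink to a common open neighborhood $N \subset U$ of $d$ on which every intermediate value of both decompositions remains in $U$ whenever the inputs are in $N$. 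Axioms (1)--(3) then exhibit $*_d\upharpoonright_{N \times N}$ and $(-)^{-1_d}\upharpoonright_N$ as composites of $\CC$-morphisms, yielding the required local $\CC$-structure on $G_d$ (with the ``outer'' set $V_{\mathrm{new}}$ taken to be a sufficiently small symmetric-in-$G_d$ open neighborhood of $d$, obtained by intersecting a suitable neighborhood with its $(-)^{-1_d}$-image).
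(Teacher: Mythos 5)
There is a genuine gap, and it is the one you flag yourself: the existence, for a fixed $d$, of an auxiliary $e$ with $e,de\in U$, i.e.\ the nonemptiness of $U\cap d^{-1}U$. In the purely topological setting of the theorem this is not a removable technicality: the group operations are only assumed to be morphisms on dense open sets, so $L_{d^{-1}}$ need not be a homeomorphism and $d^{-1}U$ need be neither open nor dense; and your continuity argument only shows that points of $U$ near $d^{-1}$ are sent by $L_d$ \emph{near} $1_G$, not \emph{into} $U$ --- the image of a small piece of $U$ may well lie inside the closed nowhere dense complement of $U$. Appealing to translation-invariance of a dimension function is not available, since the statement is designed precisely as a topology-only black box to be quoted later in model-theoretic situations. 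A second, related problem is your preliminary reduction to a symmetric dense open $U\subset G$ with $m$ a $\CC$-morphism on $U\times U$: generic $\CC$-ness, as the paper's proof uses it and as it is actually verified for definable groups via generic loricity, only supplies a dense open subset of $G\times G$ on which multiplication is a morphism, and such a set need not contain any product $U'\times U'$ with $U'$ dense open in $G$. So the reduction already assumes more than the hypothesis (and the intended application) provides.

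The paper's proof is close to yours in spirit --- an auxiliary parameter reroutes the computation away from $1_G$ --- but it avoids your obstacle by never fixing $d$ and then hunting for the auxiliary point. It treats the auxiliary element as an extra coordinate: one factors $(t,x,y,z)\mapsto xy^{-1}z$ as $(t,x,y,z)\mapsto(t,tx,y^{-1},z)\mapsto(t,txy^{-1},z)\mapsto(t,txy^{-1}z)\mapsto xy^{-1}z$, and considers the set $Z\subset G^2$ of pairs $(w,d)$ at which each factor is a $\CC$-morphism near the corresponding point of the chain $(w,d,d,d)\mapsto(w,wd,d^{-1},d)\mapsto(w,w,d)\mapsto(w,wd)\mapsto d$; the paper argues, coordinate by coordinate, that each of these conditions holds on a dense open subset of the $(w,d)$-plane. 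The set $D$ is then the projection of $Z$ to the $d$-coordinate, which is dense and open because coordinate projections for the product topology are open maps and preserve density --- this projection step is exactly what replaces your problematic intersection $U\cap d^{-1}U$. Given $d\in D$ a witness $w$ exists by definition of the projection; one extracts a box $W\times U_0^3$ around $(w,d,d,d)$ on which the composite is $\CC$, and specializing $t=w$, and afterwards a variable to $d$, by constant maps (axioms (1)--(3)) exhibits $x*_dy=xd^{-1}y$ and $x^{-1_d}=dx^{-1}d$ as $\CC$-morphisms near $d$. Your decomposition through $e$ could be salvaged by the same device (quantify over pairs $(d,e)\in G^2$, show the relevant conditions contain a dense open subset of $G^2$, and project to the $d$-coordinate); as written, however, the proposal does not prove the theorem.
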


\begin{proof}
    The main point is to show that for a dense open set of $d\in G$, the function $(x,y,z)\mapsto xy^{-1}z$ is a morphism in $\CC$ in a neighborhood of $(d,d,d)$. To show this, we consider the following sequence of maps from $G^4\rightarrow G^4\rightarrow G^3\rightarrow G^2\rightarrow G$: $$(t,x,y,z)\mapsto(t,tx,y^{-1},z)\mapsto(t,txy^{-1},z)\mapsto(t,txy^{-1}z)\mapsto xy^{-1}z.$$ For $w,d\in G$, we further distinguish the sequence of points $$(w,d,d,d)\mapsto(w,wd,d^{-1},d)\mapsto(w,w,d)\mapsto(w,wd)\mapsto d$$ as above. We claim that for a dense open set of $(w,d)\in G^2$, all four of these maps are locally in $\CC$ at the relevant distinguished points as above. To see this, note that since $\CC$ has products given by the usual cartesian products, it suffices to check everything `one coordinate at a time'. Thus, we need only separately note that each of the following is locally in $\CC$ for dense open many $(w,d)$:
    \begin{enumerate}
        \item $(a,b)\mapsto ab$ at $(w,d)\mapsto wd$.
        \item $a\mapsto a^{-1}$ at $d\mapsto d^{-1}$.
        \item $(a,b)\mapsto ab$ at $(wd,d^{-1})\mapsto w$.
        \item $(a,b)\mapsto ab$ at $(w,d)\mapsto d$.
        \item $(a,b)\mapsto a^{-1}b$ at $(w,wd)\mapsto d$.
    \end{enumerate}
    Indeed, each of (1)-(5) holds on a dense open set as a straightforward consequence of the fact that $G$ is generically $\CC$.

    Now let $Z\subset G^2$ be a dense open set of $(w,d)$ on which the above maps are in $\CC$. Let $D$ be the projection of $Z$ onto the second coordinate. Then $D$ is dense open in $G$. We claim that $D$ satisfies the requirements of the theorem. Indeed, let $d\in D$ be arbitrary. Then there is $w$ with $(w,d)\in Z$, and thus the map $(t,x,y,z)\mapsto xy^{-1}z$ is in $\CC$ in a neighborhood of $(w,d,d,d)$. Using that $\CC$ contains all inclusions of open sets, we may assume this neighborhood of $(w,d,d,d)$ has the form $W\times U^3$ with $w\in W$ and $d\in U$. Then it follows that $(x,y,z)\mapsto xy^{-1}z$ is in $\CC$ on $U^3$, by factoring through $U^3\rightarrow W\times U^3$ (i.e. taking the product of the constant map $w:U^3\rightarrow W$ and the identity map $U^3\rightarrow U^3$; so here we use that $\CC$ contains all constant maps).

    Now given that $xyz\rightarrow xy^{-1}z$ is in $\CC$ on $U^3$, it follows that $G_d$ is locally $\CC$: indeed, recall that $x*_dy=xd^{-1}y$ and $x^{-1_d}=dx^{-1}d$. By a similar argument to above (using the constant map $d$), these maps are now in $\CC$ when we restrict $x$ and $y$ to $U$.
\end{proof}

\section{Locally Manifold-Loric Groups}

Let us now explain how Theorem \ref{T: marikova} will come into play. The proof in \cite{CasOmin} of the higher-dimensional case of the trichotomy splits into two cases, according to whether the given non-locally modular relic $\CM$ interprets a strongly minimal group. Let us sketch the case when there is such a group, say $G$. In this case, one first replaces $\CM$ with the structure $\CG$ it induces on $G$, thereby assuming $\CM$ is already an expansion of a group $(M,+)$ (this step requires elimination of imaginaries to turn $\CG$ into a definable relic). Since the background setting in \cite{CasOmin} is an o-minimal field, one can even assume $(M,+)$ is a topological group with the affine topology; then since the relevant lore in \cite{CasOmin} is the \textit{full} lore, it follows that $(M,+)$ is a \textit{loric group} (i.e. the underlying set, group operation and inverse maps are loric; for the full lore, this is the same as being a topological group). One then proves the desired bound $\dim(M)\leq 2$ by applying purity of ramification to a suitable restriction of the group operation $+:M^2\rightarrow M$.

In our case, things are more subtle. Suppose $\CM$ is a strongly minimal definable $\CR_V$-relic interpreting a strongly minimal group $G$. In order to run the proof sketched above, we would need that (1) $G$ is $\CR_V$-definable (i.e. if it is imaginary, it can be eliminated), and (2) the embedding into a power of $R$ can be chosen to make $G$ a loric group (in particular, a topological group). Unfortunately step (2) does not entirely work. In this section, we show how to modify it using Theorem \ref{T: marikova} without harming the argument in \cite{CasOmin}. 

First we point out that step (1) works as planned. Namely, the following is an immediate consequence of results in section 4 (see also \cite[Lemma 2.8]{HaOnPi}): 
\begin{fact}\label{F: interpretable is definable}
    Let $\CM$ be a strongly minimal definable $\CR_V$-relic. Then any strongly minimal set $D$ interpretable in $\CM$ is in $\CR_V$-interpretable bijection with an $\CR_V$-definable set.
\end{fact}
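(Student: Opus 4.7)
The plan is to combine weak elimination of imaginaries in $\CM$ with elimination of finite imaginaries in the field $\CR_V$, applying Lemmas \ref{L: use of acl model} and \ref{L: e of fin i} from Section 4 in sequence.

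First I would use the standing assumptions from Section 2 that $\CM$ is strongly minimal with $\acl_\CM(\emptyset)$ infinite. This gives weak EI in $\CM$ (every imaginary is $\CM^{eq}$-interalgebraic with a real tuple from $M$, as recalled in Remark \ref{R: acl inf}) and also the acl-model property for $\CM$: any algebraically closed subset of $\CM$ contains the infinite set $\acl_\CM(\emptyset)$, and by strong minimality is therefore an elementary submodel. A standard compactness argument using weak EI then produces an $\CM^{eq}$-definable finite correspondence between $D$ and a $\CM$-definable set $Y\subset M^n$ for some $n$.

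Next I would apply Lemma \ref{L: use of acl model} to $\CM$ with $X=D$, obtaining an $\CM^{eq}$-definable surjective finite-to-one map $f\colon Y\to D$. Equivalently, $D$ is identified in $\CM^{eq}$ with the finite quotient $Y/E$, where $E$ is a $\CM^{eq}$-definable equivalence relation on $Y$ with finite classes. Since $\CM$ is a \emph{definable} relic, $M\subset R^m$ is $\CR_V$-definable, so $Y\subset R^{nm}$ sits inside a real cartesian power. Now $\CR_V$ expands the real closed field $R$, so Lemma \ref{L: e of fin i} eliminates the finite quotient: each $E$-class is a finite subset of $R^{nm}$ and can be encoded canonically as a single tuple, e.g.\ by lex-sorting via the $\CR_V$-definable order on $R$ or by symmetric polynomials. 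This yields a $\CR_V$-interpretable injection $s\colon D\hookrightarrow R^N$ (for some $N$) whose image $D'$ is a $\CR_V$-definable subset of $R^N$, and composing gives the desired $\CR_V$-interpretable bijection between $D$ and $D'$.

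The hard part will be the final definability step: the cross-section $s$ is \emph{a priori} only $\CR_V^{eq}$-definable, since the $\CM^{eq}$-definable quotient data may come with imaginary parameters from the relic interpretation. The point is that the only obstruction to definability here is the finite-quotient structure $Y/E$ itself, and this is precisely what Lemma \ref{L: e of fin i} eliminates in any expansion of a field -- independently of whether $\CR_V$ admits a full elimination of imaginaries (recall Remark \ref{R: no EOI}). Taken together, the pair of lemmas from Section 4 make the Fact an immediate consequence, which is why the paper presents it as such.
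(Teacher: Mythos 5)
Your proposal is correct and rests on the same two lemmas as the paper, but it routes the acl-model step through $\CM$ rather than through $\CR_V$, so a brief comparison is in order. The paper's proof is more economical: by strong minimality (unidimensionality) $D$ is already in $\CM$-interpretable finite correspondence with the $\CR_V$-definable set $M$ itself, and Lemma \ref{L: use of acl model} is then applied inside $\CR_V$, whose acl-model property comes for free from definable Skolem functions; Lemma \ref{L: e of fin i} then finishes exactly as in your last step. Your version instead invokes the acl-model property of $\CM$ (via strong minimality and infinite $\acl_{\CM}(\emptyset)$) to present $D$ as an $\CM$-definable finite quotient $Y/E$ with $Y\subseteq M^n$, and only then passes to $\CR_V$. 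This works, but two small points need attention: the standing assumptions of Section 2 are not in force where the Fact is stated, so you must add constants to make $\acl_{\CM}(\emptyset)$ infinite (harmless) and note that $\CM$, being a definable relic of the sufficiently saturated $\CR_V$, is itself sufficiently saturated, so that Lemma \ref{L: use of acl model} really applies to $\CM$; the paper's choice of applying that lemma to $\CR_V$ sidesteps both issues. Finally, the difficulty you flag at the end is not actually there: the statement only asks for an $\CR_V$-\emph{interpretable} bijection onto an $\CR_V$-definable set, so imaginary parameters are permitted for the bijection, and any interpretable subset of a power of $R$ is automatically definable (imaginary parameters defining a subset of a real sort can be replaced by real ones); what Lemma \ref{L: e of fin i} buys you is the definable image, not parameter elimination.
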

\begin{proof} By strong minimality there is an $\mathcal M$-interpretable finite correspondence between $M$ and $D$. Now apply Lemmas \ref{L: use of acl model} and \ref{L: e of fin i} (noting that $\CR_V$ has definable Skolem functions, so has the acl-model property).
\end{proof}

Now suppose we have a non-locally modular strongly minimal definable $\CR_V$-relic $(M,+,...)$ (expanding the group $(M,+)$). We would like $M$ to be a loric group. Unfortunately we do not know whether this can always be arranged. Instead, we work with the notion of a \textit{locally manifold-loric group}:

%Next, step (2) (topologizing the group with the affine topology) \textit{almost} works, and is good enough for our needs. The following is \cite[Proposition 6.4]{AcHa}:
%\begin{fact}\label{F: top groups}
   % Let $G$ be an $\CR_V$-definable group. Then $G$ admits a definable manifold structure with respect to which $G$ is a topological group. Moreover, this group topology coincides generically with the affine topology. 
%\end{fact}
%So we do get a topological group structure, and the topology is affine if we only work in a neighborhood of a generic point (which is what we will do). Note that this result is stated for 1-h-minimal fields (so covers only the case where $\CR_V$ is power bounded), but the proof goes through verbatim to any $T$-convex expansion of a real closed field. \\

%Finally, we discuss the (more subtle) step (3). Suppose $G$ is an $\CR_V$-definable group equipped with a topology as in Fact \ref{F: top groups}. Using the genericity property of lores, we can arrange that $G$ is a \textit{generically loric} group in an appropriate sense. One might expect to conclude that $G$ is loric by applying translations to cover the whole group; however, this would require arbitrary translation maps to be loric at all points (which we do not know). Instead, we work with the notion of a \textit{locally loric group}:

\begin{definition}\label{D: loc lor}
    Let $G$ be an $\CR_V$-definable group. We say that $G$ is \textit{locally manifold-loric} if there are open neighborhoods $U\subset G$ and $V\subset G$ of the identity such that:
    \begin{enumerate}
        \item $U^{-1}=U$, and $U\cdot U\subset V$.
        \item $U$ and $V$ are loric $\dim(G)$-manifolds.
        \item The group operation and inverse restrict to loric maps $U^2\rightarrow V$ and $U\rightarrow U$.
    \end{enumerate}
\end{definition}

Definition \ref{D: loc lor} coincides with $G$ having local dimension $\dim(G)$ at the identity, and also being `locally $\mathcal C$' as in Definition \ref{D: local group}, where $\mathcal C$ is the category of loric manifolds with loric maps. Note that `locally $\CC$' is not quite the right notion by itself, because this would allow any $G$ with an isolated point at the identity.

We now show:
\begin{lemma}\label{L: loc loric}
    Let $(G,\cdot)$ be an $\CR_V$-definable group. Then $G$ is $\CR_V$-definably isomorphic to a definable group $G'\subset R^n$ so that $G'$, with the subspace topology induced from $R^n$, is locally manifold-loric.
\end{lemma}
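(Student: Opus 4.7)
The plan is to apply Theorem \ref{T: marikova} with $\CC$ taken to be the category whose objects are loric manifolds (in the lore of locally $\CR$-definable sets from Proposition \ref{P: lore}) and whose morphisms are loric maps. I first verify the three hypotheses of Theorem \ref{T: marikova} for this $\CC$: constant maps between loric manifolds have locally $\CR$-definable graphs (using the constant as a parameter) and are trivially continuous, hence loric; products of loric manifolds are loric manifolds under the product topology by Definition \ref{D: lore}(\ref{D: loric sets basic}); and open subsets of loric manifolds are loric manifolds with loric inclusion by Definition \ref{D: lore}(\ref{D: loric sets loc}) (the graph of the inclusion is the diagonal intersected with an open product, which is locally $\CR$-definable).

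Viewing $G\subset R^n$ as an $\CR_V$-definable set with $k:=\dim(G)$, I next show that $G$ is generically $\CC$. Viscerality of $\CR_V$ (Fact \ref{F: basic Tconv facts}(1)) provides dense open loci on which the multiplication $m:G^2\to G$ and the inverse $i:G\to G$ are continuous. Applying the genericity axiom of the lore (Definition \ref{D: lore}(\ref{D: loric sets gen})) to $G$, $\gra(m)$, and $\gra(i)$, together with Corollary \ref{C: generically locally ominimal}, I further shrink these loci to ones where $G$ is a loric $k$-manifold and where $m,i$ have locally $\CR$-definable graphs (so become loric maps). Using that the product of two independent generics of $G$ is again generic, I extract dense open $U,V\subset G$ with $U\cdot U\subset V$, $V^{-1}=V$, both $U$ and $V$ loric $k$-manifolds, and $m\restriction_{U^2}$, $i\restriction_V$ loric. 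This verifies the generically $\CC$ condition.

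Applying Theorem \ref{T: marikova}, I obtain a dense open $D\subset G$ such that for every $d\in D$, the group $G_d=(G,*_d)$ (whose identity is $d$) is locally $\CC$. Choosing $d\in D$ that also lies in a fixed loric $k$-manifold open subset of $G$, the open neighborhoods of $d$ witnessing the local $\CC$ condition can be shrunk (using the locality axiom of the lore) to be loric $k$-manifolds; hence $G_d$ is locally manifold-loric. Finally, the map $\phi:G\to G_d$ defined by $\phi(x)=xd$ is an $\CR_V$-definable group isomorphism, since $\phi(xy)=xyd=(xd)d^{-1}(yd)=\phi(x)*_d\phi(y)$. I take $G':=G_d$ as a subset of $R^n$ with the subspace topology inherited from $G$, completing the argument.

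I expect the main obstacle to be verifying the generically $\CC$ condition precisely, especially arranging the containment $U\cdot U\subset V$ simultaneously with loric manifold structure on $U,V$ and loricity of $m$ on $U^2$. The individual ingredients each hold on their own dense open sets, but patching them into a single compatible pair $(U,V)$ requires careful local work near a generic identity point, exploiting continuity of $m$ to absorb small perturbations within the shrinking neighborhoods. Past this, the remainder of the argument is a routine application of Marikova's framework as encoded in Theorem \ref{T: marikova}.
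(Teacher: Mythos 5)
Your proposal is correct and follows essentially the same route as the paper: the paper's proof is exactly an application of Theorem \ref{T: marikova} to the category of loric manifolds with loric maps, using the genericity property of the lore to see that $G$ is generically $\CC$, and then choosing a generic $d$ (where $G$ is locally a loric $\dim(G)$-manifold) so that $G_d$ is locally manifold-loric. The patching issue you flag (arranging $U\cdot U\subset V$ together with loricity) is glossed over in the paper as well, and your explicit verification of the categorical hypotheses and of the definable isomorphism $x\mapsto xd$ only adds detail the paper leaves implicit.
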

\begin{proof}
    Let $\CC$ be the category of loric manifolds with loric maps. Then $\CC$ satisfies the requirements of Theorem \ref{T: marikova}. Also, by the genericity property of lores, $G$ is a generically $\CC$-group. Thus, by Theorem \ref{T: marikova}, the group $G_d$ is locally $\CC$ for dense open-many $d\in G$. In particular, let $d\in G$ be generic. Then $G_d$ is locally loric. Moreover, by genericity, $G$ is a loric $\dim(G)$-manifold in a neighborhood of $d$, which implies that $G_d$ is locally manifold-loric.
    %Let $d\in G$ be generic, and let $(G_d,\cdot_d)$ be the group obtained treating $d$ as the identity -- so $x\cdot_dy=xd^{-1}y$, $x^{-1_d}=dx^{-1}d$, and $x\mapsto dx$ is a definable isomorphism $G\rightarrow G_d$. Moreover, we may assume that the group topology on $G_d$ agrees with the affine topology in a neighborhood of $d$ (by genericity).
    
    %We use an argument inspired by Marikova (and later appearing in \cite{HaHaPeGps}) to show that $G_d$ is locally loric, which will prove the lemma.
    
    %Let $Y\subset G$ be an $\CR_V(\emptyset)$-definable loric set with $\dim(G-Y)<\dim(Y)$ (such $Y$ exists by the genericity property of lores). So $d\in Y$ by genericity. Next, recall (\cite[Lemma 2.11(7)]{CasOmin}) that the composition of two loric maps is loric. Using this, it is easy to check that \cite[Lemma 5.3]{HaHaPeGps} goes through unaltered if we systematically replace "$D$-sets" with "loric sets". So we get that the function $F(x,y,z):=xy^{-1}z$ is loric in a neighborhood of $(d,d,d)$.  
    
    %That is, we find loric open sets $d\in U,V\subset G$ such that $F(U,U,U)\subset V$ and $F$ defines a loric map from $U$ to $V$. Since loric maps are preserved under restrictions, it follows that $(x,y)\mapsto F(x,d,y)=x\times_dy$ is a loric map from $U^2$ to $V$, and $x\mapsto F(d,x,d)=x^{-1_d}$ is a loric map from $U$ to $V$. Replacing $U$ by $U\cap U^{-1}$, we further guarantee that $U$ is symmetric, and thus $x^{-1_d}$ is loric from $U$ to $U$. Thus $G_d$ is locally loric.
\end{proof}

Theorem \ref{T: marikova} also implies the following, which is needed in the next section:

\begin{corollary}\label{C: finite torsion}
    In $\CR_V$ there are no infinite definable torsion groups.
\end{corollary}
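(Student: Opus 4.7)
The plan is to reduce to a fixed exponent and then derive a contradiction via a tangent-space computation at the identity. Suppose for contradiction that $G$ is an infinite definable torsion group in $\CR_V$. For each positive integer $n$ the subset $G_n := \{g \in G : g^n = e\}$ is a definable subgroup, and $G = \bigcup_n G_n$. Since $\CR_V$ is sufficiently saturated, some $G_n$ must be infinite; replacing $G$ by this $G_n$, we may assume $G$ has a fixed exponent $n \geq 1$. By Lemma \ref{L: loc loric} we may further assume $G \subset R^m$ is locally manifold-loric, with open neighborhoods $U \subset V$ of $e$ as in Definition \ref{D: loc lor}. Since $G$ is infinite, $d := \dim G \geq 1$, and $U$ is a loric $d$-manifold with $e$ a smooth point of $G$ possessing a $d$-dimensional tangent space $T_e G$.

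After shrinking $U$ to a smaller open neighborhood $U'$ of $e$ on which all intermediate powers $x, x^2, \ldots, x^{n-1}$ lie in $U$ (which is possible by continuity at $e$), the iterated multiplication $F(x) := x^n$ defines a loric map $U' \to V$, obtained by composing the loric group operation $n-1$ times. The exponent hypothesis forces $F$ to be identically $e$ on $U'$, so in particular the differential $dF_e : T_e G \to T_e G$ vanishes. On the other hand, the standard Leibniz-type computation in a differentiable Hausdorff geometric structure gives $dF_e = n \cdot \operatorname{id}_{T_e G}$: using $\mu(x,e) = x = \mu(e,x)$ one has $d\mu_{(e,e)}(v,0) = v$ and $d\mu_{(e,e)}(0,w) = w$, so by linearity $d\mu_{(e,e)}(v,w) = v + w$, and induction on $n$ via the chain rule for tangent maps yields the formula. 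Comparing the two expressions for $dF_e$, we obtain $n \cdot \operatorname{id}_{T_e G} = 0$. Since $\operatorname{char}(R) = 0$ and $d \geq 1$, this forces $n = 0$ in $R$, a contradiction.

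The main subtlety is legitimizing the chain rule and the tangent-map computation for iterated multiplication within the differentiable Hausdorff geometric structure of $\CR_V$. The cleanest route exploits the fact that the chosen lore consists of locally $\CR$-definable sets (Proposition \ref{P: lore}): after shrinking $U'$ so that the graph of $F$ is $\CR$-definable on a neighborhood of $e$, the entire computation of $dF_e$ takes place inside the o-minimal field $\CR$ via a $d$-approximation, where tangent spaces, the Leibniz rule, and the chain rule are standard. Alternatively, one may simply observe at this stage that an $\CR$-definable local group of positive dimension with fixed exponent $n$ cannot exist in any o-minimal expansion of a real closed field (by exactly this differential argument), and that the local $\CR$-definable picture of $G$ near $e$ would produce precisely such a local group.
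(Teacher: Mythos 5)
Your overall strategy (reduce to bounded exponent, then show the $n$-th power map induces multiplication by $n$ on the tangent space at the identity, contradicting characteristic zero) is the same as the paper's, but there is a genuine gap in how you legitimize the differential computation. Lemma~\ref{L: loc loric} is Theorem~\ref{T: marikova} applied to the category of loric manifolds with \emph{loric} maps, and a loric map is only a continuous map with locally $\CR$-definable graph. So local manifold-loricity gives you topological regularity of $\mu$ and of $F(x)=x^n$ near the identity, nothing more: it does not make $e$ a smooth point of $G$ (a loric $d$-manifold can fail to be smooth at particular points, just as the graph of $|x|$ is a definable topological $1$-manifold that is not smooth at the origin), it does not provide the tangent space $T_e G$, and it does not make $\mu$ differentiable at $(e,e)$. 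Your proposed repair --- shrink so that the graph of $F$ (or of $\mu$) is $\CR$-definable near $e$ and then compute ``inside $\CR$ via a d-approximation'' --- does not close this gap: an $\CR$-definable map in an o-minimal structure is only \emph{generically} $C^1$, and $e$ is a specific, non-generic point; d-approximations and tangent spaces in the paper's formalism are likewise only guaranteed at smooth (e.g.\ generic) points. The same objection applies to your final alternative, since ruling out a positive-dimensional $\CR$-definable local group of exponent $n$ ``by exactly this differential argument'' again presupposes differentiability at its identity. The paper's proof supplies exactly the missing ingredient by invoking Theorem~\ref{T: marikova} a second time, now with $\CC$ the category of locally loric $C^1$-manifolds and loric $C^1$-maps: since the group operations are generically $C^1$, one may pass to a translate $G_d$ (isomorphic to $G$, so the exponent hypothesis transfers) whose identity is a smooth point of full dimension and at which $\mu$ and inversion are $C^1$; only then is the computation $dF_e=n\cdot\operatorname{id}_{T_eG}$ available.

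A smaller point: your first reduction asserts that $G_n=\{g\in G: g^n=e\}$ is a definable subgroup, which fails for nonabelian $G$, so you cannot simply ``replace $G$ by $G_n$.'' The correct (and standard) reduction is the one the paper uses: by saturation, finitely many of the sets $G_n$ cover $G$, so taking the least common multiple one gets a single $k$ with $x^k=e$ on all of $G$; it then suffices to show $x\mapsto x^k$ is not constantly $e$. This slip is easily repaired, but the differentiability issue above is the substantive gap.
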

\begin{proof} Let $(G,\cdot,e)$ be an infinite definable group. By compactness, it suffices to show that for each $k\in\mathbb Z^+$, the map $x\mapsto x^k$ is not the identity on all of $G$. 

This time we let $\CC$ be the category of all locally loric $C^1$-manifolds with loric $C^1$-maps. Then $\CC$ again satisfies the requirements of Theorem \ref{T: marikova}, and again $G$ is generically $\CC$. So by Theorem \ref{T: marikova} (arguing as above to ensure top-dimensionality), we may assume $G$ is locally $\CC$ and locally of dimension $\dim(G)$ at the identity. The result is that $(x,y)\mapsto xy$ and $x\mapsto x^{-1}$ restrict to $\CR$-definable $C^1$ maps of infinite $\CR$-definable $C^1$-manifolds in neighborhoods of $e$. But in this case, the power map $x\mapsto x^k$ induces scaling by $k$ on the tangent space $T_e(G)$; and since scaling by $k$ is surjective -- thus not constant by top-dimensionality -- $x\mapsto x^k$ cannot be constant either.
\end{proof}

We also get the following corollary, which is not needed for the sequel but may be of interest: 

\begin{corollary}
    If $\CR_V$ is power bounded then any definable topological group has a definable open subgroup definably isomorphic to a loric group. 
\end{corollary}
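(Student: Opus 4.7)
The plan is to apply Marikova's theorem (Theorem \ref{T: marikova}) with the category $\CC$ of loric topological spaces and loric continuous maps, exactly as in the proof of Lemma \ref{L: loc loric}. Since $G$ is a definable topological group, the group operation and inverse are continuous, and by the genericity clause of the definition of a lore (Definition \ref{D: lore}), they are loric on dense open sets. Hence $G$ is generically $\CC$ in the sense of Definition \ref{D: local group}, so Theorem \ref{T: marikova} yields a dense open $D\subset G$ such that $G_d$ is locally $\CC$ for every $d\in D$. Translating by some $d\in D$, we may assume this happens at the identity of $G$: there are open neighborhoods $U, V$ of $1$ in $G$, both loric, satisfying $U^{-1}=U$, $U\cdot U\subset V$, and with the group operations restricting to loric maps.

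Next, I would produce a definable open subgroup $H\leq G$ contained in $U$. This is where power-boundedness enters: when $\CR_V$ is power bounded, both $\Gamma$ and $R/V$ are locally linear (see the discussion in Remark \ref{R: imaginaries}), and the structure theory of definable topological groups in power-bounded $T$-convex fields -- developed in the work of Halevi--Hasson--Peterzil referenced in the previous section -- provides a definable neighborhood basis of the identity consisting of open subgroups. Pick such a subgroup $H$ contained in $U$.

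Finally, I would verify that $H$ is a loric group. Since $H$ is an open subset of the loric manifold $U$, the locality clause of Definition \ref{D: lore} implies that $H$ is itself loric. The group operation $H\times H\to H$ and inverse $H\to H$ are restrictions of the corresponding loric maps on $U$ and $U^2$, hence are themselves loric by the locality and invariance properties of the lore. Thus $H$, with its subspace topology from $G\subset R^n$, is already a loric group, and the identity map (or, if needed, a small coordinate adjustment to embed $H$ as a locally $\CR$-definable subset of some $R^m$) is the required definable isomorphism.

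The main obstacle is the middle step: producing the definable open subgroup $H$. Power-boundedness is essential here -- without it, the pathological topological behavior displayed in Proposition \ref{P: purity failure} indicates that no useful definable neighborhood basis of open subgroups need exist, and Marikova's theorem alone only yields the \emph{local} loric group structure.
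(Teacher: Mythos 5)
Your skeleton is the paper's: Lemma \ref{L: loc loric} (i.e.\ Theorem \ref{T: marikova} applied to the category of loric manifolds and loric maps) to reduce to a locally loric group, then an open subgroup sitting inside the loric neighborhood of the identity, which is loric together with its operations by the locality clause of Definition \ref{D: lore}. The one place you diverge, and the one genuine weak point, is the middle step. The paper does not obtain the neighborhood basis of open subgroups from the Halevi--Hasson--Peterzil structure theory, nor from local linearity of $\Gamma$ and $R/V$: local linearity of those quotient sorts is a statement about definable subsets of $\Gamma^n$ and $(R/V)^n$ and by itself gives no small open subgroups of a definable topological group $G$ living in $R^n$, and the referenced work does not supply the basis statement you need. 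What the paper actually quotes is \cite[Proposition 6.3]{AcoLoGps}: in the 1-h-minimal setting, a definable topological group has a neighborhood basis at the identity consisting of open subgroups; power boundedness enters precisely because power bounded $T$-convex theories are 1-h-minimal, not through the linearity of the imaginary sorts. So you have correctly isolated the exact external fact the proof hinges on, and the rest of your argument (choosing the subgroup inside $U$ and checking loricness of the operations via locality and restriction) matches the paper's, but the justification you offer for that fact would not stand as written and should be replaced by the citation to Acosta L\'opez's result.
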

\begin{proof}
    Let $G$ be such a group. By Lemma \ref{L: loc loric}, we may assume $G$ is locally loric. By \cite[Proposition 6.3]{AcoLoGps}, $G$ has a neighborhood basis at the identity consisting of open subgroups. Since $G$ is locally loric, one of these subgroups is loric.
\end{proof}

It may be worth pointing out that specialised to the case of pure real closed valued fields, the above corollary extends \cite[Theorem A]{HruPil} to RCVF. We recall that this theorem asserts that a Nash group over $\Rr$ is locally Nash isomorphic to the $\Rr$-rational points $H(\Rr)$ of some algebraic group $H$. The formulation (and proof) of this theorem over arbitrary real closed fields is given in \cite{HrPi2011} (see the proof of Theorem 2.1 of that paper). We also note that, in the RCVF setting, the same conclusion holds of any definable group, if we drop the requirement that the definable isomorphism is Nash. 

\section{Higher Dimensional Relics of T-Convex Fields}
We are ready to prove the first part of our main theorem for $\CR_V$-relics:
\begin{theorem}\label{T: at most 2}
    Let $\CM$ be a strongly minimal non-locally modular definable $\CR_V$-relic. Then $\dim(M)\le 2$. 
\end{theorem}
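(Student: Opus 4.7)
The plan is to adapt the proof of the analogous ``$\dim(M) \leq 2$'' result for o-minimal fields from \cite{CasOmin} to our T-convex setting. The preceding sections have assembled all the infrastructure needed to run that argument: $\CR_V$ is a differentiable Hausdorff geometric structure, it carries the lore of locally $\CR$-definable sets, it satisfies manifold ramification purity of order $2$ (Proposition \ref{P: purity success}), it admits no infinite definable torsion groups (Corollary \ref{C: finite torsion}), and any $\CR_V$-definable group can be put into locally manifold-loric form (Lemma \ref{L: loc loric}). Since the argument of \cite{CasOmin} takes place entirely within a differentiable Hausdorff geometric structure with a lore satisfying manifold ramification purity, the bulk of the work is re-examining that argument and verifying that the weaker ``locally manifold-loric'' condition suffices in place of a globally loric group.

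I would first reduce to the case that $\CM$ is an expansion of a group. Since $\CM$ is strongly minimal and non-locally modular, the Hrushovski--Zilber group configuration theorem produces a strongly minimal group $G$ interpretable in $\CM$ with $\dim(G) = \dim(M)$. By Fact \ref{F: interpretable is definable}, $G$ is in $\CR_V$-interpretable bijection with an $\CR_V$-definable set. Replacing $\CM$ with the structure it induces on $G$ (using elimination of finite imaginaries to realise this as a definable $\CR_V$-relic), we may assume $\CM = (M,+,\ldots)$ is itself a strongly minimal expansion of an $\CR_V$-definable group. By Lemma \ref{L: loc loric} we may then assume $(M,+)$ is locally manifold-loric; fix open neighbourhoods $U$ and $V$ of $0$ witnessing Definition \ref{D: loc lor}.

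Suppose for contradiction that $n := \dim(M) \geq 3$. Following the template of \cite{CasOmin}, the contradiction is obtained by producing a finite-to-one loric map between loric $n$-manifolds whose ramification locus has codimension strictly greater than $2$, contradicting Proposition \ref{P: purity success}. The relevant map is constructed from the restricted group operation $+ : U \times U \to V$; Corollary \ref{C: finite torsion} is used to rule out degenerate behaviour of the relevant ``power-type'' maps, while the tangent-space structure on the loric manifold $U$ -- inherited from the differentiable Hausdorff geometric structure on $\CR_V$ -- plays the role of the tangent space in \cite{CasOmin}.

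The main obstacle, as already noted, is that the group is only \emph{locally} manifold-loric rather than globally loric, so every construction inherited from \cite{CasOmin} must be carried out inside the neighbourhoods $U$ and $V$ where the required continuity and loricness hold. Because the arguments in \cite{CasOmin} are by design local -- taking place near a generic point, or near the identity of the group -- I expect that each step can be restricted to a small enough open subset of $U$ without losing essential content, at the cost of somewhat more careful bookkeeping of domains, lores, and open neighbourhoods. No new geometric input beyond what has been developed in Sections 4--8 should be required.
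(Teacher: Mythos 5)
There is a genuine gap at the very first step. You claim that non-local modularity of $\CM$ lets you invoke the Hrushovski--Zilber group configuration theorem to produce a strongly minimal group $G$ interpretable in $\CM$, and you then reduce the whole theorem to the group case. This is false: non-local modularity does \emph{not} yield an interpretable group. The group configuration theorem produces a group only from a group configuration, and the classical result in this vein goes in the opposite direction of what you need (a \emph{locally modular} non-trivial strongly minimal set interprets a group); for non-locally modular strongly minimal sets there is no such general existence theorem -- Hrushovski's ab initio constructions are non-locally modular and interpret no infinite group at all. Whether $\CM$ interprets a strongly minimal group is precisely the dichotomy the actual proof must respect: the paper splits into two cases, handling the case where $\CM$ does \emph{not} interpret a strongly minimal group by citing the group-free argument of \cite{CasOmin} (Theorems 5.4 and 7.1 there, which establish $\dim(M)\le 2$ from manifold ramification purity of order $2$ alone in that case), and only in the remaining case does it pass to the induced structure on a strongly minimal group via Fact \ref{F: interpretable is definable} and Lemma \ref{L: loc loric}. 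Your proposal omits the no-group case entirely, and the claimed reduction that would make it unnecessary does not exist.

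The group case of your sketch is broadly aligned with the paper's argument (restrict $+$ to a suitable set, apply Proposition \ref{P: purity success}, use Corollary \ref{C: finite torsion}, and work inside the neighbourhoods $U,V$ witnessing local manifold-loricness), though it is vague on the two points where real work happens: the construction of the strongly minimal curve $C$ with finite generic stabilizer, $C\cap -C$ finite, and $0\in C$ a loric manifold point (this is where finite torsion and the choice of a generic translate inside $U$ enter), and the \emph{upper} bound $\dim(\operatorname{Ram}(f))\le\dim(M)$, which comes from weak detection of closures in differentiable Hausdorff geometric structures and is what turns purity ($\dim(\operatorname{Ram}(f))\ge 2\dim(M)-2$) into the inequality $\dim(M)\le 2$. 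But even granting all of that, the missing no-group case means the proposal does not prove the theorem as stated.
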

\begin{proof}
    %We, essentially, repeat the proof of \cite[Theorem 1]{CasOmin}, with minor adaptations using the results collected up to this point. By Lemma \ref{L: reduce2def} we may assume that $M$ is definable (as explained before, if $M$ is internal to the residue field $\textbf{k}$ we can either conclude by the main result of \cite{CasOmin}, using the fact that $\textbf{k}$ is o-minimal, or use the fact that $\textbf{k}$ is itself a $T_{convex}$-relic). 
    % Before proceeding in that direction, we also need some results from \cite{CaHaYe}. By \cite[Fact 12.6]{CaHaYe} (see also \cite[Proposition 5.5]{HaHaPeVF})  $M$ is locally interalgerabic with at least one of $R, \Gamma, K/\CO$ and the residue field $\textbf{k}$. By Theorem \ref{T: almost internality} if $M$ is locally interalgerabic with a sort $D$ then $D$ is rich. By Proposition 12.24 of the same paper, if $D$ is locally linear it is not rich. Thus $M$ is not locally interalgerabic with $\Gamma$ (\cite[Theorem B]{vdDries-Tconvex}) or with $K/\CO$ (\cite[Proposition 6.16]{HaHaPe}, using the fact that $\CR_V$ has definable Skolem functions \cite[Remark 2.4]{vdDries-Tconvex}). If $M$ is locally interalgerabic with the residue field $\textbf{k}$ then $M$ is internal to $\textbf{k}$, and we conclude by the main result of \cite{CasOmin}. 

    % So we are reduced to the case where $M$ is almost interalgebraic with $R$, in which case $M$ is internal to $R$ (as with the case of $\textbf{k}$. Thus, we may assume that $M\sub K^n$ (some $n$). 

    Adding constants, we may assume that $\acl_\CM(\0)$ is infinite. Passing to a reduct of $\CM$ (keeping non-local modularity), we may assume that the language of $\CM$ is countable, and that every $\CM(\emptyset)$-definable set is $\CR_V(\emptyset)$-definable. Thus, we may assume that our structures $\CR_V$ and $\CM$ meet the requirements of \cite[Assumption 3.1]{CasOmin} with respect to the lore $\CS$ of locally $\CR$-definable sets (The main point to note is that $\CR$ and $\CS$ satisfy manifold ramification purity of order $2$, which is given by Proposition \ref{P: purity success}). So we can now use freely the results of sections 3-7 of \cite{CasOmin}.
    
    If $\CM$ does not interpret a strongly minimal group, then we conclude by \cite[Theorem 7.1, Theorem 5.4]{CasOmin}. (These results collectively establish $\dim(M)\leq 2$ assuming (i) manifold ramification purity of order 2 with respect to the given lore, and (ii) that $\CM$ does not interpret a strongly minimal group.) 
    
    So, suppose $\CM$ interprets the strongly minimal group $G$. Then $\dim(G)=\dim(M)$ (as $G$ and $M$ are in finite correspondence). Moreover, by Fact \ref{F: interpretable is definable} and Lemma \ref{L: loc loric}, $G$ is $\CR_V$-definably isomorphic to a locally manifold-loric $\CR_V$-definable group. Thus, replacing $\CM$ by the structure it induces on $G$, we may assume $\mathcal M$ is already an expansion of a locally manifold-loric definable group. Since strongly minimal groups are abelian, we write this group as $(M,+)$. We also abuse notation (as in \cite{CasOmin}) and often use 0 as shorthand for $(0,0)\in M^2$. Since $(M,+)$ is locally manifold-loric, we fix open loric neighborhoods $U$ and $V$ of 0 as in Definition \ref{D: loc lor}. Without loss of generality, assume $U\subset V$.

    Our task is now to show that the analogous result in the o-minimal setting (\cite[Theorem 4.2]{CasOmin}) goes through for our group $(M,+)$. Let us recall the structure of the proof from \cite{CasOmin}. One can distill the argument into four pieces:
    \begin{enumerate}
        \item[I.] First, one constructs a particular $\CM$-definable strongly minimal plane curve $C\subset M^2$, chosen to satisfy the following (see \cite[Lemma 4.6]{CasOmin}):
    \begin{enumerate}
        \item The generic type of $C$ has finite stabilizer.
        \item $C\cap -C$ is finite.
        \item $0\in C$, and $C$ is a loric $\dim(M)$-manifold in a neighborhood of $0$.
    \end{enumerate}
    \item[II.] Second, the curve $C$ is used to construct a function $f:X\rightarrow Y$ with the following properties:
    \begin{enumerate}
        \item $X\subset C^2$ and $Y\subset M^2$ are open, loric $2\dim(M)$-manifolds and $f$ is the restriction of $+$ to $X$ (so $X+X\subset Y$).
        \item $f$ is loric and finite-to-one.
        \item $(0,0)\in X$, and $f$ ramifies at $(0,0)$.
    \end{enumerate}
    \item[III.] Next, one notes that by purity of ramification, the value $\dim(\operatorname{Ram}(f))$ (the dimension of the ramification locus of $f$) is at least $2\dim(M)-2$.
    \item[IV.] On the other hand, one shows using \textit{weak detection of closures} (in differentiable Hausdorff geometric structures) that at the same time, $\dim(\operatorname{Ram}(f))\leq\dim(M)$. Then combining with (III) gives $2\dim(M)-2\leq\dim(M)$, and thus $\dim(M)\leq 2$. 
    \end{enumerate}

    In the T-convex setting, steps III and IV go through word-for-word. That is, III is a direct consequence of purity of ramification (Proposition \ref{P: purity success}), and IV is a direct consequence of weak detection of closures (\cite[Proposition 3.40 and Theorem 4.10]{CaHaYe}).
    
    Steps I and II also go through, but with minor adaptations. Thus, we now discuss these steps in \cite{CasOmin} and how to change them to fit our needs.
   
    %Using purity of ramification for loric maps, combined with "weak detection of closures" for differential Housdorff geometric structures, we ultimately get the desired restriction on $\dim(M)$. The hardest part of this argument is weak detection of closures. Fortunately, this follows from earlier work on Hausdorff geometric structures and is already encapsulated as a black box in the proof fo \cite[Theorem 4.2]{CasOmin}. So we focus on the construction of a plane curve $C$ satisfying the above properties, and the above mentioned function $f$. Since the details are similar to those of the original proof, we will be brief, mentioning only the minor necessary adaptations. 

    Let us begin with I. In \cite{CasOmin}, the construction of a curve $C$ satisfying the required properties proceeds as follows: first choose $C$ to satisfy (1) (which can always be done in a non-locally modular strongly minimal group, see \cite[Lemma 7.2 and Theorem 7.3]{cashasva}); then choose $a\in C$ generic and replace $C$ by $C-a$, and deduce (2) from the fact that there are no o-minimally definable infinite torsion groups. Finally, since (i) $C$ is a loric $\dim(M)$-manifold in a neighborhood of $a$, and (ii) translation by $-a$ is a loric homeomorphism, we get (3). 
    
    In our case, the proofs of (1) and (2) are unchanged, because one only uses that there are no infinite definable torsion groups -- a property which holds also in $\CR_V$ by Corollary \ref{C: finite torsion}. 

    To arrange (3), one just needs to choose our generic $a$ to belong to the distinguished open set $U$ that we have fixed from the beginning. This is possible because $\dim(U)=\dim(G)$. Then the definition of locally manifold-loric implies that translation by $-a$ is a loric homeomorphism from $U$ to $U-a$, and thus $C-a$ is a loric $\dim(M)$-manifold in the neighborhood $U-a$ of 0.

    Finally, as in \cite{CasOmin}, step II is done by choosing $X$ and $Y$ to be small enough neighborhoods of the zero vectors in $C^2$ and $M^2$. One can arrange II(a) and II(b) using that $M$ is locally manifold loric and $C\cap -C$ is finite (so we choose $X$ and $Y$ to be contained in $U^4$ and $V^2$, where $U$ and $V$ are the distinguished open sets witnessing that $M$ is locally manifold loric). Then II(c) follows as in \cite{CasOmin}, by noting that $0+0=0$, while for $0\neq u\in C$ near 0, one has the two preimages $0+u=u+0=u$ near $(0,0)$.

   % Once the set $C$ is constructed, 
    %the proof of \cite[Theorem 4.2]{CasOmin} goes through essentially unaltered. 
    %The next main step (\cite[Lemma 4.7]{CasOmin}) 
   % we need to  choose a suitable restriction $f:X\rightarrow Y$ of $+:C^2\rightarrow M^2$ (so $X\subset C^2$, $Y\subset M^2$, and $X+X\subset Y$) which defines a finite-to-one loric map of loric $(2\dim(M))$-manifolds topologically ramifying at $(0,0)\in C^2$. 
    %In our case, 
   % The construction is identical to the original \cite[Lemma 4.7]{CasOmin}, except that we further shrink $X$ and $Y$ to satisfy $X\subset U^4$ and $Y\subset U^2$. 
    
  % Once this map $f$ is constructed, the rest of the proof of \cite[Theorem 4.2]{CasOmin} works verbatim. Thus, we conclude that $\dim(M)\leq 2$ in our case as well.

  So, since we can recover steps I and II, the rest of the proof works unchanged, and we get the same bound $\dim(M)\leq 2$ as in \cite{CasOmin}.    
\end{proof}

%\begin{remark}
 %     The assumption of power doundedness of the o-minimal theory $T$ was only used to conclude that $K/\CO$ and $\Gamma$ are linear. In particular, the proof would go through, unaltered, provided that $\CM$ is assumed to be a \emph{definable} $\CR_V$-relic. We do not know whether, in this setting, the problem is easier than the analogous general problem for weakly o-minimal expansions of ordered groups. 
%\end{remark}

\section{Differentiable Hausdorff Geometric Fields}

For the rest of the paper, we show that one-dimensional definable strongly minimal relics of $\CR_V$ are locally modular, generalizing an earlier result of Hasson, Onshuus, and Peterzil in the o-minimal setting (\cite{HaOnPe}). As in that paper, our strategy is to assume non-local modularity and interpret a strongly minimal (thus algebraically closed) field -- a contradiction since there are no $\CR_V$-definable algebraically closed fields of dimension 1.

It will be convenient to give the argument a general treatment. In \cite{CaHaYe}, we showed that non-trivial one-dimensional strongly minimal definable relics of ACVF interpret a strongly minimal group. The main tool was a general theory of \textit{coherent slopes} that could be developed in the relic. We also remarked that a similar abstract strategy might produce a field (at least in characteristic zero), not just a group -- but more would be needed. The main purpose of the next two sections is to complete this program, giving an abstract construction of a field using coherent slopes. The axiomatic setting will essentially consist of a characteristic zero topological field with geometric theory and equipped with `tangent spaces' of definable sets where appropriate.

Unfortunately, there will be slight inconsistencies in notation and terminology with \cite{CaHaYe}. In that paper we considered a more general notion of $n$-slopes (for all $n\geq 1$), which could be collected into an inverse system of groupoids called the \textit{definable slopes} of a model of ACVF. A key requirement was that a curve is determined near a point by its $n$-slopes for all $n$ (i.e. the category of germs is the inverse limit of the categories of slopes). This property allowed us to construct a one-dimensional family of $n$-slopes at a point for some $n$ (and this family was used as a first approximation of the group we constructed).

The rather elaborate setup discussed above was designed to work in positive characteristic, where we could not control the value of $n$ witnessing a one-dimensional family as above. In contrast, in our characteristic zero setting, families of $n$-slopes always exist for $n=1$ (essentially by Sard's Theorem, see Lemma \ref{L: tangency slope is generic}). Thus, in this paper, we only use 1-slopes, and we do \textit{not} assume a curve is determined near a point by its slope. Thus, strictly speaking, we are \textit{leaving} the formalism in \cite{CaHaYe} and redeveloping it in a different context. For the most part, however, the two treatments are quite similar.

There will also be differences in notation, largely because in the present paper we work with \textit{slopes of types in all arities} (while in \cite{CaHaYe}, to handle the Frobenius effectively we were constrained to slopes of curves in $K^2$). Overall, the current approach seems more natural. Thus, we hope the reader will forgive us for needing to redevelop some notions from \cite{CaHaYe} in a different language -- we feel the outcome is worth the effort for aesthetic reasons.

We begin with our axiomatic framework. Recall that a \textit{differentiable Hausdorff geometric structure} is a Hausdorff geometric structure that comes equipped with a notion of smoothness and an associated notion of tangent spaces (see \cite[Definition 3.34]{CaHaYe}).

\begin{definition}\label{D: differentiable HGF}
    A \textit{differentiable Hausdorff geometric field} is a t-minimal differentiable Hausdorff geometric structure $(\CK,\tau)$ such that:
    \begin{enumerate}
        \item $\CK=(K,+,\cdot,...)$ is an expansion of a field, and $\tau$ is a field topology.
        \item The `scalar field' of the differential structure is $K$ (that is, tangent spaces are $K$-vector spaces).
        \item If $X\subset K^n$ is definable, and $a\in X$ is a smooth point, then $T_a(X)$ is an affine linear subspace of $K^n$, viewed as a $K$-vector space with zero vector $a$ (thus, we can and do view $T_a(X)$ as encoded by a tuple in some $K^m$ via Grassmanians -- see below for an elaboration on this).
        \item The smooth part and tangent space maps, $X\mapsto X^S$ and $(X,a)\mapsto T_a(X)$ (including the vector space structure), are $\CK(\emptyset)$-definable in families.
    \end{enumerate}
\end{definition}

\textbf{Fix $(\CK,\tau)$, a differentiable Hausdorff geometric field. We assume that $\CK$ is $\kappa$-saturated and $\kappa$-strongly homogenous for some sufficiently large $\kappa$. For now, all tuples are assumed to reside in powers of $K$.}

\begin{remark} Recall that we also still have our fixed T-convex structure $\CR_V$ that we will continue to mention as we go. The reader could assume $\CK=\CR_V$, but we use the different notation to distinguish which arguments are more general.
\end{remark}

We now continue by developing some basic facts and terminology in $\CK$. As noted above, we will view tangent spaces as tuples in powers of $K$. Let us elaborate: suppose $X\subset K^n$ is definable and smooth at $a\in X$. Then $T_a(X)$ is an affine linear subpace of $K^n$ containing $a$, viewed as a vector space with $a$ as its origin. We describe how to encode this definably. First, we view $a$ as a code for the $K$-vector space structure $V_a$ on $K^n$ with $a$ as its origin (so equipped with the addition operation $x+_ay=x+y-a$ and scaling maps $x\mapsto a+c(x-a)$ for $c\in K$). Then we view $T_a(X)$ as a $K$-vector subspace of $V_a$ -- an object that can be encoded in a straightforward (and uniform) way via Grassmanians in $V_a$. So, strictly speaking, the code of $T_a(X)$ is the pair $(a,s)$ where $s$ codes the subspace $T_a(X)$ of $V_a$. Note, in particular, that with this formalism we will always have $a\in\dcl(T_a(X))$ (since $a$ is included in the encoding).

As in \cite{CaHaYe}, the frontier inequality implies that the \textit{germ} of a complete type at a realization is well-defined, and thus so is the tangent space of a complete type at a realization. We use the notation $G(a/A)$ and $T(a/A)$ to denote the germ and tangent space of $\tp(a/A)$ at $a$.

In \cite{CaHaYe}, we developed a theory of \textit{weakly generic intersections} between two families of plane curves in a Hausdorff geometric structure. This allowed us to formalize various geometric properties we needed. In what follows, we develop (simpler) analogs at the level of types.

\begin{definition}\label{D: family of curves}
    An \textit{abstract family of plane curves} is a complete type $p=\tp(a,t/A)$ such that $\dim(a/A)=2$ and $\dim(a/At)=1$.
\end{definition}

\begin{remark}
Definition \ref{D: family of curves} makes sense in any geometric structure. In the context of strongly minimal structures, it is essentially equivalent to the usual families of plane curves one considers. That is, suppose $\CM$ is strongly minimal. Let $T$ be an infinite definable set, and suppose that $\{C_t:t\in T\}$ is an $A$-definable \textit{almost faithful} family of plane curves (i.e. one-dimensional subsets of $M^2$; see \cite[\S2]{CasACF0} for a discussion of almost faithfulness). Let $t\in T$ be $A$-generic, and let $a\in C_t$ be $At$-generic. Then $p=\tp(a,t/A)$ is an abstract family of plane curves. Moreover, if $q=\tp(b,u,B)$ is any abstract family of plane curves in $\CM$, and $\tp(b/Bu)$ is stationary, then $q$ essentially (i.e. up to reparametrization) arises from a construction of the above form.
\end{remark}

\begin{definition}
    Let $p=\tp(a,t/A)$ and $q=\tp(b,u/A)$ be abstract families of plane curves.
    \begin{enumerate}
        \item A \textit{$(p,q)$-intersection} is a triple $(a,t,u)$ such that $(a,t)\models p$ and $(a,u)\models q$.
        \item A \textit{strong $(p,q)$-intersection} is a $(p,q)$-intersection $(a,t,u)$ such that $t$ and $u$ are independent over both $A$ and $Aa$.
        \item A $(p,q)$-intersection $(a,t,u)$ is \textit{strongly approximable} if every neighborhood of $(a,t,u)$ contains a strong $(p,q)$-intersection.
        \item A \textit{$(p,q)$-tangency} is a strongly approximable $(p,q)$-intersection $(a,t,u)$ such that $T(a/At)=T(a/Au)$.
        \item A \textit{$(p,q)$-multiple intersection} is a $(p,q)$-intersection $(a,t,u)$ such that every neighborhood of $(a,t,u)$ contains $(p,q)$-intersections $(a',t',u'),(a'',t',u')$ with $a'\neq a''$.
    \end{enumerate}
\end{definition}

Aside from Definition \ref{D: differentiable HGF}, we need one more assumption before building a field -- again adapted to types from an original version in \cite{CaHaYe}:

\begin{definition}\label{D: TIMI} Let $p=\tp(a,t/A)$ and $q=\tp(b,u/A)$ be abstract families of plane curves. We say that the pair $(p,q)$ \textit{satisfies TIMI} if every $(p,q)$-tangency is a $(p,q)$-multiple intersection. If this holds for all such $p$ and $q$, we say that $(\CK,\tau)$ \textit{satisfies TIMI}.
\end{definition}

TIMI is short for `tangent intersections are multiple intersections'. It is the key geometric data allowing us to recover differential data from topological data in a relic.

\subsection{Finding Infinitely Many Slopes}

As described above, to interpret a field in a $\CK$-relic, we will need a family of plane curves achieving infinitely many distinct slopes at a generic point. In the current setting, the existence of such families is essentially a consequence of \textit{Sard's Theorem}. More precisely, recall that a version of Sard's Theorem is built into the axioms of a differentiable Hausdorff structure: in the current language, the statement we need is the following:

\begin{fact}\cite[Lemma 3.38(3)]{CaHaYe}\label{F: Sard} Fix any $a\in R^m$, $b\in R^n$, and $A$. Let $0_b$ be the zero vector in $T(b/A)$. Then the projection $T(ab/A)\rightarrow T(b/A)$ is surjective with kernel $T(a/Ab)\times\{0_b\}$.
\end{fact}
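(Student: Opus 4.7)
The plan is to reduce this statement to the standard ``rank-nullity'' for the differential of a linear projection, and then extract surjectivity and the kernel description from additivity of $\dim$ in the geometric structure $\CK$. First I would replace the types with their loci: set $X=\loc(ab/A)$, $Y=\loc(b/A)$ and $F=\loc(a/Ab)$. By the definition of tangent spaces of types together with clauses (3)--(4) of Definition \ref{D: differentiable HGF}, the points $(a,b)\in X$, $b\in Y$ and $a\in F$ are smooth in their respective loci (generic points being smooth is part of the axioms), and $T(ab/A)=T_{(a,b)}(X)$, $T(b/A)=T_b(Y)$, $T(a/Ab)=T_a(F)$. The coordinate projection $\pi:K^{m+n}\to K^n$ is $K$-linear, so its differential at $(a,b)$ is $\pi$ itself, and restricts to a $K$-linear map $d\pi:T(ab/A)\to T(b/A)$ since $\pi(X)\subseteq Y$ (the generic point of $X$ projects to the generic point of $Y$, and frontiers have smaller dimension).

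Next I would identify the kernel. Near $(a,b)$, the fiber $X\cap \pi^{-1}(b)$ should coincide, up to a lower-dimensional frontier, with $F\times\{b\}$; this is the model-theoretic content that the slice of a locus over an appropriately generic parameter is the locus over that parameter. Since $\pi$ is constant on this fiber, its differential vanishes on the fiber's tangent space at $(a,b)$, which is exactly $T(a/Ab)\times\{0_b\}$. This yields the inclusion $T(a/Ab)\times\{0_b\}\subseteq \ker(d\pi)$. To close everything I would invoke a dimension count: additivity of $\dim$ in the geometric structure $\CK$ gives $\dim(ab/A)=\dim(a/Ab)+\dim(b/A)$, and the differentiable structure axioms yield $\dim T_{(a,b)}(X)=\dim(ab/A)$, $\dim T_b(Y)=\dim(b/A)$, and $\dim T_a(F)=\dim(a/Ab)$. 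Combining these with $\dim\ker(d\pi)+\dim\im(d\pi)=\dim T(ab/A)$ and the inclusions already obtained, the dimensions are forced to match on both sides: $d\pi$ is surjective onto $T(b/A)$ and $\ker(d\pi)=T(a/Ab)\times\{0_b\}$.

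The main obstacle I expect is the local identification of $X\cap\pi^{-1}(b)$ with $F$ as germs at $(a,b)$, which underlies the kernel inclusion. This requires the model-theoretic notion of locus to interact cleanly with the topological notion of germ; one must rule out spurious local components of the fiber through $(a,b)$ and any dimension drop inside the slice. In the differentiable Hausdorff geometric setting this should follow from the frontier inequality, the definability of smooth loci and tangent spaces in families, and the genericity of $a$ over $Ab$, but the real content sits in this bookkeeping. Once that germ-level identification is in place, surjectivity and the kernel description drop out of the dimension count.
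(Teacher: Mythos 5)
First, note that the paper itself gives no proof of this statement: it is imported verbatim from \cite[Lemma 3.38(3)]{CaHaYe}, and the sentence immediately preceding it explains that ``a version of Sard's Theorem is built into the axioms of a differentiable Hausdorff geometric structure''. So the content you are trying to reprove is, in this framework, an axiom-level input (verified separately for o-minimal and 1-h-minimal structures in \cite{CaHaYe}), not a formal consequence of Definition \ref{D: differentiable HGF} as reproduced here.

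The concrete gap is in your final dimension count. Write $f=\dim(a/Ab)$, $g=\dim(b/A)$, $k=\dim\ker(d\pi)$ and $i=\dim\im(d\pi)$, where $d\pi$ is the projection restricted to $T(ab/A)$. What your argument establishes is: $k+i=f+g$ (rank--nullity together with $\dim T(ab/A)=\dim(ab/A)=f+g$), $k\geq f$ (your fiber inclusion $T(a/Ab)\times\{0_b\}\subseteq\ker(d\pi)$), and $i\leq g$ (the image lands in $T(b/A)$). These three facts are perfectly consistent with $k=f+1$ and $i=g-1$ whenever $g\geq 1$, i.e.\ with failure of surjectivity and of the kernel identity; the two inequalities point in the wrong direction to force equality. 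The missing half --- surjectivity onto $T(b/A)$, equivalently $\ker(d\pi)\subseteq T(a/Ab)\times\{0_b\}$ --- is exactly the Sard/generic-submersion content. It genuinely uses that $(a,b)$ realizes its type (so is generic in every $A$-definable set through it) and cannot follow from additivity of $\dim$ plus linear algebra: at a smooth but non-generic point the statement simply fails (project the parabola $y=x^2$ to the $y$-axis at the origin: the kernel of $d\pi$ on the tangent line is one-dimensional while the fiber is a single point, and the image is $\{0\}$). A secondary, patchable point: rather than identifying the fiber $X\cap\pi^{-1}(b)$ with $\loc(a/Ab)\times\{b\}$ ``up to frontier'', you should choose a small $Ab$-definable $Z\subseteq X_b$ lying in $\tp(a/Ab)$ with $T_a(Z)=T(a/Ab)$ and invoke product and monotonicity properties of tangent spaces; this gives the easy inclusion cleanly, but, as above, it is not where the content of the lemma lies.
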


Fact \ref{F: Sard} will be used crucially in Lemma \ref{L: strong intersections are transverse}. From now on, we will refer to this fact simply as \textit{Sard's Theorem}.

In fact, the ingredients used to construct a family of slopes are also crucial for proving TIMI in our T-convex field $\CR_V$. Thus, we will now give these ingredients a more general treatment, before deducing both of the desired consequences.

First we show that strong intersections of plane curves are `transverse' (have different tangent spaces):

\begin{lemma}\label{L: strong intersections are transverse} Let $p=\tp(a,t/A)$ and $q=\tp(b,u/A)$ be abstract families of plane curves. Let $(a,t,u)$ be a strong $(p,q)$-intersection. Then $T(a/At)\neq T(a/Au)$.
\end{lemma}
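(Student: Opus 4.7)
The plan is to apply Sard's theorem (Fact~\ref{F: Sard}) repeatedly to $\tp(atu/A)$, and derive a contradiction from the assumption $T(a/At)=T(a/Au)$ by showing that $T(a/A)$ would then have to be one-dimensional. First, standard dimension bookkeeping: from $\dim(a/A)=2$, $\dim(a/At)=\dim(a/Au)=1$, and independence of $t,u$ over both $A$ and $Aa$, one computes $\dim(t/Aa)=\dim(t/A)-1$ and $\dim(t/Aau)=\dim(t/Aa)$ (symmetrically for $u$), which gives $\dim(atu/A)=\dim(t/A)+\dim(u/A)$ and hence $\dim(a/Atu)=0$.

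Next, I would view $T(atu/A)$ naturally inside $T(a/A)\times T(t/A)\times T(u/A)$ and introduce the subspaces $V_1$ and $V_2$ consisting of tangent vectors with $u$-component zero and $t$-component zero, respectively. Direct applications of Sard to the projections $T(atu/A)\to T(u/A)$ and $T(atu/A)\to T(t/A)$ identify $V_1$ with $T(at/Au)$ and $V_2$ with $T(au/At)$, of respective dimensions $\dim(t/A)$ and $\dim(u/A)$ by the bookkeeping above. Sard applied once more to $T(atu/A)\to T(tu/A)$ has kernel $T(a/Atu)\times\{0_{tu}\}=\{0\}$ (since $\dim(a/Atu)=0$), which forces $V_1\cap V_2=0$, and hence $V_1\oplus V_2=T(atu/A)$.

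Finally, I would project onto $T(a/A)$. Sard applied to $\tp(at/Au)$ shows that the $a$-projection $V_1\to T(a/A)$ has image exactly $T(a/Au)$, and symmetrically $V_2$ projects onto $T(a/At)$; therefore the image of $T(atu/A)=V_1\oplus V_2$ in $T(a/A)$ is the sum $T(a/Au)+T(a/At)$, taken inside $T(a/A)$ as a $K$-vector space based at $a$. But Sard applied directly to $T(atu/A)\to T(a/A)$ gives surjectivity onto this two-dimensional space, so the two one-dimensional summands $T(a/At)$ and $T(a/Au)$ cannot coincide. The main delicacy is the bookkeeping that correctly identifies each $V_i$ with the tangent space of the relevant subtype — each identification is a direct application of Sard, but one has to keep the coordinate projections straight and interpret the tangent spaces consistently as affine subspaces based at $a$ and $t,u$.
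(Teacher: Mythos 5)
Your proof is correct: every step is a legitimate application of Fact~\ref{F: Sard} together with the dimension bookkeeping coming from the definition of an abstract family of plane curves and the strongness hypothesis, plus the standing facts (used implicitly by the paper as well) that $\dim T(q)=\dim(q)$ and that tangent spaces of algebraic types are trivial. The route differs from the paper's in how the linear algebra is organized. The paper proves two claims --- that the projection $T(atu/A)\to T(tu/A)$ is an isomorphism, and that $T(atu/A)$ is the fiber product of $T(at/A)$ and $T(au/A)$ over $T(a/A)$ --- and then, assuming $T(a/At)=T(a/Au)$, lifts a common nonzero tangent vector $v$ to the element $(v,0_t,0_u)$ of the fiber product, contradicting injectivity onto $T(tu/A)$. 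You instead decompose $T(atu/A)$ as the internal direct sum of the two kernels $V_1=T(at/Au)\times\{0_u\}$ and $V_2=T(au/At)\times\{0_t\}$ (using $\dim(a/Atu)=0$ for $V_1\cap V_2=0$ and the dimension count for spanning), and then push forward along the $a$-projection, where surjectivity onto the two-dimensional $T(a/A)$ forces $T(a/At)+T(a/Au)=T(a/A)$. Both arguments rest on exactly the same dimension computation ($\dim(atu/A)=\dim(t/A)+\dim(u/A)$); the paper's version is marginally shorter, while yours yields the slightly stronger conclusion that the two tangent lines actually span $T(a/A)$, i.e.\ $T(a/A)=T(a/At)\oplus T(a/Au)$, a genuine transversality statement rather than mere inequality.
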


\begin{proof}
    We proceed with two key claims. The first is the main use of strongness.
    
    \begin{claim} The projection $T(atu/A)\rightarrow T(tu/A)$ is an isomorphism.
    \end{claim}
    \begin{proof} By Sard's Theorem, $T(atu/A)\rightarrow T(tu/A)$ is surjective. So it will suffice to show that $\dim(atu/A)\leq\dim(tu/A)$. Since $\dim(a/A)=2$ and $\dim(a/At)=1$, it follows that $\dim(t/Aa)=\dim(t/A)-1$. Similarly, $\dim(u/Aa)=\dim(u/A)-1$. Thus, $$\dim(atu/A)=\dim(a/A)+\dim(tu/Aa)\leq\dim(a/A)+\dim(t/Aa)+\dim(u/Aa)$$ $$=2+(\dim(t/A)-1)+(\dim(u/A)-1)=\dim(t/A)+\dim(u/A).$$ But by strongness, $\dim(t/A)+\dim(u/A)=\dim(tu/A)$, which implies the claim. 
\end{proof}

\begin{claim} $T(atu/A)$ is the fiber product of $T(at/A)$ and $T(au/A)$ over $T(a/A)$.
\end{claim}
\begin{proof} There is a natural injection of $T(atu/A)$ into the given fiber product. So, as in the previous claim, it suffices to check that the dimensions are equal. That is, we want to show that $$\dim(atu/A)=\dim(at/A)+\dim(au/A)-\dim(a/A).$$ By the argument in the previous claim, both sides equal $\dim(t/A)+\dim(u/A)$.
\end{proof}
Now suppose $T(a/At)=T(a/Au)$. So there is $0\neq v\in T(a/At)\cap T(a/Au)$. Let $0_t$ and $0_u$ be the zero vectors in $T(t/A)$ and $T(u/A)$, respectively. Then by Sard's Theorem, $(v,0_t)\in T(at/A)$ and $(v,0_u)\in T(au/A)$. So by the second claim, $(v,0_t,0_u)\in T(atu/A)$. But since $v\neq 0$, this contradicts the first claim. 
\end{proof}

The next lemma says that strongly approximable intersections can be strongly approximated without moving the `plane' coordinate:

\begin{lemma}\label{L: approximation without moving point}
    Let $p=\tp(\hat a,\hat t/A)$ and $q=\tp(\hat b,\hat u/A)$ be abstract families of plane curves, so that $(\hat a,\hat t,\hat u)$ be a strongly approximable $(p,q)$-intersection. Let $U_{\hat t}\times U_{\hat u}$ be a neighborhood of $(\hat t,\hat u)$ (in the ambient power of $K$). Then there are $(t,u)\in U_{\hat t}\times U_{\hat u}$ so that $(\hat a,t,u)$ is a strong $(p,q)$-intersection.
\end{lemma}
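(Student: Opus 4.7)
The plan is to combine strong approximability with the transversality established in Lemma \ref{L: strong intersections are transverse} and an implicit function theorem available in the differentiable Hausdorff geometric setting: a strong $(p, q)$-intersection close to $(\hat a, \hat t, \hat u)$ can be ``perturbed'' so that its first coordinate becomes $\hat a$ while its second and third coordinates move only slightly.

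First, by strong approximability, I would choose a strong $(p, q)$-intersection $(a, t, u)$ in a small neighborhood of $(\hat a, \hat t, \hat u)$ and consider the intersection locus $L := \{(a', t', u') : (a', t') \models p,\ (a', u') \models q\}$. By Sard's Theorem (Fact \ref{F: Sard}) applied as in Lemma \ref{L: strong intersections are transverse}, the tangent space $T((a, t, u)/A)$ equals the fiber product of $T((a, t)/A)$ and $T((a, u)/A)$ over $T(a/A)$; moreover, Lemma \ref{L: strong intersections are transverse} gives transversality, $T(a/At) \neq T(a/Au)$, so the local parametrization of $L$ by $(t', u')$ (valid by Sard plus transversality, since $T((a,t,u)/A) \to T((t,u)/A)$ is then an isomorphism) yields a local map $a = a(t', u')$ whose partial derivatives lie in $T(a/Au)$ and $T(a/At)$ respectively. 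These two $1$-dimensional subspaces span $T(a/A) = K^{|a|}$ by transversality, so the differential of the projection $\pi_a : L \to K^{|a|}$ at $(a, t, u)$ is surjective.

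Second, since $\pi_a$ is a submersion at $(a, t, u)$, the implicit function theorem available in the differentiable Hausdorff geometric structure (generic openness of projections with surjective differential) implies that $\pi_a$ is an open map in a neighborhood of $(a, t, u)$. Consequently, for $\hat a$ sufficiently close to $a$, there exists $(t', u')$ close to $(t, u)$ with $(\hat a, t', u') \in L$. By choosing the initial strong intersection $(a, t, u)$ close enough to $(\hat a, \hat t, \hat u)$ from the start, the resulting $(t', u')$ lands in the prescribed neighborhood $U_{\hat t} \times U_{\hat u}$. Finally, strongness is a generic condition --- its failure amounts to membership in certain $A$- or $A\hat a$-definable sets of strictly smaller dimension --- and since $(a, t, u)$ is strong, nearby $(p, q)$-intersections like $(\hat a, t', u')$ remain strong after possibly shrinking the neighborhood.

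The main obstacle is invoking the implicit function theorem in the abstract differentiable Hausdorff geometric setting, specifically verifying that a definable map with surjective differential at a point is locally open. In the concrete T-convex or o-minimal setting this is standard from the underlying o-minimal calculus; in the abstract setting one relies on the axiomatic properties of smoothness and tangent spaces introduced in \cite[\S 3]{CaHaYe} (namely, that surjectivity of the induced map on tangent spaces propagates to a genuine topological openness statement for the underlying projection).
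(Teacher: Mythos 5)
Your approach has two genuine gaps, and it also diverges substantially from the paper's argument, which is purely model-theoretic (compactness plus germ invariance) rather than differential-topological.

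First, the step you yourself flag is fatal at this level of generality: the lemma is stated and used for an arbitrary differentiable Hausdorff geometric field, whose axioms provide only Sard-type statements about tangent spaces of types (Fact \ref{F: Sard}), the frontier inequality, and generic continuity. There is no implicit function theorem or ``surjective differential implies locally open'' principle in that framework, and none is ever invoked in the paper; surjectivity at the level of tangent spaces simply does not propagate to topological openness of the projection of the (type-definable, not definable) locus $L$. Moreover the differential of $\pi_a$ can at best be surjective onto the $2$-dimensional space $T(a/A)$, not onto $K^{|a|}$, so even granting an openness principle you would still need to argue that the specific point $\hat a$ lies in the relevant locus of realizations near $a$.

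Second, and independently, the final step is wrong: strongness is not preserved under topological perturbation. Being a strong $(p,q)$-intersection is a type-definable condition (an infinite conjunction expressing the independence of $t$ and $u$ over $A$ and over $A\hat a$), so its failure set is a union of infinitely many small-dimensional definable sets, which can be topologically dense; a point $(\hat a,t',u')$ produced by a purely topological existence statement carries no control on the model-theoretic genericity of $t',u'$, and they could well be algebraic over $A\hat a$. This is precisely the difficulty the paper's proof is designed to circumvent: it argues by contradiction, uses compactness to replace the type-definable strongness condition by a single formula $\phi$ over $A$ that would fail for all $(t,u)\in U_{\hat t}\times U_{\hat u}$ above $\hat a$, observes that this failure is a definable property of $\hat a$ over the parameters $B$ defining the neighborhoods, and then uses the independence of $\hat a\hat t\hat u$ from $B$ over $A$ (so that $\tp(\hat a/A)$ and $\tp(\hat a/B)$ have the same germ at $\hat a$, \cite[Lemma 3.21]{CaHaYe}) to spread the failure to a whole neighborhood $U_{\hat a}\times U_{\hat t}\times U_{\hat u}$ containing no strong $(p,q)$-intersections, contradicting strong approximability. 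You should adopt that compactness-and-germ argument rather than trying to perturb a nearby strong intersection directly.
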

\begin{proof}
      Suppose $U_{\hat t}$ and $U_{\hat u}$ are definable over $B$. Shrinking if necessary, we may assume $\hat a\hat t\hat u$ is independent from $B$ over $A$.
      
      Suppose the lemma fails. Note that the set of strong $(p,q)$-intersections is type-definable over $A$. Using this and compactness, one finds a formula $\phi(a,t,u)$ over $A$ such that:
        \begin{enumerate}
            \item Every strong $(p,q)$-intersection satisfies $\phi$.
            \item There are no $(t,u)\in U_{\hat t}\times U_{\hat u}$ with $(\hat a,t,u)\models\phi$.
        \end{enumerate}
        Note that (2) is a $B$-definable property of $\hat a$. Since $\hat a$ is independent from $B$ over $A$, $\tp(\hat a/A)$ and $\tp(\hat a/B)$ have the same germ at $\hat a$ (see \cite[Lemma 3.21]{CaHaYe}). Thus, there is a neighborhood $U_{\hat a}$ of $\hat a$ such that (2) holds for all $a\in U_{\hat a}$ realizing $\tp(\hat a/A)$. But then the neighborhood $U_{\hat a}\times U_{\hat t}\times U_{\hat u}$ has no strong $(p,q)$-intersections, contradicting that $(\hat a,\hat t,\hat u)$ is strongly approximable.
    \end{proof}

    We now arrive at our main goal. As described above, Lemma \ref{L: tangency slope is generic} allows us to construct infinite families of slopes at a point.

\begin{lemma}\label{L: tangency slope is generic} Let $p=\tp(a,t/A)$ and $q=\tp(b,u/A)$ be abstract families of plane curves. Let $(\hat a,\hat t,\hat u)$ be a $(p,q)$-tangency, and let $T=T(\hat a/A\hat t)=T(\hat a/A\hat u)$. Then $T\notin\acl(A\hat a)$.
\end{lemma}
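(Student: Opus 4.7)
The plan is to assume for contradiction that $T \in \acl(A\hat a)$ and to produce a strong $(p,q)$-intersection $(\hat a, t, u)$ at which both curves still have tangent space $T$, contradicting Lemma \ref{L: strong intersections are transverse}.

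First, since $T \in \acl(A\hat a)$, $T$ has only finitely many conjugates $T_1 = T, T_2, \ldots, T_n$ under automorphisms fixing $A\hat a$. For any $t' \equiv_{A\hat a} \hat t$, there is an automorphism $\sigma$ fixing $A\hat a$ with $\sigma(\hat t) = t'$; since the tangent-space construction is automorphism-equivariant, $T(\hat a/At') = \sigma(T) \in \{T_1, \ldots, T_n\}$. The analogous observation applies to any $u' \equiv_{A\hat a} \hat u$.

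Second, I would invoke definability-in-families of tangent spaces to obtain a continuity statement near $\hat t$. Choose an $A$-definable set $X$ containing $(\hat a, \hat t)$ as a generic smooth point, with $X_{\hat t}$ a curve through $\hat a$; then on the open set of $t'$ for which $\hat a$ is a smooth point of $X_{t'}$, the map $t' \mapsto T_{\hat a}(X_{t'})$ is well-defined, continuous, and takes the value $T$ at $\hat t$. Since $T_1, \ldots, T_n$ are finitely many distinct points, there is a neighborhood $U_{\hat t}$ of $\hat t$ on which this continuous map takes no value $T_i$ with $i \geq 2$. Hence every $t' \in U_{\hat t}$ realizing $\tp(\hat t/A\hat a)$ satisfies $T(\hat a/At') = T$. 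By the symmetric argument, there is a neighborhood $U_{\hat u}$ of $\hat u$ on which every $u' \in U_{\hat u}$ realizing $\tp(\hat u/A\hat a)$ satisfies $T(\hat a/Au') = T$.

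Third, I would apply Lemma \ref{L: approximation without moving point} to the strongly approximable intersection $(\hat a, \hat t, \hat u)$ and the neighborhood $U_{\hat t} \times U_{\hat u}$, obtaining a strong $(p,q)$-intersection $(\hat a, t, u)$ with $t \in U_{\hat t}$ and $u \in U_{\hat u}$. Since $(\hat a, t) \models p$, the parameter $t$ realizes $\tp(\hat t/A\hat a)$, and similarly for $u$. Consequently $T(\hat a/At) = T = T(\hat a/Au)$, contradicting Lemma \ref{L: strong intersections are transverse}, which forces distinct tangent spaces at any strong intersection.

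The main subtlety I anticipate is the continuity step. As $t'$ varies through realizations of $\tp(\hat t/A\hat a)$ the point $\hat a$ is held fixed, so we must argue that the tangent space at $\hat a$ varies continuously with $t'$. This follows from the definability-in-families of the tangent-space map combined with the openness of the smooth locus, but it relies on choosing $X$ so that $\hat a$ remains a smooth point of $X_{t'}$ for $t'$ in a neighborhood of $\hat t$; generic smoothness of $(\hat a, \hat t)$ in $X$ is exactly what ensures this.
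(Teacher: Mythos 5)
Your overall strategy is the paper's: assume $T\in\acl(A\hat a)$, show that all realizations of $\tp(\hat t/A\hat a)$ (resp.\ $\tp(\hat u/A\hat a)$) in some neighborhood of $\hat t$ (resp.\ $\hat u$) still give tangent space $T$ at $\hat a$, then use Lemma \ref{L: approximation without moving point} to produce a strong $(p,q)$-intersection at $\hat a$ itself and contradict Lemma \ref{L: strong intersections are transverse}. The conjugacy observation and the final two steps are fine (in particular, a strong intersection $(\hat a,t,u)$ does give $t\models\tp(\hat t/A\hat a)$, since the witnessing automorphism fixes $\hat a$). Where you diverge from the paper is in how the ``local constancy'' step is justified, and this is where there is a genuine gap.

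You assert that, for a suitable $A$-definable family $X$, the map $t'\mapsto T_{\hat a}(X_{t'})$ is continuous on the open set of parameters where $\hat a$ is a smooth point of $X_{t'}$, ``by definability-in-families plus openness of the smooth locus.'' That inference is not valid: in this axiomatic framework definable maps are only \emph{generically} continuous (clause (3) of Definition \ref{D: HGS}), and smoothness of each fiber at $\hat a$ does not by itself make the fiberwise tangent space vary continuously in the parameter (already in $\mathbb R$ one can write down a definable family of lines through a fixed point whose slope jumps). Worse, generic continuity cannot be applied where you want it: the parameter set you work over is $A\hat a$, and $\dim(\hat t/A\hat a)=\dim(\hat t/A)-1$, so $\hat t$ is \emph{not} generic in the open set of parameters you consider, and there is no reason the generic continuity locus contains $\hat t$. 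The repair is either (i) to restrict the definable map to the locus of $\tp(\hat t/A\hat a)$, in which $\hat t$ \emph{is} generic over $A\hat a$, apply generic continuity there (every realization of the type lies in any generic $A\hat a$-definable subset, so your finitely-many-conjugates argument then closes the gap), or (ii) to argue as the paper does and avoid continuity altogether: since $T\in\acl(A\hat a)$, $\hat t$ is independent from $T$ over $A\hat a$, so $\tp(\hat t/A\hat aT)$ and $\tp(\hat t/A\hat a)$ have the same germ at $\hat t$ (the same invariance fact used in the proof of Lemma \ref{L: approximation without moving point}), which immediately yields a neighborhood $U_{\hat t}$ in which every realization of $\tp(\hat t/A\hat a)$ satisfies $T(\hat a/At')=T$. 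With either repair the rest of your argument goes through as written.
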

\begin{proof} 
       Assume that $T\in\acl(A\hat a)$. Then $\hat t$ is independent from $T$ over $Aa$, so the types $\tp(\hat t/A\hat aT)$ and $\tp(\hat t/A\hat a)$ have the same germ at $\hat t$. It follows that in some neighborhood $U_{\hat t}$ of $\hat t$, every $t\models\tp(\hat t/A\hat a)$ satisfies $T(\hat a/At)=T$. Similarly, in some neighborhood $U_{\hat u}$ of $\hat u$, every $u\models\tp(\hat u/A\hat a)$ satisfies $T(\hat a/A\hat u)=T$. By Lemma \ref{L: approximation without moving point}, there is a pair $(t,u)\in U_{\hat t}\times U_{\hat u}$ so that $(\hat a,t,u)$ is a strong $(p,q)$-intersection. But then $T(\hat a/At)=T(\hat a/Au)$, contradicting Lemma \ref{L: strong intersections are transverse}. 

\end{proof}

\subsection{TIMI in T-convex Fields}

As promised, we now use Lemma \ref{L: tangency slope is generic} to prove that our T-convex field $\CR_V$ satisfies TIMI. 

\begin{remark}\label{R: tangent space clarification} Below, we will use that the tangent spaces we take in $\CR_V$ are the usual (geometrically meaningful) ones. That is, recall that we say a definable set $X$ is \textit{smooth} at $a\in X$ if $X$ restricts to an $\CR$-definable $C^1$-manifold of dimension $\dim(X)$ in a neighborhood of $a$; and then $T_a(X)$ is the standard tangent space of that manifold from o-minimal geometry. In particular, the tangent space of the graph of a $C^1$ function $f:R\rightarrow R$ at a point $a\in R$ is the line through $(a,f(a))$ with slope $f'(a)$.
\end{remark}

\begin{theorem}\label{T: t-convex timi} $\CR_V$ satisfies TIMI.
\end{theorem}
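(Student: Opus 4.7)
My approach is to reduce TIMI in $\CR_V$ to TIMI in its o-minimal reduct $\CR$, via the d-approximation machinery of Section 9. The key observation is that all the notions involved in TIMI --- tangent spaces, strong approximability, and multiple intersection --- are $d$-local: they depend only on the germ of the defining loci at the relevant generic point, and are therefore stable under passage to d-approximations.

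Let $(\hat a, \hat t, \hat u)$ be a $(p,q)$-tangency, where $p=\tp(\hat a,\hat t/A)$ and $q=\tp(\hat b,\hat u/A)$ are abstract families of plane curves. First, I would apply Corollary \ref{C: generically locally ominimal} to replace the $\CR_V(A)$-loci of $p$ and $q$, as well as those of the fibres $C_{\hat t}$ and $C_{\hat u}$, by $\CR(A)$-definable d-approximations. Because $\CR$- and $\CR_V$-dimension coincide on real tuples over real parameter sets (a standard consequence of weak o-minimality and QE relative to $\CR$, Fact \ref{F: basic Tconv facts}), the associated $\CR$-types $\tilde p, \tilde q$ are still abstract families of plane curves. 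Moreover, since tangent spaces in $\CR_V$ were \emph{defined} as those of an $\CR$-definable d-approximation, and strong approximability is a purely topological condition on the locus, the triple $(\hat a,\hat t,\hat u)$ persists as a $(\tilde p,\tilde q)$-tangency in $\CR$.

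Next, the plan is to invoke TIMI for the o-minimal field $\CR$ (established in \cite[\S 9]{CaHaYe}), which produces a $(\tilde p,\tilde q)$-multiple intersection arbitrarily close to $(\hat a,\hat t,\hat u)$: for any neighborhood, there exist parameters $(t',u')$ and two distinct points $a',a''$ such that the curves cut out by $\tilde p$ at $t'$ and by $\tilde q$ at $u'$ meet at both. Shrinking the neighborhood so that the d-approximations agree with the original $\CR_V$-loci inside it, this double-intersection configuration takes place inside the original $\CR_V$-definable curves $C_{t'}$ and $C_{u'}$. A final density-of-generics step --- realizations of the complete $\CR_V$-types $p$ and $q$ are dense in their loci by saturation --- then upgrades the configuration to a genuine $(p,q)$-multiple intersection.

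The hardest part, I expect, will be this last upgrade step: promoting the $\tilde p, \tilde q$-configuration to genuine realizations of $p, q$ while preserving the crucial feature that the \emph{same} parameters $(t',u')$ carry two distinct intersection points $a' \neq a''$. This should rely on the structural stability of transverse intersections (two simple roots of a $C^1$-equation persist under small perturbation of coefficients --- made precise via o-minimal definable choice on the d-approximations), combined with a careful saturation argument to move to $\CR_V$-generic $(t',u')$ without losing the double-intersection count. Once this is handled, the reduction to the o-minimal case, combined with our earlier lemmas \ref{L: strong intersections are transverse}--\ref{L: tangency slope is generic}, closes the proof.
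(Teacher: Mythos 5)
Your opening moves (reducing to $\hat a\in R^2$, passing to $\CR(A)$-definable d-approximations via Corollary \ref{C: generically locally ominimal} and the lore, and noting that tangent data in $\CR_V$ is by definition computed on such d-approximations) do match the first half of the paper's proof. But the core of your argument rests on invoking ``TIMI for the o-minimal field $\CR$ (established in \cite[\S 9]{CaHaYe})'', and no such result exists in the form you need. Section 9.1 of \cite{CaHaYe} only establishes that o-minimal (more generally 1-h-minimal) fields are differentiable Hausdorff geometric structures; the TIMI of Definition \ref{D: TIMI} is a new, type-level notion introduced in this paper (the paper explicitly says it is leaving the formalism of \cite{CaHaYe}), and proving the o-minimal-flavored statement is precisely the content of Theorem \ref{T: t-convex timi}. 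After the reduction to $\CR$-definable families of functions $f_t$, $g_u$, the paper does \emph{not} cite anything: it proves Lemma \ref{L: main timi lemma} directly, by combining Lemma \ref{L: tangency slope is generic} (the tangency slope is non-algebraic, hence the map $t\mapsto (f_t)'(\hat a_1)$ is open near $\hat t$, so one can tilt $f_t$ about the point $\hat a$ to a strictly larger slope) with a sign-change/definable-connectedness argument producing a second intersection point $x>\hat a_1$ with $f_t(x)=g_{\hat u}(x)$. This intermediate-value-plus-slope-genericity argument is exactly the missing idea in your proposal.

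Your ``upgrade'' step is also a genuine gap as described. The heuristic that ``two simple roots of a $C^1$-equation persist under small perturbation'' is inapplicable here: near a $(p,q)$-tangency the intersections in question are precisely the non-transverse ones (Lemma \ref{L: strong intersections are transverse} says transversality characterizes \emph{strong} intersections, not the tangency you are perturbing), so the two nearby intersection points form a colliding pair that can disappear under a perturbation of $(t',u')$ --- which is exactly the perturbation you would need in order to move from realizations of the reduct types $\tilde p,\tilde q$ to realizations of the complete $\CR_V$-types $p,q$ at the \emph{same} parameters. ``Density of realizations of $p$ and $q$'' does not give this: a realization of $\tilde p$ near $(\hat a,\hat t)$ need not realize $p$ (it may fall on the wrong side of a valuation-definable clopen set), and nothing in your setup keeps the double intersection alive while you correct this. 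The paper sidesteps the issue by a quantifier switch: the second intersection is produced with the new parameter $t$ ranging over a d-approximation $Y$ of $\tp(\hat t/\hat a)$ through $\hat a$ (so $f_t(\hat a_1)=\hat a_2$ holds exactly), the argument is run uniformly for \emph{arbitrarily small} d-approximations of $p$ and $q$ (which can be chosen inside the locus of any finite fragment of these types), and only then does compactness/saturation yield a genuine $(p,q)$-multiple intersection. Without an analogue of that uniform-in-the-approximation compactness argument, your final step does not go through.
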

\begin{proof} Let $p=\tp(\hat a,\hat t/A)$ and $q=\tp(\hat a,\hat u/A)$ be abstract families of plane curves, where $(\hat a,\hat t,\hat u)$ is a $(p,q)$-tangency. We show that $(\hat a,\hat t,\hat u)$ is a $(p,q)$-multiple intersection. Absorbing parameters, we assume $A=\emptyset$. The first stage of the proof will be a lengthy series of reductions.

We first note that we can reduce the problem to curves in $R^2$ (in other words, we can assume $\hat a\in R^2$). The idea is that since $\dim(\hat a)=2$, some neighborhood of $\hat a$ can be identified with an open set in $R^2$. Since the statement of the theorem is local, we only need the germ $G(\hat a)$, so we replace the ambient space with $R^2$. If the reader wants, we sketch how to do this formally: first, since $\dim(\hat a)=2$, there is $b\in R^2$ interdefinable with $\hat a$. Then there is a $\emptyset$-definable map sending $\hat a$ to $b$ and witnessing interdefinability, and this map induces both a homeomorphism between neighborhoods of $\hat a$ and $b$, and also an isomorphism of tangent spaces $T(\hat a/\emptyset)\rightarrow T(b/\emptyset)$. It follows that $(b,\hat t,\hat u)$ is a $(p',q')$-tangency, where $p'=\tp(b,\hat t)$ and $q'=\tp(b,\hat u)$; and moreover, if $(b,\hat t,\hat u)$ is a $(p',q')$-multiple intersection, then $(\hat a,\hat t,\hat u)$ is a $(p,q)$-multiple intersection. Thus, we may harmlessly replace $\hat a$ with $b$ and assume $\hat a\in R^2$.

Let $\hat a=(\hat a_1,\hat a_2)$. So either $\dcl(\hat a_1\hat t)=\dcl(\hat a_2\hat t)$, or the germ $G(\hat a/\hat t)$ is a (germ of a) horizontal or vertical line (and similarly for $\tp(\hat a/\hat u)$). Applying a $\emptyset$-definable rotation if necessary (arguing as in the previous paragraph), we may assume $\dcl(\hat a_1\hat t)=\dcl(\hat a_2\hat t)$ and $\dcl(\hat a_1\hat u)=\dcl(\hat a_2\hat u)$. It follows that $p$ has a d-approximation at $(\hat a,\hat t)$ given by a $\emptyset$-definable family of bijections between neighborhoods of $\hat a_1$ and $\hat a_2$. Call this family $f$ -- so we write that $f_{\hat t}(\hat a_1)=\hat a_2$. Similarly, let $g$ be an analogous family giving a d-approximation of $\tp(\hat a,\hat u)$ -- so $g_{\hat u}(\hat a_1)=\hat a_2$. 

Next note that $(\hat a,\hat t)$ and $(\hat a,\hat u)$ are generic points in the domains of $f$ and $g$, respectively. So by the genericity property of lores, $f$ and $g$ are loric (i.e. locally $\CR$-definable) in neighborhoods of $(\hat a,\hat t)$ and $(\hat a,\hat u)$. Since we are only interested in the germs of $f$ and $g$ at these points, it follows that we may assume each of $f$ and $g$ is $\CR$-definable (that is, we have now reduced from the T-convex setting to the o-minimal setting).

We are now ready to give the argument. The main point is:

\begin{lemma}\label{L: main timi lemma} Every neighborhood of $(\hat a,\hat t,\hat u)$ contains a point $(x,y,t,u)$ with $(x,y)\neq(\hat a_1,\hat a_2)$, $f_t(\hat a_1)=g_u(\hat a_1)=\hat a_2$, and $f_t(x)=g_u(x)=y$.
\end{lemma}
\begin{proof} First, if the equation $f_{\hat t}(x)=g_{\hat u}(x)=y$ has infinitely many solutions in every neighborhood of $\hat a$, the lemma follows trivially (setting $(t,u)=(\hat t,\hat u)$). So assume $\hat a$ is an isolated solution of $f_{\hat t}=g_{\hat u}$. By o-minimality, without loss of generality there is $z>\hat a_1$ so that for $x\in(\hat a_1,z)$ we have $f_{\hat t}(x)<g_{\hat u}(x)$ (the other case is symmetric).

Let $Y$ be a d-approximation of $\tp(\hat t/\hat a)$. Shrinking if necessary, we may assume that $f_t(\hat a_1)=\hat a_2$ for all $t\in Y$. Since $t$ is generic in $Y$ over $\hat a$, we may also assume after shrinking that $Y$ is $\CR$-definable and $\CR$-definably connected in a neighborhood of $\hat t$. We now proceed with the following two claims:
\begin{claim} In any neighborhood of $(\hat a_1,\hat t)$, we can find $(x,t)$ with $t\in Y$, $x>\hat a_1$, and $f_t(x)<g_{\hat u}(x)$.
\end{claim}
\begin{proof} Choose $(x,t)=(x,\hat t)$ where $x>\hat a_1$ is sufficiently close to $\hat a_1$.
\end{proof}
\begin{claim} In any neighborhood of $(\hat a_1,\hat t)$, we can find $(x,t)$ with $t\in Y$, $x>\hat a_1$, and $f_t(x)>g_{\hat u}(x)$.
\end{claim}
\begin{proof} The idea is to rotate the graph of $f_{\hat t}$ counterclockwise about $\hat a$ until it `crosses' $g_{\hat u}$ to the right of $\hat a_1$. More precisely, we want to show that we can find $t$ close to $\hat t$ so that $f_t$ goes through the point $\hat a$ but with higher slope than $f_{\hat t}$ and $g_{\hat u}$. That this is possible follows from Lemma \ref{L: tangency slope is generic}.

Formally, Let $h$ be the $\hat a$-definable partial function $t\mapsto(f_t)'(\hat a_1)$, defined on a neighborhood of $\hat t$ in $Y$. In this language, the content of Lemma \ref{L: tangency slope is generic} is that $h(\hat t)$ is generic in $R$ over $\hat a$. By o-minimality, it follows that $h$ is open in a neighborhood of $\hat t$. Thus, we can find $t\in Y$ arbitrarily close to $\hat t$ with $h(t)>h(\hat t)$. Then for such $t$, we have (i) $f_t(\hat a_1)=f_{\hat t}(\hat a_1)=g_{\hat u}(\hat a_1)$ and (ii) $(f_t)'(\hat a_1)>(f_{\hat t})'(\hat a_1)=(g_{\hat u})'(\hat a_1)$ (where the last equality is because $(\hat a,\hat t,u)=(\hat a,\hat t,\hat u)$ is a $(p,q)$-tangency). By (i) and (ii), we can find $x>\hat a_1$ arbitrarily close to $\hat a_1$ with $f_t(x)>g_{\hat u}(x)$, which (as $t\rightarrow\hat t$ inside $Y$) is enough to prove the claim.
\end{proof}

We now finish the proof of Lemma \ref{L: main timi lemma}. Let $U_1$, $U_2$, $U_3$, and $U_4$ be any neighborhoods of $\hat a_1$, $\hat a_2$, $\hat t$, and $\hat u$, respectively. Shrinking if necessary, may assume each of the following:
\begin{enumerate}
    \item Each $U_i$ is $\CR$-definable and $\CR$-definably connected, and $U_3\cap Y$ is also $\CR$-definably connected (this is possible because $\hat t$ is generic in $Y$).
    \item $f$ and $g$ are defined and continuous on $U_1\times U_3$ and $U_1\times U_4$, respectively.
    \item $f(U_1\times U_3),f(U_1\times U_4)\subset U_2$.
\end{enumerate}
Let $c>\hat a_1$ be such that the interval $(\hat a_1,c)$ is contained in $U_1$, and let $Z=(\hat a_1,c)\times(U_3\cap Y)$. Then $Z$ is $\CR$-definable and $\CR$-definably connected, and the map $j:Z\rightarrow R$, $j(x,t)=f_t(x)-g_{\hat u}(x)$, is $\CR$-definable and continuous on $Z$. By the two claims above, $j$ takes both positive and negative values on $Z$ -- so by definable connectedness, there is $(x,t)\in Z$ with $j(x,t)=0$. Then $f_t(x)=g_{\hat u}(x)=y\in U_2$, say (by (3) above). Moreover, $f_t(\hat a_1)=g_{\hat u}(\hat a_1)=\hat a_2$ (since $t\in Y$) and $(x,y)\neq(\hat a_1,\hat a_2)$ (since $x>\hat a_1$). Thus $(x,y,t,\hat u)$ is the desired tuple to prove Lemma 10.12.
\end{proof}

We now return to the proof of Theorem \ref{T: t-convex timi}. By Lemma \ref{L: main timi lemma}, every neighborhood of $(\hat a_1,a_2,\hat t,\hat u)$ contains a point $(x,y,t,u)$ with $(x,y)\neq(\hat a_1,\hat a_2)$, $f_t(\hat a_1)=g_u(\hat a_1)=\hat a_2$, and $f_t(x)=g_u(x)=y$. But recall that $f$ and $g$ denoted arbitrarily small d-approximations of $p$ and $q$ at $(\hat a,\hat t)$ and $(\hat a,\hat u)$, respectively. By compactness, it now follows that any neighborhood of $(\hat a_1,\hat a_2,\hat t,\hat u)$ contains a point $(x,y,t,u)$ with $(x,y)\neq(\hat a_1,\hat a_2)$, $(\hat a_1,\hat a_2,t)\models p$, $(\hat a_1,\hat a_2,u)\models q$, $(x,y,t)\models p$, and $(x,y,u)\models q$. This shows that $(\hat a,\hat t, \hat u)$ is a $(p,q)$-multiple intersection.
\end{proof}

\subsection{Slopes}

We now return to our structure $\CK$. As promised in the introduction to this section, we proceed to develop the notion of a slope of a map of types at a realization. This formalism in particular leads to an elegant version of the chain rule (Lemma \ref{L: chain}), which is the main geometric fact we will use to encode the field operations.

\begin{definition}\label{D: slope}
    Let $a$ and $b$ be tuples, and suppose $b\in\acl(Aa)$. By Sard's Theorem, $T(ab/A)$ is the graph of a linear map $T(a/A)\rightarrow T(b/A)$. We call this map the \textit{slope from $a$ to $b$ over $A$}, denoted $\partial\left(\frac{a\rightarrow b}{A}\right)$.
\end{definition}

As with tangent spaces, we can view $\alpha=\partial\left(\frac{a\rightarrow b}{A}\right)$ as a tuple in $\CK$; note that $(a,b)\in\dcl(\alpha)$, and $\alpha\in\dcl(Aab)$. Note that tangent spaces are invariant under adding independent parameters (i.e. $T(a/A)=T(a/B)$ if $B\supset A$ is independent from $a$ over $A$; this follows since $\tp(a/A)$ and $\tp(a/B)$ have the same germ at $a$). Thus, we have:

\begin{lemma}\label{L: independent slope} If $b\in\acl(Aa)$ and $B\supset A$ is independent from $ab$ over $A$, then $\partial\left(\frac{a\rightarrow b}{A}\right)=\partial\left(\frac{a\rightarrow b}{B}\right)$.
\end{lemma}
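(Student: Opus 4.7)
The plan is to deduce the equality of slopes directly from the invariance of tangent spaces under independent extensions, applied three times.

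First I would unpack the definition: by Sard's Theorem (Fact \ref{F: Sard}), $T(ab/A)$ projects isomorphically onto $T(a/A)$ (since $b\in\acl(Aa)$ means the kernel $T(b/Aa)\times\{0_a\}$ is trivial), and the slope $\partial\!\left(\tfrac{a\to b}{A}\right)$ is precisely the linear map $T(a/A)\to T(b/A)$ whose graph equals $T(ab/A)$. So the slope is canonically determined by the triple $(T(a/A),T(b/A),T(ab/A))$ together with the ambient identification of tangent spaces as affine subspaces encoded inside $K^m\times K^n$.

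Next I would apply the invariance statement recalled just before the lemma --- that $T(c/A)=T(c/B)$ whenever $B\supset A$ is independent from $c$ over $A$ --- to each of the three tuples $a$, $b$, and $ab$. Independence of $B$ from $ab$ over $A$ gives independence of $B$ from $a$ and from $b$ over $A$ by monotonicity, so all three equalities $T(a/A)=T(a/B)$, $T(b/A)=T(b/B)$, and $T(ab/A)=T(ab/B)$ hold. Because each tangent space is encoded as an affine subspace of a fixed ambient space (a power of $K$), these equalities are equalities of subsets on the nose, not merely up to isomorphism.

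Finally, since the slope over $A$ is the unique linear map $T(a/A)\to T(b/A)$ with graph $T(ab/A)$, and the analogous description computes the slope over $B$ from exactly the same three subspaces, the two linear maps coincide. The only step that requires any care is confirming that the Grassmannian encoding of tangent spaces makes the word ``graph'' unambiguous --- but this is built into the conventions laid down in Definition \ref{D: differentiable HGF}(3)--(4), so no further work is needed. I expect no serious obstacle; the content of the lemma is essentially that slopes inherit the germ-level invariance of types under independent base change.
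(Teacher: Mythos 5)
Your proposal is correct and follows the same route as the paper: the paper derives the lemma immediately from the invariance of tangent spaces under independent base extension (since $\tp(a/A)$ and $\tp(a/B)$ have the same germ at $a$), applied to the encoded subspaces $T(a/\cdot)$, $T(b/\cdot)$, $T(ab/\cdot)$, exactly as you do. Your extra care about the Grassmannian encoding and the monotonicity step for passing from independence of $ab$ to that of $a$ and $b$ is fine and only makes explicit what the paper leaves implicit.
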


We now give the main result of this subsection:

\begin{lemma}[Chain Rule]\label{L: chain} Let $a,b_1,...,b_n,c$ be tuples, and $A$ a parameter set. Assume that each $b_i\in\acl(Aa)$, and that $c\in\acl(Ab_1...b_n)$. Then $$\partial\left(\frac{a\rightarrow c}{A}\right)=\sum_{i=1}^n\partial\left(\frac{b_i\rightarrow c}{A\cup\{b_j:j\neq i\}}\right)\circ \partial\left(\frac{a\rightarrow b_i}{A}\right).$$
\end{lemma}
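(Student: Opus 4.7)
The plan is to verify the identity as an equality of linear maps $T(a/A) \to T(c/A)$ by checking it on each subspace $T(a/A_i) \subseteq T(a/A)$, where $A_i := A \cup \{b_j : j \neq i\}$, and then invoking the spanning $\sum_i T(a/A_i) = T(a/A)$ together with linearity. First I would establish the base case $n = 1$: if $b \in \acl(Ba)$ and $c \in \acl(Bb)$, then
\[
\partial\!\left(\tfrac{a \to c}{B}\right) \;=\; \partial\!\left(\tfrac{b \to c}{B}\right) \circ \partial\!\left(\tfrac{a \to b}{B}\right).
\]
This is immediate from Sard's Theorem applied to $T(abc/B)$: since $bc \in \acl(Ba)$ the projection $T(abc/B) \to T(a/B)$ is a bijection, and since $c \in \acl(Bb)$ the projection $T(abc/B) \to T(bc/B)$ is a bijection onto the graph of $\partial\!\left(\tfrac{b \to c}{B}\right)$. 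Chasing $v \in T(a/B)$ through its lift $(v, w, z)$ reads off $w = \partial\!\left(\tfrac{a \to b}{B}\right)(v)$, $z = \partial\!\left(\tfrac{a \to c}{B}\right)(v)$, and $z = \partial\!\left(\tfrac{b \to c}{B}\right)(w)$.

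Now fix $v \in T(a/A_i)$ and let $b_{-i} := (b_j)_{j \neq i} \in \acl(Aa)$. By Sard applied to the pair $(a, b_{-i})$ over $A$, one identifies
\[
T(a/A_i) \;=\; \ker \partial\!\left(\tfrac{a \to b_{-i}}{A}\right) \;=\; \bigcap_{j \neq i} \ker \partial\!\left(\tfrac{a \to b_j}{A}\right).
\]
Hence $\partial\!\left(\tfrac{a \to b_j}{A}\right)(v) = 0$ for every $j \neq i$ and the right-hand side collapses to its $i$-th summand. Since $\tp(a/A_i)$ extends $\tp(a/A)$, tangent spaces shrink accordingly: $T(a/A_i) \subseteq T(a/A)$ and the slopes $\partial\!\left(\tfrac{a \to b_i}{A_i}\right), \partial\!\left(\tfrac{a \to c}{A_i}\right)$ are the restrictions of $\partial\!\left(\tfrac{a \to b_i}{A}\right), \partial\!\left(\tfrac{a \to c}{A}\right)$ to $T(a/A_i)$. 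Applying the base case over $A_i$ (valid since $b_i \in \acl(A_i a)$ and $c \in \acl(A_i b_i)$) yields
\[
\partial\!\left(\tfrac{a \to c}{A}\right)(v) \;=\; \partial\!\left(\tfrac{a \to c}{A_i}\right)(v) \;=\; \partial\!\left(\tfrac{b_i \to c}{A_i}\right)\!\left(\partial\!\left(\tfrac{a \to b_i}{A}\right)(v)\right),
\]
exactly matching the surviving RHS term.

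The principal technical point, and the main obstacle, is the spanning $\sum_i T(a/A_i) = T(a/A)$. I would derive this through the joint slope $\Pi := \left(\partial\!\left(\tfrac{a \to b_j}{A}\right)\right)_{j=1}^n \colon T(a/A) \twoheadrightarrow T(B/A)$, where $B = (b_1, \ldots, b_n)$; surjectivity is Sard applied to $(a, B)$, and the kernel $T(a/A \cup B)$ lies in every $T(a/A_i)$. The image of each $T(a/A_i)$ under $\Pi$ is the ``single-coordinate'' subspace of $T(B/A)$ supported at position $i$, namely a copy of $T(b_i/A_i)$. The spanning thus reduces to $T(B/A) = \sum_i T(b_i/A_i)$ viewed inside $\bigoplus_j T(b_j/A)$ --- precisely the non-degeneracy needed for each composition $\partial\!\left(\tfrac{b_i \to c}{A_i}\right) \circ \partial\!\left(\tfrac{a \to b_i}{A}\right)$ to be well-posed on all of $T(a/A)$, so that the RHS of the chain rule makes sense term by term. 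A direct dimension count via iterated Sard applied to $B$ settles the decomposition, and linearity concludes the proof.
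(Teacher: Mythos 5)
Your base case, the computation on each subspace $T(a/A_i)$ (where $A_i=A\cup\{b_j:j\neq i\}$), and the reduction of your spanning claim to the decomposition $T(b_1\cdots b_n/A)=\sum_i T(b_i/A_i)$ inside $\prod_j T(b_j/A)$ are all fine. The gap is the final assertion that ``a direct dimension count via iterated Sard settles the decomposition'': it does not, because the decomposition is simply false under the hypotheses of Lemma \ref{L: chain}. Comparing dimensions, your spanning $\sum_i T(a/A_i)=T(a/A)$ holds exactly when $\sum_i\dim\bigl(b_i/A\cup\{b_j:j\neq i\}\bigr)=\dim(b_1\cdots b_n/A)$, an independence requirement on the $b_i$ over $A$ that is nowhere assumed (only $b_i\in\acl(Aa)$ and $c\in\acl(Ab_1\cdots b_n)$ are). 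For a quick counterexample, take $n=2$, $a$ of dimension $2$ over $A$, and $b_1=b_2$ a non-algebraic coordinate of $a$: then $T(a/A_1)=T(a/A_2)=T(a/Ab_1)$ is a proper subspace of $T(a/A)$. More importantly, in the one place the paper really needs the $n=2$ chain rule --- the proof of Lemma \ref{L: slope addition} --- the lemma is invoked over $A=c_1c_2$ with $b_1=u_1$, $b_2=u_2$ and $\dim(x,u_1,u_2,y/c_1c_2)=1$, so the $b_i$ are interalgebraic (with each other and with $x$) over the base rather than independent; there $\dim(u_1/c_1c_2u_2)=\dim(u_2/c_1c_2u_1)=0$ while $\dim(u_1u_2/c_1c_2)=1$, the spanning fails, and your argument verifies the identity only on the trivial subspace. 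The dimension count you invoke goes through only under the extra hypothesis $T(b_i/A_i)=T(b_i/A)$ for all $i$ (the well-posedness condition you yourself flag), and that hypothesis is neither part of the statement nor available in the intended application.

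The paper's proof takes a genuinely different, pointwise route that needs no independence among the $b_i$ and no spanning: fix $u\in T(a/A)$ with image $w$ under $\partial\left(\frac{a\rightarrow c}{A}\right)$; by Sard's Theorem (Fact \ref{F: Sard}) lift $(u,w)$ to a vector of $T(ab_1\cdots b_nc/A)$, whose middle coordinates are forced to be $v_i=\partial\left(\frac{a\rightarrow b_i}{A}\right)(u)$; use the coordinatewise inclusions $T(b_ic/A_i)\hookrightarrow T(b_ic/A)$ to see that $(v_1,\ldots,v_n,\sum_iw_i)\in T(b_1\cdots b_nc/A)$; and conclude $w=\sum_iw_i$ because $c\in\acl(Ab_1\cdots b_n)$ makes the projection $T(b_1\cdots b_nc/A)\rightarrow T(b_1\cdots b_n/A)$ an isomorphism. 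To salvage your approach you would have to either add the independence of the $b_i$ over $A$ (equivalently $T(b_i/A_i)=T(b_i/A)$) as a hypothesis --- which would make the lemma unusable for Lemma \ref{L: slope addition} --- or settle for the identity on the subspace $\sum_iT(a/A_i)$, which is strictly weaker than the stated lemma.
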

\begin{proof} Let $u\in T(a/A)$ and $w$ its image in $T(c/A)$ under $\partial\left(\frac{a\rightarrow c}{A}\right)$ (so $(u,w)\in T(ac/A)$). For each $i$, let $v_i$ be the image of $u$ under $\partial\left(\frac{a\rightarrow b_i}{A}\right)$, and let $w_i$ be the image of $v_i$ under $\partial\left(\frac{b_i\rightarrow c}{A\cup\{b_j:j\neq i\}}\right)$. Finally, let $w'=w_1+...+w_n$. So we want to show that $w'=w$.

By Sard's Theorem applied to $T(a,b_1,...,b_n,c/A)\rightarrow T(a,c/A)$, there are $(v_1',...,v_n')$ with $(u,v_1',...,v_n',w)\in T(a,b_1,...,b_n,c/A)$. For each $i$, projecting to $T(a,b_i/A)$ yields that $(u,v_i')\in T(a,b_i/A)$, and thus $v_i'=v_i$.

Next, for each $i$, there is an inclusion $T(b_i,c/A\cup\{b_j:j\neq i\})\hookrightarrow T(b_i,c/A)$, which shows that $(v_i,w_i)\in T(b_i,c/A)$. Let $z_i\in T(b_1/A)\times...\times T(b_n/A)$ be the vector with $v_i$ in the $i$th coordinate and zeros elsewhere. Since $T(b_1,...,b_n,c/A)\rightarrow T(b_i,c/A)$ is just the projection, we get that $(z_i,w_i)\in T(b_1,...,b_n,c/A)$ for each $i$. Adding these together then yields $(v_1,...,v_n,w')\in T(b_1,...,b_n,c/A)$.

On the other hand, since $T(a,b_1,...,b_n,c/A)\rightarrow T(b_1,...,b_n,c/A)$ is the projection, we have $(v_1,...,v_n,w)\in T(b_1,...,b_n,c/A)$. But since $c\in\acl(Ab_1...b_n)$, the projection $T(b_1,...,b_n,c/A)\rightarrow T(b_1,...,b_n/A)$ is in isomorphism, which shows that $w'=w$.
\end{proof}

\section{Interpreting a Field}

We now develop the theory of coherent slopes in a $\CK$-relic, analogously to \cite{CaHaYe}. In \cite{CaHaYe}, we only defined coherent slopes in $K^2$, then used local homeomorphisms $M\rightarrow K$ to transfer slopes back to $M^2$. In the present context, we have access to a more flexible notion of slope, and we do not need to transfer with local homeomorphisms. Thus, our main task is to redevelop the basics of the analogous section of \cite{CaHaYe} in what we see as a more natural setting. 

\textbf{Throughout this section, fix $\CM$, a non-locally modular strongly minimal definable $\CK$-relic, with $\dim(M)=1$. All tuples are now taken in $\CM^{eq}$. Adding parameters, we assume every $\CM(\emptyset)$-definable set is $\CK(\0)$-definable, and that $\acl_{\CM}(\0)$ is infinite. We use $\dim$ for dimension in $\CK$, and $\rk$ for dimension in $\CM$.}

\subsection{Slopes in $\CM$} Following \cite{CasACF0} and \cite{CaHaYe}, we define:

\begin{definition}
    A tuple $a$ is \textit{coherent over $A$} if $\dim(a/A)=\rk(a/A)$. An infinite set is coherent over $A$ if each finite subset of it is. A set is coherent if it is coherent over $\emptyset$.
\end{definition}

We will freely use basic properties of coherence (see e.g. \cite[Lemma 4.13]{CaHaYe}). Most importantly, we note that coherence is preserved under $\mathcal M$-algebraicity: if $a$ is coherent over $A$, and $b\in\acl_{\mathcal M}(Aa)$, then $b$ is coherent over $A$.

\begin{definition} Let $(x,y)\in M^2$ be generic over $\emptyset$. A \textit{coherent slope at $(x,y)$} is a slope of the form $\alpha=\partial\left(\frac{x\rightarrow y}{A}\right)$ where:
    \begin{enumerate}
        \item $\rk(x/A)=\rk(y/A)=\rk(xy/A)=1$. 
        \item $Axy$ is coherent.
    \end{enumerate}
In this case:
\begin{itemize}
    \item $\alpha$ is an \textit{algebraic coherent slope} if $\alpha\in\acl(xy)$. Otherwise $\alpha$ is a \textit{non-algebraic coherent slope}.
    \item If $c=\operatorname{Cb}(\operatorname{stp(xy/A)})$ (i.e. $c$ is a canonical base of the strong type of $xy$ over $A$ in the sense of $\CM$), then $c$ is called a \textit{coherent representative of $\alpha$}.
    \end{itemize}
\end{definition}

\subsection{Detection of Tangency and Codes}

In \cite{CaHaYe}, we defined what it means for $\CM$ to \textit{detect tangency}. The main associated facts were (1) detection of tangency follows if $\CK$ satisfies TIMI, and (2) detection of tangency implies that every coherent slope has a \textit{code} (an associated tuple in $\CM$ allowing us to talk about that slope in $\CM$). Let us transfer these notions to the current context.

\begin{definition}
    $\CM$ \textit{detects tangency} if whenever $(x,y)\in M^2$ is generic, and $\alpha$ is a non-algebraic coherent slope at $(x,y)$, then any two coherent representatives of $\alpha$ are $\CM$-dependent over $xy$. 
\end{definition}

\begin{proposition}\label{P: detection of tangency}
    If $(\CK,\tau)$ satisfies TIMI, then $\CM$ detects tangency.
\end{proposition}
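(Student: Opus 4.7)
The strategy is to apply TIMI to a pair of abstract families of plane curves built canonically from the two representatives, and then derive a contradiction from the resulting multiple intersection via a dimension count. Assume, for contradiction, that $\alpha$ is a non-algebraic coherent slope at a generic $(x,y)\in M^2$ with coherent representatives $c_i=\operatorname{Cb}(\stp(xy/A_i))$ for $i=1,2$, and that $c_1\ind^{\CM}_{xy}c_2$. By replacing $c_i$ with $\CM$-interdefinable real tuples (possible since $\acl^{\CM}(\emptyset)$ is infinite and $\CM$ is a definable $\CK$-relic) and extending by a $\CM$-Morley sequence over $xy$, we may further assume $c_i\in K^{n_i}$ and $c_1\ind^{\CM}_\emptyset c_2$. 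Coherence then transfers these $\CM$-independences to the corresponding $\CK$-independences. Since $\rk^{\CM}(xy/c_i)=1$, coherence also gives $\dim^{\CK}(xy/c_i)=1$, and of course $\dim^{\CK}(xy)=2$. Hence $p_i:=\tp^{\CK}(xy,c_i/\emptyset)$ is an abstract family of plane curves in the sense of Definition \ref{D: family of curves}.

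The next step is to verify that $(xy,c_1,c_2)$ is a $(p_1,p_2)$-tangency. It is a $(p_1,p_2)$-intersection by construction, and since $c_1,c_2$ are $\CK$-independent over both $\emptyset$ and $xy$, it is already itself a strong $(p_1,p_2)$-intersection, hence trivially strongly approximable. For the matching tangent spaces, note that $xy\ind^{\CM}_{c_i}A_i$ by the definition of canonical base; via coherence this yields $xy\ind^{\CK}_{c_i}A_i$, and Lemma \ref{L: independent slope} then gives $\partial\left(\frac{x\rightarrow y}{c_i}\right)=\partial\left(\frac{x\rightarrow y}{A_i}\right)=\alpha$. Consequently $T(xy/c_i)$ contains the graph of $\alpha$ inside the ambient product of tangent spaces; since both are one-dimensional, we conclude $T(xy/c_1)=T(xy/c_2)=\operatorname{graph}(\alpha)$.

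Applying TIMI, the triple $(xy,c_1,c_2)$ is a $(p_1,p_2)$-multiple intersection. Let $I$ be the $\CK$-locus of $\tp(xy,c_1,c_2/\emptyset)$, viewed inside the intersection variety $\{(a,t,u):(a,t)\models p_1,\,(a,u)\models p_2\}$, and let $\pi:I\to\loc(\tp(c_1c_2/\emptyset))$ be the projection to the parameter coordinates. Using the $\CM$-independence of $c_1$ and $c_2$ over both $\emptyset$ and $xy$, together with coherence, a direct dimension count gives
\begin{equation*}
\dim^{\CK}(xy\,c_1\,c_2)=2+\dim^{\CK}(c_1/xy)+\dim^{\CK}(c_2/xy)=(\dim^{\CK}(c_1)-1)+(\dim^{\CK}(c_2)-1)+2=\dim^{\CK}(c_1\,c_2),
\end{equation*}
so $\pi$ is dominant and $(xy,c_1,c_2)$ is a $\CK$-generic point of $I$. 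At this generic (hence smooth) point, Sard's Theorem shows that the differential of $\pi$ is an isomorphism. Hence $\pi$ should be a local homeomorphism near $(xy,c_1,c_2)$, contradicting the two distinct preimages $(a',t',u'),(a'',t',u')$ of $(t',u')$ that the multiple-intersection property supplies in every neighborhood.

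The main obstacle is the last step: upgrading the tangent-map isomorphism to a genuine local homeomorphism within the axiomatic framework of a differentiable Hausdorff geometric field, without a built-in inverse function theorem. The cleanest route is likely to combine Sard's Theorem at the generic point with the frontier inequality and the weak detection of closures from \cite{CaHaYe} — these should jointly rule out two distinct branches of $\pi$ collapsing over the same parameter point. A secondary technical subtlety, easily dealt with but worth noting at the outset, is the reduction of the $\CM$-imaginary canonical bases $c_i$ to real $K$-tuples, so as to legitimately invoke the $K$-based definitions from Section 10.
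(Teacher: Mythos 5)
Your argument has two genuine gaps, and the first one is fatal to the structure of the proof. You assume for contradiction that $c_1\ind^{\CM}_{xy}c_2$ and assert that ``coherence transfers these $\CM$-independences to the corresponding $\CK$-independences.'' But coherence is only available for each tuple $c_ixy$ separately (each $c_i$ sits inside its own coherent configuration $A_ixy$); transferring additivity of $\rk$ to additivity of $\dim$ requires the \emph{joint} tuple $c_1c_2xy$ to be coherent, which is not known and in fact fails in the situation at hand: since $\alpha\in\acl_{\CK}(c_ixy)$ for both $i$ while $\alpha\notin\acl_{\CK}(xy)$ (non-algebraicity of the slope), the representatives $c_1,c_2$ are automatically $\CK$-dependent over $xy$, regardless of their behaviour in $\CM$. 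A quick sanity check confirms something is wrong: if your transfer were legitimate, then $(xy,c_1,c_2)$ would be a strong $(p_1,p_2)$-intersection with $T(xy/c_1)=T(xy/c_2)$, which already contradicts Lemma \ref{L: strong intersections are transverse}, so your proof would be finished before TIMI is ever invoked. In reality the triple is never a strong intersection, and the actual content of showing it is a $(p_1,p_2)$-tangency is the strong \emph{approximability}: one must produce strong intersections in every neighborhood by perturbing the parameters (as in \cite[Theorem 7.52]{CaHaYe}, in the spirit of the claim inside Lemma \ref{L: slope family exists}), not declare the triple itself strong. That step is entirely missing from your write-up.

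The second gap is the endgame, which you acknowledge. The paper does not derive a contradiction from the multiple intersection by an inverse-function-theorem-style local injectivity statement; it quotes \emph{detection of multiple intersections} (\cite[Proposition 3.40 and Theorem 6.10]{CaHaYe}), a relic-level result resting on weak detection of closures and the strongly minimal structure of $\CM$, which directly yields that $c_1$ and $c_2$ are $\CM$-dependent over $xy$ (so no proof by contradiction is needed at all). Your projection argument, besides resting on the same unjustified $\CK$-independence (the displayed dimension count uses $\dim(c_1c_2/xy)=\dim(c_1/xy)+\dim(c_2/xy)$, which is false here), also does not engage the definition of a multiple intersection correctly: the witnesses $(a',t',u')$ and $(a'',t',u')$ are only required to realize $p_1$ and $p_2$ pairwise, so they need not lie on the locus of $\tp_{\CK}(xy,c_1,c_2)$, and local injectivity of $\pi$ on that locus would not exclude them. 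The outer skeleton of your proposal (pass to real $\CM$-tuples interalgebraic with the representatives, exhibit a tangency, apply TIMI, conclude $\CM$-dependence) does match the paper; the repair is to prove strong approximability honestly and to replace the final projection argument with the citation of detection of multiple intersections.
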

\begin{proof} Exactly as \cite[Theorem 7.52]{CaHaYe}, adapted to the current terminology. To summarize, let $c_1'$ and $c_2'$ be coherent representatives of a non-algebraic coherent slope at $(x,y)$. Choose real tuples $c_1,c_2$ from $\CM$ that are $\CM$-interalgebraic with $c_1'$ and $c_2'$, respectively. Then $p=\tp(xy,c_1)$ and $q=\tp(xy,c_2)$ are abstract families of plane curves, and $(xy,c_1,c_2)$ is a $(p,q)$-tangency. It follows by TIMI that $(xy,c_1,c_2)$ is a $(p,q)$-multiple intersection. One then applies \textit{detection of multiple intersections} (\cite[Proposition 3.40 and Theorem 6.10]{CaHaYe}) to conclude that $c_1$ and $c_2$ are $\CM$-dependent over $xy$, and thus so are $c_1',c_2'$.
\end{proof}

\begin{definition}
    Let $\alpha$ be a coherent slope at $(x,y)$. A \textit{code} for $\alpha$ is a tuple $s\in\CM^{eq}$ such that:
    \begin{enumerate}
        \item $s$ is $\CK$-interalgebraic with $\alpha xy$.
        \item $(x,y)\in\acl_{\CM}(s)$.
        \item For every coherent representative $c$ of $\alpha$, $s\in\acl_{\CM}(cxy)$.
    \end{enumerate}
\end{definition}

The idea is that the code of $\alpha$ (if it exists) allows us to talk about $\alpha$ in the language of $\CM$.

\begin{proposition}\label{P: codes exist}
    If $\CM$ detects tangency, then every coherent slope has a code.
\end{proposition}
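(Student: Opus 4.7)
Split on whether $\alpha$ is algebraic. If $\alpha\in\acl_\CK(xy)$, take $s=(x,y)$: then $s$ is trivially $\CK$-interalgebraic with $\alpha xy$, visibly $(x,y)\in\dcl_\CM(s)$, and $s\in\acl_\CM(cxy)$ for any coherent representative $c$. So assume $\alpha$ is a non-algebraic coherent slope. The construction will parallel \cite[\S 7]{CaHaYe}, adapted from the $2$-slope setup used there to the type-level slope setup developed in the previous section.

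Fix a coherent representative $c$ of $\alpha$, and let $p:=\operatorname{stp}_\CM(c/xy)$ (an $\CM$-strong type). Set $\bar s_0:=\operatorname{Cb}_\CM(p)\in\CM^{eq}$ and let $s:=(xy,\bar s_0)$. Condition (2) is then immediate, since $(x,y)\in\dcl_\CM(s)\subset\acl_\CM(s)$. Condition (3) for the chosen $c$ is the standard property $\bar s_0\in\acl_\CM(c\cdot xy)$ of canonical bases. For an arbitrary coherent representative $c'$ of $\alpha$, detection of tangency gives that $c$ and $c'$ are $\CM$-dependent over $xy$; combined with the coherence of $Axyc$ and $Axyc'$ (forcing the $\CM$-rank drop to come from genuine $\CM$-algebraicity, and hence from a common canonical base in $\CM^{eq}$), one concludes $\bar s_0\in\acl_\CM(c'\cdot xy)$ as well. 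This establishes (3) uniformly in $c'$.

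For condition (1), we argue both directions of $\CK$-interalgebraicity. First, $s\in\acl_\CK(\alpha xy)$: any $\CK$-automorphism $\sigma$ fixing $\alpha,x,y$ sends $c$ to another coherent representative $\sigma(c)$ of $\alpha$, and then by the argument above, $\sigma(\bar s_0)$ is $\acl_\CM$-equivalent to $\bar s_0$; by saturation the $\CK$-orbit of $\bar s_0$ over $\alpha xy$ is finite. Conversely, $\alpha\in\acl_\CK(sxy)=\acl_\CK(s)$: every $\CM$-realization $c^*$ of $p$ is, by detection of tangency applied in reverse, forced to have slope exactly $\alpha$ at $(x,y)$ (since any two coherent representatives in the tangency class recorded by $\bar s_0$ share a tangent space, and $c$ already lies in this class with slope $\alpha$); hence $\alpha$ is $\CK$-definable from $\bar s_0$ via the common slope of its $p$-realizations, yielding the required algebraicity.

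\textbf{The main obstacle} is the second half of condition (1), namely that $\alpha\in\acl_\CK(s)$. The $\CM$-canonical base $\bar s_0$ is a priori only an invariant of the $\CM$-type of $c$ over $xy$, not of the finer $\CK$-type that determines the slope; so one must genuinely propagate the tangency data from the chosen $c$ to the whole class controlled by $\bar s_0$, using coherence to exclude spurious $\CM$-coincidences between coherent representatives of distinct slopes. This is precisely the point at which detection of tangency (and the coherence hypothesis built into the notion of coherent representative) is essential, and it is where the argument will have to mimic most carefully the corresponding passage in \cite[\S 7]{CaHaYe}.
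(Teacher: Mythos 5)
Your algebraic case agrees with the paper, but your construction in the non-algebraic case does not work, and the failure is structural. You set $\bar s_0:=\operatorname{Cb}_\CM(\stp_\CM(c/xy))$ for a single coherent representative $c$. In a stable (here strongly minimal) theory the canonical base of $\stp_\CM(c/xy)$ always lies in $\acl^{eq}_\CM(xy)$, so your $s=(xy,\bar s_0)$ is $\CM$-interalgebraic with $(x,y)$ alone; since every $\CM$-definable relation is $\CK$-definable, $s\in\acl_\CK(xy)$, and condition (1) would force $\alpha\in\acl_\CK(s)\subseteq\acl_\CK(xy)$, contradicting the assumption that $\alpha$ is non-algebraic. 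In other words, your candidate code records no information beyond the point $(x,y)$ and can never capture the slope. The paper's proof (following \cite[Proposition 7.45]{CaHaYe}) takes \emph{two independent} coherent representatives $c_1,c_2$, uses detection of tangency to see that they are $\CM$-dependent over $xy$, and sets $b=\operatorname{Cb}(\stp_\CM(c_2/c_1xy))$: here the base includes $c_1$, so the canonical base genuinely encodes the tangency class (detection of tangency is exactly what guarantees $b\notin\acl^{eq}_\CM(xy)$), and a forking computation shows $bxy$ is a code.

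A second, related problem is your verification that $\alpha\in\acl_\CK(s)$: you invoke ``detection of tangency applied in reverse,'' i.e.\ that $\CM$-dependence over $xy$ (or realizing the same $\CM$-type over $xy$) forces equality of tangent spaces. That converse is not part of the definition and is false in general: Lemma \ref{L: tangency slope is generic} shows precisely that the slope of a representative is \emph{not} algebraic over the point, so distinct realizations of the same $\CM$-type over $(x,y)$ typically carry distinct slopes. The correct argument has to pin down $\alpha$ through the canonical base of the relation between two representatives, not through the $\CM$-type of one representative over the point.
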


\begin{proof}
    Exactly as in \cite[Proposition 7.45]{CaHaYe}. Let $\alpha$ be a coherent slope at $(x,y)$. If $\alpha$ is algebraic, then $(x,y)$ is a code. If not, let $c_1,c_2$ be independent coherent representatives. By detection of tangency, $c_1$ and $c_2$ are $\CM$-dependent over $xy$. Let $b$ be a canonical base of the relation witnessing this, i.e. $b=\operatorname{Cb}(\stp_{\CM}(c_2/c_1xy))$. Then by a forking computation, $bxy$ is a code of $\alpha$.
\end{proof}

\subsection{Recovering the Operations}

As described above, codes allow us to talk about coherent slopes in the language of $\CM$. Using this idea, the next step is to (approximately) recover the field operations on coherent slopes that have codes. The following is analogous to \cite[Proposition 7.29]{CaHaYe}:

\begin{lemma}\label{L: slope composition} Let $(x,y,z)\in M^3$ be generic, and let $\alpha_1$ and $\alpha_2$ be coherent slopes at $(x,y)$ and $(y,z)$, respectively. Assume that $\alpha_1$ is independent from $z$ over $xy$; $\alpha_2$ is independent from $x$ over $yz$; and $\alpha_1,\alpha_2$ are independent from each other over $xyz$. Then $\alpha_3=\alpha_2\circ\alpha_1$ is a coherent slope at $(x,z)$, and if $s_i$ is a code of $\alpha_i$, then $s_1s_2s_3$ is coherent and $s_3\in\acl_{\mathcal M}(s_1s_2)$.
\end{lemma}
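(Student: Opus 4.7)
The plan is to realize $\alpha_1$ and $\alpha_2$ as slopes computed over a \emph{common} parameter set, then apply the chain rule (Lemma~10.22) once to produce $\alpha_3$ as a single slope, and finally translate the resulting $\CK$-side data back to $\CM$-side codes.

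First, fix coherent representatives $c_1$ of $\alpha_1$ and $c_2$ of $\alpha_2$, so that $\alpha_1 = \partial(x\to y / c_1)$ and $\alpha_2 = \partial(y\to z / c_2)$, with $c_1xy$ and $c_2yz$ coherent. The three independence hypotheses on $(\alpha_1, \alpha_2)$ translate, after replacing each $c_i$ by a suitable conjugate over the available data, into the three $\CM$-independences $c_1\ind_{xy}z$, $c_2\ind_{yz}x$, and $c_1\ind_{xyz}c_2$. Because every tuple involved is part of a coherent configuration (so that $\CM$-rank agrees with $\CK$-dimension on these tuples), these $\CM$-independences lift to $\CK$-independences. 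Lemma~10.20 then lets us rewrite
\[
\alpha_1 \;=\; \partial\!\left(\tfrac{x\to y}{c_1c_2}\right), \qquad \alpha_2 \;=\; \partial\!\left(\tfrac{y\to z}{c_1c_2}\right).
\]

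Second, set $A := c_1c_2$. From the rank conditions defining the $\alpha_i$ we have $y\in\acl_{\CM}(Ax)$ and $z\in\acl_{\CM}(Ay)\subseteq \acl_{\CM}(Ax)$, and by coherence these algebraicities hold in $\CK$ as well, so the hypotheses of the chain rule are met with $n=1$, $a=x$, $b_1=y$, $c=z$. Lemma~10.22 then yields
\[
\partial\!\left(\tfrac{x\to z}{A}\right) \;=\; \partial\!\left(\tfrac{y\to z}{A}\right)\circ \partial\!\left(\tfrac{x\to y}{A}\right) \;=\; \alpha_2\circ\alpha_1 \;=:\; \alpha_3.
\]
The rank conditions $\rk(x/A) = \rk(z/A) = \rk(xz/A) = 1$ required for $\alpha_3$ to be a coherent slope at $(x,z)$ follow from the coherent slope conditions on $\alpha_1,\alpha_2$ together with the independence assumptions (which prevent the addition of $c_2$ to $c_1$, or vice-versa, from collapsing ranks), while coherence of $Axz$ follows from coherence of $c_1c_2xyz$ and the fact that $y\in \acl_{\CM}(Ax)$.

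Third, for the code statement I would invoke Proposition~11.5 to produce $s_3$, then argue $s_3 \in \acl_{\CM}(s_1s_2)$ as follows. By the chain-rule identity $\alpha_3 = \alpha_2\circ\alpha_1$, we have $\alpha_3 \in \dcl_{\CK}(\alpha_1,\alpha_2)$; combined with $(x,y)\in\acl_{\CM}(s_1)$ and $(y,z)\in\acl_{\CM}(s_2)$ this gives $\alpha_3xz\in\acl_{\CK}(s_1s_2)$, and hence (using clause~(1) in the definition of codes) $s_3$ is $\CK$-algebraic over $s_1s_2$. To upgrade $\CK$-algebraicity to $\CM$-algebraicity, use that a coherent representative of $\alpha_3$ can be built from a canonical base already lying in $\acl_{\CM}(s_1s_2xz)$ (via the construction in the proof of Proposition~11.5); invoking property~(3) of codes then places $s_3 \in \acl_{\CM}(s_1s_2)$. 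Coherence of $s_1s_2s_3$ follows because $s_i$ is $\CK$-interalgebraic with $\alpha_ixy_i$ (for the appropriate pair), and the whole configuration $c_1c_2xyz$ was coherent by construction.

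The main obstacle I anticipate is the last step: cleanly translating between the $\CK$-side data $(c_i, \alpha_i)$ and the $\CM$-side codes $s_i$, in particular passing from the $\CK$-algebraicity of $s_3$ over $s_1s_2$ (which is immediate from the chain rule) to the sharper $\CM$-algebraicity (which requires unpacking the canonical-base construction of codes and using coherence to identify the two algebraic closures on the relevant tuples). Everything else is bookkeeping once the chain rule is applied.
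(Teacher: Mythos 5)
Your main construction coincides with the paper's: choose the coherent representatives $c_1,c_2$ as generically as the three independence hypotheses allow, so that $c_1c_2xyz$ is coherent with $\dim(xyz/c_1c_2)=1$; pass to the common parameter set $c_1c_2$ via Lemma \ref{L: independent slope}; apply the chain rule (Lemma \ref{L: chain}) with $n=1$ to get $\alpha_3=\partial\left(\frac{x\rightarrow z}{c_1c_2}\right)$ and its coherence; and observe $s_3\in\acl_{\CK}(s_1s_2)$ from the identity $\alpha_3=\alpha_2\circ\alpha_1$ together with clause (1) of the definition of codes. Up to that point you are reproducing the paper's argument.

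The gap is in your final transfer from $\acl_{\CK}$ to $\acl_{\CM}$. You propose to find a coherent representative of $\alpha_3$ lying in $\acl_{\CM}(s_1s_2xz)$ ``via the construction in the proof of Proposition \ref{P: codes exist}'' and then apply clause (3). But the dependence between codes and coherent representatives goes the other way: clause (3) makes the code $\CM$-algebraic over any coherent representative (plus the points), whereas a coherent representative --- a canonical base such as $\operatorname{Cb}(\stp_{\CM}(xz/c_1c_2))$, or the bases appearing in the proof of Proposition \ref{P: codes exist} --- remembers the particular curve/family through $(x,z)$, not just its slope, and is in general not algebraic (in either structure) over the codes; nothing in that construction places such a representative inside $\acl_{\CM}(s_1s_2xz)$. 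Relatedly, your justification of the coherence of $s_1s_2s_3$ invokes clause (1) ($\CK$-interalgebraicity of $s_i$ with $\alpha_i$ and the points), but coherence is preserved under $\CM$-algebraicity over a coherent tuple, not under mere $\CK$-(inter)algebraicity, so this does not suffice. The correct route --- and the one the paper takes --- is to note that $\operatorname{Cb}(\stp_{\CM}(xz/c_1c_2))$ is a coherent representative of $\alpha_3$ lying in $\acl_{\CM}^{eq}(c_1c_2)$, so clause (3) applied to all three codes gives $s_1s_2s_3\in\acl_{\CM}(c_1c_2xyz)$; hence $s_1s_2s_3$ is coherent, and then the $\CK$-algebraicity $s_3\in\acl_{\CK}(s_1s_2)$ that you already have upgrades immediately to $s_3\in\acl_{\CM}(s_1s_2)$ by coherence. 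In short: once coherence of $s_1s_2s_3$ is established properly, your problematic extra step is unnecessary, and as it stands it is also unjustified.
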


\begin{proof} Choose coherent representatives $c_i$ of $\alpha_i$ for $i=1,2$. Using the various independence assumptions above to choose the $c_i$ as generically as possible, we may ensure that $c_1c_2xyz$ is coherent and $\dim(xyz/c_1c_2)=1$. By Lemma \ref{L: independent slope}, $\partial\left(\frac{x\rightarrow y}{c_1c_2}\right)=\alpha_1$ and $\partial\left(\frac{y\rightarrow z}{c_1c_2}\right)=\alpha_2$. So by the chain rule (Lemma \ref{L: chain}), $\partial\left(\frac{x\rightarrow z}{c_1c_2}\right)=\alpha_3$. But $c_1c_2xz$ is coherent, so by definition $\alpha_3$ is coherent. 

Now assume $s_i$ is a code for each $\alpha_i$. Then $s_1s_2s_3\in\acl_{\CM}(c_1c_2xyz)$, so $s_1s_2s_3$ is coherent. But $s_3\in\acl(s_1s_2)$ (applying the chain rule to  $\alpha_3=\alpha_2\circ\alpha_1$), so by coherence, $s_3\in\acl_{\CM}(s_1s_2)$.
\end{proof}

Note that a simpler version of the above argument proves the same result for inverses of slopes:

\begin{lemma}\label{L: inverse slope} If $\alpha$ is a coherent slope at $(x,y)$, then $\alpha^{-1}$ is a coherent slope at $(y,x)$. Moreover, if $s$ and $t$ are codes for $\alpha$ and $\alpha^{-1}$, respectively, then $s$ and $t$ are $\CM$-interalgebraic.
\end{lemma}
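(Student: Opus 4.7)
My plan has three main steps: (i) verify $\alpha^{-1}$ is well-defined and coherent; (ii) observe that $t$ is also a code of $\alpha$ (and symmetrically); (iii) conclude via uniqueness of codes up to $\CM$-interalgebraicity.

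For step (i), I would use the dimension conditions $\rk(x/A)=\rk(y/A)=\rk(xy/A)=1$ together with coherence of $Axy$ to obtain $\dim(y/Ax)=\dim(x/Ay)=0$. Sard's Theorem (Fact \ref{F: Sard}) then shows that both projections $T(xy/A)\to T(x/A)$ and $T(xy/A)\to T(y/A)$ are isomorphisms of $1$-dimensional $K$-vector spaces, making $\alpha$ (whose graph is $T(xy/A)$) a linear isomorphism. Its inverse is visibly the slope $\partial\!\left(\frac{y\to x}{A}\right)$, and since all axioms of a coherent slope are symmetric in $x$ and $y$, this inverse is a coherent slope at $(y,x)$.

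For step (ii), I would verify that the three defining clauses of a code transfer symmetrically between $\alpha$ and $\alpha^{-1}$: (1) $\alpha$ and $\alpha^{-1}$ are $\CK$-interdefinable via inversion of the linear map, so $\CK$-interalgebraicity with $\alpha xy$ and with $\alpha^{-1}xy$ coincide; (2) the tuples $(x,y)$ and $(y,x)$ determine each other; (3) the canonical base $\operatorname{Cb}(\stp(xy/A))$ is invariant under reordering the tuple $xy$ as $yx$, so $\alpha$ and $\alpha^{-1}$ share the same coherent representatives $c$, with $\acl_{\CM}(cxy)=\acl_{\CM}(cyx)$. Hence $t$ satisfies the definition of a code of $\alpha$, and symmetrically $s$ is a code of $\alpha^{-1}$.

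For step (iii), it suffices to show that any two codes of a single coherent slope are $\CM$-interalgebraic. The algebraic case is immediate since $(x,y)$ is itself a code. For the non-algebraic case with codes $s_1,s_2$ of $\alpha$, my plan is to choose a coherent representative $c$ of $\alpha$ that is $\CM$-independent from $s_1s_2$ over $xy$; this is achievable by enlarging the parameter set $A$ with a suitable generic element, which preserves coherence of $Axy$ and yields a fresh coherent rep. Clause (3) then places both $s_1,s_2\in\acl_{\CM}(cxy)=\acl_{\CM}(c,s_j)$ (using $(x,y)\in\acl_{\CM}(s_j)$), and a short $\CM$-forking computation against the genericity of $c$ (in the spirit of the canonical-base construction underlying Proposition \ref{P: codes exist}) yields $s_2\in\acl_{\CM}(s_1)$, and symmetrically. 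I expect this uniqueness step to be the main obstacle, as it requires careful handling of canonical bases in $\CM$ and verifying the freedom to pick $c$; the first two steps are essentially symmetry observations.
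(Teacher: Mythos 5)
Your steps (i) and (ii) are correct and essentially coincide with the paper's (very short) argument: the whole point there is that $\alpha^{-1}=\partial\left(\frac{y\rightarrow x}{A}\right)$ for the \emph{same} witnessing set $A$, so coherence of $\alpha^{-1}$ and the identification of codes of $\alpha^{-1}$ with codes of $\alpha$ follow from the symmetry of the definitions, exactly as you observe.

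The problem is step (iii), precisely the step you flagged as the main obstacle. In the non-algebraic case you propose to choose a coherent representative $c$ of $\alpha$ which is $\CM$-independent from $s_1s_2$ over $xy$; no such $c$ can exist. Indeed, by clause (1) of the definition of a code, $s_j$ is $\CK$-interalgebraic with $\alpha xy$, so if $\alpha\notin\acl_\CK(xy)$ then $s_j\notin\acl_\CM(xy)$ (since $\acl_\CM\subseteq\acl_\CK$ on these sorts), i.e.\ $\rk(s_j/xy)\geq 1$; but clause (3) forces $s_j\in\acl_\CM(cxy)$ for \emph{every} coherent representative $c$, so $\rk(s_j/cxy)=0<\rk(s_j/xy)$ and $c$ forks with each $s_j$ over $xy$ in $\CM$. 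Thus the independence you hope to arrange by enlarging $A$ is unobtainable (moving $A$ by an automorphism over $xy$ would also change $\alpha$), and the forking computation built on it cannot start. Fortunately the independence is unnecessary: the intended argument, the same pattern as in Lemma \ref{L: slope composition}, is to fix any one coherent representative $c$. Then $s,t\in\acl_\CM(cxy)$ and $cxy$ is coherent (as $c=\operatorname{Cb}(\stp(xy/A))\in\acl_\CM(A)$ and $Axy$ is coherent), so $st$ is coherent; since $s$ and $t$ are $\CK$-interalgebraic (both are $\CK$-interalgebraic with $\alpha xy$), we get $\dim(t/s)=0$, and coherence converts this into $\rk(t/s)=0$, i.e.\ $t\in\acl_\CM(s)$, and symmetrically. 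The same remark repairs your algebraic case, where ``immediate since $(x,y)$ is itself a code'' still needs this coherence step to see that an arbitrary code lies in $\acl_\CM(xy)$.
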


\begin{proof} Similar to above, noting that if $\alpha=\partial\left(\frac{x\rightarrow y}{A}\right)$, then $\alpha^{-1}=\partial\left(\frac{y\rightarrow x}{A}\right)$.
\end{proof}

To interpret a field, we will prove yet another analogous result, this time for addition of slopes (and which is not analogous to any statement in \cite{CaHaYe}). Recovering the sum of (independent) coherent slopes is trickier than recovering composition, since there is no obvious $\CM$-definable operation sending two representatives of such slopes to a representative of their sum. To overcome this we use the \textit{two-dimensional} chain rule (i.e. Lemma \ref{L: chain} for $n=2$), combined with an additional technique introduced in \cite{CasACF0}. The idea is that the statement of the higher-dimensional chain rule gives us certain linear combinations of two slopes -- and we can then cancel the coefficients on these linear combinations using a more sophisticated instance of composition.

As in \cite{CasACF0}, after adding constants we may fix $G\subset M^3$, an $\mathcal M(\emptyset)$-definable set of Morley rank 2 whose projections to $M^2$ are all finite-to-one with generic image.

\begin{lemma}\label{L: slope addition}
 Let $(x,y)\in M^2$ be generic, and let $\alpha_1$ and $\alpha_2$ be independent coherent slopes at $(x,y)$. Then $\alpha_3=\alpha_1+\alpha_2$ is a coherent slope, and if $s_i$ is a code of $\alpha_i$ (for $i=1,2,3$), then $s_1s_2s_3$ is coherent and $s_3\in\acl_{\mathcal M}(s_1s_2)$. 
\end{lemma}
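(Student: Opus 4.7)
The strategy mirrors Lemma \ref{L: slope composition}, now using the two-variable form of the chain rule (Lemma \ref{L: chain} with $n=2$) together with the auxiliary set $G\subset M^3$ of Morley rank $2$ whose projections are finite-to-one. The role of $G$ is to supply an $\CM$-algebraic ``combining'' of two generic elements of $M$ into a third; the 2D chain rule then expresses the slope of the combined element as a linear combination of the input slopes, and Lemma \ref{L: slope composition} together with Lemma \ref{L: inverse slope} is used to cancel the remaining coefficients, as in the analogous argument of \cite{CasACF0}.

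\emph{Construction.} Fix independent coherent representatives $c_1,c_2$ of $\alpha_1,\alpha_2$, chosen as generically as possible so that $c_1c_2xy$ is coherent. Realize $(x,y_1)\models\tp(x,y/c_1)$ and $(x,y_2)\models\tp(x,y/c_2)$ with $y_1,y_2$ as $\CM$-independent as possible over $c_1c_2x$; by Lemma \ref{L: independent slope}, $\partial\!\left(\tfrac{x\to y_i}{c_1c_2}\right)=\alpha_i$ (viewed at the new base point). Choose $y_3\in M$ generic with $(y_1,y_2,y_3)\in G$; the finite-to-one property of $G$ gives $y_3\in\acl_\CM(y_1,y_2)$, and coherence extends. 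Applying the two-variable chain rule with $a=x$, $b_1=y_1$, $b_2=y_2$, $c=y_3$, over $A=c_1c_2$, yields
\[
\partial\!\left(\tfrac{x\to y_3}{c_1c_2}\right)=\beta_1\alpha_1+\beta_2\alpha_2,
\]
where $\beta_i:=\partial\!\left(\tfrac{y_i\to y_3}{c_1c_2y_{3-i}}\right)$ are coherent slopes at $(y_i,y_3)$ determined by $G$ and the generic triple. Composing with $\beta_1^{-1}$ (coherent by Lemma \ref{L: inverse slope}) via Lemma \ref{L: slope composition} then produces the coherent slope
\[
\partial\!\left(\tfrac{x\to y_1}{c_1c_2y_2}\right)=\alpha_1+\gamma\alpha_2,\qquad \gamma:=\beta_1^{-1}\beta_2.
\]
To pass from $\alpha_1+\gamma\alpha_2$ to $\alpha_1+\alpha_2$, absorb $\gamma$ into $\alpha_2$: since $\gamma^{-1}\alpha_2$ is itself coherent by Lemma \ref{L: slope composition}, rerun the construction with $\alpha_2$ replaced by $\gamma^{-1}\alpha_2$ and the same $(y_1,y_3)$-data, arranging that the resulting $G$-triple reproduces the same coefficient $\gamma$; the output is then $\alpha_1+\gamma(\gamma^{-1}\alpha_2)=\alpha_1+\alpha_2$, a coherent slope. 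Code tracking is automatic: the code clauses of Lemmas \ref{L: slope composition} and \ref{L: inverse slope} preserve $\acl_\CM$-relations among codes, and all auxiliary data $(y_1,y_2,y_3,\gamma)$ lies in $\acl_\CM(s_1s_2xy)$, giving $s_3\in\acl_\CM(s_1s_2)$.

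\emph{Main obstacle.} The delicate point is the cancellation of the coefficient $\gamma$: one must coordinate the rerun so that replacing a representative of $\alpha_2$ leaves the partial slopes $\beta_1,\beta_2$ (and hence $\gamma$) unchanged. Concretely, this requires arranging for the $G$-triple to be determined by the points $(y_1,y_2,y_3)$ alone rather than by the specific coherent representatives parametrizing $\alpha_2$. This is the genuine new ingredient beyond the composition case, and mirrors the analogous step in \cite{CasACF0}. Once it is set up correctly, the chain rule and the already-established composition formalism do the rest.
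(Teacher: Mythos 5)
Your proposal assembles the right ingredients (the auxiliary set $G$, the two-variable chain rule of Lemma \ref{L: chain}, and Lemmas \ref{L: slope composition} and \ref{L: inverse slope}), but the step you yourself flag as the delicate point is precisely where the proof is missing: cancelling the coefficient $\gamma=\beta_1^{-1}\beta_2$ by ``rerunning the construction with $\alpha_2$ replaced by $\gamma^{-1}\alpha_2$, arranging that the resulting $G$-triple reproduces the same coefficient'' is the actual content of the lemma, and nothing in the proposal shows it can be arranged. As written it does not even typecheck: $\gamma^{-1}$ maps $T(y_1)\to T(y_2)$ while $\alpha_2$ maps $T(x)\to T(y)$ (or $T(y_2)$ after your base change), so $\gamma^{-1}\alpha_2$ is not a composition of slopes; and replacing $\alpha_2$ changes the coherent representatives and the independence constellation, so there is no reason the second run produces the same $\beta_i$. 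There are also base-point problems: realizing $(x,y_i)\models\tp(x,y/c_i)$ only gives automorphic conjugates of the $\alpha_i$ with targets $T(y_i)\neq T(y)$, so your chain-rule output lives at $(x,y_3)$ rather than at the original $(x,y)$ where $\alpha_1+\alpha_2$ must be shown coherent; the identification of $\beta_1^{-1}\circ\partial\bigl(\tfrac{x\to y_3}{c_1c_2}\bigr)$ with $\partial\bigl(\tfrac{x\to y_1}{c_1c_2y_2}\bigr)$ is false (the latter is just the conjugate of $\alpha_1$). Finally, the code-tracking is not ``automatic'': $y_1,y_2$ are chosen generically, so they do not lie in $\acl_{\CM}(s_1s_2xy)$, and the passage from $s_3\in\acl_{\CK}(s_1s_2)$ to $s_3\in\acl_{\CM}(s_1s_2)$ requires first establishing coherence of $s_1s_2s_3$.

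The paper's proof avoids the cancellation problem entirely by running the construction backwards. Take $u_1$ generic, $u_2$ with $(u_1,u_2,y)\in G$ (the third coordinate is the original $y$), and let $\beta_i$ be the resulting algebraic coherent slopes at $(u_i,y)$; then \emph{precompose}, setting $\gamma_i:=\beta_i^{-1}\circ\alpha_i$, a coherent slope at $(x,u_i)$ by Lemmas \ref{L: slope composition} and \ref{L: inverse slope}. Choosing coherent representatives $c_i$ of $\gamma_i$ sufficiently independently and applying the chain rule to $x\to(u_1,u_2)\to y$ over $c_1c_2$ gives $\partial\bigl(\tfrac{x\to y}{c_1c_2}\bigr)=\beta_1\gamma_1+\beta_2\gamma_2=\alpha_1+\alpha_2$ on the nose, at the original point $(x,y)$: the unknown coefficients cancel by construction, so no second pass is needed. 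Coherence of $xyc_1c_2$ then gives coherence of $\alpha_3$, and coherence of $s_1s_2s_3$ (all of it lying in $\acl_{\CM}(c_1c_2xu_1u_2y)$) converts $s_3\in\acl_{\CK}(s_1s_2)$ into $s_3\in\acl_{\CM}(s_1s_2)$. Reorganizing your argument along these lines removes the gap.
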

\begin{proof}
   Let $u_1\in M$ be generic over all other data. Then there is $u_2$ with $(u_1,u_2,y)\in G$. Let $\beta_1=\partial\left(\frac{u_1\rightarrow y}{u_2}\right)$ and $\beta_2=\partial\left(\frac{u_2\rightarrow y}{u_1}\right)$. Then each $\beta_i$ is an (algebraic) coherent slope at $(u_i,y)$. It follows from Lemmas \ref{L: slope composition} and \ref{L: inverse slope} that $\gamma_i=\beta_i^{-1}\circ\alpha_i$ is a coherent slope at $(x,u_i)$. Let $c_i$ be a coherent representative of $\gamma_i$.
   
   By choosing the $c_i$ as independently as possible, one can ensure that the full tuple $(x,u_1,u_2,y,c_1,c_2)$ is coherent, and $\dim(x,u_1,u_2,y/c_1c_2)=1$. It follows from Lemma \ref{L: independent slope} that each $\partial\left(\frac{x\rightarrow u_i}{c_1c_2}\right)=\gamma_i$ and each $\partial\left(\frac{u_i\rightarrow y}{c_1c_2\{u_j:j\neq i\}}\right)=\beta_i$. So by the chain rule, $$\partial\left(\frac{x\rightarrow y}{c_1c_2}\right)=\beta_1\gamma_1+\beta_2\gamma_2=\alpha_1+\alpha_2=\alpha_3.$$ But from above, $xyc_1c_2$ is coherent, thus $\alpha_3$ is coherent.
   
   Finally, let $s_i$ be a code of each $\alpha_i$. From above, $s_3\in\acl_{\CM}(c_1c_2xy)$. Also note that $\alpha_1=\partial\left(\frac{x\rightarrow y}{c_1u_2}\right)$ (by the chain rule applied to $x\rightarrow u_1\rightarrow y$), and thus $s_1\in\acl_{\CM}(c_1u_2xy)$. Similarly, $s_2\in\acl_{\CM}(c_2u_1xy)$. So in total, $s_1s_2s_3\in\acl_{\CM}(c_1c_2xu_1u_2y)$. As the latter tuple is coherent, so is $s_1s_2s_3$. But $s_3\in\acl(s_1s_2)$ (since $\alpha_3=\alpha_1+\alpha_2$), so by coherence, $s_3\in\acl_{\CM}(s_1s_2)$.
    \end{proof}

\subsection{Finding Infinitely Many Slopes}

At this point, we have shown that the sum and composition operations on coherent slopes can be traced into $\CM$.  However, we have not ruled out the possibility that there is (say) only one coherent slope at each point. In this case, the results above on sum and composition would be trivial, and we would be no closer to interpreting an infinite field.

In this subsection, we point out that $\CM$ \textit{must} realize infinite families of coherent slopes at some generic point -- in other words, that \textit{non-algebraic coherent slopes} exist. This is essentially a consequence of Sard's theorem -- precisely, it will be proved by applying Lemma \ref{L: tangency slope is generic}. 
\begin{lemma}\label{L: slope family exists} There is a non-algebraic coherent slope.
\end{lemma}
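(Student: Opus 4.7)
The plan is to argue by contradiction, using Lemma~\ref{L: tangency slope is generic} to rule out the possibility that every coherent slope is algebraic. The engine is non-local modularity, which provides a rich family of plane curves through a generic point; if all slopes in the family were algebraic over the point, the pigeonhole principle would force two distinct curves through the point to be tangent, contradicting Lemma~\ref{L: tangency slope is generic}.

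More concretely, using non-local modularity of $\CM$, I would fix an almost faithful $\CM(\emptyset)$-definable family $\{C_t : t \in T\}$ of plane curves in $\CM$ with $\rk(T)=2$, and shrink to a generic open subfamily so that all tuples appearing below are coherent (hence $\dim$ and $\rk$ agree throughout). Choose $(x,y) \in M^2$ generic over $\emptyset$ and $t_1 \in T_{xy} := \{s \in T : (x,y) \in C_s\}$ generic over $xy$, so $\rk(xy)=2$, $\rk(t_1/xy)=1=\rk(xy/t_1)$, and $p := \tp(xy,t_1)$ is an abstract family of plane curves with associated coherent slope $\alpha_1 := \partial\!\left(\tfrac{x \to y}{t_1}\right)$. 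Now suppose for contradiction that every coherent slope is algebraic, so $\alpha_1 \in \acl(xy)$. Since $\alpha_s$ takes only finitely many values as $s$ ranges over $T_{xy}$, the fiber $F := \{s \in T_{xy} : \alpha_s = \alpha_1\}$ has $\rk(F/xy) = 1$; pick $t_2 \in F$ generic over $xy t_1$, using almost faithfulness to avoid the finite set $B_{t_1} := \{s \in T : |C_s \cap C_{t_1}| = \infty\}$. This guarantees $|C_{t_1} \cap C_{t_2}| < \infty$, so $\rk(xy/t_1 t_2) = 0$, and a rank computation yields $\rk(t_1 t_2) = 4 = \rk(t_1) + \rk(t_2)$; thus $t_1, t_2$ are $\CM$-independent over $\emptyset$, and (by choice of $t_2$) over $xy$ as well. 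Coherence lifts both independences to $\CK$, making $(xy, t_1, t_2)$ a strong $(p,q)$-intersection with $q := \tp(xy, t_2)$.

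Since $T(xy/t_i)$ is the graph of $\alpha_{t_i}$ (in $T(x/t_i) \times T(y/t_i)$), the equality $\alpha_{t_1} = \alpha_{t_2}$ translates into $T(xy/t_1) = T(xy/t_2)$. Being a strong intersection, the triple is automatically strongly approximable, so it is a genuine $(p,q)$-tangency. Lemma~\ref{L: tangency slope is generic} now gives $T(xy/t_1) \notin \acl(xy)$, i.e.\ $\alpha_1 \notin \acl(xy)$, contradicting our assumption. This contradiction yields a non-algebraic coherent slope, proving the lemma.

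The main technical obstacle I anticipate is the bookkeeping required to certify that $t_1$ and $t_2$ can simultaneously be taken $\CK$-independent (not merely $\CM$-independent) both over $\emptyset$ and over $xy$, since the notion of a strong intersection (and hence of a tangency) in Section~10 is phrased in terms of $\dim$ rather than $\rk$. This must be handled by setting things up inside the lore from the start --- choosing the family $\{C_t\}$ generically enough so that the entire configuration $(x,y,t_1,t_2,\alpha_1)$ is coherent, so that all the $\rk$-computations above can be transferred directly to $\dim$-computations without loss.
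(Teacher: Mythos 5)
Your route is genuinely different from the paper's, and it is mostly sound but has one unjustified step. For comparison: the paper argues \emph{directly}, not by contradiction. Non-local modularity gives a configuration $(a,b,A)$ with $\rk(a/A)=\rk(b/A)=2$, $\rk(a/Ab)=1$ and $b$ interalgebraic with the canonical base of $\operatorname{stp}_{\CM}(a/Ab)$; after passing to a coherent conjugate, the \emph{self-intersection} $(a,b,b)$ is trivially a $(p,p)$-tangency (equal tangent spaces are automatic, and strong approximability is checked by finding, in any neighborhood of $b$, a realization $b'\models\tp(b/Aa)$ independent from $b$ over $Aa$), so Lemma \ref{L: tangency slope is generic} immediately gives $T(a/Ab)\notin\acl(Aa)$. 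No second curve with the same slope is ever produced. By contrast, you assume all coherent slopes are algebraic and manufacture a second parameter $t_2$ with literally the same tangent space at $(x,y)$. That is workable, but note that once you have a strong $(p,q)$-intersection with equal tangent spaces you already contradict Lemma \ref{L: strong intersections are transverse}; the subsequent passage through ``strong intersections are strongly approximable, hence this is a tangency'' and Lemma \ref{L: tangency slope is generic} is a detour -- indeed the paper's proof of that lemma is exactly this argument, so you are in effect re-deriving part of it and then also invoking it.

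The genuine gap is the sentence ``Since $\alpha_s$ takes only finitely many values as $s$ ranges over $T_{xy}$.'' Your hypothesis is only that every \emph{coherent} slope is algebraic; for $s\in T_{xy}$ outside the generic locus the tuple $sxy$ need not be coherent (nor satisfy the rank conditions in the definition of a coherent slope), so nothing forces $\alpha_s\in\acl(xy)$, and finiteness of the image does not follow. Moreover $F=\{s\in T_{xy}:\alpha_s=\alpha_1\}$ is $\CK$-definable but not $\CM$-definable, so ``$\rk(F/xy)=1$'' is not meaningful as written. The repair is easy and avoids finiteness altogether: under the contradiction hypothesis $\alpha_1=\partial\left(\frac{x\to y}{t_1}\right)\in\acl_{\CK}(Axy)$, while $\dim(t_1/Axy)=1$, so $\tp_{\CK}(t_1/Axy\alpha_1)$ is non-algebraic; every realization $s$ of this type satisfies $T(xy/As)=T(xy/At_1)$ (an automorphism over $Axy\alpha_1$ fixes the graph of $\alpha_1$ setwise), and a realization $t_2$ chosen independent from $t_1$ over $Axy$ lies outside $\acl_{\CK}(Axyt_1)$, hence outside the finite set of parameters whose curves meet $C_{t_1}$ infinitely; then $xy\in\acl(At_1t_2)$, which is exactly what yields independence of $t_1,t_2$ over $A$ as well. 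With that substitution, and the coherence bookkeeping you already flag (which does go through, since $\dim\le\rk$ and all the relevant conditional ranks are forced to be $1$), your proof is correct, though longer than the paper's.
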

\begin{proof}
  By non-local modularity, there are a tuple $(a,b)=(a_1,a_2,b_1,b_2)\in M^4$, and a parameter set $A$, such that:
  \begin{enumerate}
      \item $\rk(a/A)=rk(b/A)=2$.
      \item $\rk(a_1/Ab)=\rk(a_2/Ab)=\rk(a/Ab)=1$.
      \item $b$ is $\CM$-interalgebraic with $\operatorname{Cb}(\operatorname{stp}_{\CM}(a/Ab))$.
    \end{enumerate}
    Replacing with an $\CM$-automorphism conjugate if necessary, we may assume that $Aab$ is coherent. Then $p=\tp(a,b/A)$ is an abstract family of plane curves, and $(a,b,b)$ is a $(p,p)$-intersection. The key point is now:
    \begin{claim}
        $(a,b,b)$ is strongly approximable.
    \end{claim}
    \begin{proof} By a compactness argument, any neighborhood of $b$ contains a realization $b'\models\tp(b/Aa)$ with $b'$ independent from $b$ over $Aa$. It follows easily that $(a,b,b')$ is a strong $(p,p)$-intersection.
    \end{proof}
    By the claim, $(a,b,b)$ is a $(p,p)$-tangency. So by Lemma \ref{L: tangency slope is generic}, $T(a/Ab)\notin\acl(Aa)$. Then $\partial\left(\frac{a_1\rightarrow a_2}{Ab}\right)$ is a non-algebraic coherent slope.
\end{proof}
\textbf{For the rest of this section, we fix a generic $(x,y)\in M^2$ and a non-algebraic coherent slope $\alpha$ at $(x,y)$, with code $s$.} For the rest of the proof, we will essentially be treating $\alpha$ (and thus also $x$, $y$, and $s$) as absorbed constants -- but we will not explicitly do this, because we still need the genericity of $(x,y)$ for certain instances of Lemmas \ref{L: slope composition} and \ref{L: slope addition}. Thus, most statements will involve $\alpha$ as a parameter.

\subsection{Recovering the Operations at One Point}

We now have an infinite family of coherent slopes at a point $(x,y)$, and (assuming codes exist) we can code independent sums and compositions of slopes where they are defined. This is not quite good enough, because one cannot compose two slopes at $(x,y)$ (and forcing $x=y$ would invalidate most of the proofs to this point). Toward this end, our final remaining task is to encode slopes at $(x,x)$ into slopes at $(x,y)$ -- that is, we identify the given family of slopes at $(x,y)$ with a family of $K$-linear endomorphisms of $T(x)$ (noting that the collection of all such endomorphisms forms a $\CK$-definable field, say $F$). We then check that we can still recover the sum and composition operations on $F$ when viewed as operations on slopes at $(x,y)$ through our encoding. Assuming TIMI, this will (essentially) allow us to encode the full field $F$ into $\CM$.

We first define the encoding:

\textbf{For the rest of this section, we fix $F$, the field of $K$-linear endomorphisms of $T(x)$.} Note that $F$ is definable in $\CK$ over $x$ (thus also over $\alpha$).

\begin{definition}
    Let $\beta\in F$ be generic over $\alpha$. We say that $\beta$ is \textit{coherent-generic} if the composition $\alpha\circ\beta:T(x)\rightarrow T(y)$ is a coherent slope at $(x,y)$. In this case, a \textit{code} of $\beta$ is just a code of $\alpha\circ\beta$.
\end{definition}

We thus easily have (in analogy to Lemma \ref{L: slope family exists}):

\begin{lemma}\label{L: coherent field element} There is a coherent-generic element of $F$.
\end{lemma}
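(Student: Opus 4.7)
The plan is to exploit the non-algebraicity of $\alpha$ to produce a second, $\CK$-independent coherent slope at $(x,y)$, and then divide. I would write $\alpha=\partial\!\left(\tfrac{x\to y}{A}\right)$ for a parameter set $A$ making $Axy$ coherent, and then use saturation and strong homogeneity of $\CK$ to choose $(A',\alpha')$ with $\tp_\CK(A'\alpha'/xy)=\tp_\CK(A\alpha/xy)$ and $(A',\alpha')\ind^{\CK}_{xy}(A,\alpha)$. Setting $\beta:=\alpha^{-1}\circ\alpha'$, I would then argue in turn that $\beta\in F$, that $\alpha\circ\beta=\alpha'$ is a coherent slope at $(x,y)$, and that $\beta$ is $\CK$-generic in $F$ over $\alpha$.

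The first step is that $\alpha'$ is again a non-algebraic coherent slope at $(x,y)$. Coherence is a type-definable statement ($\dim=\rk$ on various tuples), and both $\dim$ and $\rk$ are invariant under $\CK$-automorphisms --- the latter because $\CK$-automorphisms restrict to $\CM$-automorphisms, since $\CM$ is $\CK$-definable. So the clauses defining a coherent slope, together with non-algebraicity, transfer verbatim from $\alpha$ to $\alpha'$. Since $\dim(M)=1$, both $T(x)$ and $T(y)$ are one-dimensional $K$-vector spaces, and non-algebraicity forces $\alpha,\alpha'$ to be nonzero (the zero slope lies in $\dcl(\emptyset)$); hence they are $K$-linear isomorphisms, $\beta=\alpha^{-1}\circ\alpha'\in F\cong K$ is well-defined, and $\alpha\circ\beta=\alpha'$ is coherent by construction.

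For the genericity, $F\cong K$ has $\dim(F)=1$, so it suffices to prove $\dim(\beta/\alpha)=1$. Since composition of slopes is $\CK$-definable, $\alpha'\in\dcl_\CK(\alpha,\beta)$ and so $\dim(\beta/\alpha)\geq\dim(\alpha'/\alpha)$. The chosen $\CK$-independence, together with $xy\in\dcl(\alpha)$, yields $\dim(\alpha'/\alpha)=\dim(\alpha'/xy\alpha)=\dim(\alpha'/xy)$, and the latter equals $1$: the upper bound comes from the encoding (given $(x,y)$, a slope is a single scalar in $K$) and the lower bound is exactly non-algebraicity, $\alpha'\notin\acl_\CK(xy)$. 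The main bookkeeping I expect to handle carefully is confirming this reading of ``non-algebraic'' as $\acl_\CK$ rather than $\acl_\CM$ --- which is the reading delivered by the proof of Lemma~\ref{L: slope family exists} via Lemma~\ref{L: tangency slope is generic} (Sard's theorem at the $\CK$-level). Once that is pinned down, the rest reduces to a dimension count powered by strong homogeneity.
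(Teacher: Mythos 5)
Your proposal is correct and is essentially the paper's own argument: take an independent realization $\alpha'$ of $\tp(\alpha/xy)$ and set $\beta=\alpha^{-1}\circ\alpha'$, so that $\alpha\circ\beta=\alpha'$ is coherent and $\dim(\beta/\alpha)=\dim(\alpha'/xy)=1$ gives genericity. The extra bookkeeping you include (invariance of coherence under conjugation, invertibility of $\alpha$, and reading non-algebraicity as $\acl_\CK(xy)$) is exactly what the paper leaves implicit.
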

\begin{proof} Let $\alpha'\models\tp(\alpha/xy)$ with $\alpha$ and $\alpha'$ independent over $xy$. Then $\beta=\alpha^{-1}\alpha'$ is a coherent-generic element of $F$.
\end{proof}

We now adapt Lemmas \ref{L: slope composition} and \ref{L: slope addition} to this context.

\begin{proposition}\label{P: slope addition at a point}
    Let $\beta_1,\beta_2\in F$ be coherent-generic and independent over $\alpha$. Then $\beta_3=\beta_1+\beta_2$ is coherent-generic, and if $s_i$ is a code of $\beta_i$ for each $i$, then $ss_1s_2s_3$ is coherent and $s_3\in\acl_{\mathcal M}(ss_1s_2)$.
\end{proposition}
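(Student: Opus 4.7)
The strategy is to reduce the proposition to Lemma \ref{L: slope addition} applied to the coherent slopes at $(x,y)$ obtained by composing with $\alpha$. Set $\alpha_i := \alpha \circ \beta_i$ for $i = 1, 2$. By definition of coherent-generic, each $\alpha_i$ is a coherent slope at $(x,y)$ whose code is exactly $s_i$. By $K$-linearity of composition in the second argument,
\[
    \alpha_1 + \alpha_2 \;=\; \alpha \circ \beta_1 + \alpha \circ \beta_2 \;=\; \alpha \circ (\beta_1 + \beta_2) \;=\; \alpha \circ \beta_3,
\]
so $\alpha_3 := \alpha_1 + \alpha_2$ is exactly $\alpha \circ \beta_3$, and the tuple $s_3$ (as defined in the proposition) is exactly the code of $\alpha_3$. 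Hence the assertion that $\beta_3$ is coherent-generic is equivalent to the assertion that $\alpha_3$ is a coherent slope at $(x,y)$.

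To invoke Lemma \ref{L: slope addition}, I must verify that $\alpha_1$ and $\alpha_2$ are independent coherent slopes at $(x, y)$. Coherence of each is immediate. For independence: since $\alpha$ is an isomorphism $T(x) \to T(y)$, the map $\beta \mapsto \alpha \circ \beta$ is a $\CK(\alpha)$-definable bijection between $F$ and the one-dimensional $K$-space of slopes from $T(x)$ to $T(y)$, so each $\alpha_i$ is $\CK$-interdefinable with $\beta_i$ over $\alpha$ (using that $xy \in \dcl_{\CK}(\alpha)$). The assumed $\CK$-independence of $\beta_1, \beta_2$ over $\alpha$ therefore translates into $\dim_{\CK}(\alpha_2 / \alpha_1 xy) = 1$, the full generic dimension of a slope at $(x,y)$; this is the independence needed for Lemma \ref{L: slope addition}.

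Applying Lemma \ref{L: slope addition} then yields the three conclusions: $\alpha_3$ is a coherent slope (equivalently, $\beta_3$ is coherent-generic, with code $s_3$); $s_1 s_2 s_3$ is coherent; and $s_3 \in \acl_{\CM}(s_1 s_2) \subseteq \acl_{\CM}(ss_1 s_2)$. It remains only to upgrade ``$s_1 s_2 s_3$ coherent'' to ``$ss_1 s_2 s_3$ coherent''. For this I revisit the proof of Lemma \ref{L: slope addition}, which exhibits a coherent witness tuple $W$ (built from generic auxiliary parameters in $M$ and from coherent representatives of the auxiliary slopes $\gamma_i$) with $s_1 s_2 s_3 \in \acl_{\CM}(W)$. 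I then adjoin to $W$ a coherent representative $c$ of $\alpha$ itself, chosen as generically as possible over $W$. Since $s \in \acl_{\CM}(cxy)$ by clause~(3) of the definition of a code, this yields $ss_1 s_2 s_3 \in \acl_{\CM}(Wc)$; and because $\CM$-algebraic closure preserves coherence, it suffices to show that $Wc$ is coherent.

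The main obstacle is precisely this final coherence check: the extra coordinate $c$ contributes $\dim_{\CK}(\alpha / \alpha_1 \alpha_2 xy) = 1$ to the $\CK$-dimension of $Wc$, and one must verify that it contributes exactly the same amount to the $\CM$-rank, i.e., that no unexpected $\CM$-algebraic dependence is introduced by adjoining $c$. This amounts to a routine but delicate bookkeeping of the independence relations already used in the proof of Lemma \ref{L: slope addition}, guaranteeing that $c$ can indeed be chosen generically enough to preserve coherence.
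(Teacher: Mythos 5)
Your reduction is exactly the paper's proof: the paper's entire argument for this proposition is the one-liner ``note that $\alpha\beta_3=\alpha\beta_1+\alpha\beta_2$ and apply Lemma \ref{L: slope addition}'', and your verification of the hypotheses (each $\alpha_i=\alpha\circ\beta_i$ is a coherent slope at $(x,y)$ with code $s_i$, and the independence of $\beta_1,\beta_2$ over $\alpha$ translates into the independence of $\alpha_1,\alpha_2$ over $xy$ needed in the lemma) is the intended content.

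The one place you stall --- upgrading ``$s_1s_2s_3$ coherent'' to ``$ss_1s_2s_3$ coherent'' --- is indeed not literally contained in Lemma \ref{L: slope addition}, but it does not require reopening that lemma's proof or adjoining a coherent representative of $\alpha$ to its witness tuple; a direct rank/dimension count suffices. On the $\CK$-side, $\dim(s/s_1s_2s_3)=\dim(\alpha/\alpha\beta_1,\alpha\beta_2,xy)=1$: since $\dim(\beta_1\beta_2/\alpha)=2$, the tuple $(\alpha,\alpha\beta_1,\alpha\beta_2)$ has dimension $\dim(\alpha xy)+2=5$, while $\dim(\alpha\beta_1,\alpha\beta_2)\le\dim(xy)+2=4$ (slopes at $(x,y)$ lie in a one-dimensional definable set over $xy$), and $\alpha\beta_3\in\dcl_{\CK}(\alpha\beta_1,\alpha\beta_2)$. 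On the $\CM$-side, $s$ is coherent: it lies in $\acl_{\CM}(cxy)$ for a coherent representative $c$ of $\alpha$, and $cxy$ is coherent since $c\in\acl_{\CM}(A)$ with $Axy$ coherent; hence $\rk(s)=\dim(\alpha xy)=3$, and since $xy\in\acl_{\CM}(s)$ with $\rk(xy)=2$ we get $\rk(s/xy)=1$. As $xy\in\acl_{\CM}(s_1)$, this gives $\rk(s/s_1s_2s_3)=\rk(s/s_1s_2s_3xy)\le\rk(s/xy)=1$. Combining, $\rk(s/s_1s_2s_3)=\dim(s/s_1s_2s_3)=1$, which together with the coherence of $s_1s_2s_3$ from Lemma \ref{L: slope addition} yields the coherence of $ss_1s_2s_3$. (A minor omission: coherent-genericity of $\beta_3$ also requires $\beta_3$ to be generic in $F$ over $\alpha$, which is immediate from $\dim(\beta_3/\alpha)\ge\dim(\beta_3/\alpha\beta_1)=\dim(\beta_2/\alpha\beta_1)=1$.) With these two observations your argument is complete and coincides with the paper's.
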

\begin{proof} Clear, by noting that $\alpha\beta_3=\alpha\beta_1+\alpha\beta_2$ and applying Lemma \ref{L: slope addition}.
\end{proof}

Composition will be trickier:

\begin{proposition}\label{P: slope composition at a point} Let $\beta_1,\beta_2\in F$ be coherent-generic and independent over $\alpha$. Then $\beta_3=\beta_2\circ\beta_1$ is coherent-generic, and if $s_i$ is a code of $\beta_i$ for each $i$, then $ss_1s_2s_3$ is coherent and $s_3\in\acl_{\mathcal M}(ss_1s_2)$.
\end{proposition}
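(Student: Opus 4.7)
The plan is to route the composition through a fresh generic third point in $M$, reducing to iterated applications of Lemma \ref{L: slope composition} (together with Lemma \ref{L: inverse slope}). Writing $\sigma = \alpha$ and $\sigma_i = \alpha\circ\beta_i$, each $\sigma_i$ is a coherent slope at $(x,y)$ with code $s_i$, and the quantity to be coded is
$$\alpha\circ\beta_3 \;=\; \sigma_2\circ\sigma^{-1}\circ\sigma_1.$$
This composition formally ``passes through $x$ twice,'' so Lemma \ref{L: slope composition} does not apply directly; the remedy is to insert a fresh intermediate point.

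I would pick $z \in M$ generic over $s, s_1, s_2, x, y$ together with coherent representatives, and then choose a non-algebraic coherent slope $\rho$ at $(y,z)$ (which exists by transporting Lemma \ref{L: slope family exists} under an automorphism moving the generic pair), taken as independent as possible from the previous data. The key identity, verified by cancellation, is
$$\alpha\circ\beta_3 \;=\; (\sigma_2\circ\pi^{-1})\circ(\rho\circ\sigma_1), \qquad \pi := \rho\circ\sigma.$$
Here $\rho\circ\sigma_1$ and $\pi$ are coherent slopes at $(x,z)$ by Lemma \ref{L: slope composition} (intermediate point $y$); then $\pi^{-1}$ is a coherent slope at $(z,x)$ by Lemma \ref{L: inverse slope}; then $\sigma_2\circ\pi^{-1}$ is a coherent slope at $(z,y)$ by Lemma \ref{L: slope composition} (intermediate point $x$); and a final application of Lemma \ref{L: slope composition} (intermediate point $z$) yields $\alpha\circ\beta_3$ as a coherent slope at $(x,y)$. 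This shows $\beta_3$ is coherent-generic and produces a code $s_3'$ satisfying $s_3' \in \acl_{\CM}(s, s_1, s_2, t)$, where $t$ is a code of $\rho$.

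To eliminate $t$, I would use an exchange argument: since $\alpha\circ\beta_3$ does not involve $\rho$, any code of it is intrinsic to $\alpha, \beta_1, \beta_2$, while $t$ was chosen $\CM$-generic over $s, s_1, s_2$; hence any algebraic dependence of $s_3'$ on $t$ over $s s_1 s_2$ forces $s_3' \in \acl_{\CM}(s, s_1, s_2)$. Taking $s_3$ to be the code of $\alpha\circ\beta_3$ produced by Proposition \ref{P: codes exist} (which is $\CM$-interalgebraic with $s_3'$) yields the required statement on codes, and coherence of $s s_1 s_2 s_3$ follows from the coherence of the intermediate tuples produced at each stage together with the preservation of coherence under $\CM$-algebraicity.

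The main obstacle I anticipate is the bookkeeping of independence for the three invocations of Lemma \ref{L: slope composition}: the auxiliary slope $\rho$ must be sufficiently independent from $\sigma, \sigma_1, \sigma_2$ and from $x, y, z$ so that all three compositions simultaneously satisfy the hypotheses of Lemma \ref{L: slope composition}, and the intermediate codes must satisfy exactly the algebraicity relations needed to trace $s_3$ back to $s, s_1, s_2$. This parallels, and slightly extends, the independence juggling already carried out in the proof of Lemma \ref{L: slope addition}.
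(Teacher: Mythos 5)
Your proposal is correct and is essentially the paper's own argument: your factorization $(\sigma_2\circ\pi^{-1})\circ(\rho\circ\sigma_1)$ with $\pi=\rho\circ\alpha$ is exactly the paper's decomposition $\alpha\beta_2\beta_1=(\alpha\beta_2\alpha^{-1}\gamma^{-1})(\gamma\alpha\beta_1)$ through an auxiliary generic point $z$ and auxiliary coherent slope $\gamma$ at $(y,z)$, followed by stepwise applications of Lemmas \ref{L: slope composition} and \ref{L: inverse slope}. The final removal of the auxiliary code $t$ by independence over $ss_1s_2$, and the coherence of $ss_1s_2s_3$ via preservation of coherence under $\CM$-algebraicity, also match the paper's proof.
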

\begin{proof}
    Let $(\gamma,z)\models\tp_{\mathcal K}(\alpha^{-1},x/y)$ be independent from all other data. So $\gamma$ is a non-algebraic coherent slope at $(y,z)$. Let $t$ be a code of $\gamma$.
    
    Our goal is to show that the slope $\alpha\beta_2\beta_1$ (at $(x,y)$) is coherent. We cannot directly apply Lemma \ref{L: slope composition}, because that lemma requires compositions moving between three different points. Instead, we will break $\alpha\beta_2\beta_1$ into a longer sequence of maps, then regroup them carefully so that Lemma \ref{L: slope composition} can be used at each step. This is similar in spirit to \cite[7.27]{CasACF0}.
    
    Let us proceed. First note that $$\alpha\beta_2\beta_1=(\alpha\beta_2\alpha^{-1}\gamma^{-1})(\gamma\alpha\beta_1)$$ (going from $x$ to $z$ to $y$). Let $\delta_1=\gamma\alpha\beta_1$ and $\delta_2=\alpha\beta_2\alpha^{-1}\gamma^{-1}$, so $\alpha\beta_2\beta_1=\delta_2\delta_1$. Note that $\dim(\delta_1\delta_2/xyz)=2$. Using this, one can check that $\delta_1$ and $\delta_2$ satisfy the various independence assumptions of Lemma \ref{L: slope composition} (this is the main reason for choosing $\delta_1$ and $\delta_2$ this way). 
    
    \begin{claim} Each $\delta_i$ is coherent, and if $d_i$ is a code of $\delta_i$ then $d_i\in\acl_{\mathcal M}(sts_1s_2)$. 
    \end{claim}
    \begin{proof} For $\delta_1$, apply Lemma \ref{L: slope composition} to the composition $\delta_1=(\gamma\alpha)(\beta_1)$. For $\delta_2$, apply Lemmas \ref{L: slope composition} and \ref{L: inverse slope} to the composition $\delta_3=\alpha^{-1}\gamma^{-1}$, and then apply Lemma \ref{L: slope composition} to the composition $\delta_2=(\alpha\beta_2)(\delta_3)$.
    \end{proof}
    
    By the claim, Lemma \ref{L: slope composition} applies to $\alpha\beta_3=\delta_2\delta_1$. We conclude that $\alpha\beta_3$ is coherent, and if $s_3$ is a code (and $d_i$ is a code of $\delta_i$), then $$s_3\in\acl_{\mathcal M}(d_1d_2)\subset\acl_{\mathcal M}(sts_1s_2).$$ But $t$ was chosen independently over all other data, so it can be dropped, yielding $s_3\in\acl_{\mathcal M}(ss_1s_2)$ and proving the proposition.
\end{proof}

\subsection{Building a Field Configuration}

In this subsection, we interpret a field in $\CM$ (assuming TIMI). We will do this by building a \textit{field configuration}. For our purposes, we define:

\begin{definition}\label{D: field con}
    Let $\CN$ be a geometric structure, $A$ a parameter set, and denote dimension in $\CN$ by $d$. An \textit{affine field configuration in $\CN$ over $A$} is a tuple $(a_{12},a_{23},a_{23},u_1,u_2,u_3)$ from $\CN^{eq}$ such that:
    \begin{enumerate}
        \item Each $\dim(a_{ij}/A)=2$ and each $\dim(u_i/A)=1$.
        \item Any two of the six points are independent over $A$, and the dimension of the whole tuple over $A$ is 5.
        \item Each of $a_{12}$, $a_{23}$, $a_{13}$ is algebraic over $A$ with the other two.
        \item For all $i,j\in\{1,2,3\}$ with $i<j$, $u_i$ and $u_j$ are interalgebraic over $Aa_{ij}$.
    \end{enumerate}
\end{definition}

There is a canonical way to build affine field configurations: suppose $\CN$ is a field with geometric theory, and let $AGL_1(N)$ be the group of affine linear bijections $N\rightarrow N$. Let $A$ be any parameter set. Then let $(a_{12},a_{23},u_1)\in(AGL_1(N))^2\times N$ be generic over $A$, and define $a_{13}=a_{23}\circ a_{12}$, $u_2=a_{12}u_1$, and $u_3=a_{23}u_2=a_{13}u_1$. Then $(a_{12},a_{23},a_{13},u_1,u_2,u_3)$ is an affine field configuration in $\CN$ over $A$. Call this a \textit{standard affine field configuration over $A$}.

The main associated fact about affine field configurations is a converse in the strongly minimal case:

\begin{fact}[Hrushovski, see \cite{Bou}] Suppose $\CN$ is strongly minimal and there is a field configuration in $\CN$ over some $A$. Then in $\CN^{eq}$ there is a strongly minimal set admitting an $\CN$-definable (algebraically closed) field structure.
\end{fact}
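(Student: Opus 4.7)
The plan is to execute the classical Hrushovski extraction of a field from a field configuration in two stages: first produce a strongly minimal definable group from the configuration, then upgrade that group to a field using the $2$-dimensional (``affine'') nature of the line points $a_{ij}$.

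For the first stage, I would identify each $a_{ij}$ with the germ of a partial $\CN$-definable bijection $f_{ij}$ between the generics of $u_i$ and $u_j$. This is justified by standard canonical-base arguments in strongly minimal theories: the interalgebraicity of $u_i$ and $u_j$ over $Aa_{ij}$, combined with $\dim(a_{ij}/A)=2$, lets us view $a_{ij}$ as the canonical parameter of (an irreducible component of) a generically finite-to-finite correspondence between the loci of $u_i$ and $u_j$. The algebraicity $a_{13}\in\acl(a_{12}a_{23})$ together with the dimension counts then translates into $f_{13}$ being interdefinable with $f_{23}\circ f_{12}$ on a Zariski-dense set; after verifying associativity and the existence of generic inverses, this yields a definable group chunk on the space of germs. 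An invocation of the Weil/Hrushovski group chunk theorem (see \cite{Bou}) then produces a strongly minimal $\CN^{eq}$-definable group $G$ acting definably and transitively on a strongly minimal set bi-interpretable with the locus of $u_1$.

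For the second stage, the key observation is that this is a \emph{field} configuration rather than merely a group configuration: the $2$-dimensionality of each $a_{ij}$, together with the $1$-dimensionality of the $u_i$'s, forces the action of $G$ on the $u$-side to be generically sharply $2$-transitive. A classical theorem of Zilber and Hrushovski (see \cite{Bou}) then identifies $G\curvearrowright u_1$ with the natural action of $\agl_1(F)$ on $F$ for some interpretable algebraically closed field $F$. Concretely, decomposing $G$ as a semidirect product of a translation subgroup and the stabilizer of a generic point (``scalars'') produces the additive and multiplicative structures of $F$ simultaneously; strong minimality then forces $F$ to be algebraically closed.

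The main obstacle I anticipate is the careful bookkeeping needed to verify the group chunk axioms in stage one: one must show that composition of germs is generically well-defined, associative, and admits inverses, all while keeping track of parameter independences across the six configuration points, using only the dimension conditions (1)--(4) of Definition \ref{D: field con}. If the group chunk theorem and the sharply-$2$-transitive-to-$\agl_1$ upgrade are both taken as black boxes, the remaining work is largely checking that the configuration supplies exactly the needed data. An appealing alternative route would be to invoke the simpler group configuration theorem first to build a strongly minimal group $H$, and only afterwards use the extra dimension in $a_{ij}$ to locate an additive subgroup and a commuting multiplicative action inside $H$; this may reduce the technical overhead at the cost of invoking a more specialised black box at the end.
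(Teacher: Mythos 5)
The paper does not actually prove this statement: it is quoted as a known Fact, with the proof delegated to Bouscaren's exposition of Hrushovski's group configuration theorem, and your sketch is precisely that classical argument (replace the $a_{ij}$ by canonical bases / germs of definable finite-to-finite correspondences between the loci of the $u_i$, verify the group chunk axioms via the independence and dimension conditions, invoke the Weil--Hrushovski group chunk theorem, then identify the resulting group acting on a strongly minimal set with $\agl_1(F)$ for an interpretable field $F$, algebraically closed by strong minimality). So in substance you are reproducing the intended black-box proof, and the overall architecture is sound.

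One slip should be corrected, because as written your two stages contradict each other: the group $G$ produced in stage one is \emph{not} strongly minimal. Since each $a_{ij}$ has dimension $2$ over $A$, the family of germs, hence the group chunk and the group it generates, has Morley rank $2$; this is exactly what makes the action on the rank-$1$ set generically sharply $2$-transitive and allows the identification with $\agl_1(F)$ acting on $F$. A strongly minimal group acting transitively on a strongly minimal set would act generically regularly, and no field could be extracted. The strongly minimal object promised by the Fact is $F$ itself (interalgebraic with a rank-$1$ quotient in $\CN^{eq}$), not $G$. With that adjective fixed, and granting the standard reduction that lets one assume each $a_{ij}$ is interalgebraic with the canonical base of $\tp(u_iu_j/Aa_{ij})$ (which you gesture at but is where most of the bookkeeping you anticipate actually lives), your outline agrees with the cited source.
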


We will show:

\begin{theorem}\label{T: field interpretation}
    Suppose $\CK$ satisfies TIMI. Then there is an affine field configuration in $\CM$ over some set, and thus there is a strongly minimal set in $\CM^{eq}$ admitting an $\CM$-definable algebraically closed field structure. 
\end{theorem}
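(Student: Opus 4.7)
The plan is to interpret a field by exhibiting an affine field configuration in $\CM^{eq}$, constructed by transporting the standard configuration associated to $\agl_1(F) \curvearrowright F$, where $F = \en_K(T(x))$, via the coding machinery developed in Section~11. Note that the hypothesis TIMI yields detection of tangency (Proposition~\ref{P: detection of tangency}) and hence the existence of codes for all coherent slopes (Proposition~\ref{P: codes exist}); in particular, every coherent-generic element of $F$ carries an $\CM^{eq}$-code.

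\textbf{The construction.} Work over $A = \{s\}$, so that $\alpha, x, y \in \dcl(A)$. Applying Lemma~\ref{L: coherent field element} together with saturation, I would choose five mutually $\CK$-independent realizations $\lambda_1, \mu_1, \lambda_2, \mu_2, v_1 \in F$ over $A$ of the type of a coherent-generic element; each is itself coherent-generic, since this property depends only on the type over $\alpha$. Define
\[ v_2 = \lambda_1 v_1 + \mu_1,\quad v_3 = \lambda_2 v_2 + \mu_2,\quad \lambda_3 = \lambda_2 \lambda_1,\quad \mu_3 = \lambda_2 \mu_1 + \mu_2. \]
In the field $F$, the tuple $(\lambda_1,\mu_1), (\lambda_2,\mu_2), (\lambda_3,\mu_3), v_1, v_2, v_3$ is the standard affine configuration arising from the action of $\agl_1(F)$ on $F$, and it satisfies all clauses of Definition~\ref{D: field con} in $\CK$ over $A$ by a pure field-theoretic computation (the key identity being $v_3 = \lambda_3 v_1 + \mu_3$).

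\textbf{Transfer to $\CM^{eq}$.} Write $\mathrm{cd}(\cdot)$ for the code. Set $a_{12} = (\mathrm{cd}(\lambda_1),\mathrm{cd}(\mu_1))$ and analogously for $a_{23}, a_{13}$; set $u_i = \mathrm{cd}(v_i)$. Each code is $\CK$-interalgebraic with the element it encodes over $A$, so $\CK$-dimensions over $A$ transfer unchanged. Coherence of all tuples of codes appearing in the configuration is precisely the output of Propositions~\ref{P: slope addition at a point} and~\ref{P: slope composition at a point}, so $\CM$-rank equals $\CK$-dimension throughout, yielding clauses (1)--(2) of Definition~\ref{D: field con}. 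For the algebraicity clauses (3) and (4), the $\acl_{\CM}$-relations among codes are read off from the same propositions: $\mathrm{cd}(\lambda_3) \in \acl_{\CM}(s,\mathrm{cd}(\lambda_1),\mathrm{cd}(\lambda_2))$ by Proposition~\ref{P: slope composition at a point}, $\mathrm{cd}(v_2) \in \acl_{\CM}(s,\mathrm{cd}(\lambda_1),\mathrm{cd}(\mu_1),\mathrm{cd}(v_1))$ by applying Propositions~\ref{P: slope composition at a point} and~\ref{P: slope addition at a point} in succession (first coding $\lambda_1 v_1$, then adding $\mu_1$), and similarly for the remaining relations --- including the two-step argument for $\mu_3 = \lambda_2\mu_1 + \mu_2$ (first code $\lambda_2\mu_1$ by composition, then add $\mu_2$). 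Hrushovski's field configuration theorem cited immediately after Definition~\ref{D: field con} then produces the desired strongly minimal, $\CM$-definable algebraically closed field in $\CM^{eq}$.

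\textbf{Main obstacle.} The substantive difficulty is bookkeeping: at each application of Propositions~\ref{P: slope addition at a point} and~\ref{P: slope composition at a point}, one must verify that the two elements being combined are $\CK$-independent over $\alpha$ and both coherent-generic. The most delicate case is the expression for $\mu_3$, where the intermediate $\lambda_2\mu_1$ must be shown to be coherent-generic and independent of $\mu_2$ over $\alpha$, even though $\lambda_2$ is entangled with $\mu_2$ inside the original independent five-tuple. These checks are routine forking calculations, but require careful attention to the order in which independence hypotheses are invoked, and to the fact that intermediate codes be constructed before they are combined with later elements.
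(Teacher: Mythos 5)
Your proposal is correct and follows essentially the same route as the paper: one builds a standard affine configuration for $AGL_1(F)$ acting on $F$ from independent coherent-generic elements, transfers it to $\CM^{eq}$ via codes (available under TIMI by Propositions \ref{P: detection of tangency} and \ref{P: codes exist}) together with Propositions \ref{P: slope addition at a point} and \ref{P: slope composition at a point}, and applies Hrushovski's theorem; the paper merely packages your linear-part/constant-part bookkeeping into Lemma \ref{L: AGL action}. One small adjustment: rather than reading the reverse algebraicities in clauses (3)--(4) ``off the same propositions'' (which do not directly cover subtraction or inversion in $F$), conclude as the paper does --- your forward $\acl_{\CM}$-relations show the whole code tuple is $\acl_{\CM}$-algebraic over the coherent tuple of generating codes, hence itself coherent and pointwise $\CK$-interalgebraic with the configuration in $F$, so all remaining clauses transfer automatically.
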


Our strategy is as follows: first, we build a standard affine field configuration in our distinguished field $F=\operatorname{Aut}(T(x))$. Then assuming TIMI, we use codes to build a corresponding tuple of points in $\CM^{eq}$, and conclude (repeatedly using Propositions \ref{P: slope addition at a point} and \ref{P: slope composition at a point}) that this tuple from $\CM^{eq}$ is a field configuration in the sense of $\CM$. The key point is to ensure that all slopes we work with are coherent, so that the relevant dimension-theoretic properties in Definition \ref{D: field con} transfer directly from $\CK$ to $\CM$.

Our first step is to show that Propositions \ref{P: slope addition at a point} and \ref{P: slope composition at a point} combine to generically encode the action of the affine linear group of $F$.

\textbf{For the rest of this section, let $AGL_1(F)$ be the group of maps $t\mapsto at+b$ on $F$.}

\begin{definition}
    Let $g\in AGL_1(F)$, so $g(t)=at+b$ for unique $a,b\in F$. We say that $g$ is \textit{coherent-generic} if $a$ and $b$ are coherent-generic and independent over $\alpha$. In this case, a code of $g$ is a pair $s_a,s_b$ where $s_a$ is a code of $a$ and $s_b$ is a code of $b$.
\end{definition}

\begin{lemma}\label{L: AGL action} Let $g,h\in AGL_1(F)$ and $t\in F$ be coherent-generic and independent over $\alpha$.
\begin{enumerate}
    \item $g(t)$ is coherent-generic, and if $s_1,s_2,s_3$ are codes of $g$, $t$, and $g(t)$, respectively, then $s_3\in\acl_{\mathcal M}(ss_1s_2)$.
    \item $g\circ h$ is coherent-generic, and if $s_1,s_2,s_3$ are codes of $g$, $h$, and $gh$, respectively, then $s_3\in\acl_{\mathcal M}(ss_1s_2)$.
\end{enumerate}
\end{lemma}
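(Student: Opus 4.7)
My plan is to reduce both parts to repeated applications of Propositions~\ref{P: slope addition at a point} and~\ref{P: slope composition at a point}, by decomposing the $AGL_1(F)$-operations into elementary sums and products of scalar coefficients in $F$. Write $g(t) = at + b$ and $h(t) = ct + d$. The hypothesis that $g$, $h$, $t$ are coherent-generic and pairwise independent over $\alpha$ unpacks to: $a, b, c, d, t$ are five mutually independent coherent-generic elements of $F$, with $(s_a, s_b)$ the code of $g$, $(s_c, s_d)$ the code of $h$, and $s_t$ the code of $t$.

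For (1), I would express $g(t) = at + b$ as a composition followed by a sum. By Proposition~\ref{P: slope composition at a point} applied to the independent coherent-generic pair $(a, t)$, $at$ is coherent-generic with code $s_{at} \in \acl_{\CM}(s, s_a, s_t)$. Next, $b$ is independent from $(a,t)$ over $\alpha$ and hence from $at$; so Proposition~\ref{P: slope addition at a point} applied to $(at, b)$ gives $at + b = g(t)$ coherent-generic with code $s_3 \in \acl_{\CM}(s, s_{at}, s_b) \subseteq \acl_{\CM}(s, s_1, s_2)$.

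For (2), I would expand $(g \circ h)(t) = a(ct + d) + b = (ac)\, t + (ad + b)$, so that $g \circ h$ is the affine map with linear coefficient $ac$ and translation coefficient $ad + b$. Proposition~\ref{P: slope composition at a point} applied separately to $(a, c)$ and $(a, d)$ yields that $ac$ and $ad$ are coherent-generic with codes $s_{ac} \in \acl_{\CM}(s, s_a, s_c)$ and $s_{ad} \in \acl_{\CM}(s, s_a, s_d)$. Since $b$ is independent from $(a, d)$ over $\alpha$, Proposition~\ref{P: slope addition at a point} applied to $(ad, b)$ gives $ad + b$ coherent-generic with code in $\acl_{\CM}(s, s_{ad}, s_b) \subseteq \acl_{\CM}(s, s_a, s_b, s_d)$. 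Packaging the two coefficients, the code $s_3$ of $g \circ h$ lies in $\acl_{\CM}(s, s_a, s_b, s_c, s_d) = \acl_{\CM}(s, s_1, s_2)$.

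The main obstacle will be verifying that $g \circ h$ is itself coherent-generic in the defined sense --- that is, that its coefficients $ac$ and $ad + b$ are coherent-generic and independent over $\alpha$. Individual coherent-genericity is handled as above; the crucial point is independence. A dimension count in $\CK$ does the job: since $\dim(a, b, c, d / \alpha) = 4$ and the polynomial map $(a, b, c, d) \mapsto (ac, ad + b)$ from $F^4$ to $F^2$ has generic fibers of dimension $2$ (parameterized by $(a, b)$), the image has dimension $2$, yielding $\dim_{\CK}(ac, ad + b / \alpha) = 2$. For the corresponding $\CM$-rank, note that both codes lie in $\acl_{\CM}(s, s_a, s_b, s_c, s_d)$, a coherent tuple of $\CM$-rank $4$ over $s$; combined with the universal inequality $\dim_{\CK} \leq \rk_{\CM}$, this forces $\rk_{\CM}(s_{ac}, s_{ad+b}/s) = 2$, giving both $\CM$-independence and joint coherence simultaneously.
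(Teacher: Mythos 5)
Your proposal is correct and follows essentially the same route as the paper: part (1) is one application of the composition proposition followed by one of the addition proposition, and for part (2) the paper expands $g\circ h$ as $t\mapsto (a_ga_h)t+g(b_h)$ and successively applies the composition proposition, part (1), and the addition proposition --- which is exactly your decomposition with the use of part (1) inlined. Your explicit dimension count giving $\dim(ac,\,ad+b/\alpha)=2$ (hence independence of the two coefficients, so that $g\circ h$ is coherent-generic in the defined sense) fills in a detail the paper leaves implicit, while your closing $\CM$-rank discussion is not needed, since coherent-genericity of $g\circ h$ only requires $\CK$-independence of its coefficients over $\alpha$, which your count already provides.
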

\begin{proof}
    \begin{enumerate}
        \item Write $g(t)=at+b$, and successively apply Propositions \ref{P: slope composition at a point} and \ref{P: slope addition at a point}.
        \item Write $g(t)=a_gt+b_g$ and $h(t)=a_ht+b_h$. Then $g\circ h$ is the map $t\mapsto(a_ga_h)t+g(b_h)$. Now successively apply Proposition \ref{P: slope composition at a point}, (1), and Proposition \ref{P: slope addition at a point}.
    \end{enumerate}
\end{proof}

We now finish the proof:

\begin{proof}[Proof of Theorem \ref{T: field interpretation}] Assume that $(\CK,\tau)$ satisfies TIMI. By Propositions \ref{P: detection of tangency} and \ref{P: codes exist}, every coherent slope has a code -- and thus so does every coherent-generic element of $F$, as well as every coherent-generic element of $AGL_1(F)$. We use these facts repeatedly.

Let $(g_{12},g_{23},g_{13},t_1,t_2,t_3)$ be a standard affine field configuration in $F$ over $\alpha$. So $(g_{12},g_{23},t_1)$ is generic in $(AGL_1(F))^2\times F$ over $\alpha$, and the others are defined as in the paragraph after Definition \ref{D: field con}. Note that by using Lemma \ref{L: coherent field element} repeatedly to choose $g_{12}$, $g_{23}$, and $t_1$, we may assume they are coherent-generic. It follows inductively by Lemma \ref{L: AGL action} that $g_{13}$, $t_2$, and $t_3$ are also coherent-generic. Let $s_{ij}$ be a code for each $g_{ij}$, and $u_i$ a code for each $t_i$. Let $w=(s_{12},s_{23},s_{13},u_1,u_2,u_3)$. It follows inductively from Lemma \ref{L: AGL action} that $w\in\acl_{\mathcal M}(ss_{12}s_{23}u_1)$ (recall that $s$ is a distinguished code of $\alpha$, so is $\CK$-interalgebraic with $\alpha$). By independence, $ss_{12}s_{23}u_1$ is coherent, and thus so is $sw$. But $sw$ is, point-by-point, $\CK$-interalgebraic with $(\alpha,g_{12},g_{23},g_{13},t_1,t_2,t_3)$. It follows that $(s_{12},s_{23},s_{13},u_1,u_2,u_3)$ forms an affine field configuration in $\mathcal K$ over $s$. By coherence, the same holds in $\mathcal M$.
\end{proof}

Finally, moving back to our T-convex field $\CR_V$, we obtain:

\begin{theorem}\label{T: at least 2} Every strongly minimal 1-dimensional definable $\CR_V$-relic is locally modular.
\end{theorem}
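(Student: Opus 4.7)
The plan is to combine the abstract field-interpretation result (Theorem \ref{T: field interpretation}) with the verification that $\CR_V$ satisfies its hypotheses, then derive a contradiction from the nonexistence of a one-dimensional algebraically closed field definable in $\CR_V$.

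First I would verify that $(\CR_V,\tau)$ is a differentiable Hausdorff geometric field satisfying TIMI. Section 8 already establishes that $\CR_V$ is a (t-minimal) differentiable Hausdorff geometric structure, with smoothness and tangent spaces inherited from the o-minimal reduct $\CR$ via Corollary \ref{C: generically locally ominimal}. The additional axioms of Definition \ref{D: differentiable HGF} --- that $\tau$ is a field topology, that the scalar field is $K$, and that tangent spaces are $K$-affine subspaces definable in families --- all transfer directly from the o-minimal case, since these are $d$-local properties already known for $\CR$. Theorem \ref{T: t-convex timi} then supplies TIMI.

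Next I would suppose for contradiction that $\CM = (M,\ldots)$ is a non-locally modular strongly minimal definable $\CR_V$-relic with $\dim(M)=1$. After adding constants so that $\acl_{\CM}(\emptyset)$ is infinite and every $\CM(\emptyset)$-definable set is $\CR_V(\emptyset)$-definable (as done at the start of Section 11), Theorem \ref{T: field interpretation} produces a strongly minimal (hence algebraically closed) field $F$ interpretable in $\CM$. By Fact \ref{F: interpretable is definable} applied to the strongly minimal set $F$, we may identify $F$ with an $\CR_V$-definable set; since this set is in finite correspondence with $M$, we have $\dim_{\CR_V}(F)=1$. Thus $\CR_V$ admits a definable one-dimensional algebraically closed field, and it remains to rule this out.

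The final step is to derive a contradiction from the existence of such an $F$. Using Corollary \ref{C: generically locally ominimal}, I would approximate both the universe of $F$ and the graphs of its addition and multiplication by $\CR$-definable sets in a neighborhood of a generic point; the Marikova-style local retopologization of Theorem \ref{T: marikova} (applied now to the category of locally $\CR$-definable $C^1$-maps) then yields a locally $\CR$-definable field chunk of dimension one. A Weil-style field-chunk argument (as in \cite{Pi5} and \cite{HruPil}) produces an honestly $\CR$-definable one-dimensional infinite field, which by the Peterzil--Starchenko classification of definable fields in o-minimal expansions of RCF must be real closed --- contradicting the algebraic closedness of $F$.

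The main obstacle is precisely this last descent step: extracting an $\CR$-definable one-dimensional field from the $\CR_V$-definable $F$. All the ingredients are in place (generic $d$-approximation by $\CR$-definable sets, the Marikova-style retopologization of Section 7, and the o-minimal classification), and the arguments run parallel to the o-minimal treatment in \cite{HaOnPe}. In particular, Theorem \ref{T: field interpretation} does almost all of the conceptual work, so the $T$-convex part of the proof really amounts to checking hypotheses plus one local-to-global field-chunk argument.
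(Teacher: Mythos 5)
Your reduction matches the paper's proof up to the last step: assume non-local modularity, invoke TIMI (Theorem \ref{T: t-convex timi}) and Theorem \ref{T: field interpretation} to get a strongly minimal algebraically closed field $F$ in $\CM^{eq}$, note $F$ is in finite correspondence with $M$ so $\dim(F)=1$, and then derive a contradiction from the fact that $\CR_V$ admits no one-dimensional algebraically closed field. The divergence, and the problem, is in how you rule out such an $F$. The paper does this by citation: $F$ is locally almost strongly internal to $R$, so by \cite[Theorem 4.21]{HaHaPe} (via \cite{JohnDpJML} and \cite[Corollary 9.5]{HaHaPeGps}) any such interpretable field is definably isomorphic to $R$ or to $R[\sqrt{-1}]$, and the latter is $2$-dimensional. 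You replace this with a sketched descent (d-approximation, Marikova retopologization, Weil-style field chunk, Peterzil--Starchenko), and that sketch has a genuine gap.

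Concretely, two things are missing. First, Corollary \ref{C: generically locally ominimal} only gives $\CR$-definable d-approximations of the graphs of $+$ and $\cdot$ at a generic point: the approximating $\CR$-definable functions agree with the true operations on a neighborhood that need not be $\CR$-definable, so what you obtain is a local field germ, not a \emph{generic} $\CR$-definable chunk on which the field axioms hold generically in the sense of $\CR$; the group-chunk machinery of \cite{Pi5}, \cite{vdDriesGpChunk}, \cite{HruPil} does not apply off the shelf to a germ at one point, and Theorem \ref{T: marikova} as stated handles a single group operation (its translation trick $x*_dy=xd^{-1}y$ relocates the additive identity, and compatibility with multiplication is not addressed), so ``locally $\CR$-definable field chunk'' is not yet justified. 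Second, even granting that some one-dimensional $\CR$-definable field $L$ could be extracted, your contradiction needs more than the existence of $L$: you must relate $F$ to $L$ as fields. A germ-level identification of the operations near a generic point does not by itself transport algebraic closedness of $F$ into a contradiction with $L$ being real closed; one needs a definable (or at least global enough) field isomorphism, and establishing exactly that kind of comparison is the substance of the result the paper cites. So the final step of your proposal is not a routine ``checking hypotheses'' matter; as written it leaves the key point unproved, whereas the paper closes it with published classification theorems for fields interpretable in these valued fields.
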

\begin{proof}
Assume not, and let $\CM$ be such a relic which is not locally modular. We may assume that $\CM$ satisfies all of the assumptions made about $\CM$ in this section. By TIMI (Theorem \ref{T: t-convex timi}), we may apply Theorem \ref{T: field interpretation} to conclude that $\CM$ interprets a strongly minimal algebraically closed field. Call this field $F$. By strong minimality, $F$ is in finite correspondence with $M$, implying $\dim(F)=\dim(M)=1$. But $\CR_V$ does not interpret any 1-dimensional algebraically closed fields \cite[Theorem 4.21]{HaHaPe}, a contradiction. Let us elaborate\footnote{Halevi, Peterzil and the second author have recently shown, using similar methods, that any dp-minimal field locally almost embeddable into a weakly o-minimal field is real closed. We give a proof relying only on published results.}: by \cite{JohnDpJML} any dp-minimal unstable field is an SW-uniformity (in the sense of \cite{SimWal}). Moreover, as we have seen, $T$-convex expansions of o-minimal fields have generic differentiability (this is necessary to apply the facts we now use). Now the field $F$ we found is locally almost embeddable into $R$, so it is locally almost strongly internal to $R$ (\cite[Corollary 9.5]{HaHaPeGps}). So \cite[Theorem 4.21]{HaHaPe} applies, implying that $F$ is definably isomorphic to either $R$ or to $R[\sqrt{-1}]$. The latter is 2-dimensional, so we have reached the desired contradiction. 
\end{proof}

As a corollary we obtain a new proof of the main result of \cite{HaOnPe}:
\begin{corollary}
    Let $\CN$ be an o-minimal structure. Let $\CM$ be a 1-dimensional $\CN$-relic. Then $\CM$ is locally modular. 
\end{corollary}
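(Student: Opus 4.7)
The plan is to deduce the corollary from Theorems \ref{T: reduction2fields} and \ref{T: at least 2} via a two-step reduction. Suppose for contradiction that $\CM = (M,\ldots)$ is a strongly minimal $1$-dimensional $\CN$-relic which is not locally modular. Passing to a sufficiently saturated elementary extension (and noting that if $\CN$ is discrete its $1$-dimensional relics are trivial, hence locally modular), we may assume $\CN$ is dense.

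First, I would apply Theorem \ref{T: reduction2fields} to produce an interval $I \subseteq N$ carrying an $\CN$-definable real closed field structure such that $M$ is internal to $I$. Let $\CR$ denote the structure on $I$ induced by $\CN$: then $\CR$ is an o-minimal expansion of a real closed field, and $\CM$ can be viewed as an $\CR$-relic. Since any o-minimal expansion of a field eliminates imaginaries, internality promotes to an $\CR$-definable bijection between $M$ and a definable subset $M' \subseteq I^k$; thus $\CM$ is in fact a \emph{definable} $\CR$-relic. Moreover, the proof of Theorem \ref{T: reduction2fields} exhibits an infinite interval of $I$ almost embeddable into $M$, so $M'$ and $I$ are in finite correspondence, forcing $\dim_\CR(M') = 1$.

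Second, I would apply Theorem \ref{T: at least 2} to $\CR$. Setting $T := \Th(\CR)$, one realizes $\CR$ as a model of $T_{conv}$ by taking the trivial (improper) convex subring $V := I$; alternatively, one passes to a proper elementary extension and takes $V$ to be the convex hull of the standard part, noting that local modularity of $\CM$ transfers back by elementary equivalence. Either way, Theorem \ref{T: at least 2} yields that $\CM$ is locally modular, contradicting our assumption.

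The substantive content lies entirely in the first step, where Theorem \ref{T: reduction2fields} (itself built on Theorem \ref{T: intro weak rank} and the Peterzil--Starchenko trichotomy) does all the heavy lifting of reducing from an arbitrary dense o-minimal $\CN$ to an o-minimal expansion of a real closed field. The second step is bookkeeping: we simply need to recognize that the $T$-convex machinery of Sections 10--11 specializes to the o-minimal field setting (indeed, the proofs of both TIMI in Theorem \ref{T: t-convex timi} and the field interpretation in Theorem \ref{T: field interpretation} go through without change for a differentiable Hausdorff geometric field with trivial valuation). I do not anticipate a serious obstacle.
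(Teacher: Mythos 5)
Your proposal is correct and follows essentially the same route as the paper: reduce via Theorem \ref{T: reduction2fields} to the case of an o-minimal expansion of a real closed field (using elimination of imaginaries to make the relic definable), then realize that field inside a model of $T_{conv}$ and invoke Theorem \ref{T: at least 2}. One caveat: your first option for the second step, taking the improper convex subring $V=I$, is not legitimate, since $T_{conv}$ in the sense of van den Dries--Lewenberg (and as used throughout the paper, e.g.\ for completeness, saturation, and the residue field and value group sorts) requires the valuation ring to be a \emph{proper} convex subring; so one must use your alternative -- pass to a sufficiently saturated elementary extension and take the convex hull of the small model, noting that strong minimality, non-local modularity and one-dimensionality of the relic transfer -- which is exactly what the paper does. (A minor further quibble: Theorem \ref{T: reduction2fields} gives a finite correspondence between an interval and a definable \emph{subset} of $M$, not $M$ itself, but $\dim(M')=1$ follows anyway from the hypothesis that the relic is one-dimensional.)
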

\begin{proof}
    By Theorem \ref{T: reduction2fields} we may assume that $\CN$ is an o-minimal expansion of a real closed field. Passing to an $|\CL|^+$-saturated elementary extension, we may expand $\CN$ by a $T$-convex valuation ring. Let $\CN_V$ be the resulting structure. Then $\CM$ is a strongly minimal definable 1-dimensional $\CN_V$-relic, and the conclusion follows from the previous theorem.  
\end{proof}

\begin{remark} As in Remark \ref{R: imaginaries}, we expect the proof of Theorem \ref{T: at least 2} to adapt to interpretable sorts in power bounded T-convex fields $\CR_V$. Namely, suppose $\CM$ is strongly minimal, not locally modular, and interpreted in $\CR_V$, and $\dim(M)=1$. Then, as in Remark \ref{R: imaginaries}, there is an additive weak rank on $M$; and moreover there should be an infinite interpretable set $S\subset M$ which embeds definably into $R$. Without loss of generality, after adding parameters, we may assume that $S\subset M$ is itself a $\emptyset$-definable subset of $R$. One should then be able to develop the theory of coherent slopes exactly as above, with the added requirement that the coordinates where a coherent slope is taken must lie in $S$. The result should be that $\CM$ interprets a one-dimensional (in the sense of $\CR_V$) algebraically closed field. One can then apply results on interpreted field from \cite{HaHaPe} to reach a similar contradiction.
\end{remark}

\end{document}